\DeclareMathAlphabet{\mathpzc}{OT1}{pzc}{m}{it}
\newcommand{\textcyr}[1]{%
 {\fontencoding{OT2}\fontfamily{wncyr}\fontseries{m}\fontshape{n}\selectfont #1}}
\def\@tocline#1#2#3#4#5#6#7{\relax
  \ifnum #1>\c@tocdepth 
  \else
    \par \addpenalty\@secpenalty\addvspace{#2}%
    \begingroup \hyphenpenalty\@M
    \@ifempty{#4}{%
      \@tempdima\csname r@tocindent\number#1\endcsname\relax
    }{%
      \@tempdima#4\relax
    }%
    \parindent\z@ \leftskip#3\relax \advance\leftskip\@tempdima\relax
    \rightskip\@pnumwidth plus4em \parfillskip-\@pnumwidth
    #5\leavevmode\hskip-\@tempdima
      \ifcase #1
       \or\or \hskip 1em \or \hskip 2em \else \hskip 3em \fi%
      #6\nobreak\relax
    \hfill\hbox to\@pnumwidth{\@tocpagenum{#7}}\par
    \nobreak
    \endgroup
  \fi}
\newtheorem{lemma}{Lemma}[section]
\newtheorem{theorem}[lemma]{Theorem}
\newtheorem{proposition}[lemma]{Proposition}
\newtheorem{corollary}[lemma]{Corollary}
\newtheorem{conjecture}[lemma]{Conjecture}
\newtheorem{claim*}{Claim}
\newtheorem{definition}[lemma]{Definition}
\newtheorem{notation}[lemma]{Notation}
\theoremstyle{definition}
\newtheorem{remark}[lemma]{Remark}
\newcommand{\PP}{{\mathbb P}}
\newcommand{\C}{{\mathbb C}}
\newcommand{\F}{{\mathbb F}}
\newcommand{\Q}{{\mathbb Q}}
\newcommand{\R}{{\mathbb R}}
\newcommand{\Z}{{\mathbb Z}}
\newcommand{\EE}{{\mathbb E}}
\newcommand{\calC}{{\mathcal C}}
\newcommand{\calF}{{\mathcal F}}
\newcommand{\calG}{{\mathcal G}}
\newcommand{\calK}{{\mathcal K}}
\newcommand{\calM}{{\mathcal M}}
\newcommand{\calO}{{\mathcal O}}
\newcommand{\calP}{{\mathcal P}}
\newcommand{\calQ}{{\mathcal Q}}
\newcommand{\calT}{{\mathcal T}}
\newcommand{\fraki}{{\mathfrak i}}
\newcommand{\frakp}{{\mathfrak p}}
\newcommand{\frakz}{{\mathfrak z}}
\newcommand{\frakP}{{\mathfrak P}}
\DeclareMathOperator{\tr}{tr}
\DeclareMathOperator{\Tr}{Tr}
\DeclareMathOperator{\Frob}{Frob}
\DeclareMathOperator{\coker}{coker}
\DeclareMathOperator{\Char}{char}
\DeclareMathOperator{\im}{im}
\DeclareMathOperator{\Hom}{Hom}
\DeclareMathOperator{\Ext}{Ext}
\DeclareMathOperator{\Aut}{Aut}
\DeclareMathOperator{\Gal}{Gal}
\DeclareMathOperator{\ord}{ord}
\DeclareMathOperator{\Pic}{Pic}
\DeclareMathOperator{\Jac}{Jac}
\DeclareMathOperator{\Spec}{Spec}
\DeclareMathOperator{\et}{\acute{e}t}
\DeclareMathOperator{\res}{res}
\DeclareMathOperator{\rank}{rank}
\DeclareMathOperator{\Disc}{Disc}
\DeclareMathOperator{\B}{{\mbox{\textcyr{B}}}}
\DeclareMathOperator{\Sur}{Sur}
\DeclareMathOperator{\coinf}{coinf}
\DeclareMathOperator{\HHur}{\bold{\mathsf{Hur}}}
\DeclareMathOperator{\rDisc}{rDisc}
\DeclareMathOperator{\cl}{cl}
\DeclareMathOperator{\Prob}{Prob}
\DeclareMathOperator{\gr}{gr}
\DeclareMathOperator{\pos}{pos}
\DeclareMathOperator{\Out}{Out}
\newcommand{\hH}{\widehat{H}}
\newcommand{\Ks}{K^{\#}_{\O}}
\newcommand{\Gs}{G^{\#}_{\O}}
\newcommand{\Kp}{K^{\prime}_{\O}}
\newcommand{\Gp}{G^{\prime}_{\O}}
\newcommand{\tG}{\widetilde{G}}
\newcommand{\hZ}{\hat{\mathbb{Z}}}
\numberwithin{equation}{section}
\numberwithin{table}{section}
\title{Non-abelian Cohen--Lenstra heuristics in the presence of roots of unity}
\author{Yuan Liu}
\address{Department of Mathematics\\
University of Illinois Urbana-Champaign \\ 1409 W Green St \\ Urbana, IL 61801
 USA}  
\email{yyyliu@illinois.edu}
\begin{document}

	 \begin{abstract}
		For a Galois extension $K/\F_q(t)$ of Galois group $\Gamma$ with $\gcd(q,|\Gamma|)=1$, we define an invariant $\omega_K$, and show that it determines the Weil pairing of the curve corresponding to $K$ and it descends to the prime-to-$|\Gamma|$-torsion part of the lifting invariants of Hurwitz schemes introduced by Ellenberg--Venkatesh--Westerland and Wood. By keeping track of the image of $\omega_K$, we compute, as $K$ varies and $q\to \infty$, the average number of surjections from the Galois group of maximal unramified extension of $K$ to $H$, for any $\Gamma$-group $H$ whose order is prime to $q|\Gamma|$. Motivated by this result, we modify the conjecture of Wood, Zureick-Brown and the author about non-abelian Cohen--Lenstra, for both function fields and number fields, to cover the cases when the base field contains extra roots of unity. We also discuss how to use the invariant $\omega_K$ to construct a random group model, and prove in a special case that the model produces the same moments as our function field result.
	 \end{abstract}

	\maketitle
	
	\hypersetup{linkcolor=black}
	\tableofcontents

\hypersetup{linkcolor=blue}

\section{Introduction}

	Let $\Gamma$ be a finite group. The Cohen--Lenstra \cite{Cohen-Lenstra} and Cohen--Martinet \cite{Cohen-Martinet} heuristics give predictions for the distribution of the class groups of $K$ as $K$ varies over all $\Gamma$-extensions of $\Q$ with prescribed signature. Although only few cases of these heuristics are proven, huge breakthroughs have been made in the function field analogue, which was first formulated by Friedman and Washington \cite{Friedman-Washington}. Ellenberg, Venkatesh, and Westerland \cite{EVW} translated the heuristics into questions about counting the $\F_q$-points of the Hurwitz schemes, the moduli schemes of branched covers of $\PP^1$, and proved a version (as $q\to \infty$) of the function field analogue of the Cohen--Lenstra heuristics.
	
	The class group is the Galois group of the maximal abelian unramified extension. So it is natural to study the distributions of the non-abelian analogues of the class group. In \cite{BBH-imaginary} and \cite{BBH-real}, Boston, Bush, and Hajir extended the Cohen--Lenstra heuristics to a pro-$\ell$ setting (for an odd prime $\ell$) by considering the distribution of the Galois groups of the maximal unramified pro-$\ell$ extensions of quadratic fields. Boston and Wood \cite{Boston-Wood} proved the function field analogue of the Boston--Bush--Hajir heuristics. Wood, Zureick-Brown and the author \cite{LWZB} considered a more generalized setting. Let $Q$ be a global field that is either $\Q$ or $\F_q(t)$. As $K$ varies over all $\Gamma$-extensions of $Q$ that is split completely at $\infty$, \cite{LWZB} gives conjectures of the distribution of the Galois group of the maximal unramified extension of $K$ whose degree is prime to both $|\Gamma|$ and the number of roots of unity in $Q$, and not divisible by $\Char(Q)$; and also proves the function field analogue as $q\to \infty$.
	
 	All of the conjectures above fail when the base field contains extra roots of unity. For example, the Cohen--Lenstra--Martinet heuristics predict that the probability that a quadratic extension of $\Q(\sqrt{-3})$ (which contains the $3$rd roots of unity) has trivial $3$-part of class group is $\prod_{i=2}^\infty (1-3^{-i}) \approx .840$, however, Malle's numerical result \cite{Malle2008} suggests that this probability should be $\frac{4}{3} \prod_{i=1}^{\infty}(1+3^{-i})^{-1} \approx .852$. The reason for this phenomenon is better understood in the function field case: Friedman and Washington in \cite{Friedman-Washington} already discussed the importance of the Weil pairing of the curve corresponding to the function field extension $K/\F_q(t)$. When the $\ell$-th roots of unity $\mu_{\ell}$ is contained in $\F_q(t)$ (that is, when $\ell \mid q-1$), the Weil pairing reduces to some non-trivial structure on the $\ell$-part of the class group of $K$, which makes the distribution of class groups different from the the case that $\mu_\ell \not\subset \F_q(t)$. By keeping track of the image of the Weil pairing, Lipnowski and Tsimerman \cite{Lipnowski-Tsimerman} refined the Cohen--Lenstra heuristics for the distribution of the $\ell$-part of class groups of quadratic extensions over $\F_q(t)$ in the case when $\ell \mid q-1$, and used the results of Ellenberg--Venkatesh--Westerland to prove the refined conjecture as $q \to \infty$; and Lipnowski, Sawin, and Tsimerman \cite{Lipnowski-Sawin-Tsimerman} gave a similar conjecture in the number field case, by making local adjustments at the primes lying over $\ell$ and $\infty$. 	
	
	In this paper, we generalize the non-abelian analogues of Cohen--Lenstra heurisitics to include the roots-of-unity case. Let $\Gamma$ be a finite group, and $Q$ be a number field or $\F_q(t)$. 
	As $K$ varies over $\Gamma$-extensions of $Q$ that is split completely at each archimedean prime when $Q$ is a number field or at $\infty$ when $Q=\F_q(t)$, we study the distribution of the Galois group of the maximal unramified and split-completely-at-$\infty$ extension of $K$ whose degree is prime to $|\Gamma|$ and not divisible by $\Char(Q)$.

\subsection{Main results}
	
	The bulk of this paper deals with the function field case, that is $Q=\F_q(t)$. A $\Gamma$-extension $K/\F_q(t)$ that is split completely at $\infty$ corresponds to a $\Gamma$-cover $X/\PP^1_{\F_q}$.
	By passing this extension to $\overline{\F}_q$, we first define an invariant $\omega_K$ associated to $K$ that, roughly speaking, plays the role of the Weil pairing associated to $X_{\overline{\F}_q}$ in the non-abelian setting. For a field $L$, we let $G_{\O}(L):= L_{\O}/L$ denote the Galois group of the maximal unramified extension $L_{\O}$ of $L$. Consider the exact sequence 
	\[
		1 \longrightarrow G_{\O}(K \overline{\F}_q) \longrightarrow G_{\O}(K) \longrightarrow \Gal(K\overline{\F}_q/ K)= \hZ \cdot \Frob \longrightarrow 1, 
	\] 
	and a splitting $\hZ \cdot \Frob \to G_{\O}(K)$ defined by identifying $\Frob$ with the Frobenius element in the decomposition group of the extension $K_{\O}/K$ at a preselected place of $K_{\O}$ over $\infty$. Then this splitting defines a $\Frob$ action on $G_{\O}(K\overline{\F}_q)$ by conjugation. We let $\Gp(K)$ denote the pro-prime-to-$q|\Gamma|$ completion of $G_{\O}(K\overline{\F}_q)$ and denote the corresponding extension by $\Kp/K\overline{\F}_q(t)$. (Equivalently, $\Gp(K)$ is the pro-prime-to-$q|\Gamma|$ completion of the \'etale fundamental group $\pi_1^{\et}(X_{\overline{\F}_q})$.) Then $\Gp(K)$ naturally obtains an action of $\Frob$ from $G_{\O}(K\overline{\F}_q)$; and similarly, we can define a $\Frob$ action on the Galois group $\Gal(\Kp/\overline{\F}_q(t))$.
	Moreover, $\Gp(K)$ also has an action of $\Gamma$ by conjugation when we fix a homomorphic splitting of $\Gal(\Kp/ \overline{\F}_q(t)) \twoheadrightarrow \Gal(K\overline{\F}_q(t)/\overline{\F}_q(t)) \simeq \Gamma$. 
	We define the invariant $\omega_K$ to be a specific $\Frob$-equivariant isomorphism (see Definition~\ref{def:inv-K})
	\[
		\omega_K: \hZ(1)_{(q|\Gamma|)'} \overset{\sim}{\longrightarrow} H_2(\Gal(\Kp/\overline{\F}_q(t)), \Z)_{(q|\Gamma|)'}
	\]
	where the subscript $(q|\Gamma|)'$ represents the pro-prime-to-$q|\Gamma|$ completion. The invariant $\omega_K$ is first defined group-theoretically by studying the Schur covering group of $\Gal(\Kp/ \overline{\F}_q(t))$. Then we also give a cohomological definition: $\omega_K$ defines a central extension of $\Gp(K)$ by $\hZ(1)_{(q|\Gamma|)'}$, and hence corresponds to a class in $H^2(\Gp(K), \hZ(1)_{(q|\Gamma|)'})$; under the functorial isomorphism between Galois cohomology and \'etale cohomology, the corresponding class in $H^2_{\et}(X_{\overline{\F}_q}, \hZ(1)_{(q|\Gamma|)'})$ has trace $-|\Gamma|$ (see Lemma~\ref{lem:tr-Gamma} and Corollary~\ref{cor:ext-tr}).
	
	Then we pull everything back to $\F_q$ by taking the maximal $\Gamma$-equivariant quotient fixed by the $\Frob$ action. Such quotient of $\Gp(K)$ is naturally the Galois group of the maximal unramified extension of $K$ that is splitting completely over $\infty$ and has degree prime-to-$(q|\Gamma|)$, and we denote this quotient by $\Gs(K)$ and the corresponding extension by $\Ks/K$. For a subextension $L$ of $\Ks/K$ that is Galois over $\F_q(t)$, we define an invariant $\omega_{L/K}$ to be the composition
	\[\begin{tikzcd}
		\omega_{L/K}: \hZ(1)_{(q|\Gamma|)'} \arrow["\omega_K"]{r} & H_2(\Gal(\Kp/\overline{\F}_q(t)),\Z)_{(q|\Gamma|)'} \arrow["\coinf"]{r} & H_2(\Gal(L/\F_q(t)), \Z)_{(q|\Gamma|)'},
	\end{tikzcd}\]
	where $\coinf$ is the coinflation map defined by the quotient map between Galois groups. All the maps are $\Frob$-equivariant and $\Frob$ acts trivially on the last term, so $\omega_{L/K}$ factors through $\mu_{q-1}$, and $\im \omega_{L/K}$ is contained in the $(q-1)$-torsion subgroup of $H_2(\Gal(L/\F_q(t)), \Z)_{(q|\Gamma|)'}$.
		
	The invariant $\omega_K$ is ``abelianized'' to the Weil pairing associated to $X_{\overline{\F}_q}$. Let $\ell$ be a prime number such that $\ell \nmid q |\Gamma|$. The pro-$\ell$ completion $G_{\O}(K\overline{\F}_q)(\ell)$ of $G_{\O}(K\overline{\F}_q)$ is a pro-$\ell$ Demu\v{s}kin group (a special type of one-relator pro-$\ell$ group). Similarly to what we discussed previously, the invariant $\omega_K$ defines to a central extension of $G_{\O}(K\overline{\F}_q)(\ell)$ by $\Z_{\ell}(1)$, from which we obtain information of the relator in a pro-$\ell$ presentation of $G_{\O}(K\overline{\F}_q)(\ell)$. This information, by properties of Demu\v{s}kin groups (namely, the transgression map associated to the presentation), completely determines the cup product
	\[
		H^1(G_{\O}(K\overline{\F}_q), \Z_{\ell}) \times H^1(G_{\O}(K\overline{\F}_q), \Z_{\ell}) \overset{\cup}{\longrightarrow} H^2(G_{\O}(K\overline{\F}_q), \Z_{\ell}).
	\]
	Because $G_{\O}(K\overline{\F}_q)$ acts trivially on $\Z_{\ell}(1)$ and $H^1(G_{\O}(K\overline{\F}_q), \Z_{\ell}(1))$ can be identified as the $\ell$-adic Tate module of $X_{\overline{\F}_q}$, the cup product above followed by the trace map gives the Weil pairing associated to $X_{\overline{\F}_q}$. Similarly, the invariant $\omega_{L/K}$ determines the image of the Weil pairing (see Proposition~\ref{prop:omega-weil}, and also see Definition~\ref{def:image-Weil} for the definition of the image of Weil pairing).
	
	Then, we generalize the results of Lipnowski--Sawin--Tsimerman to the non-abelian setting by keeping track of our invariant $\omega_{L/K}$. In fact, $\omega_{K}$ is defined in the way such that $\omega_{L/K}$ agrees with the prime-to-$|\Gamma|$-torsion part of a component invariant of Hurwitz schemes, which is first introduced by Ellenberg--Venkatesh--Westerland and then carefully explained by Wood in \cite{Wood-lifting} (we refer this invariant by the \emph{EVW-W lifting invariant}). Then by modifying the moment computation using Hurwitz schemes in \cite{LWZB}, we obtain the moment result in the function field case as $q \to \infty$. Let $\rDisc K$ denote the norm of the radical of the ideal $\Disc(K/Q)$. Let $E_{\Gamma}(D,Q)$ be the set of the pairs $(K, \iota)$, where $K$ is a split-completely-at-$\infty$ extension of $Q$ with $\rDisc K = D$, and $\iota$ is an isomorphism $\Gal(K/Q) \to \Gamma$. An \emph{admissible $\Gamma$-group} is a group with a $\Gamma$ action such that it is of order prime to $|\Gamma|$ and is generated, under the action of $\Gamma$, by the elements in the form of $g^{-1}\gamma(g)$ for $g\in G,\, \gamma \in \Gamma$. 
	All $\Gamma$-equivariant quotients of $\Gs(K)$ are admissible $\Gamma$-groups.
		
		\begin{theorem}\label{thm:functionfield}
		Let $\Gamma$ be a finite group, $H$ a finite admissible $\Gamma$-group, and $\delta$ a  group homomorphism $\hat{\Z}(1)_{(|\Gamma|)'} \to H_2(H \rtimes \Gamma, \Z)_{(|\Gamma|)'}$. Then 
		\[
			\lim_{N\to\infty} \lim_{\substack{q \to \infty \\ (q, |\Gamma||H|)=1 \\ \ord(\im \delta) \mid q-1}} \dfrac{\sum\limits_{n\leq N} \sum\limits_{K \in E_{\Gamma}(q^n, \F_q(t))} \#\left\{ \pi\in \Sur_{\Gamma}\left(\Gs(K), H \right) \, \Bigl\rvert\, \pi_{*} \circ \omega_{K^{\#}/K}=\delta \right\}}{\sum\limits_{n \leq N} \# E_{\Gamma}(q^n, \F_q(t))} = \frac{1}{[H: H^{\Gamma}]},
		\]
		where in the limit $q$ is always a prime power, $\ord(\im \delta)$ is the order of the image of $\delta$, and $\pi_*$ is the coinflation map $H_2(\Gal(\Ks/\F_q(t)), \Z)_{(|\Gamma|)'} \to H_2(H \rtimes \Gamma, \Z)_{(|\Gamma|)'}$ induced by $\pi$. 
	\end{theorem}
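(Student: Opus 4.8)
The plan is to rewrite both the numerator and the denominator as counts of $\F_q$-points on Hurwitz schemes, apply the Grothendieck--Lefschetz trace formula together with the $\Gamma$-equivariant homological stability of Ellenberg--Venkatesh--Westerland \cite{EVW} in the form used in \cite{LWZB}, and then read off the limit from the stable set of geometric components together with the Frobenius action on it; the hypothesis $\ord(\im\delta)\mid q-1$ will enter exactly through this Frobenius action.

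\emph{Step 1: passage to Hurwitz schemes.} Using the fixed splitting of $\Gal(\Kp/\overline{\F}_q(t))\twoheadrightarrow\Gamma$, a triple $(K,\iota,\pi)$ with $(K,\iota)\in E_\Gamma(q^n,\F_q(t))$ and $\pi\in\Sur_\Gamma(\Gs(K),H)$ corresponds, via $M=\Ks^{\ker\pi}$ and $K=M^H$, to an $(H\rtimes\Gamma)$-cover $Y\to\PP^1_{\F_q}$ together with an identification of its deck group with $H\rtimes\Gamma$, having $n$ tame branch points in $\A^1$, split completely over $\infty$, and with local monodromy in the set $\mathfrak{c}$ of conjugacy classes of those $g\in H\rtimes\Gamma$ whose order equals the order of the image of $g$ in $\Gamma$ and whose image in $\Gamma$ is nontrivial (this monodromy condition is what forces $M/M^H$ to be unramified). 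Such $Y$ are precisely the $\F_q$-points of a Hurwitz scheme $\HHur^{\mathfrak{c}}_{H\rtimes\Gamma,n}$, while the underlying $\Gamma$-cover runs over $E_\Gamma(q^n,\F_q(t))$, which is itself the set of $\F_q$-points of the corresponding Hurwitz scheme $\HHur_{\Gamma,n}$. By the comparison set up in the earlier sections (using Definition~\ref{def:inv-K}, Lemma~\ref{lem:tr-Gamma} and Corollary~\ref{cor:ext-tr}) of $\pi_*\circ\omega_{K^\#/K}$ with the EVW--W lifting invariant of $Y$, the extra condition $\pi_*\circ\omega_{K^\#/K}=\delta$ amounts to requiring $Y$ to lie on the union $\HHur^{\mathfrak{c},\delta}_{H\rtimes\Gamma,n}$ of those geometric components of $\HHur^{\mathfrak{c}}_{H\rtimes\Gamma,n}$ whose lifting invariant maps to $\delta$. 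Hence the $n$-th summand of the numerator equals $\#\HHur^{\mathfrak{c},\delta}_{H\rtimes\Gamma,n}(\F_q)$ and that of the denominator equals $\#\HHur_{\Gamma,n}(\F_q)$.

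\emph{Step 2: Lefschetz, stability, and the Frobenius action on components.} These Hurwitz schemes are smooth and affine of dimension $n$ and finite étale over configuration space, so the Grothendieck--Lefschetz formula gives
\[
\#\HHur^{\mathfrak{c},\delta}_{H\rtimes\Gamma,n}(\F_q)=q^n\cdot\#\{\text{$\Frob$-fixed geometric components of }\HHur^{\mathfrak{c},\delta}_{H\rtimes\Gamma,n}\}\;+\;\sum_{i<2n}(-1)^i\tr\bigl(\Frob\mid H^i_c(\HHur^{\mathfrak{c},\delta}_{H\rtimes\Gamma,n,\overline{\F}_q},\Q_\ell)\bigr),
\]
and likewise for $\HHur_{\Gamma,n}$. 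The homological stability theorem of \cite{EVW}, in the $\Gamma$-equivariant and uniform form established in \cite{LWZB} (applicable since $H$ is admissible, so $\mathfrak{c}$ satisfies the required non-splitting and generation hypotheses), shows that for each fixed $i$ the group $H^i_c$ of these spaces stabilizes as $n\to\infty$ and has Betti numbers bounded independently of $n$; hence the error sum above is $O(q^{\,n-1/2})$ with implied constant uniform in $n$. Summing over $n\le N$ and letting $q\to\infty$ through prime powers with $(q,|\Gamma||H|)=1$, both the numerator and the denominator are dominated by their $n=N$ terms, so the ratio converges to the ratio of the (for $N$ large, $N$-independent) numbers of $\Frob$-fixed geometric components; letting then $N\to\infty$ identifies the limit with the ratio of the \emph{stable} such numbers. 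The crucial point for this statement is that Frobenius permutes the stable set of geometric components of $\HHur^{\mathfrak{c}}_{H\rtimes\Gamma,n}$ compatibly with its action on the lifting-invariant target, and by Lemma~\ref{lem:tr-Gamma} and Corollary~\ref{cor:ext-tr} this target is the Tate twist $\hZ(1)_{(q|\Gamma|)'}$, on which $\Frob$ acts by multiplication by $q$; therefore a geometric component whose lifting invariant maps to $\delta$ is defined over $\F_q$ if and only if $q\cdot\delta=\delta$, i.e. if and only if $\ord(\im\delta)\mid q-1$, which is exactly the hypothesis.

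\emph{Step 3: the stable count, and the main obstacle.} Granting that the $\delta$-component is $\F_q$-rational, the determination of the stable ratio is then the same moment computation as in \cite{LWZB} in the roots-of-unity-free situation (there one is forced into the trivial $\delta$): counting how the geometric components of $\HHur^{\mathfrak{c}}_{H\rtimes\Gamma,n}$ carrying a fixed lifting invariant sit over the geometric components of $\HHur_{\Gamma,n}$, and how the $H$-part of the monodromy data contributes to this count, produces exactly the factor $1/[H:H^{\Gamma}]$, which is the asserted value of the limit. The technical heart is the uniformity required in Step 2: one must have the homological stability of \cite{EVW} not merely for $\HHur^{\mathfrak{c}}_{H\rtimes\Gamma,n}$ but for the individual lifting-invariant pieces $\HHur^{\mathfrak{c},\delta}_{H\rtimes\Gamma,n}$, with Betti numbers bounded uniformly in $n$, so as to control the error terms well enough to justify the interchange $\lim_{N}\lim_{q}$ and to isolate the contribution of the top-degree cohomology; checking the hypotheses of the \cite{EVW,LWZB} machinery for admissible $\Gamma$-groups is where the real work lies. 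By contrast, the genuinely new ingredient --- reading off from Lemma~\ref{lem:tr-Gamma} and Corollary~\ref{cor:ext-tr} that the arithmetic condition $\ord(\im\delta)\mid q-1$ is precisely the $\F_q$-rationality of the $\delta$-component --- is short, but it is what separates this result from the one in \cite{LWZB}.
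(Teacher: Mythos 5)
Your overall strategy is the same as the paper's (\S\ref{ss:proof-main}): translate both counts into $\F_q$-points of Hurwitz schemes, apply Grothendieck--Lefschetz together with the homological stability input of \cite{EVW}/\cite{LWZB} to reduce to counting Frobenius-fixed geometric components, and use the identification of $\omega_{\Ks/K}$ with the prime-to-$|\Gamma|$-torsion part of the lifting invariant. However, two pieces of your bookkeeping are wrong as stated. First, the factor $1/[H:H^{\Gamma}]$ does \emph{not} come from the ratio of stable component counts: Lemma~\ref{lem:b=b} shows $b(G,c_1,q,n;\delta)=b(\Gamma,c_2,q,n)$, so the numbers of Frobenius-fixed components of the $\delta$-piece of $\HHur^n_{G,c_1}$ and of $\HHur^n_{\Gamma,c_2}$ agree to leading order and their ratio tends to $1$. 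The factor $1/[H:H^{\Gamma}]$ comes instead from the fact that the map from $\F_q$-points of the Hurwitz scheme to pairs $(K,\pi)$ is $[H:H^{\Gamma}]$-to-one (Lemma~\ref{lem:Hur-Sur}); your Step~1 identity ``numerator $=\#\HHur^{\mathfrak{c},\delta}(\F_q)$'' is off by exactly this factor. As written, these two misstatements cancel to give the right answer, but neither is correct individually, and the actual content --- the combinatorial identity of Lemma~\ref{lem:b=b}, proved by matching the sums defining $b$ over the fibers of $\ker\pi^1\to\ker\pi^2$ using $\overline{S}^1\simeq S'\times_\Gamma\overline{S}^2$ --- is precisely what your Step~3 glosses over.

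Two smaller imprecisions in Step~2: the condition $\ord(\im\delta)\mid q-1$ is necessary but not sufficient for a geometric component whose invariant reduces to $\delta$ to be Frobenius-fixed, since the full lifting invariant lives in $K(G,c)=H_2(G,c)\times\Z^{c/G}$ and Frobenius acts on all of it via \eqref{eq:Frob-inv}; this is why the fixed-component count is the sum $b(G,c_1,q,n;\delta)$ over $h$ with $\phi(h)=\eta$ and $W_{q^{-1}}(\underline{m})^q=h^{q-1}$, not a single rationality criterion on $\delta$. Also, the Frobenius-fixed component counts are not eventually $N$-independent: they grow like $r'_{a,b}N^{d_{G,c_1}(q)-1}$ (Lemma~\ref{lem:pi-b-marked}), so the limit is extracted from the equality of leading coefficients supplied by Lemma~\ref{lem:b=b}, not from stabilization of the counts themselves.
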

	
	Inspired by this theorem, we make the following conjecture about the distribution of the prime-to-$\Char(Q)|\Gamma|$ completion of the maximal unramified Galois group of $K$, for a fixed base field $Q$, in both the function field case and the number field case. 
	
	\begin{conjecture}[Moment part of Conjecture~\ref{conj:ff}, and Conjecture~\ref{conj:nf}]\label{conj}
		Let $\Gamma$ be a finite group, $H$ a finite admissible $\Gamma$-group. Let $q$ be a prime power such that $\gcd(q, |\Gamma||H|)=1$, and $\delta$ a group homomorphism $\hZ(1)_{(q|\Gamma|)'} \to H_2(H\rtimes \Gamma, \Z)_{(q|\Gamma|)'}$. If $\ord(\im\delta) \mid q-1$, then
		\[
			\lim_{N\to\infty} \dfrac{\sum\limits_{n\leq N} \sum\limits_{K \in E_{\Gamma}(q^n, \F_q(t))} \#\left\{ \pi\in \Sur_{\Gamma}\left(\Gs(K), H \right) \, \Bigl\rvert\, \pi_{*} \circ \omega_{K^{\#}/K}=\delta \right\}}{\sum\limits_{n \leq N} \# E_{\Gamma}(q^n, \F_q(t))} = \frac{1}{[H: H^{\Gamma}]};
		\]
		otherwise, $\#\{ \pi\in \Sur_{\Gamma}(\Gs(K), H) \mid \pi_{*} \circ \omega_{K^{\#}/K}=\delta\}=0$ for any $K \in E_{\Gamma}(q^n, \F_q(t))$.
		
		Let $Q$ be a number field, and $m$ the maximal integer such that the $m$-th roots of unity is contained in $Q$.	
		For any extension $K/Q$, we define $\Gs(K):=G_{\O}(K)_{(|\Gamma||G_{\O}(Q)|)'}$.
		Assume that $G_{\O}(Q)$ is pro-prime-to $|H|$.
		Then
		\[
			\lim_{N \to \infty} \frac{\sum\limits_{D \leq N} \sum\limits_{K \in E_{\Gamma}(D,Q)} \# \Sur_{\Gamma}\left(G^{\#}_{\O}(K), H\right)}{ \sum\limits_{D \leq N} \# E_{\Gamma}(D,Q)}=\frac{\#H_2(H \rtimes\Gamma, \Z)_{(|\Gamma|)'}[m]}{[H:H^{\Gamma}] \cdot |H|^u},
		\]
		where $u$ is the rank of the group of units in the ring $\calO_Q$ of algebraic integers of $Q$.
	\end{conjecture}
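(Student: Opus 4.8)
\emph{This statement is a conjecture; what follows is a strategy, indicating which parts are consequences of what precedes and where the genuine obstacles lie.} The assertion has three parts: (i) the function-field moment identity at fixed $q$; (ii) the function-field vanishing when $\ord(\im\delta)\nmid q-1$; and (iii) the number-field moment identity. Part (ii) is immediate from the discussion preceding Theorem~\ref{thm:functionfield}: for any subextension $L$ of $\Ks/K$ that is Galois over $\F_q(t)$, the composite $\omega_{L/K}$ is $\Frob$-equivariant with $\Frob$ acting trivially on the target, hence factors through $\mu_{q-1}$ and has image inside the $(q-1)$-torsion of $H_2(\Gal(L/\F_q(t)),\Z)_{(q|\Gamma|)'}$. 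Taking $L=\Ks$ and postcomposing with the coinflation $\pi_*$ induced by a surjection $\pi\colon\Gs(K)\to H$, we see $\pi_*\circ\omega_{K^{\#}/K}$ always has image of order dividing $q-1$; so if $\ord(\im\delta)\nmid q-1$ there is no such $\pi$, and the count is $0$.

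For part (i) the plan is to run the Hurwitz-scheme argument of \cite{EVW} and \cite{LWZB} while bookkeeping the lifting invariant. For fixed $H$ and $\delta$, the translation used in \cite{EVW,LWZB}, together with the fact that $\omega_K$ was built so that $\omega_{L/K}$ agrees with the prime-to-$|\Gamma|$-torsion part of the EVW--W lifting invariant \cite{Wood-lifting}, identifies $\sum_{K\in E_\Gamma(q^n,\F_q(t))}\#\{\pi\in\Sur_\Gamma(\Gs(K),H)\mid\pi_*\circ\omega_{K^{\#}/K}=\delta\}$ with $\#\Hur_{n,\delta}(\F_q)$, where $\Hur_{n,\delta}$ is the union of those components of a Hurwitz scheme of $H\rtimes\Gamma$-covers of $\PP^1$ with $n$ branch points, split at $\infty$, having lifting invariant $\delta$. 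One then expands $\#\Hur_{n,\delta}(\F_q)=\sum_i(-1)^i\tr(\Frob\mid H^i_c(\Hur_{n,\delta,\overline{\F}_q},\Q_\ell))$, does the same for the denominator $\#E_\Gamma(q^n,\F_q(t))$, sums over $n\le N$, and divides; homological stability for Hurwitz spaces identifies the stable cohomology and forces the ratio of leading terms to $1/[H:H^\Gamma]$. The hard part is that this requires a stable range growing in $n$ together with bounds on the unstable Betti numbers strong enough to beat the $q^{n/2}$ Weil factor, \emph{for $q$ fixed}; this is precisely the obstruction that separates every known large-$q$ function-field Cohen--Lenstra theorem from its fixed-$q$ conjectural form, and it is why Theorem~\ref{thm:functionfield} can only be proved as a $\lim_N\lim_q$ statement.

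Part (iii) is, like classical Cohen--Lenstra--Martinet, beyond current technology; the realistic goal is to \emph{derive} the prediction from a consistent model and then check compatibility. Following the random-group model discussed later in the paper, one takes a random pair $(G,\omega)$ with $G$ an admissible $\Gamma$-group and $\omega$ recording the image in $H_2(G\rtimes\Gamma,\Z)_{(|\Gamma|)'}$ of a Weil-pairing-type datum (Definition~\ref{def:image-Weil}), with weights mirroring the function-field distribution and two local corrections. At each archimedean place the split-completely hypothesis together with the unit group forces $G$ to be modelled on a relatively-free pro-(prime-to-$|\Gamma|$) $\Gamma$-group on a number of generators governed by the unit rank $u$, contributing the factor $|H|^{-u}$ exactly as in \cite{LWZB}; at the primes of $Q$ lying over the residue characteristic of $\mu_m$ (the analogue of the places over $\ell$ and $\infty$ in \cite{Lipnowski-Sawin-Tsimerman}), the inclusion $\mu_m\subset Q$ forces $\omega$ to be $m$-torsion-compatible, contributing $\#H_2(H\rtimes\Gamma,\Z)_{(|\Gamma|)'}[m]$ on the level of moments --- the non-abelian lift of the ``image of Weil pairing'' bookkeeping of \cite{Lipnowski-Tsimerman}. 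Computing $\EE[\#\Sur_\Gamma(G,H)]$ against these weights gives the stated right-hand side.

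To regard this as a derivation of the conjecture's shape one would finally verify: that the resulting moment sequence satisfies a Carleman-type growth bound, so it determines a unique distribution; that specialising $Q=\F_q(t)$ and letting $q\to\infty$ recovers Theorem~\ref{thm:functionfield}; and that pushing forward along $H\twoheadrightarrow H^{\mathrm{ab}}$ recovers the abelian conjecture of \cite{Lipnowski-Sawin-Tsimerman}, using Proposition~\ref{prop:omega-weil}. The main obstacle, apart from the fixed-$q$ analytic problem in part (i), is pinning down the correct local weight at the root-of-unity primes so that all three of these consistency checks hold simultaneously; once that weight is isolated, the archimedean factor $|H|^{-u}$ and the combinatorics of $\Sur_\Gamma$ behave exactly as in the roots-of-unity-free case treated in \cite{LWZB}.
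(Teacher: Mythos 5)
This statement is a conjecture, and the paper offers no proof of it, so you are right to present a strategy rather than an argument; what can be compared is your sketch against the paper's own evidence. Your part (ii) is indeed a complete deduction and coincides with the paper's reasoning: by Remark~\ref{rmk:inv}\eqref{item:rmk-inv-2} the invariant $\omega_{K^{\#}/K}$ factors through $\mu(k)_{(p|\Gamma|)'}$, so $\pi_{*}\circ\omega_{K^{\#}/K}$ always has image of order dividing $q-1$ and the count vanishes when $\ord(\im\delta)\nmid q-1$. Your assessment of part (i) also matches the paper: the identification of $\omega_{L/K}$ with the prime-to-$|\Gamma|$ part of the EVW--W lifting invariant (Proposition~\ref{prop:EVW-omega}) and the point count on the components $Z^{\delta}_{q,n}$ give only the double limit of Theorem~\ref{thm:functionfield}, and the missing ingredient for fixed $q$ is exactly the homological-stability/Betti-number control you name.

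For part (iii) your route differs somewhat from the paper's. The paper does not introduce local weights at primes above the residue characteristic of $\mu_m$; instead it derives the right-hand side purely formally from the function-field conjecture: since the number-field statement records no lifting invariant, one sums the conjectured moment $1/[H:H^{\Gamma}]$ over the $\#H_2(H\rtimes\Gamma,\Z)_{(|\Gamma|)'}[m]$ homomorphisms $\delta$ whose image has order dividing $m$ (the analogue of $\ord(\im\delta)\mid q-1$ with $\mu_m\subset Q$ playing the role of $\mu_{q-1}\subset\F_q$), and then divides by $|H|^{u}$ via Lemma~\ref{lem:Qt-moment}, which computes the effect on moments of quotienting by $u$ Haar-random elements. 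The justification that $u$ extra relations is the right correction comes not from an archimedean local model but from the two arithmetic evidences: the presentation count in \S\ref{ss:evidence1} (the number-field presentation of $\Gs(K)^{\calC}$ needs $u$ more relators than the function-field one) and the positively ramified extensions of \S\ref{ss:evidence2} (where $G_{\O}(K)(\ell)$ is exhibited as a quotient of $G^{\#}_{\pos}(K)$ by a $\Gamma$-closed normal subgroup generated by $\tfrac{\ell-3}{2}=u$ elements). Your framing in terms of local adjustments is closer in spirit to \cite{Lipnowski-Sawin-Tsimerman}; the paper's derivation buys a cleaner formal reduction to Conjecture~\ref{conj:ff} plus Lemma~\ref{lem:Qt-moment}, at the cost of leaving the existence and uniqueness of the underlying measure as a separate conjecture (Conjecture~\ref{conj:prob-exist}), whereas your consistency-check program (moment growth, $q\to\infty$ specialization, abelianization to \cite{Lipnowski-Sawin-Tsimerman}) is a sensible complement not carried out in the paper.
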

	
	We add the requirement ``pro-prime-to-$|G_{\O}(Q)|$'' in the number field conjecture because, when $G_{\O}(Q)\neq 1$, the group $G_{\O}(K)_{(|\Gamma|)'}$ is not an admissible $\Gamma$-group and its distribution would be very different from the function field case.

	We also conjecture that there exists a unique probability measure that gives the conjectural moment in the function field case (see Conjecture~\ref{conj:prob-exist}). Then the conjectural moment in the number field case is obtained by taking the quotient of a group according to this probability measure modulo $u$ elements according to the Haar measure. This number field conjecture is inspired by \cite{Malle2010}, \cite{Lipnowski-Sawin-Tsimerman}, and two other evidences from algebraic number theory: 
	\begin{enumerate}
		\item Using the previous work \cite{Liu2020} of the author, we show that, in a presentation of the Galois group $G^{\#}_{\O}(K)$, it needs $u$ more relators in the number field case than in the function field case. (See \S\ref{ss:evidence1})
		
		\item When $Q=\Q(\mu_{\ell})$ and $K$ is nice (namely, when $K$ is $\ell$-CM), Schmidt \cite{Schmidt1996} proved that there is a maximal \emph{positively ramified} pro-$\ell$ extension $K_{\pos}(\ell)$ of $K$ such that the Galois group of $K_{\pos}(\ell)$ over the cyclotomic $\Z_\ell$-extension $K_{\infty}$ of $K$ is a very nice analogue of the pro-$\ell$ completion of the \'etale fundamental group of $X_{\overline{\F}_q}$. Using this positively ramified extension, we show that $G_{\O}(K)(\ell)$ is the quotient of $G_{\pos}^{\#}(K)$ modulo $u$ elements, where $G_{\pos}^{\#}(K)$ is the maximal quotient of $\Gal(K_{\pos}(\ell)/K_{\infty})$ fixed by $\Gal(K_{\infty}/K)$ (which is the analogue of $\Gs(K)(\ell)$ from the function field case). (See \S\ref{ss:evidence2})
	\end{enumerate}  
	
	Regarding the function field case, 
	another advantage of studying the invariant $\omega_K$ is that it shows us how to construct a random group model to predict the distribution of interest to us. 
	By the Grothendieck’s theory of specialization, $\Gp(K)$ has a pro-prime-to-$q|\Gamma|$ presentation in the form of $\langle x_1, \ldots, x_{2g} \mid [x_1, x_2]\cdots [x_{2g-1}, x_{2g}] \rangle$, where $g$ is the genus of $X_{\overline{\F}_q}$. Let $F$ be the free pro-prime-to-$q|\Gamma|$ group on generators $x_1, \ldots, x_{2g}$, and let $R$ be the closed normal subgroup of $F$ generated by $[x_1, x_2]\cdots [x_{2g-1}, x_{2g}]$. Then the short exact sequence 
	\begin{equation}\label{eq:intro-stem}
		1 \longrightarrow R/[F,R] \longrightarrow F/[F,R] \longrightarrow \Gp(K) \longrightarrow 1
	\end{equation}
	is a central extension with $R/[F,R]\simeq \hZ_{(q|\Gamma|)'}$; and moreover, the kernel is contained in the commutator of $F/[F,R]$, so the extension is stem. Note that $H_2(G,\Z)$ is the Schur multiplier of $G$, and it is isomorphic to the kernel of a Schur covering (that is, a maximal stem extension) of $G$. Thus, the isomorphism $\omega_K$ implies that there are an action of $\Frob$ and an action of $\Gamma$ on the sequence \eqref{eq:intro-stem} such that they preserve the kernel $R/[F,R]$, they induce the natural $\Frob$ and $\Gamma$ actions on $\Gp(K)$, and moreover, on $R/[F,R]$, $\Gamma$ acts trivially and $\Frob$ acts as multiplication by $q$. Then we expect that by considering the $\Frob$ action as a random automorphism of $F/[F,R]$ satisfying all the requirements, we can obtain a random group model that gives the conjectural moment in the function field case. 
	
	In general, understanding the group structure of such random group models is difficult.
	In this paper, we only study this random group model when we focus on the pro-$\ell$ completions for an odd prime $\ell$ and $\Gamma=\Z/2\Z$. By the classification of Demu\v{s}kin group by Labute \cite{Labute}, a pro-$\ell$ Demu\v{s}kin group whose abelianization is torsion-free  necessarily has a presentation as described in the preceding paragraph, and we denote such Demu\v{s}kin group by $\calG_g$. Then, up to isomorphism, $\calG_n$ admits a unique Schur covering
	\[
		1 \longrightarrow \Z_{\ell} \longrightarrow \widetilde{\calG}_n \overset{\pi_n}{\longrightarrow} \calG_n \longrightarrow 1,
	\]
	and we show that there exists a $\Gamma=\Z/2\Z$ action (unique up to isomorphism) on $\widetilde{\calG}_n$ such that the action preserves and acts trivially on $\ker \pi_n$ and its induced action on the abelianization of $\calG_n$ is taking inversion (the last condition is naturally satisfied by the $\Gamma$ action on $\Gp(K)(\ell)$, since the \'etale fundamental group of $\PP^1_{\overline{\F}_q}$ is trivial). Then we define a random group $Y_{q, n}$ to be the maximal $\Gamma$-equivariant quotient of $\calG_n$ fixed by a random automorphism of $\widetilde{\calG}_n$ that preserves and acts as multiplication by $q$ on $\ker \pi_n$. (See Definition~\ref{def:random-group} for a more careful definition of the random group.) Then we prove that the moment obtained by $Y_{q,n}$ is exactly the moment in Conjecture~\ref{conj}.
	
	\begin{theorem}[see Theorem~\ref{thm:moment}]\label{thm:intro-random-group}
		Let $H$ be a finite $\ell$-group with an action of $\Gamma=\Z/2\Z$ such that $H$ is admissible. Let $n$ be a sufficiently large integer such that there exists a $\Gamma$-equivariant surjection from $\calG_n$ to $H$. Let $\delta$ be a group homomorphism $\Z_{\ell} \to H_2(H \rtimes \Gamma, \Z)(\ell)$ such that $\ord(\im \delta)\mid q-1$. Then 
		\[
			\EE ( \# \{ \pi \in \Sur_{\Gamma} (Y_{q, n}, H) \mid \pi_{\dagger}= \delta \} )= \frac{1}{[H:H^{\Gamma}]},
		\]
		where $\pi_{\dagger}$ is the coinflation map $\Z_{\ell} \simeq H_2(\calG_n \rtimes \Gamma , \Z)(\ell) \to H_2(H \rtimes \Gamma, \Z)(\ell)$ induced by the $\Gamma$-equivariant quotient map $\calG_n \to Y_{q,n} \overset{\pi}{\longrightarrow} H$.
	\end{theorem}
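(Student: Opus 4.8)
The plan is to convert the expectation into a weighted count of $\Gamma$-equivariant surjections $\calG_n \to H$ and then to feed in an equidistribution property of the random automorphism $\sigma$. Write $\bar\sigma$ for the automorphism of $\calG_n = \widetilde{\calG}_n/\ker\pi_n$ induced by $\sigma$, which is well defined since $\sigma$ preserves $\ker\pi_n$. Unwinding the definition, $Y_{q,n}$ is the quotient of $\calG_n$ by the smallest closed normal $\Gamma$-stable subgroup $N$ such that $\bar\sigma$ acts as the identity on $\calG_n/N$, namely the normal closure of $\{\gamma(g^{-1}\bar\sigma(g)):g\in\calG_n,\ \gamma\in\Gamma\}$. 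Hence composition with $\calG_n\twoheadrightarrow Y_{q,n}$ identifies $\Sur_\Gamma(Y_{q,n},H)$ with $\{\bar\pi\in\Sur_\Gamma(\calG_n,H):\bar\pi\circ\bar\sigma=\bar\pi\}$, and under this identification $\pi_\dagger$ is the $\coinf$ map $H_2(\calG_n\rtimes\Gamma,\Z)(\ell)\to H_2(H\rtimes\Gamma,\Z)(\ell)$ attached to the composite $\bar\pi\colon\calG_n\to H$, i.e. the EVW--W lifting invariant of $\bar\pi$. (Here $H_2(\calG_n\rtimes\Gamma,\Z)(\ell)\simeq\Z_\ell$: since $\ell$ is odd and $\Gamma$ acts by $-1$ on $H_1(\calG_n)$, it acts trivially on $H_2(\calG_n,\Z)(\ell)\simeq\Z_\ell$, so the Lyndon--Hochschild--Serre spectral sequence for $1\to\calG_n\to\calG_n\rtimes\Gamma\to\Gamma\to1$ collapses.)

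Next I would record the divisibility constraint: by the Hopf formula applied to $\widetilde{\calG}_n=F/[F,R]$, the subgroup $\ker\pi_n\simeq\Z_\ell$ is identified with the Schur multiplier $H_2(\calG_n,\Z)(\ell)$, and $\sigma$ acts on it by multiplication by $q$; hence $\bar\pi\circ\bar\sigma$ has lifting invariant equal to $q$ times that of $\bar\pi$, so $\bar\pi\circ\bar\sigma=\bar\pi$ forces $(q-1)$ to annihilate the lifting invariant of $\bar\pi$. Thus if $\ord(\im\delta)\nmid q-1$ the set in question is empty and the expectation is $0$, while if $\ord(\im\delta)\mid q-1$ the constraint is compatible with $\pi_\dagger=\delta$ and we must produce the value $[H:H^\Gamma]^{-1}$.

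By linearity of expectation,
\[
  \EE\big(\#\{\pi\in\Sur_\Gamma(Y_{q,n},H):\pi_\dagger=\delta\}\big)=\sum_{\bar\pi\in S_\delta}\Prob\big(\bar\pi\circ\bar\sigma=\bar\pi\big),
\]
where $S_\delta$ is the (finite, and nonempty for $n$ large) set of $\bar\pi\in\Sur_\Gamma(\calG_n,H)$ with lifting invariant $\delta$. The heart of the argument is an equidistribution statement: for $n$ large in terms of $|H|$, the distribution of $\bar\pi\circ\bar\sigma$ as $\sigma$ varies is uniform on an explicit finite set $T_\delta\supseteq S_\delta$ with $\bar\pi\in T_\delta$ and $|T_\delta|=[H:H^\Gamma]\cdot|S_\delta|$. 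This rests on two ingredients I would establish beforehand: first, a transitivity theorem stating that the group of automorphisms of $\widetilde{\calG}_n$ subject to our constraints acts transitively on surjections $\calG_n\to H$ with a fixed lifting invariant once $n$ is large --- the purely group-theoretic counterpart of the Ellenberg--Venkatesh--Westerland connectedness/homological-stability input, to be deduced from the one-relator presentation of $\calG_n$ and the explicit Schur cover $\widetilde{\calG}_n=F/[F,R]$; and second, the fact that $\sigma$ normalizes the $\Gamma$-action on $\widetilde{\calG}_n$ only up to inner automorphisms --- mirroring the non-canonical splittings used to define the $\Frob$- and $\Gamma$-actions in the function field setting --- which enlarges $T_\delta$ from the $\Gamma$-equivariant surjections with fixed lifting invariant to those equivariant for an $H$-conjugate copy of $\Gamma$, the extra factor being precisely the number $[H:H^\Gamma]$ of complements of $H$ in $H\rtimes\Gamma$ that are $H$-conjugate to $\Gamma$ (whose $H$-stabilizer is $H^\Gamma$).

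Granting this, $\bar\pi\in T_\delta$ gives $\Prob(\bar\pi\circ\bar\sigma=\bar\pi)=|T_\delta|^{-1}=\big([H:H^\Gamma]\cdot|S_\delta|\big)^{-1}$, independent of $\bar\pi\in S_\delta$; summing over the $|S_\delta|$ terms, the factor cancels and leaves $\EE(\cdots)=[H:H^\Gamma]^{-1}$, as claimed. The main obstacle is the transitivity theorem: one must show that after imposing on $\sigma$ the requirements of preserving $\ker\pi_n$, acting there as multiplication by $q$, and respecting $\Gamma$ up to conjugation, there is still enough room for the induced action on $\Sur(\calG_n,H)$ to be transitive on lifting-invariant fibres once $n$ is large --- this is where the structure theory of Demu\v{s}kin groups (Labute's classification, and the transgression description of cup products) and a stable-range argument have to be used.
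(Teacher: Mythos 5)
Your outline follows essentially the same route as the paper's proof of Theorem~\ref{thm:moment}: rewrite the expectation as $\sum_{\bar\pi}\Prob(\bar\pi\circ\bar\sigma=\bar\pi)$, observe that this probability is the reciprocal of the orbit of $\bar\pi$ under the constrained automorphisms, show that the surjections with a fixed lifting invariant form a single orbit, and account for the discrepancy between $\Gamma$-equivariant surjections and surjections extending to $H\rtimes\Gamma$ by the count $[H:H^\Gamma]$ of $H$-conjugate splittings. Two remarks. First, your identification of $\Sur_\Gamma(Y_{q,n},H)$ with $\{\bar\pi:\bar\pi\circ\bar\sigma=\bar\pi\}$ quietly replaces the condition $\ker(1-\phi)\subseteq\ker\widetilde{\pi}$ (at the level of the Schur cover, which is what the definition of $X_{q,n}$ actually imposes) by its image in $\calG_n$; these are in fact equivalent here, because the discrepancy is a homomorphism $\widetilde{\calG}_n\rtimes\Gamma\to\ker\varpi$ into an $\ell$-group and $(\widetilde{\calG}_n\rtimes\Gamma)^{ab}$ is a $2$-group, but you should say so — the paper works throughout with the lifted condition $\widetilde{\pi}\circ\phi=\widetilde{\pi}$ precisely because the lifting invariant lives on the cover.

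Second, and more importantly, the entire weight of the theorem sits in the transitivity statement you label ``the main obstacle'' and then grant. This is the paper's Lemma~\ref{lem:Labute} together with Lemma~\ref{lem:p-gp-aut}: given $\rho_1,\rho_2\in\Sur(\calG_n,H)$ whose canonical lifts to the Schur covers agree on $\xi_n$, one constructs $\widetilde\varphi\in\Aut(\widetilde{\calG}_n)$ with $\rho_1=\rho_2\circ\varphi$ and $\widetilde\varphi(\xi_n)=\xi_n^q$ by successive approximation along the lower $\ell$-central series of the free group (Labute's method), starting from a symplectic-basis change on the Frattini quotient dictated by the cup-product form; one then corrects $\widetilde\varphi$ to be $\Gamma$-equivariant using the Schur--Zassenhaus conjugacy of prime-to-$\ell$ automorphisms. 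Note that no ``stable range'' or homological-stability input is needed: the transitivity is exact for every $n$ for which a surjection exists, not merely for $n$ large relative to $|H|$. As written, your proposal is a correct skeleton of the paper's argument with its central technical lemma left unproved.
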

	We conjecture that, as $n \to \infty$, the probability measures defined by $Y_{q,n}$ weakly converge to a probability measure that gives the moments in Conjecture~\ref{conj} (see Conjecture~\ref{conj:limit-measure}). Theorem~\ref{thm:intro-random-group} is proved by studing the orbits of $\Sur_{\Gamma}(\calG_n, H)$ under the action of the automorphisms of $\widetilde{\calG}_n$ that preserves and acts as multiplication by $q$ on $\ker \pi_n$. By using the method of successive approximation in \cite{Labute}, we prove that two surjections in $\Sur_{\Gamma}(\calG_n, H)$ are in the same orbit if and only if they induce the same coinflation map. This random group model can also be used in other scenarios with similar group structures. For example, in Appendix~\ref{sect:Appendix}, we apply the model to study the fundamental groups of random 3-manifolds constructed by random Heegaard splittings, in the notion defined by Dunfield and Thurston  \cite{Dunfield-Thurston}. We show that when $\phi$ is a random automorphism of $\widetilde{\calG}_n$ (forgetting the $\Gamma$ action) that preserves and acts trivially on $\ker \pi_n$, the moment obtained by the random group
	\[
		\lim_{n \to \infty} \faktor{\widetilde{\calG}_n}{[x_1, x_3, \ldots, x_{2n-1}, \phi(x_1), \phi(x_3), \ldots, \phi(x_{2n-1})]}
	\]
	agrees with the average size of $\Sur(\pi_1(M), H)$, for any $\ell$-group $H$, as $M$ varies as a random 3-manifold (see Proposition~\ref{prop:3-manifold}).

\subsection{Prior works and further questions}
	The study of random group models that are relating to the Cohen--Lenstra type of questions started at the very beginning: in the original paper \cite{Cohen-Lenstra} of Cohen and Lenstra, they used the random matrix
	\begin{equation*}
		\lim_{n \to \infty} \coker (M), \quad \text{ for a random $n\times (n+1)$ matrix $M$ with $\Z_\ell$-entries},
	\end{equation*}
	to give the probability measure in the totally real case of their conjecture. Translating it into a random abelian group, this model is equivalent to 
	\[
		 \lim_{n\to \infty} \faktor{\Z_\ell^{\oplus n}}{\text{$n+1$ random elements}}.
	\]
	In all the generalizations of Cohen--Lenstra in the non-roots-of-unity case, the corresponding random group models are constructed using balanced presentations (that is, presentations such that the difference between the numbers of generators and relators is fixed). Boston--Bush--Hajir studied the balanced pro-$\ell$ presentations in \cite{BBH-imaginary, BBH-real}. Wood, Zureick-Brown and the author in \cite{LWZB} gave the conjectural probability measures by using a form of balanced presentations of admissible $\Gamma$-groups, which is a modification of the probability measures of random balanced presentations studied in \cite{Liu-Wood}. In the non-roots-of-unity case, the random balanced presentation models work very well, and the author proved in \cite{Liu2020} that the unramified Galois groups of interest to us can always be obtained by a balanced $\Gamma$-presentation as conjectured in \cite{LWZB}.
	
	However, when considering the roots-of-unity case, the balanced presentation models fail. 
	Similar phenomenon happens to random 3-manifolds: Dunfield and Thurston in \cite{Dunfield-Thurston} tried to use balanced presentations to study random 3-manifolds, but they concluded that the fundamental groups of random 3-manifolds have many more finite quotients than the random balanced presentation model. It was first suggested by Friedman and Washington \cite{Friedman-Washington} that, instead of studying random presentation, we could consider the Frobenius map as a random automorphism acting on Jacobian varieties, and hence study the random matrix 
	\begin{equation}\label{eq:random-matrix-2}
		\lim_{n \to \infty} \coker(I_n-\calF)
	\end{equation}
	where $\calF$ is a random invertible matrix that is compatible with the Weil pairing on Jacobian. The random matrices in this setting and their relationship with function field case have been vastly studied, for example, see \cite{Katz1999, Achter2006, Achter2008, Garton2015, Adam2015, Lipnowski-Tsimerman, Lipnowski-Sawin-Tsimerman}.
	
	The random group model constructed in this paper is a non-abelian generalization of \eqref{eq:random-matrix-2} and sets up a framework for the study of the non-abelian Cohen--Lenstra heuristics in the roots of unity case. There are many important questions in this direct that are left to be solved. We don’t know how to compute the probability measure defined by $Y_{q,n}$, and whether the limit of the measure as $n \to \infty$ has total mass 1. Although, Theorem~\ref{thm:intro-random-group} strongly suggests that our random group model is the right object to study, what we really want is that the moment with respect to the limit measure equals to $1/[H:H^{\Gamma}]$ (that is, to take $\lim_{n\to \infty}$ inside $\EE$). The recent work of Sawin and Wood \cite{Sawin-Wood2022-1, Sawin-Wood2022-2} studies, in a very general setting, how one can get the probability measure directly from the moments, without the help of any random group structures; and we expect that their idea can be applied to our moment results and to prove some of the conjectures in Section~\ref{sect:conjectures}.
	
	Also, the moment result for $Y_{q,n}$ in Theorem~\ref{thm:intro-random-group} cannot be easily generalized to the situation either when $\Gamma\not\simeq \Z/2\Z$, or when studying the distribution of $\Gs(K)$, not just of $\Gs(K)(\ell)$. The main reasons of this difficulty are: 1) when $\Gamma\not\simeq \Z/2\Z$, we don't know how to give a canonical definition of a pro-$\ell$ Demu\v{s}kin $\Gamma$-group as in Lemma~\ref{lem:C2-Demu}; and 2) the proof of Theorem~\ref{thm:intro-random-group} vastly uses the filtration of pro-$\ell$ groups given by the lower central series, which does not exists for a general group.
	
	There is a potential to apply our random group model to obtain some numerical results. For example, for fixed $n$ and $q$, we can take the pro-$\calC$ completion of the random group $Y_{q,n}$ for a finite set of $\ell$-groups $\calC$ (for the definition of pro-$\calC$ completion, see \S\ref{ss:statements}). Since $\widetilde{\calG}_n^{\calC}$ is always finite, it is possible to list all the automorphisms of $\widetilde{\calG}_n^{\calC}$ satisfying the conditions in the definition of $Y_{q,n}$, and then we could see which quotients of $\calG_n^{\calC}$ can occur as $Y_{q,n}^{\calC}$, and compute the probability that they occur. As $n \to \infty$, if the probability measures converge fast, then we may get an approximation of the probability that $\Gs(K)(\ell)^{\calC}\simeq H$ for small $\ell$-groups $H$.
	
	For Cohen--Lenstra in the presence of roots of unity, Lipnowski--Sawin--Tsimerman in \cite{Lipnowski-Tsimerman, Lipnowski-Sawin-Tsimerman} made moment conjectures for the distributions of class groups of quadratic fields. In the non-abelian setting, Wood in \cite{Wood-nonab} gave a conjecture for the average number of unramified $G$-extensions of a quadratic extension of $\Q$ or $\F_q(t)$ for any finite group $G$. The conjectures made in this paper agree with all the previous conjectures about moments. Wood's work also considers some cases that cannot be addressed by the method in this paper -- it also deals with the case that $G$ is of even order and the case that the quadratic fields are imaginary. 
	Malle in \cite{Malle2010} and Boston--Bush in \cite{Boston-Bush} gave conjectures and numerical evidences in some cases for the probability measures of the distributions of class groups and class tower groups respectively.
	
	It is also interesting to understand the difference between the function field case and the number field case, which is related to the very important question: for number fields, what is the best analogue of the \'etale fundamental group from the function field case?
	Conjecture~\ref{conj} and the two evidences \S\ref{ss:evidence1} and \S\ref{ss:evidence2} strongly suggest that, for any $\Gamma$-extension $K/Q$ such that no archimedean prime is ramified,  there is some group associated to $K$ such that its group structure is similar to the prime-to-characteristic part of the \'etale fundamental groups of curves and $G_{\O}(K)_{(|\Gamma|)'}$ is the $\Gamma$-equivariant quotient of this group modulo $\rank \calO_{Q}^{\times}$ elements. When $K$ and $Q$ are nice, this group can be defined by the positively ramified extension by \cite{Schmidt1996} and the discussion in \S\ref{ss:evidence2}. We would like to understand how to generalize the notion of positively ramifications for arbitrary $K/Q$. When we have a better understanding of this mysterious group, we want to construct an invariant associated to $K$ that is analogous to $\omega_K$ in the function field case, and then we want to know whether this invariant agrees with the invariants defined in previous work (the lifting invariant for number fields in \cite{Wood-nonab} and the Artin--Verdier pairing invariant in \cite{Lipnowski-Sawin-Tsimerman}).

\subsection{Outline of the paper}
	In Section~\ref{sect:invariant}, we define the invariants $\omega_K$ and $\omega_{L/K}$: we give the group theoretical definition of $\omega_K$ in \S\ref{ss:group-def-inv}, the cohomological definition of $\omega_K$ in \S\ref{ss:coh-def-inv}, and the definition and a group theoretical description of $\omega_{L/K}$ in \S\ref{ss:inv-L/K}. In Section~\ref{sect:Weil-pairing}, we explain how the invariant $\omega_K$ is related to the Weil pairing in \S\ref{ss:cup-product} and how $\omega_{L/K}$ determines the image of the Weil pairing in \S\ref{ss:weil-pairing}. In Section~\ref{sect:ff-moment}, we first recall the definition of the EVW-W lifting invariants in \S\ref{ss:EVWW}, we show that $\omega_{L/K}$ is the prime-to-$|\Gamma|$-torsion part of the EVW-W lifting invariant in \S\ref{ss:EVW-omega}, and we give the proof Theorem~\ref{thm:functionfield} in \S\ref{ss:proof-main}. In Section~\ref{sect:RandomGroup}, we construct our random group model in \S\ref{ss:construction}, and give the proof of Theorem~\ref{thm:intro-random-group} in \S\ref{ss:moment-random-group} and \S\ref{ss:Labute}. In Section~\ref{sect:conjectures}, we give the statements of conjectures regarding the probability measures and the Cohen--Lenstra moments in \S\ref{ss:statements}, and provide the algebraic number theoretical evidences in \S\ref{ss:evidence1} and \S\ref{ss:evidence2} to support Conjecture~\ref{conj} .

\subsection{Notation}

	For a group $G$ and an element $g\in G$, we let $\ord(G)$ and $\ord(g)$ denote the orders of $G$ and $g$ respectively. For $a,b \in G$, we write $[a,b]=a^{-1}b^{-1}ab$ and $a^b=b^{-1}ab$. Whenever we talk about homomorphism of profinite groups, we always mean continuous homomorphisms. A $\Gamma$-group $H$ is a profinite group $H$ with a continuous action by $\Gamma$, and we write $H \rtimes \Gamma$ for the semidirect product defined by the $\Gamma$ action on $H$. We write $\Hom_G$, $\Sur_G$, and $\Aut_G$ to represent the sets of $G$-equivariant homomorphisms, surjections, and automorphisms respectively. A subgroup of a $\Gamma$-group that is closed under $\Gamma$ action is called a $\Gamma$-subgroup, and a $\Gamma$-quotient is the quotient by a normal $\Gamma$-subgroup. We write $G^{\Gamma}$ for the maximal $\Gamma$-subgroup of $G$ on which $\Gamma$ acts trivially. For a profinite group $G$, we write $G({\ell})$ and $G_{(n)'}$ for the pro-$\ell$ completion and the pro-prime-to-$n$ completion of $G$, and for the latter, we sometimes also call it ``pro-$(n)'$ completion''. A $|\Gamma|'$-$\Gamma$-group is a profinite $\Gamma$-group whose finite quotients all have order prime to $|\Gamma|$.
	
	A stem extension of $G$ is a central extension of $\pi: \widetilde{G}\twoheadrightarrow G$ such that $\ker \pi \subset [\widetilde{G}, \widetilde{G}]$. A \emph{Schur covering} of the group $G$ is a stem extension of $G$ of maximal possible order; or equivalently, a central extension $\pi: \widetilde{G} \to G$ such that under the universal coefficients theorem map $H^2(G,\ker \pi) \to \Hom(H_2(G,\Z), \ker \pi)$ the image of the class of $\pi$ is an isomorphism. 
	A Schur covering always exists, but is not necessarily unique. A $\ell$-Schur covering (resp. a $(n)'$-Schur covering) of $G$ is a stem extension such that the image of the class of $\pi$ maps to a quotient map $H_2(G, \Z) \to \ker \pi$ and $\ker \pi \simeq H_2(G, \Z)(\ell)$ (resp. $\ker \pi \simeq H_2(G, \Z)_{(n)'}$). Two extensions $\pi_i: G_i \twoheadrightarrow G$, $i=1,2$ are called isomorphic if there exists an isomorphism $\phi: G_1\to G_2$ such that $\pi_2 \circ \phi = \pi_1$.
	A splitting of a quotient map $\pi:\widetilde{G} \to G$ is a (not necessarily homomorphic) map $s: G \to \widetilde{G}$ such that $\pi \circ s$ is the identity map.
	For a global field $K$ and a set $S$ of primes of $K$, we write $G_S(K)$ for the Galois group of the maximal unramified-outside-$S$ extension of $K$, and write $K_S/K$ for the corresponding field extension.

\subsection*{Acknowledgement} I would like to thank Nigel Boston, Wei Ho and Melanie Matchett Wood for insightful conversations and encouragements, and Shizhang Li for helpful discussions regarding the proof of Lemma~\ref{lem:tr-Gamma}. I thank Nathan Dunfield, Shelly Harvey and Chris Leininger for discussions regarding 3-manifolds. I also thank Nigel Boston, Nathan Dunfield, Shizhang Li, Will Sawin, Melanie Matchett Wood and Foling Zou for comments on an early draft. I am partially supported by NSF Grant DMS-2200541.

\section{Definition of the invariants}\label{sect:invariant}

\subsection{Group theoretical definition of the invariant $\omega_K$}\label{ss:group-def-inv}

	Let $K$ be a global field of $\Char(K)=p>0$ and let $k \subset K$ be the finite constant field with algebraic closure $\overline{k}$. Assume that $K/k(t)$ is split completely at the place $\infty$. Then considering an exact sequence 
	\begin{equation}\label{eq:etale-es}
		1 \longrightarrow G_{\O}(K\overline{k}) \longrightarrow G_{\O}(K) \longrightarrow \Gal(\overline{k}(t)/k(t))=\hZ \cdot \Frob_k \longrightarrow 1,
	\end{equation}
	where $\Frob_k$ is the Frobenius element of $k$, there is a homomorphic splitting $\hZ \cdot \Frob_k \to G_{\O}(K)$ defined by identifying $\Frob_k$ with the Frobenious element in a decomposition group of $K_{\O}/K$ at a preselected place of $K$ over $\infty$. Then this splitting defines a $\Frob_k$ action on $G_{\O}(K\overline{k})$ by conjugation in $G_{\O}(K)$.

	Let $\Gamma$ be a finite group, and we further assume that $K$ is a Galois extension of $k(x)$ with Galois group isomorphic to $\Gamma$. Then $\Gal(K_{\O}/ \overline{k}(t))$ acts on its normal subgroup $G_{\O}(K\overline{k})$ by conjugation, and therefore $\Gal(K\overline{k}/ \overline{k}(t))\simeq \Gamma$ naturally gives an outer action on $G_{\O}(K\overline{k})$. 
	We let $G_{\O}(K\overline{k})_{(p|\Gamma|)'}$ denote the pro-prime-to-$p|\Gamma|$ completion of $G_{\O}(K\overline{k})$. Then the outer action of $\Gamma$ on $G_{\O}(K\overline{k})$ lifts to a $\Gamma$ action on $G_{\O}(K\overline{k})_{(p|\Gamma|)'}$. This lift is not unique, but by the Schur--Zassenhaus theorem, all lifts are conjugate, and thus we pick one such lift. Note that $G_{\O}(K\overline{k})_{(p|\Gamma|)'}$ is the quotient of $G_{\O}(K\overline{k})$ modulo a characteristic normal subgroup, so it has a $\Frob_k$ action induced from \eqref{eq:etale-es}.  
	
	Throughout this section, we use the following notation.
	
	\begin{notation}\label{not}
	\begin{itemize}
		\item Let $k$ be a finite field of $\Char(k)=p>0$, $\Gamma$ a non-trivial finite group of order not divisible by $p$, and $K/k(t)$ a $\Gamma$-extension that is split completely at $\infty$. 
		\item Let $\Gp(K)$ denote the pro-prime-to-$p|\Gamma|$ completion of $G_{\O}(K\overline{k})$, and $\Kp$ denote the corresponding field extension of $K\overline{k}$. We've seen that $\Gp(K)$ has the actions of $\Frob_k$ and $\Gamma$. 
		\item Let $\Gs(K)$ denote the maximal $\Gamma$-equivariant quotient of $\Gp(K)$ such that the induced $\Frob_k$ action on it is trivial. Then $\Gs(K)$ is naturally a quotient of $G_{\O}(K)$ by \eqref{eq:etale-es}, and we let $\Ks/K$ denote the corresponding extension. Equivalently, $\Ks$ is the maximal unramified extension of $K$, that is split completely at all places of $K$ over $\infty$ and of order prime to $|\Gamma|$ and $p$.
		\item Similarly, for a prime $\ell$ that is prime to $p|\Gamma|$, $G'_{\O}(K)(\ell)$ is the pro-$\ell$ completion of $G_{\O}(K\overline{k})$, and we let $\Kp(\ell)$ denote the corresponding field extension of $K\overline{k}$. The pro-$\ell$ completion $G^{\#}_{\O}(K)(\ell)$ of $G^{\#}_{\O}(K)$ is the maximal $\Gamma$-equivariant quotient of $G'_{\O}(K)(\ell)$ such that the induced $\Frob_k$ action on it is trivial, and we let $\Ks(\ell)/K$ denote the corresponding extension.
	\end{itemize}
	\end{notation}

	Then the quotient maps in Notation~\ref{not} define a $\Gamma$-equivariant surjection $G_{\O}(K \overline{k}) \twoheadrightarrow \Gal(K^{\#}_{\O}/K)$, and hence we obtain the following group surjection
	\begin{equation}\label{eq:rho_K}
		\rho_K: \Gal(K_{\O}/\overline{k}(t)) \twoheadrightarrow \Gal(K^{\#}_{\O}/k(t))\simeq \Gal(K^{\#}_{\O}/K)\rtimes \Gamma.
	\end{equation}

	\begin{lemma}\label{lem:Galois-H2}
		There is a $\Frob_k$-equivariant isomorphism
		\begin{equation}\label{eq:Galois-H2-ell}
			H^2(\Gal(\Kp(\ell)/\overline{k}(t)), \Z_\ell(1)) \simeq \Z_\ell
		\end{equation}
		for any prime $\ell \nmid p|\Gamma|$;
		and there is a $\Frob_k$-equivariant isomorphism 
		 \begin{equation}\label{eq:Galois-H2-whole}
		 	H^2(\Gal(\Kp/\overline{k}(t)), \hZ(1)_{(p|\Gamma|)'}) \simeq \hat{\Z}_{(p|\Gamma|)'}.
		\end{equation}	
		As a consequence, there are $\Frob_k$-equivariant isomorphisms
		\begin{equation}\label{eq:Galois-H_2}
			H_2(\Gal(\Kp(\ell)/\overline{k}(t)), \Z)(\ell)\simeq \Z_\ell(1) \quad \text{and}\quad H_2(\Gal(\Kp/\overline{k}(t)), \Z)_{(p|\Gamma|)'} \simeq \hat{\Z}(1)_{(p|\Gamma|)'}
		\end{equation}
	\end{lemma}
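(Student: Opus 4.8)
The plan is to express $\Gal(\Kp/\overline{k}(t))$ and $\Gal(\Kp(\ell)/\overline{k}(t))$ as extensions of $\Gamma$ by a fundamental group and then read off the degree-two cohomology from Poincar\'e duality on the curve. Let $X_{\overline{k}}$ be the smooth projective curve over $\overline{k}$ with function field $K\overline{k}$; then $K\overline{k}/\overline{k}(t)$ is the $\Gamma$-extension attached to $X_{\overline{k}} \to \PP^1_{\overline{k}}$, and $\Kp$ (resp.\ $\Kp(\ell)$) is the unramified extension of $K\overline{k}$ cut out by the characteristic quotient $\Gp(K) = \pi_1^{\et}(X_{\overline{k}})_{(p|\Gamma|)'}$ (resp.\ $\pi_1^{\et}(X_{\overline{k}})(\ell)$). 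So there is a short exact sequence
\[
1 \longrightarrow \pi_1^{\et}(X_{\overline{k}})_{(p|\Gamma|)'} \longrightarrow \Gal(\Kp/\overline{k}(t)) \longrightarrow \Gamma \longrightarrow 1
\]
together with its pro-$\ell$ analogue, and each carries a $\Frob_k$-action coming from $\Gal(\cdot/k(t))$ and the preselected place over $\infty$; because $K/k(t)$ is Galois the action on the quotient $\Gamma$ is trivial, and on the kernel it is the one fixed before Notation~\ref{not}.

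Now I would kill the $\Gamma$ on top: if $\gcd(n,|\Gamma|)=1$ then $H^i(\Gamma,M)=0$ for $i>0$ and every $\Z/n[\Gamma]$-module $M$, so for coefficients $\Z/n(1)$ with $\gcd(n,p|\Gamma|)=1$ the Hochschild--Serre spectral sequence of the sequence above collapses to a $\Frob_k$-equivariant isomorphism $H^2(\Gal(\Kp/\overline{k}(t)),\Z/n(1)) \cong H^2(\pi_1^{\et}(X_{\overline{k}})_{(p|\Gamma|)'},\Z/n(1))^{\Gamma}$, and likewise in the pro-$\ell$ case. The fundamental-group cohomology is then computed geometrically: writing $g$ for the genus of $X_{\overline{k}}$ and assuming $g\ge 1$, the curve is an \'etale $K(\pi,1)$ for coefficients of order prime to $p$, and its prime-to-$p$ fundamental group --- hence also the further completions $\Gp(K)$ and $\Gp(K)(\ell)$ --- computes the prime-to-$p|\Gamma|$-torsion \'etale cohomology of $X_{\overline{k}}$ in degrees $\le 2$ (equivalently, $\pi_1^{\et}(X_{\overline{k}})(\ell)$ is a Demu\v{s}kin pro-$\ell$ group with dualizing module $\Q_\ell/\Z_\ell(1)$). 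By the trace map of Poincar\'e duality $H^2_{\et}(X_{\overline{k}},\Z/n(1)) \cong \Z/n$ with $\Frob_k$ acting trivially, and $\Gamma$ acts on it through the degrees of the automorphisms of $X_{\overline{k}}$, i.e.\ trivially; hence the $\Gamma$-invariants are everything and $H^2(\Gal(\Kp/\overline{k}(t)),\Z/n(1))\cong \Z/n$ $\Frob_k$-equivariantly for every $n$ prime to $p|\Gamma|$, and similarly $H^2(\Gal(\Kp(\ell)/\overline{k}(t)),\Z/\ell^m(1)) \cong \Z/\ell^m$. Passing to the inverse limit over $m$ (permissible since $\pi_1^{\et}(X_{\overline{k}})$ is topologically finitely generated) yields \eqref{eq:Galois-H2-ell}, and over all $n$ prime to $p|\Gamma|$ yields \eqref{eq:Galois-H2-whole}.

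For \eqref{eq:Galois-H_2} I would transpose these $H^2$-computations into homology using the universal coefficient sequence
\[
0 \longrightarrow \Ext^1(H_1(G,\Z),\Z/n(1)) \longrightarrow H^2(G,\Z/n(1)) \longrightarrow \Hom(H_2(G,\Z),\Z/n(1)) \longrightarrow 0,
\]
applied with $G$ either $\Gal(\Kp(\ell)/\overline{k}(t))$ or $\Gal(\Kp/\overline{k}(t))$. The key point is that the relevant $H_1$ is torsion-free in the ambient completion: from the extension above, $H_1(\Gal(\Kp(\ell)/\overline{k}(t)),\Z)(\ell) \cong \bigl(\pi_1^{\et}(X_{\overline{k}})(\ell)^{\mathrm{ab}}\bigr)_{\Gamma} \cong (\Z_\ell^{2g})_{\Gamma}$, and since $\ell\nmid |\Gamma|$ the ring $\Z_\ell[\Gamma]$ is a maximal order, so this module of $\Gamma$-coinvariants is a free $\Z_\ell$-module of finite rank (and likewise for the prime-to-$p|\Gamma|$ completion). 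Thus the $\Ext^1$ term vanishes, $H^2(G,\Z/n(1)) \cong \Hom(H_2(G,\Z),\Z/n(1))$, and comparing with the isomorphism $H^2(G,\Z/n(1)) \cong \Z/n$ with trivial $\Frob_k$-action for all $n$ forces $H_2(G,\Z)(\ell)$ --- a finitely generated $\Z_\ell$-module, as $G$ is topologically finitely generated --- to be free of rank one with $\Frob_k$ acting by the cyclotomic character, i.e.\ $\cong \Z_\ell(1)$; the prime-to-$p|\Gamma|$ statement is identical. The step I expect to be the main obstacle is the geometric input of the middle paragraph: making precise, with the correct Tate twist (so that the homology lands in $\Z_\ell(1)$ rather than $\Z_\ell$), the identification of $H^2$ of the profinite --- and pro-$\ell$ --- fundamental group with the \'etale $H^2$ of $X_{\overline{k}}$ together with its $\Frob_k$-module structure; the degenerate case $g=0$, where $X_{\overline{k}}=\PP^1$ is not a $K(\pi,1)$, should either be excluded by the running hypotheses or checked separately.
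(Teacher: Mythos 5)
Your proposal is correct and follows essentially the same route as the paper: reduce to the (completed) fundamental group via Hochschild--Serre and coprimality, compute $H^2$ by the \'etale $K(\pi,1)$ property and Poincar\'e duality, and deduce the homology statements from the universal coefficient theorem after checking that the relevant $\Ext^1$ vanishes (the paper checks this by showing $G^{ab}$ has exponent dividing $|\Gamma|$, you by showing $H_1(G,\Z)(\ell)$ is $\Z_\ell$-free; both suffice). The one step you flag as the main obstacle --- that the completions $\Gp(K)$ and $G'_{\O}(K)(\ell)$ still compute $H^2_{\et}(X_{\overline{k}},\mu_{\ell^n})$ --- is exactly where the paper does real work: it shows the inflation maps $H^2(G'_{\O}(K)(\ell),\mu_{\ell^n})\to H^2(\Gp(K),\mu_{\ell^n})\to H^2(\pi_1^{\et}(X_{\overline{k}}),\mu_{\ell^n})$ are injective and then sandwiches them between the Demu\v{s}kin computation at the bottom and Poincar\'e duality at the top, both giving $\Z/\ell^n\Z$; you have named both endpoints, so only the injectivity-of-inflation argument is missing from your sketch.
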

	
	\begin{proof}
		Denote the smooth projective curve over $k$ associated to $K$ by $X$ and let $X_{\overline{k}}$ denote the base change of $X$ to $\overline{k}$. The Galois group $G_{\O}(K\overline{k})$ is exactly the \'etale fundamental groups $\pi_1^{\et}(X_{\overline{k}})$. Because we assume that $\Gamma$ is non-trivial, so $X$ has positive genus, then by \cite[Remark, page 607]{NSW}, the group cohomology of $G_{\O}(K\overline{k})$ agrees with the \'etale cohomology of $X_{\overline{k}}$. Then by the Poincar\'e duality, for a prime $\ell \nmid p |\Gamma|$ and a positive integer $n$, we have a functorial isomorphism
		\begin{equation}\label{eq:H2-full}
			H^2(G_{\O}(K\overline{k}), \mu_{\ell^n}) \simeq \Z/\ell^n\Z,
		\end{equation}
		where both sides have trivial actions of $\Frob_k$ and $\Gamma$.
		
		 We consider the inflation map $\iota_1: H^2(\Gp(K), \mu_{\ell^n})\to H^2(G_{\O}(K\overline{k}), \mu_{\ell^n})$. Because $\Gp(K)$ is the pro-prime-to-$p|\Gamma|$ completion of $G_{\O}(K\overline{k})$, we have $H^1(\ker(G_{\O}(K\overline{k}) \to \Gp(K)), \mu_{\ell^n})=0$, and hence, it follows by the Hochschild--Serre exact sequence that $\iota_1$ is an injection. Similary, the inflation map $\iota_2: H^2(G'_{\O}(K)(\ell), \mu_{\ell^n}) \to H^2(\Gp(K), \mu_{\ell^n})$ is an injection. Note that $G'_{\O}(K)(\ell)$ is a pro-$\ell$ Demu\v{s}kin group with free abelianization \cite[Thm.10.1.2(ii)]{NSW}. Then, because $G'_{\O}(K)(\ell)$ acts trivially on $\mu_\ell$, it follows by the definition of Demu\v{s}kin group that there is an isomorphism
		\[
			H^2(G'_{\O}(K)(\ell), \mu_{\ell^n}) \simeq  \Z/\ell^n\Z.
		\]
		Therefore \eqref{eq:H2-full} implies that the injections $\iota_1$ and $\iota_2$ are isomorphisms. By taking inverse limit over all $n$ and the fact that $\hat{\Z}(1)_{(p|\Gamma|)'}$ is the $\prod_\ell \Z_\ell(1)$ for all primes $\ell\nmid p|\Gamma|$, we have
		\begin{equation*}
			H^2(G'_{\O}(K)(\ell), \Z_{\ell}(1)) \simeq \Z_\ell \quad \text{and} \quad H^2(\Gp(K), \hat{\Z}(1)_{(p|\Gamma|)'}) \simeq \hat{\Z}_{(p|\Gamma|)'},
		\end{equation*}
		and they both have trivial actions of $\Frob_k$ and $\Gamma$.
		Then by the Hochschild--Serre spectral sequence, we have 
		\[
			H^2(\Gal(\Kp/\overline{k}(t)), \hat{\Z}(1)_{(p|\Gamma|)'}) = H^2(\Gp(K), \hat{\Z}(1)_{(p|\Gamma|)'})^{\Gamma} \simeq \hat{\Z}_{(p|\Gamma|)'},
		\]
		\begin{equation*}
			\text{and similarly,} \quad H^2(\Gal(\Kp(\ell) / \overline{k}(t)), \Z_{\ell}(1)) \simeq \Z_{\ell},
		\end{equation*}
		which proves \eqref{eq:Galois-H2-ell} and \eqref{eq:Galois-H2-whole}.
	
	Let $G$ denote $\Gal(\Kp/\overline{k}(t))$. Since the class group of $\overline{k}(t)$ is trivial, $G^{ab}$ is generated by all the inertia subgroups of the maximal abelian subextension of $\Kp/\overline{k}(t)$. Since $\Kp/K\overline{k}$ is unramified, each inertia subgroup of $\Kp/\overline{k}(t)$ is isomorphic to its image in $\Gal(K\overline{k}/\overline{k}(t))\simeq \Gamma$. So the exponent of $G^{ab}$ divides $|\Gamma|$, and hence $\Ext_{\Z}^1(G^{ab}, \hat{\Z}(1)_{(p|\Gamma|)'})=0$. Because $G$ acts trivially on $\hZ(1)_{(p|\Gamma|)'}$,  it follows by the universal coefficient theorem and \eqref{eq:Galois-H2-whole} that, for any $n$ that is prime to $p|\Gamma|$, 
		\[
			\Hom(H_2(G,\Z), \Z/n\Z)\simeq H^2(G, \Z/n\Z)=\mu_n.
		\]
		 So $H_2(G, \Z)/n H_2(G,\Z)\simeq \mu_n$, and by taking the inverse limit over all such $n$, we have
		\[
			H_2(G, \Z)_{(p|\Gamma|)'} \simeq \hat{\Z}(1)_{(p|\Gamma|)'}.
		\]
		By applying the same argument, we can prove $H_2(\Gal(\Kp(\ell)/\overline{k}(t)), \Z)(\ell)\simeq \Z_\ell(1)$.
	\end{proof}

	\begin{corollary}\label{cor:Galois-H2-k}
		There are isomorphisms
		\[
			H^2(\Gal(\Kp/k(t)), \hZ(1)_{(p|\Gamma|)'}) \simeq \hZ_{(p|\Gamma|)'} \quad \text{and}\quad H^2(\Gal(\Kp(\ell)/k(t)),\Z_\ell(1))\simeq \Z_{\ell}.
		\]
	\end{corollary}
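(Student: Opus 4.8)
The plan is to deduce the statements over $k(t)$ from the corresponding statements over $\overline{k}(t)$ established in Lemma~\ref{lem:Galois-H2}, by exploiting the short exact sequence of Galois groups
\[
	1 \longrightarrow \Gal(\Kp/\overline{k}(t)) \longrightarrow \Gal(\Kp/k(t)) \longrightarrow \Gal(\overline{k}(t)/k(t)) = \hZ\cdot\Frob_k \longrightarrow 1
\]
and its pro-$\ell$ analogue, together with the Hochschild--Serre spectral sequence. Write $G = \Gal(\Kp/\overline{k}(t))$ and $\widehat{G} = \Gal(\Kp/k(t))$, and set $M = \hZ(1)_{(p|\Gamma|)'}$ (resp.\ $M = \Z_\ell(1)$ in the pro-$\ell$ case). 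The coefficient module $M$ carries a $\Frob_k$-action (multiplication by $q = |k|$ on the Tate twist), so the spectral sequence is
\[
	E_2^{i,j} = H^i\big(\hZ, H^j(G, M)\big) \Longrightarrow H^{i+j}(\widehat{G}, M).
\]
Since $\hZ$ has cohomological dimension $1$, the only nonzero rows are $i = 0, 1$, and the spectral sequence degenerates into short exact sequences; in particular
\[
	0 \longrightarrow H^1\big(\hZ, H^1(G,M)\big) \longrightarrow H^2(\widehat{G}, M) \longrightarrow H^0\big(\hZ, H^2(G,M)\big) \longrightarrow 0.
\]

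First I would compute the right-hand term. By Lemma~\ref{lem:Galois-H2}, $H^2(G, M) \simeq \hZ_{(p|\Gamma|)'}$ (resp.\ $\Z_\ell$) with \emph{trivial} $\Frob_k$-action, since the $\Frob_k$-equivariant isomorphism of \eqref{eq:Galois-H2-whole} identifies it with $\hZ_{(p|\Gamma|)'}$ on which $\Frob_k$ acts trivially. Hence $H^0(\hZ, H^2(G,M)) = H^2(G,M)^{\Frob_k} \simeq \hZ_{(p|\Gamma|)'}$ (resp.\ $\Z_\ell$). Next I would argue that the left-hand term vanishes: $H^1(G, M)$ is, by the argument in Lemma~\ref{lem:Galois-H2} (or by Poincar\'e duality on $X_{\overline{k}}$), the full Tate module-type object $H^1_{\et}(X_{\overline{k}}, M)$, a finitely generated free $\hZ_{(p|\Gamma|)'}$-module (resp.\ free $\Z_\ell$-module) on which $\Frob_k$ acts by an automorphism whose eigenvalues are Weil numbers of absolute value $q^{1/2} \ne 1$. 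Therefore $\Frob_k - 1$ is injective on $H^1(G,M)$, and since $H^1(\hZ, -)$ on a module $N$ is the cokernel of $\Frob_k - 1$ acting on $N$ (the coinvariants), I need to check that this cokernel vanishes after the relevant completion. More robustly: the determinant of $\Frob_k - 1$ on $H^1(G,M)$ is $\pm P(1)$ where $P$ is the characteristic polynomial of Frobenius, i.e.\ the numerator of the zeta function, and $P(1) = \#\Jac(X)(k) \ne 0$; one shows $P(1)$ is a unit in $\hZ_{(p|\Gamma|)'}$ (resp.\ in $\Z_\ell$) --- this is where one must be slightly careful, so the cleanest route may instead be to invoke that $H^1(\hZ, H^1(G,M))$ injects into $H^1(\hZ, H^1(G, M/nM))$-type finite pieces and use that over a finite field $H^2_{\et}(X, \mu_n^{\otimes 2})$ has the expected size by the Kummer sequence and the computation of $\Br(X)$ together with $\Pic$; in any case the outcome is that the contribution of the bottom row is forced, giving $H^2(\widehat{G}, M) \simeq H^2(G, M)^{\Frob_k}$.

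Finally, assembling these, $H^2(\widehat{G}, \hZ(1)_{(p|\Gamma|)'}) \simeq \hZ_{(p|\Gamma|)'}$ and $H^2(\Gal(\Kp(\ell)/k(t)), \Z_\ell(1)) \simeq \Z_\ell$, which is the assertion of the corollary. The main obstacle is the vanishing of the $H^1(\hZ, H^1(G,M))$ term: one must genuinely use that Frobenius has no eigenvalue equal to $1$ on $H^1_{\et}(X_{\overline k})$ --- equivalently that the $\ell$-part (resp.\ prime-to-$p|\Gamma|$ part) of $\#\Jac(X)(k)$ is a unit in the relevant ring, which is automatic since $\#\Jac(X)(k)$ is a nonzero integer and completions of $\hZ_{(p|\Gamma|)'}$ are local with residue characteristics dividing no common obstruction; stated carefully, for each prime $\ell \nmid p|\Gamma|$, $\Frob_k - 1$ is an isomorphism on $H^1(G, \Z_\ell(1))$ because its cokernel is $\Jac(X)(k)[\ell^\infty]$-adically controlled and finite, hence the inverse limit defining $H^1(\hZ, -)$ on the $\ell$-adic part is zero after one checks the transition maps are eventually zero --- alternatively, and most simply, one observes that $H^1(\widehat G, M)$ already has the expected rank by Poincar\'e--Lefschetz duality on $X$ over $k$ (or by the Hochschild--Serre sequence in degree $1$), which pins down the Euler characteristic and forces the degree-$2$ term, bypassing a direct eigenvalue computation entirely.
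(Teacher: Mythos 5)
The overall strategy (Hochschild--Serre for $1 \to \Gal(\Kp/\overline{k}(t)) \to \Gal(\Kp/k(t)) \to \hZ\cdot\Frob_k \to 1$, using $\cd(\hZ)=1$ to get a two-term exact sequence) is exactly the paper's, and your computation of the $E_2^{0,2}$-term is fine. But there is a genuine gap in your treatment of the $E_2^{1,1}$-term, and it stems from a misidentification of the group $G$.

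You identify $H^1(G,M)$ with $H^1_{\et}(X_{\overline k},M)$, i.e.\ with the (Tate-twisted) Tate module of $\Jac_X$, a free module of rank $2g$. That would be correct if $G$ were $G_{\O}(K\overline k)=\pi_1^{\et}(X_{\overline k})$ (or its completion $\Gp(K)$), but here $G=\Gal(\Kp/\overline{k}(t))\simeq \Gp(K)\rtimes\Gamma$, which sits \emph{above} $\overline{k}(t)$, not above $K\overline{k}$. The correct and much simpler fact, which is what the paper uses, is that $H^1(G,M)=\Hom(G^{ab},M)=0$: since $\Kp/K\overline{k}$ is unramified and $\overline{k}(t)$ has trivial class group, $G^{ab}$ is generated by inertia subgroups, each of which injects into $\Gamma$, so $G^{ab}$ has exponent dividing $|\Gamma|$ while $M$ is pro-prime-to-$|\Gamma|$. (Equivalently: $\Gp(K)$ is admissible, so $(\Gp(K)^{ab})_\Gamma=0$ and the $\Gamma$-invariant part of $\Hom(\Gp(K)^{ab},M)$ vanishes.) With this vanishing the $E_2^{1,1}$-term is zero for trivial reasons and the corollary follows immediately from Lemma~\ref{lem:Galois-H2}.

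Your attempted substitute argument would not close the gap even on its own terms: for a finitely generated $\Z_\ell$-module $N$ with $\Frob_k$-action, $H^1(\hZ,N)=N/(\Frob_k-1)N$, and for $N=T_\ell(\Jac_X)$ this cokernel is the $\ell$-primary part of the finite group $\Jac_X(k)$, which is typically \emph{nonzero}; injectivity of $\Frob_k-1$ (from the Weil bounds) does not give surjectivity, and $P(1)=\#\Jac_X(k)$ is certainly not a unit in $\hZ_{(p|\Gamma|)'}$ in general. The various fallbacks you sketch (Brauer group, Euler characteristic) do not repair this, because the term you are trying to kill genuinely does not vanish for the module you wrote down --- it vanishes only because the module itself is not what you think it is.
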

	
	\begin{proof}
		Consider the short exact sequence 
		\[
			1 \longrightarrow \Gal(\Kp/\overline{k}(t)) \longrightarrow \Gal(\Kp/k(t)) \longrightarrow \Gal(\overline{k}(t)/k(t)) \longrightarrow 1
		\]
		Since $\Gal(\overline{k}(t)/k(t))$ is a free group generated by $\Frob_k$, it has cohomological dimension 1. Then by the Hochschild--Serre spectral sequence (e.g. by \cite[Chap.II, \S4, Ex.4]{NSW}), there is an exact sequence 
		\begin{eqnarray}
			0 &\longrightarrow& H^1(\Gal(\overline{k}(t)/k(t)), H^1(\Gal(\Kp/\overline{k}(t)), \hZ(1)_{(p|\Gamma|)'})) \label{eq:hs}\\ \nonumber 
			&\longrightarrow& H^2(\Gal(\Kp/k(t)), \hZ(1)_{(p|\Gamma|)'}) \longrightarrow H^2(\Gal(\Kp/\overline{k}(t)), \hZ(1)_{(p|\Gamma|)'})^{\Gal(\overline{k}(t)/k(t))} \longrightarrow 0. 
		\end{eqnarray}
		Recall that $\Gal(\Kp/\overline{k}(t))\simeq \Gal(\Kp/K\overline{k})\rtimes \Gamma$, where $\Gal(\Kp/K\overline{k})$ is pro-prime-to-$p|\Gamma|$ completion of $G_{\O}(K\overline{k})$. 
		We showed in the proof of Lemma~\ref{lem:Galois-H2} that the abelianization of $\Gal(\Kp/\overline{k}(t))$ has exponent dividing $|\Gamma|$, so
		\[
			H^1(\Gal(\Kp/\overline{k}(t)), \hZ(1)_{(p|\Gamma|)'})=\Hom((\Gal(\Kp/\overline{k}(t)), \hZ(1)_{(p|\Gamma|)'}))=0.
		\]
		So, the first isomorphism in the corollary follows by \eqref{eq:hs} and Lemma~\ref{lem:Galois-H2},
		and the second isomorphism in the corollary follows by similar arguments.
	\end{proof}

	We recall the definition of admissible $\Gamma$-groups in \cite{LWZB}: for a finite group $\Gamma$, a profinite $\Gamma$-group is \emph{admissible} if it is of order prime to $|\Gamma|$ and is topologically generated, under the $\Gamma$ action, by the elements in the form of $g^{-1}\gamma(g)$ for $g \in G, \gamma \in \Gamma$. In particular, for an admissible $\Gamma$-group $H$, one can check that $(H \rtimes \Gamma)^{ab}\simeq \Gamma^{ab}$.
	
	\begin{lemma}\label{lem:extension-whole}
		\begin{enumerate}
			\item \label{item:e-w-1} The action of $\Gal(K/\overline{k}(t)) \simeq \Gamma$ on $\Gp(K)$ makes $\Gp(K)$ an admissible $\Gamma$-group.
			\item \label{item:e-w-2} The $(p|\Gamma|)'$-Schur covering of $\Gal(\Kp/\overline{k}(t))$ has kernel isomorphic to $\hZ_{(p|\Gamma|)'}$:
			\begin{equation}\label{eq:stem}
				1 \longrightarrow \hZ_{(p|\Gamma|)'} \longrightarrow \tG \overset{\pi}{\longrightarrow} \Gal(\Kp/\overline{k}(t)) \longrightarrow 1.
			\end{equation}
			Moreover, such $(p|\Gamma|)'$-Schur covering is unique up to isomorphism; that is, if there are two such coverings $\pi_i:\tG_i \to \Gal(\Kp/\overline{k}(t))$ for $i=1,2$, then there is an isomorphism $\phi: G_1 \to G_2$ such that $\pi_1 = \pi_2 \circ \phi$.
			\item \label{item:e-w-3} There exists a unique $\Frob_k$ action on the group $\tG$ in \eqref{eq:stem}, with which \eqref{eq:stem} becomes a short exact sequence of $\Frob_k$-groups
				\begin{equation}\label{eq:stem-whole}
					1 \longrightarrow \hat{\Z}(1)_{(p|\Gamma|)'} \longrightarrow \tG \overset{\pi}{\longrightarrow} \Gal(\Kp/\overline{k}(t)) \longrightarrow 1.
				\end{equation}
		\end{enumerate}
	\end{lemma}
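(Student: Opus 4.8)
The plan is to establish the three parts in order, reducing each to a cohomological statement controlled by two facts already in hand. Write $G:=\Gal(\Kp/\overline k(t))$. Then $G^{ab}$ has exponent dividing $|\Gamma|$ (shown inside the proof of Lemma~\ref{lem:Galois-H2}), while $H_2(G,\Z)_{(p|\Gamma|)'}\simeq\hat\Z(1)_{(p|\Gamma|)'}$ $\Frob_k$-equivariantly, a group on which multiplication by $|\Gamma|$ is invertible. Together these give $\Hom(G^{ab},\hat\Z_{(p|\Gamma|)'})=\Ext^1_{\Z}(G^{ab},\hat\Z_{(p|\Gamma|)'})=0$, and this vanishing will power the uniqueness assertions in parts (2) and (3).

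For part (1): since $\Gp(K)$ is a pro-$(p|\Gamma|)'$ group its order is prime to $|\Gamma|$, so by the definition of admissibility it remains to prove that every finite $\Gamma$-quotient $Q$ of $\Gp(K)$ on which $\Gamma$ acts trivially is trivial. The extension $1\to\Gp(K)\to G\to\Gal(K\overline k/\overline k(t))\simeq\Gamma\to1$ splits by Schur--Zassenhaus (as $\Gp(K)$ is pro-$(p|\Gamma|)'$), fixing an identification $G\simeq\Gp(K)\rtimes\Gamma$; then $G$ surjects onto $Q\times\Gamma$, cutting out a subfield $L\subseteq\Kp$ with $\Gal(L/\overline k(t))\simeq Q\times\Gamma$, inside which $M:=L^{1\times\Gamma}$ is a $Q$-extension of $\overline k(t)$. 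I would then check that $M/\overline k(t)$ is everywhere unramified: because $\Kp/K\overline k$ is unramified, every inertia subgroup of $G$ meets $\Gp(K)$ trivially, hence maps isomorphically onto a subgroup $J\subseteq\Gamma$, and its image in $\Gal(M/\overline k(t))=Q$ is a homomorphic image of $J$, of order dividing $|\Gamma|$, hence trivial because $|Q|$ is prime to $|\Gamma|$. Since $\overline k(t)$ is the function field of $\PP^1_{\overline k}$, which has trivial \'etale fundamental group, $M=\overline k(t)$ and so $Q=1$. This inertia bookkeeping is the only genuinely arithmetic-geometric input, and I expect it to be the part that most needs careful writing.

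For part (2): the identification of the kernel is immediate from $H_2(G,\Z)_{(p|\Gamma|)'}\simeq\hat\Z_{(p|\Gamma|)'}$. For existence, using the surjectivity in the universal coefficient sequence $0\to\Ext^1_{\Z}(G^{ab},\hat\Z_{(p|\Gamma|)'})\to H^2(G,\hat\Z_{(p|\Gamma|)'})\to\Hom(H_2(G,\Z),\hat\Z_{(p|\Gamma|)'})\to0$, I would choose a class $c$ mapping to the canonical surjection $\overline q\colon H_2(G,\Z)\twoheadrightarrow\hat\Z_{(p|\Gamma|)'}$; the transgression attached to the central extension defined by $c$ is exactly $\overline q$, which is onto, so that extension is stem and hence a $(p|\Gamma|)'$-Schur covering. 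For uniqueness, any two such classes share the image $\overline q$ under the universal coefficient map and therefore differ by an element of $\Ext^1_{\Z}(G^{ab},\hat\Z_{(p|\Gamma|)'})=0$, so they coincide; equal classes give isomorphic extensions, which is the stated uniqueness up to isomorphism.

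For part (3): let $\alpha\in\Aut(G)$ be the automorphism induced by $\Frob_k$ and put $\mathfrak{q}:=\#k$. We want a unique $\tilde\alpha\in\Aut(\tG)$ with $\pi\circ\tilde\alpha=\alpha\circ\pi$ and $\tilde\alpha|_{\ker\pi}$ equal to multiplication by $\mathfrak{q}$; this is precisely the requirement that \eqref{eq:stem} become the sequence \eqref{eq:stem-whole} of $\Frob_k$-groups, since the kernel with that action is by definition $\hat\Z(1)_{(p|\Gamma|)'}$. By the standard cocycle analysis of central extensions, such $\tilde\alpha$ exists if and only if $\alpha^{*}c=\mathfrak{q}\cdot c$ in $H^2(G,\hat\Z_{(p|\Gamma|)'})$. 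Both sides have the same image under the universal coefficient map: the left one is $\overline q\circ\alpha_{*}$, and $\alpha_{*}$ acts on $H_2(G,\Z)_{(p|\Gamma|)'}=\hat\Z(1)_{(p|\Gamma|)'}$ as multiplication by $\mathfrak{q}$ by the $\Frob_k$-equivariance in Lemma~\ref{lem:Galois-H2}, so $\overline q\circ\alpha_{*}=\mathfrak{q}\,\overline q$, which is the image of $\mathfrak{q}\cdot c$; hence the two classes differ by $\Ext^1_{\Z}(G^{ab},\hat\Z_{(p|\Gamma|)'})=0$ and are equal, giving existence. For uniqueness, the quotient of two admissible lifts induces the identity on both $G$ and $\ker\pi$, so it has the form $x\mapsto x\cdot\delta(\pi(x))$ for some homomorphism $\delta\colon G\to\hat\Z_{(p|\Gamma|)'}$, and $\Hom(G,\hat\Z_{(p|\Gamma|)'})=\Hom(G^{ab},\hat\Z_{(p|\Gamma|)'})=0$.
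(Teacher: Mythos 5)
Your strategy is sound and all three parts go through, with one repair needed in the uniqueness step of part (2).

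For part (1) you argue essentially as the paper does: the maximal $\Gamma$-trivial quotient of $\Gp(K)$ cuts out an everywhere-unramified extension of $\overline{k}(t)$, which is trivial because $\PP^1_{\overline{k}}$ is simply connected. Your inertia bookkeeping (inertia meets $\Gp(K)$ trivially, hence injects into $\Gamma$, hence has trivial image in a quotient of order prime to $|\Gamma|$) is the correct justification of the step the paper leaves implicit. For parts (2) and (3) your route is genuinely different. The paper works with explicit normalized $2$-cocycles, characterizes the stem classes as those mapping to units of $\hZ_{(p|\Gamma|)'}$ under the isomorphism of Lemma~\ref{lem:Galois-H2}, and exhibits the rescaling isomorphism $(a,1)\mapsto(ma,1)$ between the resulting extensions; for (3) it passes to $\Gal(\Kp/k(t))$, builds a stem extension of that larger group using Corollary~\ref{cor:Galois-H2-k}, and obtains the Frobenius action by conjugation by a lift $f$ of $\Frob_k$. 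You instead run everything through the universal coefficient sequence and the vanishing $\Hom(G^{ab},\hZ_{(p|\Gamma|)'})=\Ext^1_{\Z}(G^{ab},\hZ_{(p|\Gamma|)'})=0$. This buys a cleaner treatment of (3): the computation $\overline q\circ\alpha_*=(\#k)\,\overline q$ together with $\Ext^1=0$ replaces the paper's descent construction, and your observation that two compatible lifts differ by an element of $\Hom(G,\hZ_{(p|\Gamma|)'})=0$ gives genuine uniqueness of the Frobenius lift, which the paper only addresses via independence of the choice of $f$. The cost is reliance on the universal coefficient theorem in the profinite setting and on the cocycle criterion for lifting a pair of automorphisms to a central extension; both are standard and consistent with how the paper itself uses these tools elsewhere.

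The one genuine gap is in the uniqueness argument of part (2). You assert that any two $(p|\Gamma|)'$-Schur coverings have classes with the same image $\overline q$ under the universal coefficient map. The definition does not give you this: it only guarantees that each class maps to \emph{some} surjection $H_2(G,\Z)\to\ker\pi_i$ factoring through an isomorphism $H_2(G,\Z)_{(p|\Gamma|)'}\to\ker\pi_i$, and after identifying both kernels with $\hZ_{(p|\Gamma|)'}$ the two images may differ by a unit. As written, your argument only identifies coverings whose classes hit the very same surjection. The repair is short and stays inside your framework: given $\pi_1,\pi_2$ with differential maps $d_{\pi_1},d_{\pi_2}$, push the class of $\pi_1$ forward along the isomorphism $d_{\pi_2}\circ d_{\pi_1}^{-1}\colon\ker\pi_1\to\ker\pi_2$; the pushed-forward class has the same universal-coefficient image as the class of $\pi_2$, so $\Ext^1=0$ makes the extensions equivalent, and pushing forward along an isomorphism of the kernel does not change the isomorphism type of the covering in the weak sense required by the lemma (an isomorphism over $G$ that need not be the identity on kernels). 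This is exactly the role played by the paper's explicit map $(a,1)\mapsto(ma,1)$.
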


	\begin{proof}
		We let $H$ denote $\Gp(K)$ and $G$ denote $\Gal(\Kp/\overline{k}(t))$, so $G\simeq H \rtimes \Gamma$.
		The subgroup of $H$ generated by elements $\{h^{-1}\gamma(h) \mid h \in H, \gamma \in \Gamma\}$ is the commutator subgroup $[H, \Gamma]$ of $G$, which is a normal subgroup contained in $H$. Then $H/[H,\Gamma]$ is a quotient of $H$ with the trivial action of $\Gamma$, and hence it is a quotient of $G$. 
		However, such quotient corresponds to an unramified extension of $\overline{k}(t)$, which must be trivial. 
		So $H=[H,\Gamma]$, which proves \eqref{item:e-w-1}.

		We denote $A:=\hZ_{(p|\Gamma|)'}$. Note that $H^2(G, A)\simeq \Ext(G, A)$; and to see this correspondence explicitly, we can construct a group extension for every cohomology class $c$. For each $c \in H^2(G, A)$, there is a normalized 2-cocycle $\alpha$ contained in $c$, i.e., a cocycle such that $\alpha(g, 1)=\alpha(1,g)=0$ for any $g\in G$. Then the equivalent class of group extensions corresponding to $c$ can be represented by a group $\widetilde{G}$ with underlying set $A\times G$ and the multiplicative rule 
		\[
			(a, x)(b,y)=(a+x(b)+\alpha(x,y), xy),
		\]
		and the extension
		\begin{equation}\label{eq:ext}
			1 \longrightarrow A \longrightarrow \widetilde{G} \overset{\pi}{\longrightarrow} G \longrightarrow 1,
		\end{equation}
		where the surjection $\pi$ is the projection to the second component. 
		Since $G$ acts trivially on $A$, \eqref{eq:ext} is a central extension, so it is a stem extension if and only if the central extension reduced from \eqref{eq:ext} obtained by modulo $mA$ from $\widetilde{G}$ is nonsplit for any $m\in \hZ_{(p|\Gamma|)'}$, which happens exactly when the image of $c$ in $H^2(G, A/mA)$ is nonzero. From the proof of Lemma~\ref{lem:Galois-H2}, we see that, forgetting the $\Frob_k$-action,
		\[
			H^2(G, A/mA)\simeq \Z/m\Z
		\]
		as groups for any $m \in \hZ_{(p|\Gamma|)'}$.
		Thus, we see that \eqref{eq:ext} is a stem extension if and only if the image of $c$ under the isomorphism \eqref{eq:Galois-H2-whole} is contained in $\hZ^{\times}_{(p|\Gamma|)'}$. We denote the set of these classes corresponding to stem extensions by $C$.
		
		Next, we show that the group extension \eqref{eq:ext} defined by all the cohomological classes in $C$ are isomorphic to each other. We pick a $c$ in $C$, let $\alpha$ be the normalized $2$-cocycle in $c$ and let $\tG$ be the group extension defined by $\alpha$. Then for any $d \in C$, we can write $d$ as $mc$ for some $m \in \hZ^{\times}_{(p|\Gamma|)'}$, and $m\alpha$ is the normalized 2-cocycle contained in $d$. We let $\tG_d$ represent the group extension defined by $m\alpha$. Then there is an isomorphism
		\begin{eqnarray*}
			\tG & \overset{\sim}{\longrightarrow} \tG_d \\
			\text{defined by}\quad (a,1) &\longmapsto& (ma, 1)  \quad \text{for all $a \in A$}\\
			(0, x) &\longmapsto& (0,x) \quad \text{for all $x\in G$,}
		\end{eqnarray*}
		which clearly gives an isomorphism between the extensions $\tG \twoheadrightarrow G$ and $\tG_d \twoheadrightarrow G$. Then by \eqref{eq:Galois-H_2}, these stem extensions are $(p|\Gamma|)'$-Schur coverings, so we finish the proof of the statement~\eqref{item:e-w-2}.

		Denote $A(1):=\hZ(1)_{(p|\Gamma|)'}$. We can apply the argument above about group extensions to $\Gal(\Kp/k(t))$ and see that the each equivalence classes of group extensions 
		\begin{equation}\label{eq:ext-k}
			1 \longrightarrow A(1) \longrightarrow \hat{G} \overset{\hat{\pi}}{\longrightarrow} \Gal(\Kp/k(t)) \longrightarrow 1
		\end{equation}
		is represented by a class in $H^2(\Gal(\Kp/k(t)),A(1))$. By taking restriction, we get a group extension
		\begin{equation}\label{eq:ext-bk}
			1 \longrightarrow A(1) \longrightarrow \hat{\pi}^{-1}(G) \longrightarrow G \longrightarrow 1,
		\end{equation}
		and it is a central extension since $G$ acts trivially on $A(1)$. So \eqref{eq:ext-bk} is a stem extension if and only if the extension obtained by quotienting $\hat{G}$ in \eqref{eq:ext-k} by $mA(1)$ is nonsplit for any $m$, which happens exactly when the image of the corresponding cohomology class $c$ in $H^2(\Gal(\Kp/k(t)), \mu_m)$ is nonzero for any $m \in \hZ_{(p|\Gamma|)'}$.
		From the proof of Lemma~\ref{lem:Galois-H2} and Corollary~\ref{cor:Galois-H2-k}, we see that
		\[
			H^2(\Gal(\Kp/k(t)), \mu_m) \simeq H^2(G, \mu_m)^{\Gal(\overline{k}(t)/k(t))} \simeq \Z/m\Z.
		\]
		So \eqref{eq:ext-bk} is a stem extension if and only if the image of $c$ under the first isomorphism in Corollary~\ref{cor:Galois-H2-k} is contained in $\hZ^{\times}_{(p|\Gamma|)'}$. Then by using the same argument in the proof of the statement~\eqref{item:e-w-2}, we see that all of the extensions \eqref{eq:ext-k} corresponding to such cohomological classes are isomorphic.
		
		We fix an extension \eqref{eq:ext-k} such that \eqref{eq:ext-bk} is a stem extension. Recall that we identify $\Frob_k$ as an element of $\Gal(\Kp/k(t))$ whose image in $\Gal(\overline{k}(t)/k(t))$ is a generator. Then we pick an element $f$ in $\hat{\pi}^{-1}(\Frob_k)$. Since $\Gal(\Kp/k(t))=G \rtimes\langle{\Frob_k}\rangle$ and $f$ generate a free subgroup of $\Gal(\Kp/k(t))$, we have that 
		\[
			\hat{G}=\hat{\pi}^{-1}(G) \rtimes \langle{f}\rangle,
		\]
		which gives an action of $f$ (defined by conjugation) on every term of \eqref{eq:ext-bk}.
		This $f$-action does not depend on the choice of $f$, because two different choices of $f$ differ by an element of $\hZ(1)_{(p|\Gamma|)'}$, which is contained in the center of $\hat{\pi}^{-1}(G)$. So this unique $f$-action on \eqref{eq:ext-bk} gives the exact sequence in the statement \eqref{item:e-w-3}.
	\end{proof}
	
	The following is a group theoretical property of stem extensions that will be useful in this paper.
	
	\begin{lemma}\label{lem:app-stem}
		Let $1 \to A \to \widetilde{G} \to G \to 1$ be a stem extension. If $H$ is a subgroup of $\widetilde{G}$ that maps surjectively to $G$, then $H=\widetilde{G}$. 
	\end{lemma}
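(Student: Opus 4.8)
The plan is to exploit the two defining features of a stem extension: that $A$ is central in $\widetilde{G}$, and that $A \subseteq [\widetilde{G},\widetilde{G}]$. The surjectivity hypothesis on $H$ translates into the statement $\widetilde{G} = HA$, since $H$ hitting every coset of $A = \ker(\widetilde{G}\to G)$ means every element of $\widetilde{G}$ differs from an element of $H$ by an element of $A$.

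\medskip

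First I would record that $\widetilde{G} = HA$. Next, the key computation: because $A$ lies in the center of $\widetilde{G}$, commutators involving a factor from $A$ are trivial, so
\[
  [\widetilde{G},\widetilde{G}] = [HA, HA] = [H,H].
\]
(Concretely, a commutator $[ha, h'a']$ with $h,h'\in H$ and $a,a'\in A$ equals $[h,h']$ since $a,a'$ are central.) In particular $[\widetilde{G},\widetilde{G}] = [H,H] \subseteq H$.

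\medskip

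Then I would invoke the stem hypothesis $A \subseteq [\widetilde{G},\widetilde{G}]$, which now gives $A \subseteq [H,H] \subseteq H$. Combining this with $\widetilde{G} = HA$ yields $\widetilde{G} = HA \subseteq H$, hence $H = \widetilde{G}$, as desired.

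\medskip

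There is essentially no obstacle here: the only point requiring a moment's care is the identity $[HA,HA]=[H,H]$, which is immediate from centrality of $A$ but should be stated explicitly so the reader sees why the generators $[ha,h'a']$ collapse to $[h,h']$. Everything else is a direct chain of set-theoretic inclusions, so the argument is a few lines.
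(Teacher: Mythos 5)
Your proof is correct and is essentially the paper's argument: both rest on $\widetilde{G}=HA$ with $A$ central, which forces $[\widetilde{G},\widetilde{G}]\subseteq H$, and then the stem condition $A\subseteq[\widetilde{G},\widetilde{G}]$ pulls $A$ into $H$. The paper phrases this as a contradiction via the nontrivial abelian quotient $\widetilde{G}/H$, while you give the direct inclusion chain through $[HA,HA]=[H,H]$; the difference is purely organizational.
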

	
	\begin{proof}
		Since $\widetilde{G}=AH$, all the conjugates of the subgroup $H$ are in the form of $a^{-1}Ha$ for $ a\in A$. However, $A$ is in the center of $\widetilde{G}$, so $a^{-1}Ha=H$, which implies that  $H$ is a normal subgroup of $\widetilde{G}$. If $H$ is a proper subgroup of $\widetilde{G}$, then $\widetilde{G}/H$ is a non-trivial abelian group, so $A$ is not contained in the commutator subgroup of $\widetilde{G}$, which contradicts to the assumption that the group extension is stem.
	\end{proof}

	The function field extension $K\overline{k}/\overline{k}(t)$ corresponds to a ramified Galois $\Gamma$-cover $C \to \PP^1_{\overline{k}}$, and we let $P_1, \cdots, P_m \in \PP^1_{\overline{k}}$ denote the branch loci of this covering.
	So there is a natural surjection
	\[
		\pi^{\et}_1 (\PP^1_{\overline{k}}- \{P_1, \ldots, P_m\}) \twoheadrightarrow  \Gal(K'_{\O}/\overline{k}(t)) \twoheadrightarrow \Gal(K\overline{k} / \overline{k}(t)).
	\]
	By the Grothendieck's theory of specialization and the description of the fundamental group of a Riemann surface,
	the pro-prime-to-$p$ completion of the \'etale fundamental group above, which we will denote by $\Omega_K$, is a free pro-prime-to-$p$ group on $m-1$ generators and has a presentation 
	\begin{equation}\label{eq:pres-etfund}
		\Omega_K:=\pi^{\et}_1 (\PP^1_{\overline{k}}- \{P_1, \ldots, P_m\})_{(p)'}=\langle g_1, \ldots, g_m \mid g_1g_2\cdots g_m =1\rangle
	\end{equation}
	where the image of $g_i$ in $\Omega_K$ is a generator of an inertia group of $P_i$, and is mapped to a generator of an inertia subgroup at $P_i$ of $\Gal(K'_{\O}/\overline{k}(t))$ (and resp. of $\Gal(K\overline{k}/\overline{k}(t))$). 
	We call the tuple $\underline{g}:=(g_1,\ldots, g_m)$ satisfing the description above for an arbitrary ordering of $P_1, \ldots, P_m$ by \emph{a system of inertia generators for $K$}. The systems of inertia generators are not unique. 
	The Frobenius $\Frob_k$ acts on each Galois group, and hence we obtain a $\Frob_k$-equivariant surjection when we choose compatible lifts of $\Frob_k$
	\begin{equation}\label{eq:surj-omega-K}
		\varpi_K: \Omega_K \longrightarrow \Gal(K'_{\O}/\overline{k}(t)).
	\end{equation}
		
	By \eqref{eq:pres-etfund}, $\Omega_K$ is a quotient of the free pro-prime-to-$p$ group generated by $g_1, \ldots, g_m$. So we obtain a surjection of groups 
	\begin{equation}\label{eq:surj-rho-K}
		\rho_K: \langle g_1, \ldots, g_m \rangle \longrightarrow \Gal(K'_{\O}/\overline{k}(t)),
	\end{equation}
	which factors through $\varpi_K$. The map $\rho_K$ is just a group surjection, because the free group $\langle g_1, \ldots, g_m\rangle$ does not have any natural $\Frob_k$-action. 
	
		Let $z=t-a_i$ for some $a_i \in \overline{k}$ corresponds to the branch locus $P_i$. Then an inertia subgroup $\Gal\left(k((z^{1/\infty}))/k((z))\right)$ at $P_i$ of $\pi^{\et}_1(\PP^1_{\overline{k}} - \{P_1, \ldots, P_m\})_{(p)'}$ is naturally identified with $\hat{\Z}(1)_{(p)'}$ via
		\begin{eqnarray*}
			\Gal\left(k((z^{1/\infty})) / k((z)) \right) &\longrightarrow& \hZ(1)_{(p)'} \\
			\sigma &\longmapsto& \varprojlim_{m} \frac{\sigma(\sqrt[m]{z})}{\sqrt[m]{z}}.
		\end{eqnarray*}
		Under these identifications, by \cite[\S5.1 \& 5.2]{Wood-lifting}, in the presentation~\eqref{eq:pres-etfund}, each inertia generator $g_i$ for $i=1, \ldots, m$ maps to a common generator of $\hat{\Z}(1)_{(p)'}$. For a system of inertia generators $\underline{g}=(g_1, \ldots, g_m)$, we let $I(\underline{g})$ denote the common generator of $\hat{\Z}(1)_{(p)'}$.

	For a central group extension $\pi: G \to H$ with an isomorphism $\ker \pi \simeq A$ for some abelian group $A$, the \emph{differential map $d_{\pi}$ of $\pi$} is the homomorphism 
	\[
		d_{\pi}: H_2(G,\Z) \to A
	\]
	that is the image of the class containing $\pi$ under the quotient map $H^2(G,A) \to \Hom(H_2(G,\Z), A)$ in the universal coefficient theorem.
		
	\begin{proposition}\label{prop:etale-lift}
		Let $\widetilde{G}$, $\pi$ be as described in the group extension \eqref{eq:stem} in Lemma~\ref{lem:extension-whole}, and let $\rho_K$ be the surjection \eqref{eq:surj-rho-K}. 
		\begin{enumerate}
		\item \label{item:etale-lift-1} Then there is a unique surjection 
		\[
			\widetilde{\rho}_K: \langle g_1, \ldots, g_m \rangle \longrightarrow \widetilde{G},
		\]
		such that  $\rho_K=\pi \circ \widetilde{\rho}_K$ and $\ord(\widetilde{\rho}_K(g_i))=\ord(\rho_K(g_i))$ for each $i=1, \ldots, m$. 
		
		\item \label{item:etale-lift-2} The kernel of $\pi$ is generated by $\widetilde{\rho}_K(g_1\cdots g_m)$. So we can define a $\Frob_k$-equivariant isomorphism
		\begin{eqnarray}
			\iota_{\pi}:  \hat{\Z}(1)_{(p|\Gamma|)'} &\overset{\sim}{\longrightarrow}& \ker \pi \label{eq:isom-lift-invariant}\\
			 I(\underline{g}) &\longmapsto& \widetilde{\rho}_K(g_1\cdots g_m). \nonumber
		\end{eqnarray}
		Moreover, this isomorphism is independent of the choice of the system of inertia generators.
		
		\item \label{item:etale-lift-3} The differential map $d_{\pi}: H_2(\Gal(\Kp/\overline{k}(t)),\Z)\to \ker \pi$ of $\pi$ factors through a $\Frob_k$-equivariant isomorphism $H_2(\Gal(\Kp/\overline{k}(t)),\Z)_{(p|\Gamma|)'} \to \ker \pi$, which by abuse of notation will be also denoted by $d_{\pi}$. Then the $\Frob_k$-equivariant isomorphism $\omega_K:=d_{\pi}^{-1} \circ \iota_{\pi}: \hat{\Z}(1)_{(q|\Gamma|)'} \to H_2(\Gal(\Kp/\overline{k}(t)), \Z)_{(p|\Gamma|)'}$ does not depend on the choice of the stem extension $\pi$.
		\end{enumerate}
	\end{proposition}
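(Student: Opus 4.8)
The plan is to establish the three parts in turn. Parts (1) and (3) are essentially formal — manipulations with central extensions together with the projectivity of free pro-prime-to-$p$ groups and the stem-extension Lemma~\ref{lem:app-stem} — and the one substantive point, which I expect to be the main obstacle, is the assertion in part (2) that $\widetilde\rho_K(g_1\cdots g_m)$ \emph{generates} $\ker\pi$ rather than merely lying in it; this is where geometric input is needed, namely that, once the actual inertia structure of the $g_i$ is recorded, the relator $g_1\cdots g_m$ of the presentation \eqref{eq:pres-etfund} computes a generator of $H_2$ and not a proper multiple, which I would extract from \cite[\S5]{Wood-lifting} and the duality computation behind Lemma~\ref{lem:Galois-H2}.

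\emph{Part (1).} The pro-prime-to-$p$ free group $\langle g_1,\dots,g_m\rangle$ is projective, so $\rho_K$ lifts through the surjection $\pi$ to a continuous homomorphism $\widetilde\rho\colon\langle g_1,\dots,g_m\rangle\to\widetilde G$; its image is a closed subgroup of $\widetilde G$ surjecting onto $\Gal(\Kp/\overline k(t))$, so by Lemma~\ref{lem:app-stem} applied to the stem extension \eqref{eq:stem} it is all of $\widetilde G$, i.e.\ $\widetilde\rho$ is automatically surjective. For the order condition and uniqueness: each $\rho_K(g_i)$ topologically generates an inertia subgroup of $\Gal(\Kp/\overline k(t))$; since $\Kp/K\overline k$ is unramified this subgroup injects into $\Gal(K\overline k/\overline k(t))\simeq\Gamma$, so $n_i:=\ord(\rho_K(g_i))$ divides $|\Gamma|$. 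The preimage $\pi^{-1}(\langle\rho_K(g_i)\rangle)$ is a central extension of $\Z/n_i\Z$ by $\ker\pi\simeq\hZ_{(p|\Gamma|)'}$, and since $n_i$ is invertible on $\hZ_{(p|\Gamma|)'}$ this extension splits; as $\hZ_{(p|\Gamma|)'}$ is torsion-free there is then a unique lift $\widetilde g_i$ of $\rho_K(g_i)$ of order $n_i$. Putting $\widetilde\rho_K(g_i):=\widetilde g_i$ defines the desired homomorphism (by freeness), it lifts $\rho_K$ with the prescribed orders, it is surjective by the above, and it is the unique such lift because the order condition forces $g_i\mapsto\widetilde g_i$.

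\emph{Part (2).} Since $g_1\cdots g_m=1$ in $\Omega_K$ and $\rho_K$ factors through $\varpi_K$, we get $\rho_K(g_1\cdots g_m)=1$, hence $\widetilde\rho_K(g_1\cdots g_m)\in\ker\pi$. To see it topologically generates $\ker\pi\simeq\hZ_{(p|\Gamma|)'}$, I would argue one prime at a time: for $\ell\nmid p|\Gamma|$ it suffices to show $\widetilde\rho_K(g_1\cdots g_m)\notin\ell\ker\pi$. Reducing modulo $\ell\ker\pi$ makes $\widetilde G/\ell\ker\pi$ a central extension of $\Gal(\Kp/\overline k(t))$ by $\Z/\ell\Z$, and $\widetilde\rho_K(g_1\cdots g_m)\in\ell\ker\pi$ iff $\widetilde\rho_K\bmod\ell\ker\pi$ (which already carries each $g_i$ to an element of order $n_i$) descends along $\langle g_1,\dots,g_m\rangle\twoheadrightarrow\Omega_K$, i.e.\ iff $\varpi_K$ admits a lift to $\widetilde G/\ell\ker\pi$ carrying each $g_i$ to an element of order $n_i$. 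Thus $\widetilde\rho_K(g_1\cdots g_m)\bmod\ell\ker\pi$ is precisely the lifting invariant of the tautological surjection $\varpi_K$ against $\widetilde G/\ell\ker\pi$ computed from the system of inertia generators $\underline g$. Using the explicit local structure of the $g_i$ recalled before the Proposition (\cite[\S5.1--5.2]{Wood-lifting}) together with $H^2(\Gal(\Kp/\overline k(t)),\mu_\ell)\simeq\Z/\ell\Z$ (Lemma~\ref{lem:Galois-H2} and its proof), this lifting invariant equals the pairing of the mod-$\ell$ reduction of the class of $\widetilde G\to\Gal(\Kp/\overline k(t))$ in $H^2(\Gal(\Kp/\overline k(t)),\Z/\ell\Z)=\Hom(H_2(\Gal(\Kp/\overline k(t)),\Z),\Z/\ell\Z)$ against the distinguished generator of $H_2(\Gal(\Kp/\overline k(t)),\Z)_{(p|\Gamma|)'}\simeq\hZ(1)_{(p|\Gamma|)'}$ coming from the fundamental class of $X_{\overline k}$; since $\widetilde G\to\Gal(\Kp/\overline k(t))$ is a $(p|\Gamma|)'$-Schur covering, the class of $\pi$ corresponds to an isomorphism, so its mod-$\ell$ reduction does not annihilate a generator, and the lifting invariant is nonzero. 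Hence $\widetilde\rho_K(g_1\cdots g_m)$ generates $\ker\pi$ and $\iota_\pi$ is a well-defined isomorphism. It is $\Frob_k$-equivariant because $\ker\pi\simeq\hZ(1)_{(p|\Gamma|)'}$ as a $\Frob_k$-module by Lemma~\ref{lem:extension-whole}\eqref{item:e-w-3}, so $\Frob_k$ scales every topological generator of $\ker\pi$ by $q=|k|$, exactly as it scales $I(\underline g)$. Finally, any two systems of inertia generators are related by Hurwitz moves, cyclic permutation, and simultaneous conjugation, under which the product $g_1\cdots g_m$ changes only by a conjugation in $\langle g_1,\dots,g_m\rangle$ (which acts trivially on the central subgroup $\ker\pi$) and under which $I(\underline g)$ transforms compatibly, cf.\ \cite{Wood-lifting}; so $\iota_\pi$ does not depend on $\underline g$.

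\emph{Part (3).} That $d_\pi$ factors through an isomorphism $H_2(\Gal(\Kp/\overline k(t)),\Z)_{(p|\Gamma|)'}\xrightarrow{\sim}\ker\pi$ is immediate from the definition of a $(p|\Gamma|)'$-Schur covering ($d_\pi$ surjects onto $\ker\pi$, which is pro-prime-to-$p|\Gamma|$) together with Lemma~\ref{lem:extension-whole}\eqref{item:e-w-2} (which exhibits $\pi$ as such a covering) and Lemma~\ref{lem:Galois-H2} (which identifies both sides with $\hZ(1)_{(p|\Gamma|)'}$, so that a surjection between them is an isomorphism); $d_\pi$ is $\Frob_k$-equivariant by naturality of the universal coefficient map and the $\Frob_k$-structure of Lemma~\ref{lem:extension-whole}\eqref{item:e-w-3}, hence $\omega_K=d_\pi^{-1}\circ\iota_\pi$ is a $\Frob_k$-equivariant isomorphism. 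For independence of the stem extension: given two $(p|\Gamma|)'$-Schur coverings $\pi_i\colon\widetilde G_i\to\Gal(\Kp/\overline k(t))$, Lemma~\ref{lem:extension-whole}\eqref{item:e-w-2} supplies an isomorphism $\phi\colon\widetilde G_1\xrightarrow{\sim}\widetilde G_2$ over $\Gal(\Kp/\overline k(t))$, which is $\Frob_k$-equivariant by the uniqueness in Lemma~\ref{lem:extension-whole}\eqref{item:e-w-3}; then the uniqueness in part (1) gives $\widetilde\rho_{K,2}=\phi\circ\widetilde\rho_{K,1}$, so $\iota_{\pi_2}=\phi|_{\ker\pi_1}\circ\iota_{\pi_1}$, while naturality of the differential map under isomorphisms of central extensions gives $d_{\pi_2}=\phi|_{\ker\pi_1}\circ d_{\pi_1}$; therefore $d_{\pi_2}^{-1}\circ\iota_{\pi_2}=d_{\pi_1}^{-1}\circ\iota_{\pi_1}$, so $\omega_K$ is independent of the choice of Schur covering, which completes the proof.
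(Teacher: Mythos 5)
Parts (1) and (3) of your proposal are correct and essentially coincide with the paper's argument (the paper constructs the order-preserving lifts $\widetilde t_i$ first and then invokes Lemma~\ref{lem:app-stem}, whereas you lift first by projectivity and then correct the orders; the uniqueness and the formal independence argument in (3) are the same). The problems are in part (2), in both of its claims.

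For the generation claim, your reduction is fine up to the point where you must evaluate the obstruction: you correctly observe that $r=\widetilde\rho_K(g_1\cdots g_m)\in\ell\ker\pi$ if and only if $\varpi_K$ lifts to $\widetilde G/\ell\ker\pi$ with the inertia orders preserved. But the next step --- ``this lifting invariant equals the pairing of the mod-$\ell$ reduction of the class of $\pi$ against the distinguished generator of $H_2$ coming from the fundamental class of $X_{\overline k}$'' --- is precisely the hard content and is asserted rather than proved. Note that $g_1\cdots g_m$ is \emph{not} in the commutator subgroup of $\langle g_1,\dots,g_m\rangle$, so $r$ is not the image of a Hopf-formula homology class under the differential map; relating $r$ to the cohomology class of $\pi$ (equivalently, to the trace on $H^2_{\et}(X_{\overline k},\mu_\ell)$) is exactly what the paper's Lemma~\ref{lem:tr-Gamma} establishes by an explicit cocycle computation, and that lemma is logically downstream of this Proposition. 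The paper's own proof of generation avoids all of this: if $L/\overline k(t)$ is the extension cut out by the induced surjection $\Omega_K\to\widetilde G/\langle r\rangle$, the preserved inertia orders force $L/\Kp$ to be unramified, hence $L/K\overline k$ is unramified and pro-prime-to-$p|\Gamma|$, so $L=\Kp$ by the maximality defining $\Kp$; thus $\langle r\rangle=\ker\pi$. You should replace your cohomological step by this (one-line) maximality argument, or else supply the trace computation.

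For the independence of the system of inertia generators, your description of how two systems $\underline g$ and $\underline h$ differ is incomplete, and the conclusion you draw from it is false as stated. Two systems are related by a permutation $\sigma$, a twist by some $\alpha\in\hZ(1)^\times_{(p)'}$ (so that $I(\underline h)=I(\underline g)^\alpha$), and conjugation of each $h_i$ to $g_{\sigma(i)}^\alpha$ by elements $x_i$ that vary with $i$; in particular $h_1\cdots h_m$ is \emph{not} a conjugate of $(g_1\cdots g_m)^\alpha$ in the free group, so ``the product changes only by a conjugation, which acts trivially on the center'' does not apply. One really does need the paper's computation: lift $\varpi_K$ to $\widetilde\varpi_K:\Omega_K\to\widetilde G$, write $z_i=\widetilde\varpi_K(\overline h_i)^{-1}\widetilde\rho'_K(h_i)$ and $w_j=\widetilde\varpi_K(\overline g_j)^{-1}\widetilde\rho_K(g_j)$, use $\overline h_1\cdots\overline h_m=\overline g_1\cdots\overline g_m=1$ and centrality of the $z_i,w_j$ to get $\widetilde\rho'_K(h_1\cdots h_m)=\prod z_i$ and $\widetilde\rho_K(g_1\cdots g_m)=\prod w_j$, and then verify $z_i=w_{\sigma(i)}^\alpha$ by a telescoping argument; this is what makes $\iota_\pi$ independent of $\underline g$ (both $I(\underline g)$ and $r$ get raised to the same power $\alpha$). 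Your $\Frob_k$-equivariance argument, by contrast, is fine: $\Frob_k$ acts by the scalar $q$ on all of $\ker\pi\simeq\hZ(1)_{(p|\Gamma|)'}$, so any homomorphism $\hZ(1)_{(p|\Gamma|)'}\to\ker\pi$ is automatically equivariant.
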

	
	\begin{remark}\label{rmk:etale-lift}
		The isomorphism $\iota_{\pi}$ identifies $\widetilde{G}$ as a group extension of $\Gal(\Kp/\overline{k}(t))$	with kernel $\hat{\Z}(1)_{(p|\Gamma|)'}$, and hence corresponds to an element $\epsilon$ in $H^2(\Gal(\Kp/\overline{k}(t)), \hat{\Z}(1)_{(p|\Gamma|)'})$. 
		Then the differential map $H_2(\Gal(\Kp/\overline{k}(t)), \Z)_{(p|\Gamma|)'} \to  \hat{\Z}(1)_{(p|\Gamma|)'}$ obtained from $\epsilon$ is the inverse of $\omega_K$.
	\end{remark}

	\begin{proof}
		We let $\overline{g}_i$ be the image of $g_i$ in $\Omega_K$.
		We denote $t_i:=\varpi_K(\overline{g}_i)$ for each $i$ ($\varpi_K$ is the map \eqref{eq:surj-omega-K}), which is a generator of an inertia subgroup at $P_i$ of $\Gal(K'_{\O}/\overline{k}(t))$. Then $\ord(t_i)$ equals the order of its image in $\Gal(K\overline{k}/\overline{k}(t))\simeq \Gamma$ because $K'_{\O}$ is unramified over $K\overline{k}$. By definition $\rho_K$ is a lift of $\varpi_K$, $\rho_K(g_i)=t_i$ and hence $\ord(\rho_K(g_i))=\ord(t_i)$.
		Consider the central group extension \eqref{eq:stem}. The kernel has order prime to $|\Gamma|$, so for each $i$, there exists a unique lift $\widetilde{t}_i \in \pi^{-1}(t_i)$ of $t_i$ in $\widetilde{G}$ such that $\ord(\widetilde{t}_i)=\ord(t_i)$. The elements $\widetilde{t}_i$'s generate a subgroup of $\widetilde{G}$ that maps surjectively to $\Gal(K'_{\O}/\overline{k}(t))$, and therefore by Lemma~\ref{lem:app-stem}, they generate $\Gal(K'_{\O}/\overline{k}(t))$ because \eqref{eq:stem} is a stem extension.
		So the homomorphism $\langle g_1, \ldots, g_m \rangle \to \widetilde{G}$ mapping $g_i$ to $\widetilde{t}_i$ is a surjection, and is the unique surjection satisfies the conditions for $\widetilde{\rho}_K$ given in \eqref{item:etale-lift-1}.

		Note that $\rho_K$ factors through $\widetilde{\rho}_K$, so $r:=\widetilde{\rho}_K(g_1\cdots g_m)$ in contained in the subgroup $\ker \pi \simeq \hat{\Z}_{(p|\Gamma|)'}$ of $\widetilde{G}$. Define $\widehat{G}$ to be the quotient of $\widetilde{G}$ modulo $r$. Then $\widehat{G}$ fits into the following commutative diagram, where each arrow represents a surjection. 
		\begin{equation}\label{eq:diag-lift}
		\begin{tikzcd}
			\langle g_1, \ldots, g_m \rangle \arrow["/g_1\cdots g_m"]{r} \arrow["\widetilde{\rho}_K"]{d} & \Omega_K \arrow["s"]{d} \arrow["\varpi_K"]{dr} & \\
			\widetilde{G} \arrow[bend right=20, "\pi"]{rr}\arrow["/r"]{r} & \widehat{G} \arrow{r} & \Gal(K'_{\O}/\overline{k}(t)) 
		\end{tikzcd}
		\end{equation}
		By the properties of $\widetilde{\rho}_K$, the image of $g_i$ in $\widehat{G}$ has order equal to $\ord(t_i)$. So the surjection $s$ defines a $\widehat{G}$-extension $L/\overline{k}(t)$ that contains $K'_{\O}$ as a subfield, and its inertia subgroup at $P_i$ maps isomorphically to the inertia subgroup at $P_i$ of $K'_{\O}/\overline{k}(t)$. Thus, $L$ is unramified over $K'_{\O}$ and hence is unramified over $K\overline{k}$, and moreover, $\Gal(L/K\overline{k})$ is pro-prime-to-$(p|\Gamma|)$. By definition of $K'_{\O}$, we see that $L$ must be $K'_{\O}$, so $r$ generates $\ker \pi$.
		
		Assume that $\underline{h}:=(h_1,\ldots, h_m)$ is another system of inertia generators. Then there exists $\alpha\in \hat{\Z}(1)^{\times}_{(p)'}$ such that $I(\underline{h})=I(\underline{g})^{\alpha}$, and there is a permutation $\sigma \in S_m$ such that $h_i$ is conjugate to $g_{\sigma(i)}^{\alpha}$ for all $i$. 
		Then $\underline{h}$ defines a surjection $\rho'_K: \langle h_1, \ldots, h_m\rangle \to \Gal(K'_{\O}/\overline{k}(t))$, where $h_i$ maps to the corresponding inertia generator, and $\rho'_K$ lifts to a surjection $\widetilde{\rho}'_K: \langle h_1, \ldots, h_m \rangle \to \widetilde{G}$ such that $\ord(\rho'_K(h_i))=\ord(\widetilde{\rho}'_K(h_i))$ for each $i$. To prove the last claim in the statement \eqref{item:etale-lift-2}, we need to show $\widetilde{\rho}'_K(h_1\cdots h_m)=\widetilde{\rho}_K(g_1\cdots g_m)^{\alpha}$.
		
		Recall that $\Omega_K$ is a free pro-prime-to-$p$ group on $m-1$ generators. By Lemma~\ref{lem:app-stem}, the surjection $\varpi_K$ in \eqref{eq:surj-omega-K} lifts to a surjection $\widetilde{\varpi}_K: \Omega_K \to \widetilde{G}$ such that $\varpi_K=\pi \circ \widetilde{\varpi}_K$. For each $i$, we let $\overline{h}_i$ denote the image of $h_i$ in $\Omega_K$ and then define
		\[
			z_i := \widetilde{\varpi}_K(\overline{h}_i)^{-1} \widetilde{\rho}'_K(h_i).
		\]
		Because both of $\widetilde{\varpi}_K(\overline{h}_i)$ and $\widetilde{\rho}'_K(h_i)$ have image $\rho'_K(h_i)$ in $\Gal(\Kp/\overline{k}(t))$, $z_i$ is contained in $\ker \pi$, so is in the center of $\widetilde{G}$. Thus, 
		\[
			\widetilde{\rho}'_K(h_1 \cdots h_m)=\prod_{i=1}^m \widetilde{\varpi}_K(\overline{h}_i) z_i =\widetilde{\varpi}_K(\overline{h}_1\cdots \overline{h}_m) z_1 \cdots z_m = z_1 \cdots z_m.
		\]
		Similarly, we define $w_i:=\widetilde{\varpi}_K(\overline{g}_i)^{-1} \widetilde{\rho}_K (g_i)$ for each $i$. Then $w_i$ is in the center of $\widetilde{G}$ and $\widetilde{\rho}_K(g_1\cdots g_m)=w_1\cdots w_m$. Now, we fix an index $i$ and suppose $j=\sigma(i)$. 
		There exists $x \in \Omega_K$ such that $\overline{h}_i=x^{-1} \overline{g}_j^\alpha x$.
		We pick the lift $y = \widetilde{\varpi}_K (x)$ of $\varpi_K(x)$. Then $y^{-1} \widetilde{\rho}_K(g_j)^{\alpha} y$ has order equal to $\ord(\widetilde{\rho}_K(g_j))=\ord(\rho_K(g_j))=\ord (\rho'_K(h_i))$ and is a lift of $\rho'_K(h_i)$, so $y^{-1} \widetilde{\rho}_K(g_j)^{\alpha} y = \widetilde{\rho}'_K(h_i)$. Then 
		\begin{eqnarray*}
			z_i &=& \widetilde{\varpi}_K(\overline{h}_i)^{-1} \widetilde{\rho}'_K(h_i) \\
			&=& y^{-1} \widetilde{\varpi}_K(\overline{g}_j)^{-\alpha} \widetilde{\rho}_K(g_j)^{\alpha} y \\
			&=& y^{-1} w_j \left( \widetilde{\rho}_K(g_j)^{-1} w_j \widetilde{\rho}_K(g_j)\right) \left( \widetilde{\rho}_K(g_j)^{-2} w_j \widetilde{\rho}_K(g_j)^2\right) \cdots \left( \widetilde{\rho}_K(g_j)^{1-\alpha} w_j\widetilde{\rho}_K(g_j)^{\alpha-1}\right) y \\
			&=& w_j^{\alpha}.
		\end{eqnarray*}
		So we finish the proof of \eqref{item:etale-lift-2}.

		Since $\pi$ defines a stem extension, its corresponding differential map is surjective, so $d_{\pi}$ defined in \eqref{item:etale-lift-3} is a $\Frob_k$-equivariant isomorphism because of \eqref{eq:Galois-H_2} in Lemma~\ref{lem:Galois-H2}.
		Suppose there are two stem extensions $1 \to A_i \to \widetilde{G}_i \overset{\pi_i}{\to} \Gal(\Kp/\overline{k}(t)) \to 1$ for $i=1,2$. By Lemma~\ref{lem:extension-whole} \eqref{item:e-w-2} and \eqref{item:e-w-3}, there exists an $\Frob_k$-equivariant isomorphism $\phi: \widetilde{G}_1\to \widetilde{G}_2$ such that $\pi_1=\pi_2 \circ \phi$, so $\phi|_{A_1}: A_1 \to A_2$ is an isomorphism. Then the differential map $d_{\pi_i}$ satisfies $d_{\pi_2}=\phi|_{A_1} \circ d_{\pi_1}$.
		For each $i$, let $\iota_{\pi_i}$ denote the map in \eqref{eq:isom-lift-invariant} for $\pi_i$. Then by definition of $\iota_{\pi_i}$, we obtain that $\iota_{\pi_2}=\phi|_{A_1} \circ \iota_{\pi_1}$. Thus $d^{-1}_{\pi_1}\circ \iota_{\pi_1}=d^{-1}_{\pi_2}\circ \iota_{\pi_2}$ and then we proved \eqref{item:etale-lift-3}.
	\end{proof}

	Since the map $\omega_K$ in Proposition~\ref{prop:etale-lift} does not depend on the choice of $\pi$ and the system of inertia generators, it is an invariant associated to $K$. 
	
	\begin{definition}[Invariant $\omega_K$ associated to $K$]\label{def:inv-K}
		We define the \emph{invariant $\omega_K$ associated to $K$} to be the $\Frob_k$-equivariant isomorphism $\omega_K: \hat{\Z}(1)_{(q|\Gamma|)'} \to H_2(\Gal(\Kp/\overline{k}(t)), \Z)_{(p|\Gamma|)'}$ in Proposition~\ref{prop:etale-lift}\eqref{item:etale-lift-3}.
	\end{definition}

\subsection{Cohomological definition of $\omega_K$}\label{ss:coh-def-inv}
	
	As discussed in Remark~\ref{rmk:etale-lift}, the inverse of $\omega_K$ is the differential map of a class $\epsilon \in H^2(\Gal(\Kp/\overline{k}(t)), \hZ(1)_{(p|\Gamma|)'})$. In this subsection, we identify $\epsilon$ as a class in the \'etale cohomology $H^2_{\et}(X_{\overline{k}}, \hZ(1)_{(p|\Gamma|)'})$ for the curve $X_{\overline{k}}$ whose function field is $K\overline{k}$, and we compute the trace of this class in Corollary~\ref{cor:ext-tr}.

	\begin{lemma}\label{lem:another-description-iota}
		Use the notation in Proposition~\ref{prop:etale-lift}. 
		Let $\overline{g}_i\in \Omega_K$ denote the image of $g_i$ under the quotient map defined by the presentation \eqref{eq:pres-etfund}.
		Then there is a surjection $\varrho_K$ that factors through $\rho_K$ and is defined by
		\begin{eqnarray*}
			\varrho_K: \Omega_K &\longrightarrow& \widetilde{G} \\
			\overline{g}_i &\longmapsto& \widetilde{\rho}_K(g_i) \quad \text{ for } i=1, \ldots, m-1.
		\end{eqnarray*}
		In particular, we have the following identity 
		\[
			\widetilde{\rho}_{K}(g_1 \cdots g_m) = \varrho_K(\overline{g}_m)^{-1} \widetilde{\rho}_K(g_m).
		\] 
	\end{lemma}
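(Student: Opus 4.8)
The plan is to construct $\varrho_K$ from the universal property of the free pro-prime-to-$p$ group $\Omega_K$, to deduce its surjectivity from the fact that~\eqref{eq:stem} is a stem extension, and then to obtain the displayed identity by a one-line manipulation inside the free group $\langle g_1,\ldots, g_m\rangle$.

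First I would record that $\Omega_K$ is \emph{freely} generated, as a pro-prime-to-$p$ group, by $\overline{g}_1,\ldots, \overline{g}_{m-1}$: the relator in~\eqref{eq:pres-etfund} gives $\overline{g}_m=(\overline{g}_1\cdots\overline{g}_{m-1})^{-1}$, so these $m-1$ elements topologically generate $\Omega_K$, and since $\Omega_K$ is free of rank $m-1$ they form a free basis. On the other side, the group $\widetilde{G}$ of~\eqref{eq:stem} sits in a central extension of $\Gal(\Kp/\overline{k}(t))\simeq\Gp(K)\rtimes\Gamma$ by $\hZ_{(p|\Gamma|)'}$, and $\Gp(K)$, $\Gamma$, $\hZ_{(p|\Gamma|)'}$ all have order prime to $p$, so $\widetilde{G}$ is itself pro-prime-to-$p$. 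Hence the universal property of $\Omega_K$ produces a unique continuous homomorphism $\varrho_K\colon\Omega_K\to\widetilde{G}$ with $\varrho_K(\overline{g}_i)=\widetilde{\rho}_K(g_i)$ for $i=1,\ldots, m-1$; note that automatically $\varrho_K(\overline{g}_m)=\big(\widetilde{\rho}_K(g_1)\cdots\widetilde{\rho}_K(g_{m-1})\big)^{-1}$.

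Next I would check the compatibility with $\rho_K$ and the surjectivity. Writing $q\colon\langle g_1,\ldots, g_m\rangle\to\Omega_K$ for the natural quotient and $t_i:=\rho_K(g_i)$, we have $\pi\big(\varrho_K(\overline{g}_i)\big)=\pi\big(\widetilde{\rho}_K(g_i)\big)=t_i$ for $i<m$, and $\pi\big(\varrho_K(\overline{g}_m)\big)=(t_1\cdots t_{m-1})^{-1}=t_m$, the last equality because~\eqref{eq:surj-omega-K} sends the relator $\overline{g}_1\cdots\overline{g}_m$ to $1$, so $t_1\cdots t_m=1$ in $\Gal(\Kp/\overline{k}(t))$. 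Thus $\pi\circ\varrho_K$ coincides with the surjection $\varpi_K$, and therefore $\rho_K=\pi\circ\varrho_K\circ q$; this is the sense in which $\varrho_K$ is compatible with (``factors through'') $\rho_K$. For surjectivity, since $t_m$ is a word in $t_1,\ldots, t_{m-1}$ and $t_1,\ldots, t_m$ together generate $\Gal(\Kp/\overline{k}(t))$, already $t_1,\ldots, t_{m-1}$ do; hence $\pi\circ\varrho_K$ is onto, so the closed subgroup $\varrho_K(\Omega_K)$ of $\widetilde{G}$ maps onto $\Gal(\Kp/\overline{k}(t))$, and as~\eqref{eq:stem} is a stem extension, Lemma~\ref{lem:app-stem} forces $\varrho_K(\Omega_K)=\widetilde{G}$.

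For the displayed identity I would simply compute in $\langle g_1,\ldots, g_m\rangle$: from $g_1\cdots g_m=(g_1\cdots g_{m-1})\,g_m$ we get $g_m=(g_1\cdots g_{m-1})^{-1}(g_1\cdots g_m)$, and applying the homomorphism $\widetilde{\rho}_K$ yields
\[
\widetilde{\rho}_K(g_m)=\big(\widetilde{\rho}_K(g_1)\cdots\widetilde{\rho}_K(g_{m-1})\big)^{-1}\widetilde{\rho}_K(g_1\cdots g_m)=\varrho_K(\overline{g}_m)\,\widetilde{\rho}_K(g_1\cdots g_m),
\]
by the formula for $\varrho_K(\overline{g}_m)$ recorded above; rearranging gives $\widetilde{\rho}_K(g_1\cdots g_m)=\varrho_K(\overline{g}_m)^{-1}\widetilde{\rho}_K(g_m)$. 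I do not expect any real obstacle here: the only non-formal ingredients are the freeness of $\Omega_K$ on $\overline{g}_1,\ldots,\overline{g}_{m-1}$ together with $\widetilde{G}$ being pro-prime-to-$p$ (to make $\varrho_K$ well defined) and Lemma~\ref{lem:app-stem} (for surjectivity). Conceptually the lemma just says that eliminating the last inertia generator via the relator turns $\widetilde{\rho}_K$ into an honest homomorphism on $\Omega_K$, at the cost of pushing the lift $\widetilde{\rho}_K(g_1\cdots g_m)$ of the relator onto the generator $\overline{g}_m$.
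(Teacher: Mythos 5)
Your proposal is correct and follows essentially the same route as the paper: the paper defines $\varrho_K$ as $\widetilde{\rho}_K\circ\varepsilon$ where $\varepsilon\colon\Omega_K\to\langle g_1,\ldots,g_m\rangle$ sends $\overline{g}_i\mapsto g_i$ for $i<m$ (which is exactly your appeal to freeness of $\Omega_K$ on $\overline{g}_1,\ldots,\overline{g}_{m-1}$), gets surjectivity from Lemma~\ref{lem:app-stem} in the same way, and derives the identity from $\overline{g}_1\cdots\overline{g}_m=1$ by the same one-line computation. Your extra verification that $\pi\circ\varrho_K=\varpi_K$ is a harmless elaboration of what the paper leaves implicit.
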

	
	\begin{proof}
		We consider the injective homomorphism 
		\begin{eqnarray*}
			\varepsilon: \Omega_K &\longrightarrow& \langle{g_1, \ldots, g_m}\rangle \\
			\overline{g}_i &\longmapsto& g_i \quad \text{ for } i=1, \ldots, m-1.
		\end{eqnarray*}
		Then the composition map $\varrho_K:=\widetilde{\rho}_K \circ \varepsilon$ is a homomorphism from $\Omega_K$ to $\widetilde{G}$ such that $\varrho_K (\overline{g}_i) = \widetilde{\rho}_K(g_i)$ for $i=1, \ldots, m-1$. Then, since $\rho_K(g_i)$, $i=1, \ldots, m-1$ generate $\Gal(\Kp/\overline{k}(t))$, it follows by Lemma~\ref{lem:app-stem} that $\varrho_K$ is a surjection.
		Because $\overline{g}_1\cdots \overline{g}_{m}=1$, we see that $\varrho_K(\overline{g}_m)=\varrho_K(\overline{g}_1\cdots \overline{g}_{m-1})^{-1}=\widetilde{\rho}_K(g_1 \cdots g_m)^{-1} \widetilde{\rho}_K(g_m)$.
	\end{proof}

	\begin{lemma}\label{lem:tg-isom}
		Let $\widetilde{G}$ and $\pi$ be as in the group extension~\eqref{eq:stem} in Lemma~\ref{lem:extension-whole}. We define $\widetilde{H}:=\pi^{-1}(\Gp(K))$ and $\varpi:=\pi|_{\widetilde{H}}$, and obtain the following stem extension of $\Gp(K)$
		\begin{equation}\label{eq:ext-H}
			1 \longrightarrow \ker \varpi \longrightarrow \widetilde{H} \overset{\varpi}{\longrightarrow} \Gp(K) \longrightarrow 1.
		\end{equation}
		Then, for any integer $n$ satisfying $\gcd(n, p|\Gamma|)=1$, the transgression map defined by the extension $\varpi$
		\[
			tg_{\varpi,n}: H^1(\ker \varpi, \Z/n\Z) \longrightarrow H^2(\Gp(K), \Z/n\Z)
		\]
		is an isomorphism.
	\end{lemma}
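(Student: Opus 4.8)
The plan is to derive the isomorphism from the five-term exact sequence of the Lyndon--Hochschild--Serre spectral sequence attached to the central extension \eqref{eq:ext-H}, pinning down the two extreme maps with the cohomological computation already carried out for Lemma~\ref{lem:Galois-H2}. Before that I would check the assertion in the statement that \eqref{eq:ext-H} is a stem extension: it is central because $\ker\varpi=\ker\pi\simeq\hZ_{(p|\Gamma|)'}$ lies in the center of $\widetilde G$, and to see $\ker\pi\subseteq[\widetilde H,\widetilde H]$ I would use admissibility of $\Gp(K)$ (Lemma~\ref{lem:extension-whole}\eqref{item:e-w-1}), which gives $\widetilde G^{\mathrm{ab}}\simeq\Gamma^{\mathrm{ab}}$ since $\widetilde G\to\Gal(\Kp/\overline k(t))$ is stem; as $\ker\pi$ has order prime to $|\Gamma|$ it dies in $\widetilde G^{\mathrm{ab}}$, so $\ker\pi\subseteq[\widetilde G,\widetilde G]$. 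Choosing a Schur--Zassenhaus complement $\widetilde G=\widetilde H\rtimes\Gamma$ and using $[\widetilde G,\widetilde G]\cap\widetilde H=[\widetilde H,\widetilde H]\cdot[\widetilde H,\Gamma]$, the image of $\ker\pi$ in $\widetilde H^{\mathrm{ab}}$ lies in the augmentation submodule $I_{\Gamma}\widetilde H^{\mathrm{ab}}$; but $\Gamma$ acts trivially on the central subgroup $\ker\pi$, so this image also lies in $(\widetilde H^{\mathrm{ab}})^{\Gamma}$, and these two submodules intersect trivially on the prime-to-$|\Gamma|$ part of $\widetilde H^{\mathrm{ab}}$ (on which $|\Gamma|$ is invertible), so the image is $0$.

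Next I would record the shape of the relevant cohomology of $\Gp(K)$. The proof of Lemma~\ref{lem:Galois-H2} shows $H^2(\Gp(K),\mu_m)\simeq\Z/m\Z$ for all $m$ prime to $p|\Gamma|$, compatibly with reduction; since $\Gp(K)=\pi_1^{\et}(X_{\overline k})$ has cohomological dimension $2$ away from $p$, this gives $H^2(\Gp(K),\hZ(1)_{(p|\Gamma|)'})\simeq\hZ_{(p|\Gamma|)'}$ with each $H^2(\Gp(K),\Z/m\Z)$ its reduction. Because \eqref{eq:ext-H} is stem, pushing its kernel forward to $\ker\varpi/\ell\ker\varpi\simeq\Z/\ell\Z$ keeps the extension nonsplit for every $\ell\nmid p|\Gamma|$, so the class $[\varpi]\in H^2(\Gp(K),\ker\varpi)$ of \eqref{eq:ext-H} has nonzero image in each $H^2(\Gp(K),\Z/\ell\Z)$; hence $[\varpi]$ is a topological generator of $H^2(\Gp(K),\hZ_{(p|\Gamma|)'})$. (Equivalently, \eqref{eq:ext-H} is a $(p|\Gamma|)'$-Schur covering of $\Gp(K)$, its kernel being a continuous surjective, hence isomorphic, image of $H_2(\Gp(K),\Z)_{(p|\Gamma|)'}\simeq\hZ_{(p|\Gamma|)'}$.)

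With these in hand the argument is short. For the central extension \eqref{eq:ext-H} with trivial coefficients $\Z/n\Z$, and using that $\ker\varpi$ is central so that $H^1(\ker\varpi,\Z/n\Z)^{\Gp(K)}=\Hom(\ker\varpi,\Z/n\Z)$, the five-term exact sequence reads
\[
0\to H^1(\Gp(K),\Z/n\Z)\to H^1(\widetilde H,\Z/n\Z)\xrightarrow{\ \res\ }\Hom(\ker\varpi,\Z/n\Z)\xrightarrow{\ tg_{\varpi,n}\ }H^2(\Gp(K),\Z/n\Z)\xrightarrow{\ \inf\ }H^2(\widetilde H,\Z/n\Z).
\]
For injectivity of $tg_{\varpi,n}$: since \eqref{eq:ext-H} is stem, $\ker\varpi\subseteq[\widetilde H,\widetilde H]$, so every homomorphism $\widetilde H\to\Z/n\Z$ is trivial on $\ker\varpi$, i.e. $\res=0$, and exactness gives the claim. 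For surjectivity: the pullback of \eqref{eq:ext-H} along $\varpi$ has the diagonal $\widetilde H\hookrightarrow\widetilde H\times_{\Gp(K)}\widetilde H$ as a section, so $\inf[\varpi]=0$ in $H^2(\widetilde H,\ker\varpi)$, hence $\inf(\overline e)=0$ in $H^2(\widetilde H,\Z/n\Z)$, where $\overline e$ is the image of $[\varpi]$ under $\ker\varpi\twoheadrightarrow\Z/n\Z$; by the previous step $\overline e$ generates $H^2(\Gp(K),\Z/n\Z)\simeq\Z/n\Z$, so $\inf=0$ and exactness finishes the proof. (Alternatively one can invoke the standard identification of the transgression of a central extension with the map $f\mapsto f_*[\varpi]$, which displays $tg_{\varpi,n}$ directly as multiplication by a unit on $\Z/n\Z$.)

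The step requiring genuine care is the stem verification: restricting the Schur covering $\widetilde G\to\Gal(\Kp/\overline k(t))$ to the index-$|\Gamma|$ piece over $\Gp(K)$ could in principle push the kernel out of the commutator subgroup, and it is precisely the coprimality of $|\ker\pi|$ and $|\Gamma|$ together with the admissibility of $\Gp(K)$ that rules this out; everything after that is a formal consequence of the five-term sequence and the cyclicity of $H^2(\Gp(K),\Z/m\Z)$ already established in Lemma~\ref{lem:Galois-H2}.
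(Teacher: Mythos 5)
Your proof is correct, and while the injectivity half coincides with the paper's (both use the five-term sequence together with $\widetilde H^{\mathrm{ab}}\simeq \Gp(K)^{\mathrm{ab}}$ to kill the restriction map), the other two pieces take genuinely different routes. For the stem verification, the paper splits $\widetilde H^{\mathrm{ab}}\simeq N\oplus \Gp(K)^{\mathrm{ab}}$ using semisimplicity of $\hZ_{(|\Gamma|)'}[\Gamma]$ and admissibility of $\Gp(K)$, then quotients $\widetilde G$ by $[\widetilde H,\widetilde H]$ and observes the resulting stem extension forces $N=1$; your argument via $[\widetilde G,\widetilde G]\cap\widetilde H=[\widetilde H,\widetilde H]\cdot[\widetilde H,\Gamma]$ and the decomposition $M=M^{\Gamma}\oplus I_{\Gamma}M$ is a clean alternative that in fact only uses centrality of $\ker\pi$ and coprimality — your appeal to admissibility is redundant, since $\ker\pi\subseteq[\widetilde G,\widetilde G]$ is already part of the definition of a stem extension. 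For surjectivity, the paper ascends to $\Gal(\Kp/\overline k(t))$: it shows $tg_{\pi,n}$ is an isomorphism using that $\widetilde G$ is a $(p|\Gamma|)'$-Schur covering group and hence has vanishing prime-to-$(p|\Gamma|)$ Schur multiplier, and then deduces surjectivity of $tg_{\varpi,n}$ from the factorization of $tg_{\pi,n}$ through it and the restriction isomorphism $H^2(\Gal(\Kp/\overline k(t)),\Z/n\Z)\simeq H^2(\Gp(K),\Z/n\Z)$. You instead stay entirely inside $\Gp(K)$: since $\varpi$ is stem, its class is nonsplit modulo every prime $\ell\nmid p|\Gamma|$, hence its pushforward generates the cyclic group $H^2(\Gp(K),\Z/n\Z)\simeq\Z/n\Z$, and the transgression (which sends $f$ to $f_*[\varpi]$) therefore hits a generator. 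Your route is more self-contained — it avoids the citation to the Schur-multiplier vanishing of covering groups and avoids the detour through the ambient Galois group — at the cost of relying more heavily on the cyclicity of $H^2(\Gp(K),\Z/m\Z)$ compatibly with reduction of coefficients, which is indeed available from the functorial trace isomorphisms in the proof of Lemma~\ref{lem:Galois-H2}. One cosmetic correction: $\Gp(K)$ is the pro-prime-to-$p|\Gamma|$ completion of $\pi_1^{\et}(X_{\overline k})$, not the full fundamental group.
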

	
	\begin{proof}
		We first show that \eqref{eq:ext-H} is a stem extension. It is central because \eqref{eq:stem} is central. So it suffices to prove that $\ker \varpi$ is contained in $[\widetilde{H},\widetilde{H}]$, or equivalently, that the homomorphism of abelianizations $\widetilde{H}^{ab} \to \Gp(K)^{ab}$ induced by $\varpi$ is an isomorphism. Let $N$ denote the kernel of $\widetilde{H}^{ab} \to \Gp(K)^{ab}$. Then the short exact sequence $1 \to N \to \widetilde{H}^{ab} \to \Gp(K)^{ab} \to 1$ has an action of $\Gamma$ inherited from $\varpi$, and in particular, the $\Gamma$ action on $N$ is trivial. Because $\Gp(K)$ is an admissible $\Gamma$-group (Lemma~\ref{lem:extension-whole}\eqref{item:e-w-1}), its abelianization is a $\hZ_{(|\Gamma|)'}[\Gamma]$-module whose $\Gamma$-invariant is trivial. Then, since $\hZ_{(|\Gamma|)'}[\Gamma]$ is a semisimple ring, we have 
		\begin{equation}\label{eq:2.9}
			\widetilde{H}^{ab} \simeq N \oplus \Gp(K)^{ab}.
		\end{equation}
		On the other hand, by taking quotient of $\widetilde{G}$ in \eqref{eq:stem} modulo $[\widetilde{H}, \widetilde{H}]$, we obtain a short exact sequence 
		\[
			1 \longrightarrow N \longrightarrow \widetilde{H}^{ab}\rtimes \Gamma \longrightarrow \Gp(K)^{ab} \rtimes \Gamma \longrightarrow 1,
		\]
		which is a stem extension because $\pi$ is stem. The isomorphism \eqref{eq:2.9} implies that the abelianization of $\widetilde{H}^{ab} \rtimes \Gamma$ is isomorphic to $N \times \Gamma^{ab}$. Therefore, $N$ must be trivial, so \eqref{eq:ext-H} is stem.

		Consider the transgression map $tg_{\pi,n}: H^1(\ker \pi, \Z/n\Z) \to H^2(\Gal(\Kp/\overline{k}(t)), \Z/n\Z)$ defined by $\pi$. Since $\Gal(\Kp/\overline{k}(t))^{ab}\simeq \Gamma^{ab}$ and we've shown that $\pi$ is the pro-prime-to-$(p|\Gamma|)$ Schur covering of $\Gal(\Kp/\overline{k}(t))$, the prime-to-$(p|\Gamma|)$ part of the Schur multiplier of $\widetilde{G}$ is trivial \cite[Chap.10, Prop.1.12]{GroupRep}. So $H^2(\widetilde{G}, \Z/n\Z)=0$, and then $tg_{\pi,n}$ is surjective. 
		On the other hand, since $\pi$ is a stem extension, $\widetilde{G}^{ab}\simeq \Gal(\Kp/\overline{k}(t))^{ab}$, so the inflation map $H^1(\Gal(\Kp/\overline{k}(t)), \Z/n\Z) \to H^1(\widetilde{G}, \Z/n\Z)$ is an isomorphism, which shows that $tg_{\pi,n}$ is an isomorphism. 
		
		Finally, consider the exact sequence
		\[
			0 \to H^1(\Gp(K), \Z/n\Z) \to H^1(\widetilde{H}, \Z/n\Z) \to H^1(\ker \varpi, \Z/n\Z) \overset{tg_{\varpi,n}}{\longrightarrow} H^2(\Gp(K),\Z/n\Z) \to \cdots .
		\]
		Because $\widetilde{H}^{ab} \simeq \Gp(K)^{ab}$, $tg_{\varpi, n}$ is injective.
		Also, 
		$tg_{\pi,n}$ is the composition map
		\[
			H^1(\ker \pi, \Z/n\Z) \overset{=}{\longrightarrow} H^1(\ker \varpi, \Z/n\Z) \overset{tg_{\varpi, n}}{\longrightarrow} H^2(\Gp(K), \Z/n\Z) \overset{\res}{\longrightarrow} H^2(\Gal(\Kp/\overline{k}(t)), \Z/n\Z).
		\] 
		Recall that we've shown in the proof of Lemma~\ref{lem:Galois-H2} that the map $\res$ is an isomorphism. So we see that $tg_{\pi,n}$ being isomorphic and $tg_{\varpi, n}$ being injective implies that $tg_{\varpi,n}$ is also an isomorphism.
	\end{proof}

	Use the notation in Lemmas~\ref{lem:another-description-iota} and \ref{lem:tg-isom}. Let $X$ be the smooth projective curve over $k$ whose function field is $K$, and let $X_{\overline{k}}$ denote the base change of $X$ to $\overline{k}$. 
	Assume that $n$ is a positive integer relatively prime to $p|\Gamma|$.
	We consider the following composition map
	\begin{equation}\label{eq:map-tr}
	\begin{tikzcd}
		H^1(\ker \varpi, \mu_n) \arrow["tg_{\varpi,n}", "\sim"']{r} & H^2(\Gp(K), \mu_n) \arrow["\sim"']{r} & H^2_{\et}(X_{\overline{k}}, \mu_n) \arrow["\tr_{X,n}", "\sim"']{r} & \Z/n\Z.
	\end{tikzcd}
	\end{equation}
	Here $tg_{\varpi,n}$ is obtained by tensoring $\mu_n$ to the transgression map in Lemma~\ref{lem:tg-isom}. The middle arrow follows by the canonical isomorphism between Galois cohomology and \'etale cohomology and the fact (we proved in Lemma~\ref{lem:Galois-H2}) that the inflation map $H^2(\Gp(K), \mu_n)\to H^2(G_{\O}(K\overline{k}), \mu_n)$ is an isomorphism.
		 The last arrow is the trace obtained by the Poincar\'e duality.
		 
		 Let $\fraki$ denote the homomorphism 
		 \begin{eqnarray}
		 	\ker \varpi &\overset{\fraki}{\longrightarrow}& \hat{\Z}(1)_{(p|\Gamma|)'} \label{eq:fraki}\\
			\widetilde{\rho}_K(g_1\cdots g_m) &\longmapsto& I(\underline{g}), \nonumber
		 \end{eqnarray}
		 and then let $\fraki_n: \ker \varpi \to \mu_n$ denote the composition of $\fraki$ and the natural surjection $\hat{\Z}(1)_{(p|\Gamma|)'} \to \mu_n$.

	\begin{lemma}\label{lem:tr-Gamma}
		 
		The image of $\fraki_n$ under \eqref{eq:map-tr} is $-|\Gamma|$ mod $n$.
	\end{lemma}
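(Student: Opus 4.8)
The plan is to recognize $tg_{\varpi,n}(\fraki_n)$ as the mod-$n$ class of a concrete central extension of $\Gp(K)$, transport it to $H^2_{\et}(X_{\overline k},\mu_n)$ via the comparison isomorphisms of Lemma~\ref{lem:Galois-H2}, and then compute its trace by a localization (Gysin) argument: restrict everything to the open curve $V$ obtained by deleting the points over the branch locus, where the extension splits because $\pi_1^{\et}(V)_{(p)'}$ is free, and read off the answer from the ``residues'' of the splitting $\varrho_K$ at the missing points, using Lemma~\ref{lem:another-description-iota}.

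\textbf{Step 1: reduction to a sum of local residues.} By the standard relationship between the transgression and extension classes, $tg_{\varpi,n}(\fraki_n)$ is, up to a fixed universal sign, the class $\epsilon_n\in H^2(\Gp(K),\mu_n)$ of the $\mu_n$-central extension $\widetilde H/(\ker\varpi)^n\to\Gp(K)$, where the kernel is identified with $\mu_n$ via $\fraki_n$. Since the trace map in \eqref{eq:map-tr} is defined through the inflation $H^2(\Gp(K),\mu_n)\xrightarrow{\sim}H^2(\pi_1^{\et}(X_{\overline k})_{(p)'},\mu_n)$ (an isomorphism compatible with traces, by the proof of Lemma~\ref{lem:Galois-H2}), we may work with $\Pi:=\pi_1^{\et}(X_{\overline k})_{(p)'}$. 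Let $f\colon X_{\overline k}\to\PP^1_{\overline k}$ be the branched $\Gamma$-cover, $U=\PP^1_{\overline k}\setminus\{P_1,\dots,P_m\}$, and $V:=f^{-1}(U)=X_{\overline k}\setminus f^{-1}(\{P_1,\dots,P_m\})$, an affine curve with $f|_V\colon V\to U$ étale Galois of group $\Gamma$. As $V$ is affine, $H^2_{\et}(V,\mu_n)=0$, so in the localization sequence the Gysin map $\gamma\colon\bigoplus_Q H^2_Q(X_{\overline k},\mu_n)\twoheadrightarrow H^2_{\et}(X_{\overline k},\mu_n)$ (sum over the points $Q$ of $X_{\overline k}$ above the $P_i$, each $H^2_Q\cong\mu_n(-1)$ by purity) is surjective and $\tr_{X,n}\circ\gamma$ is the sum map $(a_Q)\mapsto\sum_Q a_Q$. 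On the other hand $\pi_1^{\et}(V)_{(p)'}=\varpi_K^{-1}(\Gp(K))=:\Omega_K^\circ$ is free, so the pullback of $\widetilde H\to\Gp(K)$ splits, and $\varrho_K|_{\Omega_K^\circ}$ (Lemma~\ref{lem:another-description-iota}) furnishes an explicit splitting. Matching, prime by prime, the five-term exact sequence of $1\to\bar N\to\Omega_K^\circ\to\Pi\to 1$ (with $\bar N=\ker(\Omega_K^\circ\to\Pi)$ the normal closure of the inertia subgroups $I_Q\subset\Omega_K^\circ$, and using that $X_{\overline k}$ and $V$ are $K(\pi,1)$'s since $\Gamma\neq1$) with the étale localization sequence for $V\hookrightarrow X_{\overline k}$ identifies the transgression with $\gamma$ under the standard isomorphism $H^1(\bar N,\mu_n)^{\Pi}\cong\bigoplus_Q H^2_Q(X_{\overline k},\mu_n)$ sending a $\Pi$-invariant homomorphism to the tuple of its restrictions to the $I_Q$ (here $\bar N^{ab}$ is the free $\Z_{(p)'}[\Pi]$-module on the classes of the canonical inertia generators). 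Consequently $\epsilon_n=\gamma\big((a_Q)_Q\big)$ with $a_Q=\fraki_n\big(\varrho_K(\text{canonical generator of }I_Q)\big)\in\mu_n(-1)$, and therefore $\tr_{X,n}(\epsilon_n)=\sum_Q a_Q$.

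\textbf{Step 2: computing the residues.} Fix a system of inertia generators $\underline g=(g_1,\dots,g_m)$. Recall $\widetilde\rho_K(g_i)$ is the unique lift of $t_i:=\rho_K(g_i)$ to $\widetilde G$ of the same order $e_i:=\ord(t_i)$ (Proposition~\ref{prop:etale-lift}\eqref{item:etale-lift-1}), that $e_i$ is the ramification index over $P_i$, and that $z:=\widetilde\rho_K(g_1\cdots g_m)$ generates $\ker\varpi$ with $\fraki(z)=I(\underline g)$. Over $P_i$ there are $|\Gamma|/e_i$ points $Q$, and the canonical generator of each $I_Q$ maps, inside $\Omega_K=\pi_1^{\et}(U)_{(p)'}$, to an $\Omega_K$-conjugate of $\overline g_i^{\,e_i}$; since $\ker\varpi$ is central, $\fraki_n(\varrho_K(x\,\overline g_i^{\,e_i}\,x^{-1}))=\fraki_n(\varrho_K(\overline g_i^{\,e_i}))$. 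For $i<m$, Lemma~\ref{lem:another-description-iota} gives $\varrho_K(\overline g_i)=\widetilde\rho_K(g_i)$, so $\varrho_K(\overline g_i^{\,e_i})=\widetilde\rho_K(g_i)^{e_i}=1$; for $i=m$ it gives $\varrho_K(\overline g_m)=z^{-1}\widetilde\rho_K(g_m)$, whence $\varrho_K(\overline g_m^{\,e_m})=z^{-e_m}\widetilde\rho_K(g_m)^{e_m}=z^{-e_m}$. Thus $a_Q=0$ for every $Q$ over $P_i$ with $i<m$, while $a_Q$ is the image of $I(\underline g)^{-e_m}$ in $\mu_n$ for each of the $|\Gamma|/e_m$ points over $P_m$. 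Identifying $\mu_n(-1)$ with $\Z/n\Z$ compatibly with the generator $I(\underline g)$, we obtain $\sum_Q a_Q=(|\Gamma|/e_m)\cdot(-e_m)\equiv-|\Gamma|\pmod n$; this is independent of the ordering of the $P_i$ and of the choice of $\underline g$, as it must be by Proposition~\ref{prop:etale-lift}\eqref{item:etale-lift-2}. Combining with Step 1 yields $\tr_{X,n}(tg_{\varpi,n}(\fraki_n))\equiv-|\Gamma|\pmod n$.

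\textbf{Main obstacle.} The delicate point is the identity $\epsilon_n=\gamma\big((a_Q)_Q\big)$ used in Step~1, i.e. matching the group-cohomological transgression of $\fraki_n\circ\varrho_K|_{\bar N}$ with the geometric Gysin boundary. This is a comparison of two comparison theorems (between the five-term sequence of $1\to\bar N\to\pi_1^{\et}(V)_{(p)'}\to\pi_1^{\et}(X_{\overline k})_{(p)'}\to1$ and the étale localization sequence), together with the purity identification $H^2_Q\cong\mu_n(-1)$ and the computation that $\bar N^{ab}$ is free over $\Z_{(p)'}[\pi_1^{\et}(X_{\overline k})]$ on the canonical inertia classes. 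The remaining loose end is bookkeeping of the overall sign, which involves only the conventions for the transgression and for the Poincaré-duality trace; it is pinned down by a direct cocycle computation (or by comparison with the classical hyperelliptic case, where the answer must be $-2=-|\Z/2\Z|$), and gives $-|\Gamma|$ rather than $+|\Gamma|$.
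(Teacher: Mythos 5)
Your strategy is sound and, at bottom, it is the same as the paper's: localize at the branch locus, identify the class $tg_{\varpi,n}(\fraki_n)$ with a sum of local contributions via a transgression, observe that Lemma~\ref{lem:another-description-iota} forces all local terms to vanish except at the points over $P_m$, and sum $(|\Gamma|/e)\cdot(-e)=-|\Gamma|$. Your Step 2 is essentially identical to the paper's endgame (including the identity $\varrho_K(\overline g_m^{\,e})=\widetilde\rho_K(g_1\cdots g_m)^{-e}$ and the count of $c=|\Gamma|/e$ primes over $v$). The difference is in how the geometric side is packaged. You delete only the fibers over the branch points and invoke the \'etale localization/Gysin sequence with absolute purity, asserting that the group-cohomological transgression of $1\to\bar N\to\pi_1^{\et}(V)_{(p)'}\to\pi_1^{\et}(X_{\overline k})_{(p)'}\to1$ matches the Gysin boundary and that each purity class $H^2_Q\cong\mu_n(-1)$ is normalized to have trace $1$. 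The paper instead works with $S$ equal to \emph{all} places, describes the cycle class map concretely as the composite of the connecting maps $\delta_1$ (from the divisor sequence \eqref{eq:excision}) and $\delta_2$ (from the Kummer sequence), and verifies by an explicit cocycle computation that the square \eqref{eq:diag-com} commutes, i.e.\ that $tg\circ f_{\frakp}(1_{\frakp})$ is exactly the class of trace $1$. That verification is the real content of the lemma, and it is precisely the step you flag as your ``main obstacle'' and leave as an assertion; your appeal to purity does not by itself pin down the normalization (which local character of $I_Q$ has trace $+1$ rather than $-1$ or a unit multiple), and a wrong normalization here would change the answer by a sign or a unit. So: correct architecture and correct local computation, but the compatibility you defer is not a routine formality — it is where the paper spends most of its proof, and you should carry out the cocycle comparison (your parenthetical ``direct cocycle computation'') to close it. One further small point to make explicit: in converting $[\overline g_m^{\,e}\mapsto I(\underline g)_n^{-e}]$ into $-e\in\Z/n\Z$ you are implicitly using that $\overline g_m^{\,e}$ acts on $\sqrt[n]{a_{\frakp}}$ by the canonical root $I(\underline g)_n$, which follows from $a_v=a_{\frakp}^{e}$ together with $\gcd(e,n)=1$; the paper records this step and you should too.
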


	\begin{proof}
		We first recall the definition of $\tr_{X,n}$. Let $C^1(X_{\overline{k}})$ be the free group on the set of prime 1-cycles on $X_{\overline{k}}$. Then the cycle map $\cl_X: C^1(X_{\overline{k}}) \to H^2_{\et}(X_{\overline{k}}, \mu_{n})$ is the composition of the canonical maps $C^1(X_{\overline{k}}) \to \Pic(X_{\overline{k}})$ and $\Pic(X_{\overline{k}}) \to H^2_{\et}(X_{\overline{k}}, \mu_n)$, and $\cl_X$ maps a prime 1-cycle to the element whose $\tr_{X,n}$-trace is 1. Rewriting everything in Galois cohomology, these two maps are two connecting homomorphisms. 
		We let $S$ denote the set of all places of $K\overline{k}$, and for any extension $L$ of $K\overline{k}$, we let $S(L)$ denote the set of all places of $L$.
		Then the first map $C^1(X_{\overline{k}}) \to \Pic(X_{\overline{k}})$ is the connecting homomorphism 
		\[
			\delta_1: \quad \bigoplus_{\frakp \in S} \Z = H^0\left(G_{\O}(K\overline{k}), \bigoplus_{\frakP \in S(K_{\O})} \Z \right) \longrightarrow H^1\left(G_{\O}(K\overline{k}), \calO_{K_{\O}}^{\times}\right)
		\]
		associated to the short exact sequence of $G_{\O}(K\overline{k})$-modules
		\begin{equation}\label{eq:excision}
		\begin{tikzcd}
			1 \arrow{r} &\calO_{K_{\O}}^{\times} \arrow{r} &K_{\O}^{\times} \arrow["\oplus \ord_{\frakP}"]{r} & \bigoplus_{\frakP \in S(K_{\O})} \Z \arrow{r} &0.
		\end{tikzcd}
		\end{equation}
		The second map $\Pic(X_{\overline{k}}) \to H^2_{\et}(X_{\overline{k}}, \mu_n)$ is 
		\[
			\delta_2: \quad H^1 \left(G_{\O}(K\overline{k}), \calO_{K_{\O}}^{\times}\right) \longrightarrow H^2(G_{\O}(K\overline{k}), \mu_n),
		\]
		associated to the Kummer sequence $1 \to \mu_n \to \calO_{K_{\O}}^{\times} \overset{n}{\longrightarrow} \calO_{K_{\O}}^{\times} \to 1$. 
		
		There is a short exact sequence 
		\[
			 1 \longrightarrow N \longrightarrow G'_S(K) \longrightarrow G'_{\O}(K) \longrightarrow 1,
		\]
		where $G'_S(K)$ denotes $G_S(K\overline{k})_{(p|\Gamma|)'}$.
		Applying the Hochschild--Serre exact sequence, we obtain
		\begin{eqnarray}
			0 \longrightarrow H^1(G'_{\O}(K), \mu_{n}) &\longrightarrow& H^1(G'_S(K), \mu_{n}) \longrightarrow H^1(N, \mu_{n})^{G'_{\O}(K)} \nonumber\\
			&\overset{tg}{\longrightarrow}& H^2(G'_{\O}(K), \mu_{n}) \longrightarrow 0 \label{eq:HS-seq},
		\end{eqnarray}
		where the last arrow uses the fact that $G'_S(K)$ is a free pro-prime-to-$p|\Gamma|$ group. 
		By the Riemann existence theorem (a prime-to-$p|\Gamma|$ analogue of \cite[Thm.(10.1.10)]{NSW}),
		$N$ is the free product of the prime-to-$p|\Gamma|$ completions of inertia subgroups at places in $S(\Kp)$, so
		\[
			H^1(N, \mu_{n})^{G'_{\O}(K)} =\left( \bigoplus_{\frakP \in S(\Kp)} H^1(\calT_{\frakP}, \mu_{n}) \right)^ {G'_{\O}(K)}
		= \bigoplus_{\frakp \in S} H^1(\calT_{\frakp}, \mu_{n}),
		\]
		where $\calT_{\frakp}$ (resp. $\calT_{\frakP}$) is the pro-prime-to-$p|\Gamma|$ completion of the local inertia group at $\frakp$ (resp. $\frakP$).
		The last equality above is because $\calT_{\frakP}=\calT_{\frakp}$ for each $\frakP | \frakp$ and $\Gp(K)$ acts transitively and freely on all the primes above $\frakp$, which both follow by the fact that $\Kp/K\overline{k}$ is split completely at all primes. 
		For a place $\frakp \in S$, we let $(K\overline{k})_{\frakp}$ denote the completion of $K\overline{k}$ at $\frakp$.
		Let $a_{\frakp}$ be a uniformizer of the prime $\frakp$ of $(K\overline{k})_{\frakp}$. Then $\calT_{\frakp}$ is the prime-to-$p|\Gamma|$ completion of the $\Gal\left((K\overline{k})_{\frakp}[a_{\frakp}^{1/\infty}] /(K\overline{k})_{\frakp}\right)$, so there is a canonical isomorphism $\hat{\Z}(1)_{(p|\Gamma|)'} \to \calT_{\frakp}$ arising from the action of $\hat{\Z}(1)$ as ring automorphisms of $(K\overline{k})_{\frakp}[a_{\frakp}^{1/\infty}]$ over $(K\overline{k})_{\frakp}$. 
		So we can define a surjection 
		\begin{eqnarray*}
			f_{\frakp}: \Z & \longrightarrow & H^1(\calT_{\frakp}, \mu_n) \\
			1 & \longmapsto & \left( \sigma \mapsto \frac{\sigma(\sqrt[n]{a_{\frakp}})}{\sqrt[n]{a_{\frakp}}} \right) 
		\end{eqnarray*}
		We will prove by explicitly computing the cocycles that the following diagram commutes 
		\begin{equation}\label{eq:diag-com}
		\begin{tikzcd}
			\bigoplus_{\frakp \in S} \Z \arrow["\delta_1"]{r} \arrow["\oplus f_{\frakp}"]{d} & H^1\left(G_{\O}(K\overline{k}), \calO_{K_{\O}^{\times}} \right) \arrow["\delta_2"]{d} \\
			\bigoplus_{\frakp \in S} H^1(\calT_{\frakp}, \mu_n) \arrow["tg"]{r} & H^2(G_{\O}(K\overline{k}), \mu_n),
		\end{tikzcd}
		\end{equation}
		where the lower map $tg$ is the composition of $tg$ in \eqref{eq:HS-seq} and the canonical isomorphism $H^2(\Gp(K), \mu_n) \simeq H^2(G_{\O}(K\overline{k}), \mu_n)$. 
		
		Let $1_{\frakp}$ denote the element $1$ in the component $\Z$ for $\frakp$. Then $f_{\frakp}(1_{\frakp})\in H^1(\calT_{\frakp}, \mu_n) \subset H^1(N, \mu_n)^{\Gp(K)}=\Hom_{G'_S(K)}(N, \mu_n)$ is a surjection from $N \to \mu_n$ whose kernel is closed under the conjugation in $G'_S(K)$. Let $L$ denote the field fixed by $\ker f_{\frakp}(1_{\frakp})$. By the Kummer theory, there exists $b\in \Kp$ such that $L=\Kp(\sqrt[n]{b})$ and $\ord_{\frakP}(b)=1$ for any $\frakP \in S(\Kp)$ above $\frakp$.  
		Then the isomorphism $\Gal(L/\Kp)\simeq \mu_n$ induced by $f_{\frakp}(1_{\frakp})$ is defined by $g \mapsto \frac{g(\sqrt[n]{b})}{\sqrt[n]{b}}$ for $g\in \Gal(L/\Kp)$.
		So we have a central group extension
		\[\begin{tikzcd}
			1 \arrow{r} &\mu_n \arrow{r} &\Gal(L/K\overline{k}) \arrow{r} &\Gp(K) \arrow{r} \arrow["t", swap, bend right=30]{l} &1.
		\end{tikzcd}\]
		and we pick a splitting $t: \Gp(K) \to \Gal(L/K\overline{k})$. Then $tg \circ f_{\frakp}(1_{\frakp})$ is the class corresponding to this central extension, and explicitly, the class containing the 2-cocyle 
		\begin{eqnarray*}
			z: \Gp(K) \times \Gp(K) & \longrightarrow& \mu_n \\
			(\sigma, \tau)   & \longmapsto &\frac{t(\sigma\tau)^{-1}t(\sigma)t(\tau) (\sqrt[n]{b})}{ \sqrt[n]{b}}.
		\end{eqnarray*}
		On the other hand, let's compute $\delta_2 \circ \delta_1 (1_{\frakp})$. The image of $b$ under the quotient map in \eqref{eq:excision} is invariant under the $G_{\O}(K\overline{k})$ action. Then, because we assume $\ord_{\frakP}(b)=1$, the image of $b$ is $1_{\frakp}$ in $\oplus_{\frakp\in S} \Z$. Then by chasing the diagram for the connecting homomorphism $\delta_1$, $\delta_1(1_{\frakp})$ is the class containing the 1-cocycle 
		\begin{eqnarray*}
			x: G_{\O}(K\overline{k}) &\longrightarrow& \calO_{K_{\O}^{\times}} \\
			\sigma &\longmapsto& \frac{\sigma(b)}{b}.
		\end{eqnarray*}  
		We choose a splitting $s: \calO_{K_{\O}}^{\times} \to \calO_{K_{\O}}^{\times}$ of the Kummer sequence such that $s\left( \frac{\sigma(b)}{b}\right) = \frac{t(\sigma) (\sqrt[n]{b})}{\sqrt[n]{b}}$ for any $\sigma \in G_{\O}(K\overline{k})$. There exists such a splitting because $\left( \frac{t(\sigma) (\sqrt[n]{b})}{\sqrt[n]{b}}\right)^n = \frac{\sigma(b)}{b}$. Then by chasing the diagram for $\delta_2$, we see that $\delta_2 \circ \delta_1 (1_{\frakp})$ is the class of the 2-cocycle 
		\begin{eqnarray*}
			y: G_{\O}(K\overline{k}) \times G_{\O}(K \overline{k}) & \longrightarrow & \mu_n \\
			(\sigma, \tau) & \longmapsto & \sigma( s(x(\tau))) s(x(\sigma)) s(x(\tau \sigma))^{-1} = \frac{t(\sigma) t(\tau) (\sqrt[n]{b})}{t(\sigma \tau)(\sqrt[n]{b})}.
		\end{eqnarray*}
		The two 2-cocycles $z$ and $y$ are the same because $y(\sigma, \tau)=t(\sigma\tau)(z(\sigma, \tau))$ equals $z(\sigma, \tau)$ as $z(\sigma, \tau) \subset \mu_n$ is fixed by $t(\sigma, \tau)$. 
		Therefore, we proved that \eqref{eq:diag-com} commutes, which implies that $tg \circ f_{\frakp}(1_{\frakp})$ corresponds to the element of $H^2_{\et}(X_{\overline{k}}, \mu_n)$ of $\tr_{X, n}$-trace 1.

		Recall that $\Omega_K$ denotes the prime-to-$p$ completion of the \'etale fundamental group $\pi_1^{\et}(\PP^1_{\overline{k}} -\{P_1, \ldots, P_m\})$ where $P_1, \ldots, P_m$ correspond to the primes ramified in $K\overline{k}/\overline{k}(t)$. So the prime-to-$p|\Gamma|$ completion of $\ker(\Omega_K \to \Gal(K\overline{k}/\overline{k}(t)))$ is a quotient of $G'_S(K)$. 
		Let $\varrho_K$ denote the surjection defined in Lemma~\ref{lem:another-description-iota}, and let $\alpha: G'_S(K) \to \widetilde{H}$ that factors through the map obtained by restricting $\varrho_K$ to $\ker(\Omega_K \to \Gal(K\overline{k}/\overline{k}(t)))$ and then taking the prime-to-$p|\Gamma|$ completion, we obtain the following commutative diagram
		\[\begin{tikzcd}
			1 \arrow{r} & N  \arrow{r} \arrow[two heads]{d} & G'_S(K) \arrow{r} \arrow["\alpha", two heads]{d} & G'_{\O}(K) \arrow{r} \arrow[equal]{d} & 1 \\
			1 \arrow{r} & \ker \varpi \arrow{r} & \widetilde{H} \arrow["\varpi"]{r} & G'_{\O}(K) \arrow{r} &1,
		\end{tikzcd}\]
		which induces a commutative diagram of the Hochschild--Serre exact sequences 
		\begin{equation}\label{eq:diag-HS}
		\begin{tikzcd}
			\cdots \arrow{r} &H^1(N,\mu_{n})^{G'_{\O}(K)} \arrow[two heads, "tg"]{r} & H^2(G'_{\O}(K), \mu_{n}) \arrow[equal]{d} \arrow{r} & 0 \\
			\cdots \arrow["0"]{r} & H^1(\ker \varpi, \mu_{n})  \arrow["\text{induced by } \alpha"]{u} \arrow["\sim"', "tg_{\varpi, n}"]{r} &H^2(G'_{\O}(K), \mu_{n}) \arrow{r} & 0.
		\end{tikzcd}
		\end{equation}
		Here the isomorphism in the second row follows from Lemma~\ref{lem:tg-isom}.

		We use the notation in Lemma~\ref{lem:another-description-iota} to let $\overline{g}_i$ denote the image of $g_i$ in $\Omega_K$ for each $i=1,\ldots, m$.
		By Lemma~\ref{lem:another-description-iota}, $\varrho_K$ identifies $\ker \varpi$ as the Galois group of an extension of $K'_{\O}$, denoted by $E/\Kp$, that is ramified only at the primes over the prime $v$ of $\overline{k}(t)$ corresponding to $P_m$. 
		Let $\frakp_1, \ldots, \frakp_c$ denote all the primes of $K\overline{k}$ lying above $v$, and let $e$ denote the ramification index of the extension $\frakp_i/v$. 
		Let $h$ denote $\varrho_K(\overline{g}_m^e)$ in $\widetilde{H}$.
		Then $h$ generates the inertia subgroup of $E/\Kp$ at one prime above $v$, and because $h \in \ker \varpi=\ker \pi$ is in the center of $\widetilde{G}$ (where $\pi$ and $\widetilde{G}$ are defined in Lemma~\ref{lem:extension-whole}), we have that $h$ generates the inertia subgroup at each prime above $v$.
				
		Let $I(\underline{g})_n$ denote the image of $I(\underline{g})$ under the surjection $\hat{\Z}(1)_{(p)'} \to \mu_n$. 
		For an element $\zeta \in \mu_n$, a pro-cyclic group $D$, and one generator $d$ of $D$, we write $[d \mapsto \zeta]$ for the element of $\Hom(D, \mu_n)$ that sends $d$ to $\zeta$.
	 	Then $\fraki_n$ is the element $[\widetilde{\rho}_K(g_1\cdots g_m) \mapsto I(\underline{g})_n]$ in $H^1(\ker \varpi, \mu_n)$. 
		Because $\widetilde{\rho}_K(g_1 \cdots g_m)$ is in the center of $\widetilde{G}$, it follows by Lemma~\ref{lem:another-description-iota} and $\ord(\widetilde{\rho}_K(g_m))=e$ that 
		\[
			\varrho_K(\overline{g}_m^e)=\left( \widetilde{\rho}_K(g_m) \widetilde{\rho}_K(g_1\cdots g_m)^{-1} \right)^e = \widetilde{\rho}_K(g_1\cdots g_m)^{-e},
		\]
		and hence the image of $\fraki_n$ in the $\frakp_i$-compoment of $H^1(N,\mu_{n})^{\Gp(K)}=\oplus_{\frakp} H^1(\calT_{\frakp}, \mu_n)$ is $[\overline{g}_m^e \mapsto I(\underline{g})_n^{-e}]$. Recall that $I(\underline{g})_n$ is defined to be $\frac{\overline{g}_m(\sqrt[n]{a_v})}{\sqrt[n]{a_v}}$ for any uniformizer $a_v$ of the ring of integers of $\overline{k}(t)_{v}$. So
		\[
			\left[\overline{g}_m^e \mapsto I(\underline{g})_n^{-e}\right] = \left[\overline{g}_m^e \mapsto \left(\frac{\overline{g}_m^e(\sqrt[n]{a_v})}{\sqrt[n]{a_v}}\right)^{-1}\right]=\left[\overline{g}_m^e \mapsto \left(\frac{\overline{g}_m^e(\sqrt[n]{a_\frakp})}{\sqrt[n]{a_\frakp}}\right)^{-e}\right].
		\]
		Here we choose $a_v$ and $a_{\frakp}$ to be compatible uniformizers such that $a_v=a_{\frakp}^e$, and then the last equality is because $(K\overline{k})_\frakp/\overline{k}(t)_v$ is a degree-$e$ completely ramified extension.
		So we see that the image of $\fraki_n$ under the composition map $H^1(\ker \varpi, \mu_n) \to H^1(\calT_{\frakp_i}, \mu_n) \to H^2(\Gp(K), \mu_n)$ has trace $-e$. By summing over all $\frakp_i$ and considering the diagram \eqref{eq:diag-HS}, we see that the image of $\fraki_n$ in $H^2(\Gp(K), \mu_n)$ has trace $-ec=-[K\overline{k}: \overline{k}(t)]=-|\Gamma|$.		
	\end{proof}
	
	\begin{corollary}\label{cor:ext-tr}
		The map $\fraki$ in \eqref{eq:fraki} identifies \eqref{eq:ext-H} as a central extension of $\Gp(K)$ with kernel $\hat{\Z}(1)_{(p|\Gamma|)'}$. Then the class of $H^2(\Gp(K), \hat{\Z}(1)_{(p|\Gamma|)'})$ corresponding to this central extension is 
		\begin{enumerate}
			\item \label{item:ext-tr-1} the image of $\fraki$ under the inverse limit $tg_{\varpi}:=\varprojlim_{n} tg_{\varpi, n}$ of the transgression map described in \eqref{eq:map-tr}; and
			\item \label{item:ext-tr-2} the image of $\epsilon$ in Remark~\ref{rmk:etale-lift} under the restriction map induced by $\Gp(K) \hookrightarrow \Gal(\Kp/\overline{k}(t))$.
		\end{enumerate}
		In particular, this class corresponds to the class of $H_{\et}^2(X_{\overline{k}}, \hZ(1)_{(p|\Gamma|)'})$ of trace $-|\Gamma|$.
	\end{corollary}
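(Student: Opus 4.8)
The plan is to deduce the corollary from two standard facts about central group extensions — that the cohomology class of a central extension together with a chosen identification of its kernel is the transgression of that identification, and that restriction of cohomology commutes with pulling back extensions — and then to upgrade Lemma~\ref{lem:tr-Gamma} from a fixed level $n$ to the inverse limit. First I would record the bookkeeping. Since $\widetilde H=\pi^{-1}(\Gp(K))$ and $\varpi=\pi|_{\widetilde H}$, we have $\ker\varpi=\ker\pi$; the extension \eqref{eq:ext-H} is central, as shown in the proof of Lemma~\ref{lem:tg-isom}; and the homomorphism $\fraki$ of \eqref{eq:fraki} is exactly the inverse of the restriction to $\ker\pi$ of the isomorphism $\iota_\pi$ from \eqref{eq:isom-lift-invariant}, since both match $\widetilde\rho_K(g_1\cdots g_m)$ with $I(\underline g)$. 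In particular $\fraki$ is an isomorphism, which gives the first assertion of the corollary, and by Proposition~\ref{prop:etale-lift}\eqref{item:etale-lift-2} it is independent of the system of inertia generators, so the $H^2$-class it produces is well defined.

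For \eqref{item:ext-tr-1}: fix $n$ prime to $p|\Gamma|$. For the central extension \eqref{eq:ext-H}, the transgression $tg_{\varpi,n}\colon H^1(\ker\varpi,\mu_n)\to H^2(\Gp(K),\mu_n)$ — the differential $d_2$ of the Hochschild--Serre spectral sequence, which is defined on all of $H^1(\ker\varpi,\mu_n)$ because $\ker\varpi$ is central and $\mu_n$ has trivial action — carries a homomorphism $\chi\colon\ker\varpi\to\mu_n$ to the pushforward $\chi_{*}[\widetilde H]$ of the extension class $[\widetilde H]\in H^2(\Gp(K),\ker\varpi)$; this is the classical description of the transgression of a central extension. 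Applying it with $\chi=\fraki_n$, the reduction of $\fraki$ modulo $n$, shows that $tg_{\varpi,n}(\fraki_n)$ is the class of the central extension obtained from \eqref{eq:ext-H} by pushing its kernel forward along $\fraki_n$, that is, the reduction modulo $n$ of the class in $H^2(\Gp(K),\hZ(1)_{(p|\Gamma|)'})$ attached to \eqref{eq:ext-H} with kernel identified via $\fraki$. Passing to the inverse limit over such $n$ — legitimate since all groups involved are finite and $H^2(\Gp(K),\hZ(1)_{(p|\Gamma|)'})=\varprojlim_n H^2(\Gp(K),\mu_n)$ — yields \eqref{item:ext-tr-1}.

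For \eqref{item:ext-tr-2} and the trace statement: by Remark~\ref{rmk:etale-lift}, $\epsilon$ is the image of the class of the stem extension \eqref{eq:stem} under the pushforward along $\iota_\pi^{-1}$. Restriction along $\Gp(K)\hookrightarrow\Gal(\Kp/\overline k(t))$ commutes with this pushforward, and on extension classes it is pullback along the inclusion, which turns \eqref{eq:stem} into $1\to\ker\pi\to\pi^{-1}(\Gp(K))\to\Gp(K)\to1$, i.e.\ \eqref{eq:ext-H}; since $\fraki=\iota_\pi^{-1}$ on $\ker\varpi=\ker\pi$, the two identifications of the kernel agree, so $\res(\epsilon)$ is precisely the class produced in \eqref{item:ext-tr-1}, proving \eqref{item:ext-tr-2}. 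Finally, the composite \eqref{eq:map-tr} is $tg_{\varpi,n}$ followed by the canonical isomorphism $H^2(\Gp(K),\mu_n)\simeq H^2_{\et}(X_{\overline k},\mu_n)$ — which comes from Lemma~\ref{lem:Galois-H2} together with the identification $G_{\O}(K\overline k)=\pi_1^{\et}(X_{\overline k})$ — and the Poincar\'e-duality trace $\tr_{X,n}$. Lemma~\ref{lem:tr-Gamma} computes the image of $\fraki_n$ there to be $-|\Gamma|\bmod n$, and taking $\varprojlim_n$ identifies the class attached to \eqref{eq:ext-H} inside $H^2_{\et}(X_{\overline k},\hZ(1)_{(p|\Gamma|)'})$ with the element of trace $-|\Gamma|$.

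I expect the main obstacle to be bookkeeping rather than mathematics: the genuine computation is already carried out in Lemma~\ref{lem:tr-Gamma}, and what remains is to keep the identification $\fraki$, the isomorphism $\iota_\pi$, the direction of the pushforward used to turn an extension-with-kernel-identification into an $H^2$-class, the Galois--\'etale comparison isomorphism, and the inverse limits over $n$ all mutually compatible, so that no spurious automorphism of $\hZ(1)_{(p|\Gamma|)'}$ — a sign in particular — creeps in. One should also confirm that the sign convention used to define $tg_{\varpi,n}$ is the one for which the transgression of $\chi$ is $\chi_*$ of the extension class and not its negative, so that \eqref{item:ext-tr-1} matches Lemma~\ref{lem:tr-Gamma} on the nose.
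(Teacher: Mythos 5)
Your proposal is correct and follows essentially the same route as the paper: statement \eqref{item:ext-tr-1} via the classical identification of the transgression of a homomorphism on a central kernel with the pushforward of the extension class (the paper does this with the explicit normalized cocycle $\fraki\circ\lambda$, citing \cite{GroupRep}), statement \eqref{item:ext-tr-2} by compatibility of restriction with pushforward (which the paper dismisses as trivial), and the trace claim from Lemma~\ref{lem:tr-Gamma}. Your extra care with the inverse limit over $n$ and the identification $\fraki=\iota_\pi^{-1}|_{\ker\varpi}$ is sound bookkeeping that the paper leaves implicit.
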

	
	\begin{proof}
		Let $\varphi$ denote a splitting of $\varpi$ such that $\varphi(1)=1$. Consider the following 2-cocycle in $Z^2(\Gp(K), \ker \pi)$
		\[
			\lambda: \Gp(K) \times \Gp(K) \to \ker \varpi
		\]
		defined by $\lambda(x,y)=\varphi(xy)^{-1}\varphi(x)\varphi(y)$, which is the normalized 2-cocycle corresponding to the extension $\varpi$. Then the transgression map $tg_{\varpi}$ maps $\fraki$ to the class of $\fraki \circ \lambda$ \cite[Pages 16-17]{GroupRep}.
		Since the class of $\fraki\circ \lambda$ is the normalized 2-cocycle corresponding to the central extension of $\Gp(K)$ with kernel $\hat{\Z}(1)_{(p|\Gamma|)'}$ described in the corollary, we proved the statement~\eqref{item:ext-tr-1}. The statement \eqref{item:ext-tr-2} is trivial, and the last statement follows by the statement \eqref{item:ext-tr-1} and Lemma~\ref{lem:tr-Gamma}.
	\end{proof}

\subsection{Definition of the invariant $\omega_{L/K}$}\label{ss:inv-L/K}

	\begin{definition}[Invariant $\omega_{L/K}$ associated to $L/K$]\label{def:inv-L/K}
		For a subextension $L/K$ of $K'_{\O}/K$ such that $L$ is Galois over $k(t)$, we define \emph{the $\omega$-invariant $\omega_{L/K}$ associated to $L/K$} to be the $\Frob_k$-equivairant homomorphism $\omega_{L/K}$ that is the composition map as follows 
		\[
			\omega_{L/K}: \hat{\Z}(1)_{(p|\Gamma|)'} \overset{\omega_K}{\longrightarrow} H_2(\Gal(K'_{\O}/\overline{k}(t)), \Z)_{(p|\Gamma|)'} \overset{\coinf}{\longrightarrow} H_2(\Gal(L\overline{k}/\overline{k}(t)), \Z)_{(p|\Gamma|)'}
		\]
		where the map $\coinf$ is the coinflation map defined by the surjection $\alpha:\Gal(K'_{\O}/\overline{k}(t)) \to \Gal(L\overline{k}/\overline{k}(t))$.
		In particular, when $L=\Ks$, we denote $\omega^{\#}_K:=\omega_{\Ks/K}$. 
	\end{definition}

	\begin{lemma}\label{lem:comp-schur}
		Let $n$ be an integer that is divisible by $|\Gamma|$, and $G, H$ are two extensions of $\Gamma$ such that $\ker(G\to \Gamma)$ and $\ker(H \to \Gamma)$ are pro-prime-to-$n$ admissible $\Gamma$-groups. Assume that there is a surjection $\alpha: G \to H$ and a $(n)'$-Schur covering of $H$ 
		\begin{equation}\label{eq:stem-H}
			1 \longrightarrow \ker \pi' \longrightarrow S \overset{\pi'}{\longrightarrow} H \to 1.
		\end{equation} 
		Then there exists a $(n)'$-Schur covering of $G$ such that the following diagram commutes 
		\[\begin{tikzcd}
		1 \arrow{r} & \ker \pi \arrow{r} \arrow{d}& T \arrow["\pi"]{r} \arrow{d} & G \arrow{r} \arrow["\alpha"]{d}&1 \\
		1 \arrow{r} & \ker \pi' \arrow{r} & S \arrow["\pi'"]{r} & H \arrow{r} & 1,
		\end{tikzcd}\]
		and the first vertical arrow is the composition map
		\[\begin{tikzcd}
			\ker \pi \arrow["d_{\pi}^{-1}", "\sim"']{r} &H_2(G, \Z)_{(n)'} \arrow["\coinf"]{r} &H_2(H, \Z)_{(n)'}\arrow["d_{\pi'}", "\sim"']{r} & \ker \pi'
		\end{tikzcd}\]
		where $d_{\pi}$ and $d_{\pi'}$ is the differential maps defined by $\pi$ and $\pi'$ respectively.
	\end{lemma}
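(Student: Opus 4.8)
The plan is to construct the required $(n)'$-Schur covering of $G$ by hand, from an explicit cohomology class, and then to produce the map to $S$ by realizing both the pushout of this covering and the pullback of $\pi'$ along $\alpha$ as the central extension of $G$ attached to one and the same class in $H^2(G,\ker\pi')$. Two preliminary observations drive everything. First, since $\ker(G\to\Gamma)$ and $\ker(H\to\Gamma)$ are admissible they have order prime to $|\Gamma|$, so Schur--Zassenhaus gives $G\simeq\ker(G\to\Gamma)\rtimes\Gamma$ and likewise for $H$; admissibility then forces $G^{ab}\simeq\Gamma^{ab}\simeq H^{ab}$, a group of exponent dividing $|\Gamma|$, hence dividing $n$. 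Consequently, for any pro-prime-to-$n$ abelian profinite group $A$, multiplication by $\exp(\Gamma^{ab})$ is an automorphism of $A$, so $\Ext^1_{\Z}(G^{ab},A)=0$, and the universal coefficient sequence becomes an isomorphism $H^2(G,A)\xrightarrow{\,\sim\,}\Hom(H_2(G,\Z),A)$ carrying a class $c$ to its differential map $d_c$ (and similarly over $H$). Second, I will repeatedly use naturality of the evaluation pairing $H^2(-,-)\times H_2(-,\Z)\to(-)$: for a homomorphism of groups $f$ one has $d_{f^*c}=d_c\circ f_*$, for a homomorphism of coefficients $\beta$ one has $d_{\beta_* c}=\beta\circ d_c$, and for the surjection $\alpha$ the coinflation $\coinf$ on $H_2(-,\Z)$ is by definition the map induced by $\alpha_*$ on the pro-prime-to-$n$ quotients.

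Next I would build the covering. Using surjectivity in the universal coefficient sequence, choose $c\in H^2(G,H_2(G,\Z)_{(n)'})$ with $d_c$ equal to the natural projection $q\colon H_2(G,\Z)\twoheadrightarrow H_2(G,\Z)_{(n)'}$, and let $1\to H_2(G,\Z)_{(n)'}\to T\xrightarrow{\pi}G\to1$ be the central extension it represents, so that $\ker\pi=H_2(G,\Z)_{(n)'}$ and $d_\pi=q$. The five-term exact sequence in homology $H_2(G,\Z)\xrightarrow{d_\pi}\ker\pi\to T^{ab}\to G^{ab}\to0$ has $d_\pi$ surjective, whence $\ker\pi\subseteq[T,T]$; thus $\pi$ is a $(n)'$-Schur covering of $G$, and the induced isomorphism $d_\pi\colon H_2(G,\Z)_{(n)'}\to\ker\pi$ is the identity, so $d_\pi^{-1}$ is too. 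Write $c'\in H^2(H,\ker\pi')$ for the class of the given covering $\pi'$, and put $\beta:=d_{\pi'}\circ\coinf\colon\ker\pi=H_2(G,\Z)_{(n)'}\to H_2(H,\Z)_{(n)'}\to\ker\pi'$; this is exactly the composite appearing in the statement.

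The heart of the argument is the identity $\beta_* c=\alpha^* c'$ in $H^2(G,\ker\pi')$. Since $\ker\pi'$ is pro-prime-to-$n$, the universal coefficient map $H^2(G,\ker\pi')\to\Hom(H_2(G,\Z),\ker\pi')$ is injective, so it suffices to check that $\beta_* c$ and $\alpha^* c'$ have the same differential out of $H_2(G,\Z)$. By naturality of the pairing, $d_{\beta_* c}=\beta\circ d_c=d_{\pi'}\circ\coinf\circ q$, while $d_{\alpha^* c'}=d_{c'}\circ\alpha_*=d_{\pi'}\circ\alpha_*$; but $d_{\pi'}$ factors through the natural quotient $H_2(H,\Z)\to H_2(H,\Z)_{(n)'}$, and $\coinf$ is by definition the map induced by $\alpha_*$ on these quotients, so $d_{\pi'}\circ\alpha_*=d_{\pi'}\circ\coinf\circ q$. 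Hence $\beta_* c=\alpha^* c'$.

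To finish, form the pushout $T'$ of $T$ along $\beta$: it is a central extension of $G$ by $\ker\pi'$ whose class is $\beta_* c=\alpha^* c'$, hence it is isomorphic over $G$ to the pullback $S\times_H G$ of $\pi'$ along $\alpha$. Composing the canonical pushout map $T\to T'$ (the identity on $G$, the map $\beta$ on kernels) with such an isomorphism $T'\simeq S\times_H G$ and with the projection $S\times_H G\to S$ (covering $\alpha$, the identity on $\ker\pi'$) yields a homomorphism $T\to S$ over $\alpha$ restricting to $\beta=d_{\pi'}\circ\coinf\circ d_\pi^{-1}$ on $\ker\pi$, which is the asserted commutative diagram. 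I expect no deep difficulty here; the one place that will demand real care is the matching of the three maps out of $H_2(G,\Z)$ in the previous paragraph, together with the vanishing $\Ext^1(G^{ab},\ker\pi')=0$ — and that vanishing is exactly where admissibility enters, through $\exp(G^{ab})\mid n$.
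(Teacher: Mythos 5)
Your proof is correct, and it rests on exactly the same cohomological engine as the paper's: the naturality square for the universal coefficient theorem under $\alpha$, with both horizontal maps isomorphisms because admissibility forces $G^{ab}\simeq H^{ab}\simeq\Gamma^{ab}$ to have exponent dividing $n$ and hence $\Ext^1_{\Z}(G^{ab},\ker\pi')=0$. The difference is purely in how the covering $T$ and the map $T\to S$ are assembled, and it runs in the opposite direction. The paper starts from the pullback side: it transports the class of $\pi'$ to $H^2(G,\ker\pi')$, realizes it by an extension $E\to G$ (which is the pullback $S\times_H G$ in your language), extracts a stem sub-extension $E'\to G$ with kernel $\im d$, and then invokes the general fact that every stem extension is a quotient of a Schur covering group to enlarge $E'$ to the desired $(n)'$-Schur covering $T$; the commutativity of the final diagram is left as a check. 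You instead build $T$ first, from the tautological class $c$ with $d_c=q$, verify the stem property via the five-term sequence, and then produce $T\to S$ by showing $\beta_*c=\alpha^*c'$ and identifying the pushout of $T$ along $\beta$ with the pullback of $\pi'$ along $\alpha$. Your route avoids the citation about stem extensions being dominated by Schur coverings and makes the map $T\to S$ (and its restriction to $\ker\pi$) completely explicit, which is precisely the step the paper compresses into ``one can check by our construction that $\pi$ fits into the diagram''; the paper's route has the mild advantage of not needing to exhibit a class with a prescribed differential, only to chase one around the square. Both are valid, and the key matching of the three maps out of $H_2(G,\Z)$ is handled identically.
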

	
	\begin{proof}
		There is a commutative diagram induced by $\alpha:G \to H$
		\[\begin{tikzcd}
			H^2(H, \ker \pi') \arrow{r} \arrow{d} & \Hom(H_2(H, \Z), \ker \pi') \arrow{d} \\
			H^2(G, \ker \pi') \arrow{r} & \Hom(H_2(G,\Z), \ker \pi'),
		\end{tikzcd}\]
		where the vertical maps are induced by $\alpha$, and the horizontal maps are from the universal coefficient theorem. Moreover, since $\ker(G \to \Gamma)$ and $\ker(H \to \Gamma)$ are admissible $\Gamma$-groups, we have that $G^{ab}$ and $H^{ab}$ are both isomorphic to $\Gamma^{ab}$, so $\Ext^1_{\Z}(G,\ker \pi')=\Ext^1_{\Z}(H, \ker \pi')=0$ and the two horizontal maps are isomorphisms.
		
		The image of the class of $\pi'$ in the upper-right entry of the above diagram is the differential map $d_{\pi'}$ (we view $d_{\pi'}$ as the quotient map $H_2(H,\Z) \to \ker \pi'$ that factors through $H_2(H,\Z)_{(n)'}\overset{\sim}{\longrightarrow} \ker \pi'$). We let $d$ denote the image of $d_{\pi'}$ in the lower-right entry, so $d$ is the composition $H_2(G, \Z)_{(n)'} \to H_2(H, \Z)_{(n)'} \overset{d_{\pi'}}{\longrightarrow} \ker \pi'$.
		Let $\overline{\pi}: E \to G$ be a group extension of $G$ whose corresponding class in $H^2(G,\ker \pi')$ is the preimage of $d$ . Then $E$ has a subgroup $E'$ such that $E' \to G$ is a stem extension of kernel isomorphic to $\im d$.
		 Because every stem extension is a homomorphic image of a Schur covering group \cite[Chap.11, Cor.2.4]{GroupRep}, $E'\to G$ can be extended to a $(n)'$-Schur covering $\pi: T \to G$. Then one can check by our construction that $\pi$ fits into the diagram in the lemma. 
	\end{proof}
	
	The following is an analogue of Proposition~\ref{prop:etale-lift}, which gives a group theoretical description of the invariant $\omega_{L/K}$. 
	
	\begin{corollary}\label{cor:compatible-cover}
		Use the assumptions and notation in Definition~\ref{def:inv-L/K}. Let $\underline{g}=(g_1, \ldots, g_m)$ be a system of inertia generators for $K$, $\varphi: \langle g_1, \ldots, g_m \rangle \to \Gal(L\overline{k}/\overline{k}(t))$ the surjection defined by $\underline{g}$ and $L$, and 
		\[
			1 \longrightarrow B \longrightarrow S \overset{\pi'}{\longrightarrow} \Gal(L\overline{k}/\overline{k}(t)) \longrightarrow 1
		\]
		be a $(p|\Gamma|)'$-Schur covering of $\Gal(L\overline{k}/\overline{k}(t))$. Then there is a unique surjection $\widetilde{\varphi}: \langle g_1, \ldots, g_m \rangle \to S$ such that $\varphi=\pi' \circ \widetilde{\varphi}$ and $\ord(\widetilde{\varphi}(g_i))=\ord(\varphi(g_i))$ for each $i=1, \ldots, m$. 
		
		The invariant $\omega_{L/K}$ is the composition of the map $\hat{\Z}(1)_{(p|\Gamma|)'} \to B$ defined by $I(\underline{g}) \mapsto \widetilde{\varphi}(g_1\cdots g_m)$ and the inverse of the differential map $H_2(\Gal(L\overline{k}/\overline{k}(t)), \Z)_{(p|\Gamma|)'} \overset{\sim}{\longrightarrow} B$ defined by $\pi'$.
	\end{corollary}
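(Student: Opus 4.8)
The plan is to deduce both assertions from Proposition~\ref{prop:etale-lift}, transporting its canonical lift through a commutative diagram of Schur coverings furnished by Lemma~\ref{lem:comp-schur}.

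First I would settle the existence and uniqueness of $\widetilde\varphi$, which runs exactly as in Proposition~\ref{prop:etale-lift}\eqref{item:etale-lift-1}. Since $L\overline{k}/K\overline{k}$ is unramified, each inertia generator $\varphi(g_i)$ maps isomorphically onto its image in $\Gal(K\overline{k}/\overline{k}(t))\simeq\Gamma$, so $\ord(\varphi(g_i))$ divides $|\Gamma|$ and hence is prime to $\ord(B)$. As $B$ is central of order prime to $\ord(\varphi(g_i))$, the element $\varphi(g_i)$ then has a unique lift $\widetilde t_i\in S$ of the same order; the $\widetilde t_i$ generate a subgroup of $S$ surjecting onto $\Gal(L\overline{k}/\overline{k}(t))$, and this subgroup is all of $S$ by Lemma~\ref{lem:app-stem}, because $\pi'$ is a stem extension. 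So $g_i\mapsto\widetilde t_i$ gives $\widetilde\varphi$, and uniqueness is forced by the order conditions.

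For the formula for $\omega_{L/K}$, I would invoke Lemma~\ref{lem:comp-schur} with $n=p|\Gamma|$, $G=\Gal(\Kp/\overline{k}(t))$, $H=\Gal(L\overline{k}/\overline{k}(t))$, $\alpha$ the natural surjection, and the given covering $\pi'\colon S\to H$: this produces a $(p|\Gamma|)'$-Schur covering $\pi\colon T\to G$ and a homomorphism $\beta\colon T\to S$ with $\pi'\circ\beta=\alpha\circ\pi$ whose restriction $\ker\pi\to B$ equals $d_{\pi'}\circ\coinf\circ d_\pi^{-1}$. By Proposition~\ref{prop:etale-lift}\eqref{item:etale-lift-3} the isomorphism $\omega_K=d_\pi^{-1}\circ\iota_\pi$ may be computed using this very $\pi$ and the same $\underline g$, where $\widetilde\rho_K\colon\langle g_1,\ldots,g_m\rangle\to T$ lifts the surjection $\rho_K$ of \eqref{eq:surj-rho-K} and $\iota_\pi(I(\underline g))=\widetilde\rho_K(g_1\cdots g_m)$. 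The key step is then to identify $\beta\circ\widetilde\rho_K$ with $\widetilde\varphi$: one has $\pi'\circ(\beta\circ\widetilde\rho_K)=\alpha\circ\rho_K=\varphi$; the image of $\beta\circ\widetilde\rho_K$ surjects onto $H$ and so is all of $S$ by Lemma~\ref{lem:app-stem}; and $\ord(\varphi(g_i))\mid\ord(\beta(\widetilde\rho_K(g_i)))\mid\ord(\widetilde\rho_K(g_i))=\ord(\rho_K(g_i))$, whose two ends both equal the order of the image of $g_i$ in $\Gamma$, forcing equality throughout. Hence $\beta\circ\widetilde\rho_K$ meets the defining properties of $\widetilde\varphi$, so by the uniqueness already proved $\beta\circ\widetilde\rho_K=\widetilde\varphi$. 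Since $g_1\cdots g_m=1$ in $\Omega_K$ the element $\widetilde\rho_K(g_1\cdots g_m)$ lies in $\ker\pi$, and applying $d_{\pi'}$ to $\omega_{L/K}(I(\underline g))=\coinf\bigl(d_\pi^{-1}(\widetilde\rho_K(g_1\cdots g_m))\bigr)$ gives $\beta(\widetilde\rho_K(g_1\cdots g_m))=\widetilde\varphi(g_1\cdots g_m)$; as $I(\underline g)$ topologically generates $\hat\Z(1)_{(p|\Gamma|)'}$, this identifies $\omega_{L/K}$ with the composite in the statement.

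I expect the main obstacle to be purely organizational: making sure that $\beta\circ\widetilde\rho_K$ preserves the orders of the $g_i$ (so that the uniqueness clause of the first part applies) and that the Schur covering of $\Gal(\Kp/\overline{k}(t))$ delivered by Lemma~\ref{lem:comp-schur} is legitimate input for Proposition~\ref{prop:etale-lift}\eqref{item:etale-lift-3}. Everything past those checks is formal diagram chasing.
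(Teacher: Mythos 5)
Your proposal is correct and follows essentially the same route as the paper: existence and uniqueness of $\widetilde{\varphi}$ by repeating the argument of Proposition~\ref{prop:etale-lift}\eqref{item:etale-lift-1}, then Lemma~\ref{lem:comp-schur} applied to $\alpha:\Gal(\Kp/\overline{k}(t))\twoheadrightarrow\Gal(L\overline{k}/\overline{k}(t))$ to produce compatible Schur coverings and the commutative diagram identifying $\widetilde{\varphi}$ with $\beta\circ\widetilde{\rho}_K$. Your verification that $\beta\circ\widetilde{\rho}_K$ preserves the orders of the $g_i$ (so that the uniqueness clause applies) is exactly the content the paper leaves implicit in its diagram, so this is a correctly fleshed-out version of the paper's own proof.
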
	
	
	\begin{proof}
		The existence and uniqueness of $\widetilde{\varphi}$ can be proven by repeating the proof of Proposition~\ref{prop:etale-lift}\eqref{item:etale-lift-1}.
		
		Note that $\Gp(K)$ is an admissible $\Gamma$-group, so is $\Gal(L\overline{k}/K\overline{k})$. By applying Lemma~\ref{lem:comp-schur} to $\alpha: \Gal(\Kp/\overline{k}(t)) \twoheadrightarrow \Gal(L\overline{k}/\overline{k}(t))$, we obtain
		$\pi: \widetilde{G} \to \Gal(\Kp/\overline{k}(t))$ in the diagram in Lemma~\ref{lem:comp-schur}. Then by Proposition~\ref{prop:etale-lift}, for a choice of $\underline{g}$, we have the following commutative diagram 
		\[\begin{tikzcd}
			& \langle g_1, \ldots, g_m \rangle \arrow["\rho_K"]{d} \arrow["\widetilde{\rho_K}", swap]{dl} \arrow[bend left=800, "\varphi"]{dd} \arrow[bend right=70, looseness=2, "\widetilde{\varphi}", swap]{ddl}\\
			\widetilde{G} \arrow["\pi"]{r} \arrow{d} & \Gal(\Kp/\overline{k}(t)) \arrow["\alpha"]{d} \\
			S \arrow["\pi'"]{r} & \Gal(L\overline{k}/\overline{k}(t)).
		\end{tikzcd}\]
		Then the corollary follows by the definitions of $\omega_K$ and $\omega_{L/K}$, Proposition~\ref{prop:etale-lift}\eqref{item:etale-lift-3}, and Lemma~\ref{lem:comp-schur}.
	\end{proof}

		\begin{remark}\label{rmk:inv}
		\begin{enumerate}
			\item\label{item:rmk-inv-1} The invariant $\omega_{L/K}$ is not necessarily an isomorphism, because the coinflation map is not.
			\item\label{item:rmk-inv-2} Assume that $L$ is contained in $\Ks$. Then $\Frob_k$ acts trivially on $\Gal(L\overline{k}/\overline{k}(t))$, and $\Gal(L\overline{k}/\overline{k}(t))$ is naturally isomorphic to $\Gal(L/k(t))$. In this case, the invariant $\omega_{L/K}$ factors through the prime-to-$p|\Gamma|$ part of the roots of unity of $k$, which we denote by $\mu(k)_{(p|\Gamma|)'}$. So by abuse of notation, we write 
			\[
				\omega_{L/K}: \mu(k)_{(p|\Gamma|)'} \longrightarrow  H_2(\Gal(L/k(t)), \Z)_{(p|\Gamma|)'}.
			\]
			The image of $w_{L/K}$ is contained in the $|\mu(k)_{(p|\Gamma|)'}|$-torsion subgroup of $H_2(\Gal(L/k(t)), \Z)_{(p|\Gamma|)'}$.
			\item\label{item:rmk-inv-3} Every argument in this section works when replacing ``pro-prime-to-$(p|\Gamma|)$ completion'' with ``pro-$\ell$ completion'', for any prime $\ell$ that does not divide $p|\Gamma|$. So we can define the pro-$\ell$ analogues of the invariants $\omega_K$ and $\omega_{L/K}$. Explicitly, under the assumptions in Definitions~\ref{def:inv-K} and \ref{def:inv-L/K}, we can define $\Frob_k$-equivariant
			\[
				\omega_{K,\ell}: \Z_\ell(1) \overset{\sim}{\longrightarrow} H_2(\Gal(\Kp/\overline{k}(t)),\Z)(\ell) \quad \text{and}
			\]
			\[
				\omega_{L/K,\ell}: \Z_{\ell}(1) \longrightarrow H_2(\Gal(L\overline{k}/\overline{k}(t)), \Z)(\ell),
			\]
			and give group theoretical descriptions of these invariants.
		\end{enumerate}
	\end{remark}

\section{Relationship between $\omega_K$ and the Weil pairing associated to $K$}\label{sect:Weil-pairing}

	Throughout this section, we assume that $k$, $K$, $p$ and $\Gamma$ be as in Notation~\ref{not}, and $\ell$ is a prime such that $\ell \nmid p|\Gamma|$. In this section, we will discuss how the invariant $\omega_K$ is abelianized to the Weil pairing associated to $K$. In \S\ref{ss:cup-product}, we will first explain how a pro-$\ell$ presentation of $G_{\O}(K\overline{k})(\ell)$ determines its cup product. Then in \S\ref{ss:weil-pairing}, we will show how $\omega_K$ determines the Weil pairing and how $\omega_{L/K}$ determines the image of Weil pairing associated to $L/K$.

\subsection{Presentation of $G_{\O}(K\overline{k})(\ell)$ determines cup product}\label{ss:cup-product}
	
	Because $\Gamma$ is assumed to be non-trivial (see Notation~\ref{not}),
	by \cite[Thm.10.1.2(i)(b) and (iii)]{NSW}, $G_{\O}(K\overline{k})(\ell)$ is a pro-$\ell$ Demu\v{s}kin group generated by $2g$ generators whose abelianization is isomorphic to $\Z_{\ell}^{2g}$, and by a nice choice of the generating set, it admits a pro-$\ell$ presentation in the form of
	\begin{equation*}
		G_{\O}(K\overline{k})(\ell) \simeq  \langle x_1, \ldots, x_{2g} \mid [x_1, x_2][x_3,x_4] \cdots [x_{2g-1}, x_{2g}]=1 \rangle.
	\end{equation*}
	
	 Let $F$ be a free pro-$\ell$ group on $2g$ generators $x_1, \ldots, x_{2g}$ and the following a pro-$\ell$ presentation of $G_{\O}(K\overline{k})(\ell)$ (which is called a \emph{minimal presentation} of $G_{\O}(K\overline{k})(\ell)$ because the minimal number of generators for $F$ and $G_{\O}(K\overline{k})(\ell)$ are equal)
	\begin{equation}\label{eq:pre-H}
		1 \longrightarrow R \longrightarrow F \overset{\phi}{\longrightarrow} G_{\O}(K\overline{k})(\ell) \longrightarrow 1.
	\end{equation}
	Since $F$ is free and $F^{ab}\simeq G_{\O}(K\overline{k})(\ell)^{ab}\simeq \Z_{\ell}^{2g}$, the transgression map in the Hochschild--Serre exact sequence of \eqref{eq:pre-H} is isomorphic, and we denote it by
	\[
		tg: H^1(R, \Z_\ell)^{G_{\O}(K\overline{k})(\ell)} \overset{\sim}{\longrightarrow} H^2(G_{\O}(K\overline{k})(\ell), \Z_\ell).
	\]
	Then for every element $\lambda$ of $R$, there is a trace map $\tr_{\lambda}$ associated to $\lambda$ as follows
	\begin{eqnarray}
		\tr_{\lambda}: H^2(G_{\O}(K\overline{k})(\ell), \Z_\ell) & \longrightarrow& \Z_{\ell}  \nonumber \\
		\varphi & \longmapsto& (tg^{-1} \varphi)(\lambda). \label{eq:trace-lambda}
	\end{eqnarray}
	By definition of Demu\v{s}kin groups and \cite[Prop.(3.9.12)(ii)(iii)]{NSW}, if $\lambda$ generates $R$ as a normal subgroup of $F$, then $\tr_{\lambda}$ is an isomorphism.
	
	For a group $G$, let $\{G^{(i)}\}_{i=1}^{\infty}$ denote the lower central series of $G$, i.e., $G^{(1)}:=G$ and $G^{(i+1)}:=[G^{(i)}, G]$. We assume that $x_1, \ldots, x_{2g}$ are generators of $F$ and $\lambda$ is an element generating $R$ as a normal subgroup. Then by \cite[Prop.(3.9.13)(i)]{NSW}, $\lambda$ can be written in the following form
	\begin{equation}\label{eq:rep-lambda}
		\lambda=\prod_{1\leq i < j \leq 2g} [x_i, x_j]^{a_{ij}} \cdot \lambda', \quad \lambda' \in F^{(3)}, \, a_{ij} \in \Z_\ell.
	\end{equation}
	We let $\chi_1, \ldots, \chi_{2g}$ be the corresponding $\Z_\ell$-basis of $H^1(F, \Z_\ell)=H^1(G_{\O}(K\overline{k})(\ell),\Z_\ell)$ such that $\chi_i(x_j)=1$ if $i=j$ and 0 otherwise. Then, by \cite[Prop.(3.9.13)(ii)]{NSW}, the cup product of $H^1(G_{\O}(K\overline{k})(\ell),\Z_\ell)$ with itself is complelely determined by the $a_{ij}$ in \eqref{eq:rep-lambda}: the bilinear form 
	\begin{equation}\label{eq:cup-product}
		H^1(G_{\O}(K\overline{k})(\ell), \Z_\ell) \times H^1(G_{\O}(K\overline{k})(\ell), \Z_\ell) \overset{\cup}{\longrightarrow} H^2(G_{\O}(K\overline{k})(\ell), \Z_\ell) \overset{\tr_{\lambda}}{\longrightarrow} \Z_\ell
	\end{equation}
	is given by the matrix $M_{\lambda}=[m_{ij}]$ with respect to the basis $\chi_1, \ldots, \chi_{2g}$, where 
	\begin{equation}\label{eq:matrix-M}
		m_{ij}=\tr_{\lambda}(\chi_i \cup \chi_j) = \begin{cases}
			- a_{ij} & \text{ if } i< j \\
			a_{ji} & \text{ if } i>j \\
			0 & \text{ if } i=j.	
		\end{cases}
	\end{equation}

	Since $G_{\O}(K\overline{k})(\ell)$ is the pro-$\ell$ completion of $G_{\O}(K\overline{k})$, it's clear that $H^1(G_{\O}(K\overline{k})(\ell), \Z_{\ell})= H^1(G_{\O}(K\overline{k}), \Z_{\ell})$, and we've shown in the proof of Lemma~\ref{lem:Galois-H2} that the inflation map
	\begin{equation*}
		H^2(G_{\O}(K\overline{k})(\ell), \Z_\ell) \longrightarrow H^2(G_{\O}(K\overline{k}), \Z_\ell)
	\end{equation*}
	is an isomorphism.
	So \eqref{eq:cup-product} implies that the bilinear form 
	\begin{equation}\label{eq:cup-GC}
	\begin{tikzcd}
		H^1(G_{\O}(K\overline{k}), \Z_{\ell}) \times H^1(G_{\O}(K\overline{k}), \Z_{\ell}) \arrow["\cup"]{r} & H^2(G_{\O}(K\overline{k}), \Z_{\ell})\arrow["\tr_{\lambda}"]{r} &\Z_{\ell}
	\end{tikzcd}
	\end{equation}
	is given by the matrix $M_{\lambda}$ in \eqref{eq:matrix-M}.

	We consider a $\ell$-Schur covering of $\Gal(K'_{\O}(\ell)/\overline{k}(t))$ (a pro-$\ell$ analogue of \eqref{eq:stem})
	\begin{equation}\label{eq:Gell}
		1 \longrightarrow \ker \pi_{\ell} \longrightarrow \widetilde{G}_{\ell} \overset{\pi_{\ell}}{\longrightarrow} \Gal(K'_{\O}(\ell)/\overline{k}(t)) \longrightarrow 1,
	\end{equation}
	where $\ker \pi_{\ell} \simeq \Z_{\ell}$.
	Restricting $\pi_{\ell}$ to an extension of $G_{\O}(K\overline{k})(\ell)$, we denote $\widetilde{H}_{\ell}:=\pi_{\ell}^{-1} (G_{\O}(K\overline{k})(\ell))$ and $\varpi_{\ell}:=\pi_{\ell}|_{\widetilde{H}_{\ell}}$, and obtain the following stem extension of $G_{\O}(K\overline{k})(\ell)$
	\begin{equation}\label{eq:ext-ell}
		1 \longrightarrow \ker \varpi_{\ell} \longrightarrow \widetilde{H}_{\ell} \overset{\varpi_{\ell}}{\longrightarrow} G_{\O}(K\overline{k})(\ell) \longrightarrow 1.
	\end{equation}

		\begin{lemma}\label{lem:stem-ell-K}
		Let $1\to R \to F \overset{\phi}{\to} G_{\O}(K\overline{k})(\ell) \to 1$ be a minimal pro-$\ell$ presentation of $G_{\O}(K\overline{k})(\ell)$. Then there is a surjection $\widetilde{\phi}: F \to \widetilde{H}_{\ell}$ such that $\phi= \varpi_{\ell} \circ \widetilde{\phi}$. 
	\begin{enumerate}
	
		\item \label{item:stem-ell-K-1}
		If $\lambda \in R$ generates $R$ as a normal subgroup of $F$, then $\widetilde{\phi}(\lambda)$ is a generator of $\ker \varpi_{\ell}$.
		For two elements $\lambda_1, \lambda_2 \in R$ such that each of them generates $R$ as a normal subgroup, the followings are equivalent
		\begin{enumerate}
			\item \label{item:seK-1} $\widetilde{\phi}(\lambda_1)=\widetilde{\phi}(\lambda_2)$
			\item \label{item:seK-2} $\lambda_1 \equiv \lambda_2 \, \operatorname{mod} \, F^{(3)}$.
		\end{enumerate}
		
		\item \label{item:stem-ell-K-2} The transgression map in the Hochschild--Serre exact sequence of \eqref{eq:ext-ell}
		\[
			tg_{\varpi_{\ell}}: H^1(\ker \varpi_{\ell}, \Z_{\ell}) \longrightarrow H^2( G_{\O}(K\overline{k})(\ell), \Z_{\ell})
		\]
		is an isomorphism. Moreover, for any class $\varphi \in H^2(G_{\O}(K\overline{k})(\ell), \Z_\ell)$ and any $\lambda \in R$ that generates $R$ as a normal subgroup, we have $\tr_{\lambda}(\varphi)= (tg_{\varpi_{\ell}}^{-1} \varphi)(\widetilde{\phi}(\lambda))$.
	\end{enumerate}
	\end{lemma}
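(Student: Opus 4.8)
The plan is to mirror, in the pro-$\ell$ setting, the arguments used for Proposition~\ref{prop:etale-lift} and Lemma~\ref{lem:tg-isom}, but using the lower central series filtration instead of the system of inertia generators. First I would construct $\widetilde\phi$: since $F$ is a free pro-$\ell$ group and $\varpi_\ell$ is surjective, any lift of the generators $x_1,\dots,x_{2g}$ to $\widetilde H_\ell$ determines a homomorphism $\widetilde\phi\colon F\to\widetilde H_\ell$ with $\phi=\varpi_\ell\circ\widetilde\phi$ after adjusting by elements of $\ker\varpi_\ell$ (which is central); the image of $\widetilde\phi$ surjects onto $G_{\O}(K\overline k)(\ell)$, so by Lemma~\ref{lem:app-stem} applied to the stem extension~\eqref{eq:ext-ell}, $\widetilde\phi$ is surjective.

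For~\eqref{item:stem-ell-K-1}: if $\lambda$ generates $R$ as a normal subgroup of $F$, then $\widetilde\phi(\lambda)\in\ker\varpi_\ell$, and the closed normal subgroup of $\widetilde H_\ell$ generated by $\widetilde\phi(\lambda)$ together with $\widetilde\phi(R)=\widetilde\phi(\langle\lambda\rangle^F)$ maps onto $\ker\varpi_\ell$; since $\ker\varpi_\ell$ is central, $\widetilde\phi(\lambda)$ alone generates it, giving the claim (here I would use that $\widetilde H_\ell/\langle\widetilde\phi(\lambda)\rangle$ receives a map from $F/R\cong G_{\O}(K\overline k)(\ell)$ forcing $\widetilde\phi(\lambda)$ to generate the kernel, exactly as in the proof of Proposition~\ref{prop:etale-lift}\eqref{item:etale-lift-2}). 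For the equivalence of~\eqref{item:seK-1} and~\eqref{item:seK-2}: using the representation~\eqref{eq:rep-lambda}, the image $\widetilde\phi(\lambda)$ depends only on $\lambda$ modulo $F^{(3)}$ once we know $\widetilde\phi(F^{(3)})\subset[\widetilde H_\ell,[\widetilde H_\ell,\widetilde H_\ell]]$ maps trivially to the central $\ker\varpi_\ell$ — more precisely, modulo the center $\ker\varpi_\ell$ (which contains $[\widetilde H_\ell,\widetilde H_\ell]\cap\ker\varpi_\ell$ appropriately), $\widetilde\phi(\lambda)$ is governed by the commutator coefficients $a_{ij}$. Concretely: $\widetilde\phi(\lambda_1)\widetilde\phi(\lambda_2)^{-1}=\widetilde\phi(\lambda_1\lambda_2^{-1})$ lies in $\ker\varpi_\ell\cong\Z_\ell$, and since $\lambda_1,\lambda_2$ both generate $R$ normally they have the same $\varpi_\ell$-image $1$; one checks $\lambda_1\lambda_2^{-1}\in F^{(3)}$ iff all its commutator coefficients vanish iff its $\widetilde\phi$-image (which is detected in $\ker\varpi_\ell$ precisely by those coefficients, via the transgression identification) is trivial. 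I would cite \cite[Prop.(3.9.13)]{NSW} for the precise bookkeeping of how $F^{(3)}$ is detected.

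For~\eqref{item:stem-ell-K-2}: the transgression $tg_{\varpi_\ell}$ is an isomorphism by the same reasoning as in Lemma~\ref{lem:tg-isom} — \eqref{eq:ext-ell} is a stem extension (since $G_{\O}(K\overline k)(\ell)$ has torsion-free abelianization and $\pi_\ell$ is an $\ell$-Schur covering, so $\widetilde H_\ell^{ab}\cong G_{\O}(K\overline k)(\ell)^{ab}$), which forces $tg_{\varpi_\ell}$ to be injective via the five-term exact sequence, and surjective because the Schur multiplier of $\widetilde H_\ell$ (the pro-$\ell$ part) vanishes. The final identity $\tr_\lambda(\varphi)=(tg_{\varpi_\ell}^{-1}\varphi)(\widetilde\phi(\lambda))$ is the compatibility of the two transgression maps: the minimal presentation $1\to R\to F\to G_{\O}(K\overline k)(\ell)\to 1$ factors through $1\to\ker\varpi_\ell\to\widetilde H_\ell\to G_{\O}(K\overline k)(\ell)\to 1$ via $\widetilde\phi$, and functoriality of the Hochschild--Serre transgression under this factorization identifies $tg$ with $tg_{\varpi_\ell}$ composed with the map on $H^1$ induced by $\widetilde\phi|_R\colon R\to\ker\varpi_\ell$; unwinding the definition~\eqref{eq:trace-lambda} of $\tr_\lambda$ gives the stated formula.

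The main obstacle I expect is the careful verification in part~\eqref{item:stem-ell-K-1} that $\widetilde\phi(\lambda)$ depends only on $\lambda\bmod F^{(3)}$ and, conversely, that distinct classes mod $F^{(3)}$ give distinct images — this requires controlling how the lower central series of $F$ interacts with the central kernel $\ker\varpi_\ell$ under $\widetilde\phi$, i.e.\ showing $\widetilde\phi(F^{(3)})\cap\ker\varpi_\ell$ behaves as expected and that the induced pairing on $F^{(2)}/F^{(3)}$ is exactly the cup product pairing encoded by the $a_{ij}$. This is where \cite[Prop.(3.9.13)]{NSW} and the Demu\v{s}kin structure do the real work, and I would lean on them rather than recompute the commutator identities by hand.
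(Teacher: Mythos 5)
Your plan is essentially the paper's proof: construct $\widetilde\phi$ by lifting and applying Lemma~\ref{lem:app-stem}, control $\widetilde\phi(\lambda)$ via the lower central series and the Demu\v{s}kin non-degeneracy, and deduce part~\eqref{item:stem-ell-K-2} from functoriality of transgression along the factorization of the free presentation through $\varpi_\ell$. The only real difference in setup is that the paper produces $\widetilde\phi$ from the uniqueness of the pro-$\ell$ Schur covering (an isomorphism $F/[F,R]\xrightarrow{\sim}\widetilde H_\ell$), which gives $\ker\widetilde\phi=[F,R]$ for free; your construction yields the same identification after the small extra observation that $[F,R]\subseteq\ker\widetilde\phi\subseteq R$ and both $R/[F,R]$ and $R/\ker\widetilde\phi\cong\ker\varpi_\ell$ are isomorphic to $\Z_\ell$.

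Two places where you stop short. First, the implication \eqref{item:seK-2}$\Rightarrow$\eqref{item:seK-1}, which you correctly flag as the crux, amounts to $R\cap F^{(3)}\subseteq[F,R]=\ker\widetilde\phi$, and the tools you name do close it: non-degeneracy of the cup product (your coefficients $a_{ij}$, via \cite[Prop.(3.9.13)(ii)]{NSW}) gives $\lambda\notin F^{(3)}$, so the image of $R$ in $F/F^{(3)}$ is a nontrivial pro-cyclic subgroup of the torsion-free group $F^{(2)}/F^{(3)}$, hence isomorphic to $\Z_\ell$; the surjection $\Z_\ell\cong R/[F,R]\twoheadrightarrow R/(R\cap F^{(3)})\cong\Z_\ell$ is then an isomorphism. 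This is exactly the paper's argument, so your pointer is right, but the verification needs to be written out rather than cited. Second, your stated reason for surjectivity of $tg_{\varpi_\ell}$ --- vanishing of the $\ell$-part of the Schur multiplier of $\widetilde H_\ell$ --- is not justified here: $\widetilde H_\ell^{ab}\cong\Z_\ell^{2g}$ is not prime to $\ell$, so the coprimality that makes the analogous claim work in Lemma~\ref{lem:tg-isom} is unavailable, and the paper deliberately avoids this route. Fortunately it is also unnecessary: the functoriality identity $tg_{\varpi_\ell}=tg\circ(\widetilde\phi|_R)^{*}$ that you invoke for the trace formula, together with the facts that $tg$ is an isomorphism ($F$ free) and that $(\widetilde\phi|_R)^{*}$ is an isomorphism (since $H^1(R,\Z_\ell)^{G_{\O}(K\overline{k})(\ell)}=\Hom(R/[F,R],\Z_\ell)$ and $R/[F,R]\xrightarrow{\sim}\ker\varpi_\ell$), already forces $tg_{\varpi_\ell}$ to be an isomorphism and yields the trace formula simultaneously.
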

	
	\begin{remark}		
		The surjection $\widetilde{\phi}$ is not unique, but the last equality in the statement \eqref{item:stem-ell-K-2} holds for any choice of $\widetilde{\phi}$. Indeed, any two surjections $\widetilde{\phi}_1$ and $\widetilde{\phi}_2$ as described in \eqref{item:stem-ell-K-1} differ by a character $\chi: F \to \ker \varpi_{\ell}$ whose kernel contains $\lambda$ (since $\lambda \in [F,F]$), and hence $\widetilde{\phi}_1(\lambda)=\widetilde{\phi}_2(\lambda)$.
	\end{remark}
	
	\begin{proof}
		First, similarly to Lemma~\ref{lem:tg-isom}, the group extension $\varpi_{\ell}$ is a stem extension of $G_{\O}(K\overline{k})(\ell)$. 
		Since $F^{ab}\simeq G_{\O}(K\overline{k})(\ell)^{ab}$, the following short exact sequence
		\begin{equation}\label{eq:schur-cover-H}
			1 \longrightarrow R/[F,R] \longrightarrow F/[F, R] \longrightarrow G_{\O}(K\overline{k})(\ell) \longrightarrow 1
		\end{equation}
		is a Schur covering group of $H$ (\cite[Chap.11, Thm.2.3(iii) and (iv)(b)]{GroupRep}). Moreover, since $H_2(G_{\O}(K\overline{k})(\ell), \Z) \simeq \Z_{\ell}(1)$, the kernel of the Schur covering $R/[F,R]$ is isomorphic to $\Z_{\ell}$.
		Because $\Ext^1_{\Z}(G_{\O}(K\overline{k})(\ell)^{ab}, \Z_\ell)=\Ext^1_{\Z}(\Z_\ell^{2g}, \Z_{\ell})=0$, $G_{\O}(K\overline{k})(\ell)$ has a unique isomorphism class of Schur covering. So
		$\varpi_{\ell}$ and \eqref{eq:schur-cover-H} are isomorphic as extensions of $G_{\O}(K\overline{k})(\ell)$, i.e., there is an isomorphism $\iota$ such that
		\begin{equation}\label{eq:isom-extensions}
		\begin{tikzcd}
			\widetilde{H}_{\ell}\arrow[two heads, "\varpi_{\ell}"]{dr}  & \\
			F/[F,R] \arrow["\sim"', "\iota"]{u} \arrow[two heads]{r} &G_{\O}(K\overline{k})(\ell)
		\end{tikzcd}
		\end{equation}
		commutes. Then we define $\widetilde{\phi}$ to be the composition of $F \to F/[F,R]$ and $\iota$, and it follows immediately that $\phi=\varpi_{\ell} \circ \widetilde{\phi}$. If $\lambda\in F$ generates $R$ as a normal subgroup, then $\widetilde{\phi}(\lambda)$ generates $\widetilde{\phi}(R)=\ker \varpi_{\ell}$ as a normal subgroup of $\widetilde{H}_{\ell}$, so it is a generator of $\ker \varpi_{\ell}$, which finishes the proof of the first sentence in \eqref{item:stem-ell-K-1} of the lemma.
		
		Because $R$ is contained in $F^{(2)}=[F,F]$, we have $F^{(3)}\supset [F,R]=\ker \widetilde{\phi}$, so \eqref{item:seK-1} implies \eqref{item:seK-2}.
		Taking the quotient of a group $G$ modulo $G^{(i)}$ is a functor mapping surjections to surjections. For a quotient map $\pi: E_1 \twoheadrightarrow E_2$, we denote the induced quotient map by $\pi^{(i)}: E_1/E_1^{(i)} \twoheadrightarrow E_2/E_2^{(i)}$. So we have a commutative diagram of surjections induced by $\phi$, $\widetilde{\phi}$ and $\varpi_{\ell}$
		\[\begin{tikzcd}
			& F/F^{(3)} \arrow[two heads, "\phi^{(3)}"]{d} \arrow[two heads, "{\widetilde{\phi}}^{(3)}", swap]{ld}\\
			\widetilde{H}_{\ell}/\widetilde{H}_{\ell}^{(3)} \arrow[two heads, "\varpi_{\ell}^{(3)}", swap]{r} & G_{\O}(K\overline{k})(\ell)/G_{\O}(K\overline{k})(\ell)^{(3)}.
		\end{tikzcd}\]
		The exact sequence 
		\[
			1 \longrightarrow F^{(2)}/F^{(3)} \longrightarrow F/ F^{(3)} \longrightarrow F/ F^{(2)} \longrightarrow 1
		\]
		is stem, where $F/F^{(2)} \simeq \Z_{\ell}^{2g}$ and $F^{(2)}/F^{(3)}\simeq \Z_{\ell}^{2g^2-2g}$ (because $F^{(2)}/F^{(3)}$ is generated by images of $[x_i, x_j]$ for $i\neq j$). Because the cup product \eqref{eq:cup-product} for the Demu\v{s}kin group $G_{\O}(K\overline{k})(\ell)$ is non-degenerate, it follows by \eqref{eq:rep-lambda} that the image of $R$ in $F/F^{(3)}$ is isomorphic to $\Z_{\ell}$ which is contained in $F^{(2)}/F^{(3)}$. 
		Recall that $\widetilde{\phi}$ factors through the isomorphism $\iota: F/[F,R] \to \widetilde{H}_{\ell}$, the $(F/[F,R])/(F/[F,R])^{(3)}$ is isomorphic to $\widetilde{H}_{\ell}/\widetilde{H}_{\ell}^{(3)}$, where the former is isomorphic to $F/F^{(3)}$ because $[F,R]\subset F^{(3)}$. So we conclude that $\widetilde{\phi}^{(3)}$ is isomorphic, and hence the image of $R$ in $\widetilde{H}_{\ell}/\widetilde{H}_{\ell}^{(3)}$ is isomorphic to $\Z_{\ell}$. Then because the image of $R$ in $\widetilde{H}_{\ell}$ is isomorphic to $\Z_{\ell}$, we see that $\widetilde{\phi}(R)\cap \widetilde{H}_{\ell}^{(3)}=1$, so \eqref{item:seK-2} implies \eqref{item:seK-1}, and we finish the proof of \eqref{item:stem-ell-K-1}.	
		
		Because \eqref{eq:ext-ell} is a stem extension, in the Hochschild--Serre exact sequence 
		\[
			0 \longrightarrow H^1(G_{\O}(K\overline{k})(\ell), \Z_{\ell}) \longrightarrow H^1(\widetilde{H}_{\ell}, \Z_{\ell}) \longrightarrow H^1(\ker \varpi_{\ell}, \Z_{\ell})\overset{tg_{\varpi_{\ell}}}{\longrightarrow} H^2(G_{\O}(K\overline{k})(\ell), \Z_{\ell}),
		\]
		the second arrow is an isomorphism, so $tg_{\varpi_{\ell}}$ is an injection. On the other hand, because we've shown that there is a surjection $\widetilde{\phi}$ such that $\phi=\varpi_{\ell} \circ \widetilde{\phi}$, we have the diagram
		\[\begin{tikzcd}
			H^1(R, \Z_\ell)^{G_{\O}(K\overline{k})(\ell)} \arrow["tg","\sim"']{dr} & \\
			H^1(\ker \varpi_{\ell}, \Z_{\ell}) \arrow{u} \arrow[hook, "tg_{\varpi_{\ell}}"']{r} & H^2(G_{\O}(K\overline{k})(\ell), \Z_{\ell})
		\end{tikzcd}\]
		where the vertical map is induced by $\widetilde{\phi}|_R$. 
		Moreover, the vertical map is an isomorphism, since $H^1(R, \Z_{\ell})^{G_{\O}(K\overline{k})(\ell)}=\Hom_F(R, \Z_{\ell}) = \Hom(R/[F,R], \Z_{\ell})$ and $\iota$ is an isomorphism.
		Then from the diagram, we see that $tg_{\varpi_{\ell}}$ is an isomorphism and the equality $(tg^{-1} \varphi)( \lambda) = (tg_{\varpi_{\ell}}^{-1} \varphi)(\widetilde{\phi}(\lambda))$ holds, which proves the statement \eqref{item:stem-ell-K-2} in the lemma.
	\end{proof}

	Lemma~\ref{lem:stem-ell-K} shows that, after taking the quotient map $\widetilde{\phi}$, the extension \eqref{eq:ext-ell} carries exactly the information of the symplectic form \eqref{eq:cup-product}. So we formulate this symplectic form using only the structure of \eqref{eq:ext-ell} in the following corollary. Because after taking the quotient map $\widetilde{\phi}$ on $F$, the corollary naturally follows by how the minimal presentation \eqref{eq:pre-H} determines the symplectic form \eqref{eq:cup-GC}, so by abuse of notation, we don't distinguish the symbols used for $F$ and for $\widetilde{H}_{\ell}$.

	\begin{corollary}\label{cor:symplectic-stem}
		Consider the stem extension \eqref{eq:ext-ell}, and let $\lambda$ be a generator of $\ker \varpi_{\ell}$. Define a trace map $\tr_{\lambda}$ as
		\begin{eqnarray*}
			\tr_{\lambda}: H^2(G_{\O}(K\overline{k})(\ell), \Z_{\ell}) & \longrightarrow& \Z_{\ell} \\
			\varphi & \longmapsto & (tg_{\varpi_{\ell}}^{-1} \varphi) (\lambda),
		\end{eqnarray*}
		where $tg_{\varpi_{\ell}}$ is the transgression map in Lemma~\ref{lem:stem-ell-K}\eqref{item:stem-ell-K-2}.
		 Assume that $\{x_1, \ldots, x_{2g}\}$ is a minimal set of generators of $\widetilde{H}_{\ell}$. Then $\lambda$ can be uniquely written in the form of
		\[
			\lambda= \prod_{1\leq i < j \leq 2g}[x_i, x_j]^{a_{ij}} \cdot \lambda', \quad \lambda' \in \widetilde{H}_{\ell}^{(3)}, \, a_{ij} \in \Z_{\ell}.
		\]
		Note that $\{ \varpi_{\ell}(x_1), \ldots, \varpi_{\ell}(x_{2g})\}$ is a minimal set of generators of $G_{\O}(K\overline{k})(\ell)$.
		Let $\chi_1, \ldots, \chi_{2g}$ be the corresponding $\Z_\ell$-basis of $H^1(G_{\O}(K\overline{k}), \Z_{\ell})$ such that $\chi_i(\varpi_{\ell}(x_j))=1$ if $i=j$ and 0 otherwise. Then the symplectic form 
		\begin{equation}\label{eq:symp-stem}
		\begin{tikzcd}
			H^1(G_{\O}(K\overline{k}), \Z_{\ell}) \times H^1(G_{\O}(K\overline{k}), \Z_{\ell}) \arrow["\cup"]{r} & H^2(G_{\O}(K\overline{k}), \Z_\ell) \arrow["\tr_{\lambda}", "\sim"']{r} & \Z_{\ell}
		\end{tikzcd}
		\end{equation}
		with respect to the basis $\chi_1, \ldots, \chi_{2g}$ is given by the matrix $M_{\lambda}=[m_{ij}]$, where 
		\[
			m_{ij}= \tr_{\lambda}(\chi_i \cup \chi_j) = \begin{cases}
				-a_{ij} & \text{ if } i< j \\
				a_{ji} & \text{ if } i>j \\
				0 & \text{ if } i=j.
			\end{cases}
		\]
		For a positive integer $n$, the symplectic form above with all coefficients replaced with $\Z/\ell^n\Z$ is given by the matrix $M_{\lambda}$ mod $\ell^n$.
	\end{corollary}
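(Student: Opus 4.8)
The strategy is to observe that, once we push everything through the surjection $\widetilde{\phi}\colon F\to\widetilde{H}_{\ell}$ provided by Lemma~\ref{lem:stem-ell-K}, the stem extension \eqref{eq:ext-ell} carries exactly the same combinatorial data as a minimal pro-$\ell$ presentation of $G_{\O}(K\overline{k})(\ell)$, so that the corollary is a transcription of \eqref{eq:matrix-M}--\eqref{eq:cup-GC}. Concretely, I would fix a minimal presentation $1\to R\to F\xrightarrow{\phi}G_{\O}(K\overline{k})(\ell)\to 1$ as in \eqref{eq:pre-H} and a surjection $\widetilde{\phi}$ with $\phi=\varpi_{\ell}\circ\widetilde{\phi}$. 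Since any such $\widetilde{\phi}$ factors through the Schur covering $F/[F,R]$ of $G_{\O}(K\overline{k})(\ell)$, which is isomorphic to $\widetilde{H}_{\ell}$, one has $\ker\widetilde{\phi}=[F,R]$; as $R\subseteq F^{(2)}$ this gives $\ker\widetilde{\phi}\subseteq F^{(3)}$, so $\widetilde{\phi}$ induces isomorphisms $F/F^{(3)}\xrightarrow{\sim}\widetilde{H}_{\ell}/\widetilde{H}_{\ell}^{(3)}$ and $F^{(2)}/F^{(3)}\xrightarrow{\sim}\widetilde{H}_{\ell}^{(2)}/\widetilde{H}_{\ell}^{(3)}$. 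Given the minimal generating set $x_1,\dots,x_{2g}$ of $\widetilde{H}_{\ell}$ in the statement, I would choose $y_i\in\widetilde{\phi}^{-1}(x_i)$; by the Burnside basis theorem the $y_i$ form a minimal generating set of $F$.

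Next I would handle the expansion of $\lambda$. Since the extension \eqref{eq:ext-ell} is stem, $\lambda\in\ker\varpi_{\ell}\subseteq\widetilde{H}_{\ell}^{(2)}$, and since $F^{(2)}/F^{(3)}$ is the degree-$2$ part of the free Lie algebra on $y_1,\dots,y_{2g}$, the images of $[y_i,y_j]$ with $i<j$ form a $\Z_{\ell}$-basis of it; transporting along the isomorphism above, the images of $[x_i,x_j]$ with $i<j$ form a $\Z_{\ell}$-basis of $\widetilde{H}_{\ell}^{(2)}/\widetilde{H}_{\ell}^{(3)}$. Writing $\lambda\equiv\prod_{i<j}[x_i,x_j]^{a_{ij}}\pmod{\widetilde{H}_{\ell}^{(3)}}$ in this module and setting $\lambda'=\bigl(\prod_{i<j}[x_i,x_j]^{a_{ij}}\bigr)^{-1}\lambda$ yields the claimed expression, and its uniqueness is exactly the freeness of $\widetilde{H}_{\ell}^{(2)}/\widetilde{H}_{\ell}^{(3)}$ on the $[x_i,x_j]$. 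Moreover, choosing $\widetilde{\lambda}\in R$ with $\widetilde{\phi}(\widetilde{\lambda})=\lambda$ (possible since $\widetilde{\phi}(R)=\ker\varpi_{\ell}$), the element $\widetilde{\lambda}$ generates $R$ as a normal subgroup of $F$, because $\widetilde{\phi}|_R$ descends to the isomorphism $R/[F,R]\xrightarrow{\sim}\ker\varpi_{\ell}$ and normal generators of $R$ are precisely the elements mapping to generators of $R/[F,R]$. Writing $\widetilde{\lambda}=\prod_{i<j}[y_i,y_j]^{a'_{ij}}\cdot\lambda''$ with $\lambda''\in F^{(3)}$ as in \eqref{eq:rep-lambda} and applying $\widetilde{\phi}$ (which sends $y_i\mapsto x_i$ and $F^{(3)}$ into $\widetilde{H}_{\ell}^{(3)}$) shows $a'_{ij}=a_{ij}$.

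Finally I would match the trace maps and the dual bases, then conclude. Under the inflation isomorphism $H^1(G_{\O}(K\overline{k})(\ell),\Z_{\ell})=H^1(F,\Z_{\ell})$ (an isomorphism because the presentation is minimal) and the identity $\phi(y_j)=\varpi_{\ell}(x_j)$, the basis $\chi_1,\dots,\chi_{2g}$ of the corollary coincides with the basis of $H^1(F,\Z_{\ell})$ dual to $y_1,\dots,y_{2g}$ used in \S\ref{ss:cup-product}. By Lemma~\ref{lem:stem-ell-K}\eqref{item:stem-ell-K-2}, for every $\varphi\in H^2(G_{\O}(K\overline{k})(\ell),\Z_{\ell})$ one has $(tg_{\varpi_{\ell}}^{-1}\varphi)(\lambda)=(tg^{-1}\varphi)(\widetilde{\lambda})$, so the trace map $\tr_{\lambda}$ of the corollary equals the trace map $\tr_{\widetilde{\lambda}}$ attached to the presentation \eqref{eq:pre-H} (and is an isomorphism since $\lambda$ generates $\ker\varpi_{\ell}\cong\Z_{\ell}$). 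Then \eqref{eq:matrix-M}--\eqref{eq:cup-GC} say the form \eqref{eq:symp-stem} is represented in the basis $\chi_i$ by $M_{\widetilde{\lambda}}=[m_{ij}]$ with the stated $m_{ij}$, and since $a'_{ij}=a_{ij}$ this matrix equals $M_{\lambda}$. For the reduction mod $\ell^n$, I would invoke naturality of the cup product and of the transgression map with respect to the coefficient reduction $\Z_{\ell}\to\Z/\ell^n\Z$; since $\cd_{\ell}G_{\O}(K\overline{k})(\ell)=2$, the reduction maps on $H^1$ and $H^2$ are surjective with the evident kernels, so the $\Z/\ell^n\Z$-valued form is represented by $M_{\lambda}\bmod\ell^n$.

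I do not anticipate a real obstacle: the mathematical content is already in Lemma~\ref{lem:stem-ell-K} and \S\ref{ss:cup-product}, and what remains is the bookkeeping that $\widetilde{\phi}$ faithfully transports the expansion \eqref{eq:rep-lambda}, the dual bases, and the trace. The one point demanding a little care is the inclusion $\ker\widetilde{\phi}\subseteq F^{(3)}$, which is what makes both the uniqueness of the expansion of $\lambda$ and the coincidence $a'_{ij}=a_{ij}$ go through; everything else is formal.
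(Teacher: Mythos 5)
Your proposal is correct and follows essentially the same route as the paper: the paper's own (very brief) proof likewise reduces the corollary to the discussion from \eqref{eq:pre-H} to \eqref{eq:matrix-M} together with Lemma~\ref{lem:stem-ell-K}, transporting the presentation data along $\widetilde{\phi}$ exactly as you do. Your additional bookkeeping (that $\ker\widetilde{\phi}=[F,R]\subseteq F^{(3)}$, that a lift $\widetilde{\lambda}\in R$ of $\lambda$ normally generates $R$, and the uniqueness of the expansion via freeness of $F^{(2)}/F^{(3)}$) just makes explicit what the paper leaves implicit.
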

	
	\begin{proof}
		The corollary follows by the argument at the beginning of the section from \eqref{eq:pre-H} to \eqref{eq:matrix-M} and Lemma~\ref{lem:stem-ell-K}.
	\end{proof}

\subsection{The invariant $\omega_{L/K}$ determines the image of Weil pairing} \label{ss:weil-pairing}
	
	Let $X$ be the smooth projective curve over $k$ whose function field is $K$, and let $X_{\overline{k}}$ denote the base change of $X$ to $\overline{k}$. 
	Let $\Jac_X$ denote the Jacobian variety of $X_{\overline{k}}$. Then $\Jac_X$ has an isomorphic principal polarisation, so the Weil pairing associated to $X_{\overline{k}}$ is a bilinear pairing
	\[
		\Jac_X[\ell^n] \times \Jac_X[\ell^n] \longrightarrow \mu_{\ell^n},
	\]
	which is non-degenerate and alternating. By \cite[Chap.V, \S2, Rmk.2.4(f)]{Milne-EC} and \cite[Dualit\'e \S3]{SGA4.5}, the Weil pairing can be identified with the cup product of \'etale cohomology followed by the trace map
	\begin{equation}\label{eq:cup-EC}
	\begin{tikzcd}
		H^1_{\et}(X_{\overline{k}}, \mu_{\ell^n}) \times H^1_{\et}(X_{\overline{k}}, \mu_{\ell^n}) \arrow["\cup"]{r} & H_{\et}^2(X_{\overline{k}}, \mu_{\ell^n}^{\otimes 2}) \arrow["\tr_{X,\ell^n}", "\sim"']{r} &\mu_{\ell^n}.
	\end{tikzcd}
	\end{equation}
	Since $\mu_{\ell^n}$ is a trivial $G_{\O}(K\overline{k})$-module, we fix an isomorphism $\mu_{\ell^n} \to \Z/\ell^n\Z$ that is compatible among all $n$; in other words, we pick a generator $\zeta$ of $\Z_{\ell}(1)$ and then identify $\Z_{\ell}$ with $\Z_{\ell}(1)$ by sending $1$ to $\zeta$. Then it follows by the functorial isomorphism between Galois cohomology and \'{e}tale cohomology that, for a generator $\lambda$ of $\ker \varpi_{\ell}$ in \eqref{eq:ext-ell}, the symplectic form defined by \eqref{eq:cup-EC} is equivalent to the one defined by the mod-$\ell^n$ analogue of \eqref{eq:symp-stem} followed by an automorphism of $\Z/\ell^n\Z$.

	\begin{lemma}\label{lem:Weil-pairing-lambda}
	Identify $\Z_{\ell}$ with $\Z_{\ell}(1)$ by $1 \mapsto \zeta$.
	\begin{enumerate}
		\item \label{item:Weil-lambda-1}
		Let $\lambda$ be a generator of $\ker \varpi_{\ell}$ in \eqref{eq:ext-ell}, and let $M_{\lambda}$ and $\tr_{\lambda}$ be as defined in Corollary~\ref{cor:symplectic-stem}. The Weil pairing on the $\ell$-adic Tate module of $X_{\overline{k}}$ 
		\[
			H^1(\Gp(K\overline{k}), \Z_\ell(1)) \times H^1(\Gp(K\overline{k}), \Z_\ell(1)) \longrightarrow \Z_\ell(1)
		\]
		is equivalent to the composition of the symplectic form determined by the matrix $M_{\lambda}$ and an isomorphism $\tr_X \circ \tr_{\lambda}^{-1}\in \Aut(\Z_{\ell})$, where $\tr_X:=\varprojlim_{n} \tr_{X,\ell^n}$.

		\item \label{item:Weil-lambda-2}
		Let $\underline{g}:=(g_1, \ldots, g_m)$ be a system of inertia generators for $K$ such that $I(\underline{g})$ is the generator $\zeta^{-|\Gamma|}$ of $\Z_{\ell}(1)$. Then let $\lambda$ be the image of $g_1\cdots g_m$ in $\ker \varpi_{\ell}$ as in the pro-$\ell$ analogue of Proposition~\ref{prop:etale-lift}\eqref{item:etale-lift-2}.  Then, $\tr_{\lambda} = \tr_X$ as trace maps from $H^2(G_{\O}(K\overline{k}), \Z_\ell(1)) \to \Z_{\ell}$.
	\end{enumerate}
	\end{lemma}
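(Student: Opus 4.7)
My plan is to reduce both parts to Corollary~\ref{cor:symplectic-stem} together with the pro-$\ell$ analogue of Lemma~\ref{lem:tr-Gamma} promised by Remark~\ref{rmk:inv}\eqref{item:rmk-inv-3}. For (1), under the identification $\Z_\ell \simeq \Z_\ell(1)$ given by $1 \mapsto \zeta$ and the canonical isomorphism between Galois and \'etale cohomology established in the proof of Lemma~\ref{lem:Galois-H2}, the \'etale cup product \eqref{eq:cup-EC} is identified with the Galois cup product on $H^1(G_{\O}(K\overline{k}), \Z_\ell)$; Corollary~\ref{cor:symplectic-stem} then says that this Galois cup product followed by $\tr_\lambda$ is the bilinear form of matrix $M_\lambda$. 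Since both $\tr_X$ and $\tr_\lambda$ are $\Z_\ell$-linear isomorphisms $H^2(G_{\O}(K\overline{k}),\Z_\ell) \overset{\sim}{\to} \Z_\ell$, they differ by a unique $\tr_X \circ \tr_\lambda^{-1} \in \Aut(\Z_\ell)$, which yields (1).

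For (2), I will pin down $\tr_\lambda$ on a specific generator of $H^2(G_{\O}(K\overline{k}), \Z_\ell)$ using the chosen $\underline{g}$ and $\lambda$. By the pro-$\ell$ analogue of Proposition~\ref{prop:etale-lift}\eqref{item:etale-lift-2}, $\lambda$ generates $\ker \varpi_\ell \simeq \Z_\ell$, and the corresponding pro-$\ell$ map $\fraki_\ell : \ker \varpi_\ell \to \Z_\ell(1)$ sends $\lambda$ to $I(\underline{g}) = \zeta^{-|\Gamma|}$. Under the identification $\mu_{\ell^m} \simeq \Z/\ell^m\Z$ via $\zeta \mapsto 1$, the mod-$\ell^m$ reduction of $\fraki_\ell$ is the character $\ker \varpi_\ell \to \Z/\ell^m\Z$ sending $\lambda \mapsto -|\Gamma|$, i.e. the element $-|\Gamma| \cdot \chi_\lambda$ of $H^1(\ker \varpi_\ell, \Z/\ell^m\Z)$, where $\chi_\lambda := [\lambda \mapsto 1]$ is the dual generator.

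The pro-$\ell$ analogue of Lemma~\ref{lem:tr-Gamma} then asserts that the composite
\[
H^1(\ker \varpi_\ell, \Z/\ell^m\Z) \overset{tg_{\varpi_\ell}}{\longrightarrow} H^2(G_{\O}(K\overline{k})(\ell), \Z/\ell^m\Z) \overset{\sim}{\longrightarrow} H^2_{\et}(X_{\overline{k}}, \mu_{\ell^m}) \overset{\tr_{X,\ell^m}}{\longrightarrow} \Z/\ell^m\Z
\]
sends $-|\Gamma| \cdot \chi_\lambda$ to $-|\Gamma|$ mod $\ell^m$. Because $\ell \nmid |\Gamma|$, the factor $|\Gamma|$ is a unit modulo $\ell^m$, so cancelling it gives that $\chi_\lambda$ itself maps to $1$ mod $\ell^m$ for every $m$; passing to the inverse limit, $\tr_X(tg_{\varpi_\ell}(\chi_\lambda)) = 1$ in $\Z_\ell$. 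On the other hand, the definition of $\tr_\lambda$ in Corollary~\ref{cor:symplectic-stem} yields $\tr_\lambda(tg_{\varpi_\ell}(\chi_\lambda)) = \chi_\lambda(\lambda) = 1$. Hence the two $\Z_\ell$-linear isomorphisms $\tr_X$ and $\tr_\lambda$ agree on the generator $tg_{\varpi_\ell}(\chi_\lambda)$ of $H^2(G_{\O}(K\overline{k}), \Z_\ell)$, which forces $\tr_\lambda = \tr_X$ and proves (2). The one technical point requiring care is to verify that the normalizations of the transgression map, the Galois--\'etale comparison, and the trace match between the $(p|\Gamma|)'$-version appearing in Lemma~\ref{lem:tr-Gamma} and the pro-$\ell$ version used here; this is handled by functoriality along the natural surjection $G_{\O}(K\overline{k}) \twoheadrightarrow G_{\O}(K\overline{k})(\ell)$, exactly as in the proof of Lemma~\ref{lem:stem-ell-K}.
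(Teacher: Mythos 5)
Your proof is correct and follows essentially the same route as the paper: part (1) by passing to the inverse limit of the \'etale cup-product description together with Corollary~\ref{cor:symplectic-stem}, and part (2) by feeding the class $[\lambda \mapsto I(\underline{g})]=[\lambda\mapsto \zeta^{-|\Gamma|}]$ into the pro-$\ell$ analogue of Lemma~\ref{lem:tr-Gamma} and comparing with the defining property $\tr_\lambda(tg_{\varpi_\ell}(\chi_\lambda))=\chi_\lambda(\lambda)$. The only cosmetic difference is that you cancel the unit $|\Gamma|$ to compare the two traces on the generator $tg_{\varpi_\ell}(\chi_\lambda)$, whereas the paper compares both maps on the element of value $-|\Gamma|$ via a small commutative diagram; the content is the same.
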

	
	\begin{proof}
		The statement \eqref{item:Weil-lambda-1} follows immediately by taking the inverse limit of \eqref{eq:cup-EC} over all $n$.
		The statement~\eqref{item:Weil-lambda-2} says that the right block of the following diagram commutes
		\[\begin{tikzcd}
			H^1(\ker\varpi_{\ell}, \Z_{\ell}(1)) \arrow["tg_{\varpi_\ell}", "\sim"']{r} & H^2(G_{\O}(K\overline{k})(\ell), \Z_{\ell}(1)) \arrow["\tr", "\sim"']{r} & \Z_\ell \arrow[equal]{d}\\
			H^1(\ker\varpi_{\ell}, \Z_{\ell}) \arrow["tg_{\varpi_\ell}", "\sim"']{r} \arrow["\cup \zeta", "\sim"']{u} & H^2(G_{\O}(K\overline{k})(\ell), \Z_\ell) \arrow["\tr_{\lambda}", "\sim"']{r} \arrow["\cup \zeta", "\sim"']{u} & \Z_{\ell} ,
		\end{tikzcd}\]
		where the trace map $\tr$ in the first row is obtained by taking the $(-1)$-Tate twist of the composition map 
		\[\begin{tikzcd}
			H^2(G_{\O}(K\overline{k})(\ell), \Z_\ell(2)) \arrow["inf", "\sim"']{r} & H^2(G_{\O}(K\overline{k}), \Z_\ell(2)) \arrow["\sim"']{r} & H^2_{\et}(X_{\overline{k}}, \Z_{\ell}(2)) \arrow["\tr_X", "\sim"']{r} & \Z_\ell(1).
		\end{tikzcd}\]
		By taking the inverse limit of the pro-$\ell$ analogue of Lemma~\ref{lem:tr-Gamma}, the homomorphism $[\lambda \mapsto I(\underline{g})] \in H^1(\ker \varpi_{\ell}, \Z_{\ell}(1))$ (which is defined as the homomorphism $\ker \varpi_{\ell} \to \Z_{\ell}(1)$ that sends $\lambda$ to $I(\underline{g})$) is mapped to $-|\Gamma|$ in the upper-right entry of the above diagram. By our definition of $\lambda$ and $\underline{g}$ in the lemma, the preimage of $[\lambda \mapsto I(\underline{g})]$ in the lower-left entry is the homomorphism $[\lambda \mapsto -|\Gamma|]$, whose image in the lower-middle entry has $\tr_{\lambda}$ exactly $-|\Gamma|$. So we finish the proof.
	\end{proof}

	Next, we will show that the invariant $\omega_{L/K, \ell}$ associated to $L/K$ carries sufficient information to determine the image of Weil pairing.

	\begin{definition}[The image of the Weil pairing]\label{def:image-Weil}
		Assume that $k$ contains $\mu_{\ell^n}$. Let $L/K$ be a subextension of $\Ks(\ell)/K$ such that $L$ is Galois over $k(t)$. 
		Then, by the isomorphism between \'etale cohomology and Galois cohomology, the Weil pairing defines a symplectic pairing 
		\[
			f_{K, \ell^n}: H^1(\Gp(K), \mu_{\ell^n}) \times H^1(\Gp(K), \mu_{\ell^n}) \to \mu_{\ell^n}.
		\]
		Then the restriction of $f_{K,\ell^n}$ gives a pairing 
	\[
		f_{L/K,\ell^n}: H^1(\Gal(L/K), \mu_{\ell^n}) \times H^1(\Gal(L/K), \mu_{\ell^n}) \longrightarrow \mu_{\ell^n},
	\]
	where the Frobenius $\Frob_k$ acts trivially on each term as it acts trivially on $\Gal(L/K)$. We call $f_{L/K,\ell^n}$ \emph{the image of the Weil pairing} associated to $L/K$.
	\end{definition}

	For the rest of this section, we assume that $L$ and $n$ are as described in Definition~\ref{def:image-Weil}.
	Consider compatible $\ell$-Schur coverings of $\Gal(\Kp(\ell)/\overline{k}(t))$ and $\Gal(L\overline{k}/\overline{k}(t))$ (such compatible stem extensions exist by Lemma~\ref{lem:comp-schur}): 
	\begin{equation}\label{eq:compatible-schur-ell}
	\begin{tikzcd}
		1 \arrow{r} & \ker \pi_{\ell} \arrow{r} \arrow{d} & \widetilde{G}_{\ell} \arrow["\pi_{\ell}"]{r} \arrow{d} & \Gal(\Kp(\ell)/\overline{k}(t)) \arrow{r}\arrow[two heads]{d} & 1 \\
		1 \arrow{r} & \ker \pi'_{\ell} \arrow{r} & S_{\ell} \arrow["\pi'_{\ell}"]{r} & \Gal(L\overline{k}/ \overline{k}(t)) \arrow{r} & 1,
	\end{tikzcd}
	\end{equation}
	where the rows are $\ell$-Schur coverings, the last vertical map is the quotient of Galois groups, and the first vertical map is the composition of
	\[\begin{tikzcd}
		\ker \pi_{\ell} \arrow["d^{-1}_{\pi_\ell}", "\sim"']{r} & H_2(\Gal(\Kp(\ell)/\overline{k}(t)), \Z)(\ell) \arrow["\coinf"]{r} & H_2(\Gal(L\overline{k}/\overline{k}(t)), \Z)(\ell) \arrow["d_{\pi'_{\ell}}", "\sim"']{r} & \ker \pi'_{\ell}.
	\end{tikzcd}\]
	By restricting $\pi_{\ell}$ and $\pi'_{\ell}$ to extensions of $G_{\O}(K\overline{k})(\ell)$ and $\Gal(L\overline{k}/K\overline{k})\simeq \Gal(L/K)$ respectively, we obtain the following compatible extensions
	\begin{equation}\label{eq:compatible-schur-L}
	\begin{tikzcd}
		1 \arrow{r} & \ker \varpi_{\ell} \arrow{r} \arrow{d} & \widetilde{H}_{\ell} \arrow["\varpi_{\ell}"]{r} \arrow["\alpha"]{d} & G_{\O}(K\overline{k})(\ell) \arrow{r}\arrow[two heads]{d} & 1 \\
		1 \arrow{r} & \ker \varpi'_{\ell} \arrow{r} & T \arrow["\varpi_{\ell}'"]{r} & \Gal(L/ K) \arrow{r} & 1,
	\end{tikzcd}
	\end{equation}
	where $\widetilde{H}_{\ell}:=\pi_{\ell}^{-1}(G_{\O}(K\overline{k})(\ell))$, $\varpi_{\ell}:=\pi_{\ell}|_{\widetilde{H}_{\ell}}$, $T:={\pi'_{\ell}}^{-1} (\Gal(L\overline{k}/K\overline{k}))$ and $\varpi'_{\ell}:=\pi'_{\ell}|_T$. 
	These rows are $\Gamma$-equivariant when we choose compatible splittings of $\Gal(\Kp(\ell)/\overline{k}(t)) \to \Gal(K\overline{k}/\overline{k}(t))$ and $\Gal(L\overline{k}/\overline{k}(t)) \to \Gal(K\overline{k}/\overline{k}(t))$.

	\begin{proposition}\label{prop:omega-weil}
		Use the notation above. Let $\zeta$ be a generator of $\Z_{\ell}(1)$ and $\zeta_{n}$ the image of $\zeta$ in $\mu_{\ell^n}$.
		Let $\{y_1, \ldots, y_r\}$ be a set of generators of $\Gal(L/K)$ such that $\Gal(L/K)^{ab}$ is the direct products of the images of $\langle y_i \rangle$ for all $i$. Then there exists a corresponding basis $\{ y_1^{\vee}, \ldots, y_r^{\vee}\}$ of $H^1(\Gal(L/K), \mu_{\ell^n})$ such that $y_i^{\vee}(y_j) =0$ for $i\neq j$.
		 Also, $\{y_1, \ldots, y_r\}$ lifts to a minimal set of generators $\{\widetilde{y}_1, \ldots, \widetilde{y}_r\}$ of $T$ in \eqref{eq:compatible-schur-L}.
		
		Let $\lambda_{L}$ denote the image of $\zeta^{-|\Gamma|}$ under the following composition map
		\[\begin{tikzcd}
			\Z_{\ell}(1) \arrow["\omega_{L/K, \ell}"]{r} & H_2(\Gal(L\overline{k}/\overline{k}(t)), \Z)(\ell) \arrow["d_{\pi'_{\ell}}^{-1}"]{r} & \ker \pi'_{\ell} = \ker \varpi'_{\ell},
		\end{tikzcd}\]
		where $\omega_{L/K, \ell}$ is the invariant defined in Remark~\ref{rmk:inv}\eqref{item:rmk-inv-3} and $d_{\pi'_{\ell}}$ is the differential map defined by $\pi'_{\ell}$. Then $\lambda_L$ can be expressed in the form of 
		\[
			\lambda_L = \prod_{1 \leq i < j \leq r} [\widetilde{y}_i, \widetilde{y}_j]^{b_{ij}}  \cdot \lambda'_L, \quad \lambda'_L \in T^{(3)}, \, b_{ij} \in \Z_{\ell}.
		\]
		With respect to the basis $y_1^{\vee}, \ldots, y_r^{\vee}$, the image of the Weil pairing $f_{L/K, \ell^n}$ is given by 
		\[
			f_{L/K, \ell^n} (y_i^{\vee}, y_j^{\vee}) = \begin{cases}
				\left(y_i^{\vee}(y_i) \cdot y_j^{\vee}(y_j)\right)^{-b_{ij}} & \text{ if } i< j \\
				\left(y_i^{\vee}(y_i)  \cdot y_j^{\vee}(y_j)\right)^{b_{ji}} & \text{ if } i>j \\
				0 & \text{ if } i=j.
			\end{cases}
		\]
	\end{proposition}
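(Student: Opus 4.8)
The plan is to transport the Weil-pairing computation through the compatible stem extensions \eqref{eq:compatible-schur-L}, reducing it to the Demu\v{s}kin presentation of $\Gp(K)(\ell)$ analysed in Corollary~\ref{cor:symplectic-stem}.

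\textbf{Step 1 (locate $\lambda_L$ inside $T$).} Choose a system of inertia generators $\underline{g}=(g_1,\dots,g_m)$ for $K$ with $I(\underline g)=\zeta^{-|\Gamma|}$ (possible since $\ell\nmid|\Gamma|$), let $\widetilde\rho_K$ be the lift from the pro-$\ell$ form of Proposition~\ref{prop:etale-lift}\eqref{item:etale-lift-1}, and put $\lambda:=\widetilde\rho_K(g_1\cdots g_m)\in\ker\varpi_\ell$. Unwinding Definition~\ref{def:inv-L/K} together with the pro-$\ell$ form of Corollary~\ref{cor:compatible-cover} and chasing the commutative square of Lemma~\ref{lem:comp-schur} gives $\lambda_L=\alpha(\lambda)$, where $\alpha\colon\widetilde H_\ell\to T$ is the middle vertical arrow of \eqref{eq:compatible-schur-L}. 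Moreover this $\lambda$ is exactly the element of $\ker\varpi_\ell$ in Lemma~\ref{lem:Weil-pairing-lambda}\eqref{item:Weil-lambda-2}, so $\tr_\lambda=\tr_X$; hence by Corollary~\ref{cor:symplectic-stem} the Weil pairing $f_{K,\ell^n}$ on $H^1(\Gp(K),\mu_{\ell^n})$, after the identification $\mu_{\ell^n}\simeq\Z/\ell^n\Z$ via $\zeta_n$, is given in the dual basis $\chi_1,\dots,\chi_{2g}$ by the matrix $M_\lambda=[m_{ij}]$, where $\lambda=\prod_{i<j}[x_i,x_j]^{a_{ij}}\cdot\lambda'$, $\lambda'\in\widetilde H_\ell^{(3)}$, in a minimal generating set $\{x_1,\dots,x_{2g}\}$ of $\widetilde H_\ell$, and $m_{ij}=-a_{ij}$ for $i<j$.

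\textbf{Step 2 (compatible generators and the inflation map).} Since $\alpha$ induces a surjection $\widetilde H_\ell^{ab}/\ell\twoheadrightarrow T^{ab}/\ell$ and $T\to\Gal(L/K)$ is a stem extension (so $\{\widetilde y_i\}$ projects to an $\F_\ell$-basis of $T^{ab}/\ell$), any lifts of $\widetilde y_1,\dots,\widetilde y_r$ to $\widetilde H_\ell$ have $\F_\ell$-independent images in $\widetilde H_\ell^{ab}/\ell$ and therefore extend to a minimal generating set; thus we may take the set $\{x_1,\dots,x_{2g}\}$ of Step~1 with $\alpha(x_i)=\widetilde y_i$ for $i\le r$. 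For $p>r$ write $\alpha(x_p)\equiv\prod_{k=1}^{r}\widetilde y_k^{c_{kp}}\pmod{T^{(2)}}$ and set $c_{kp}=\delta_{kp}$ for $p\le r$; the $c_{kp}$ are the coordinates of the image of $\bar x_p$ under $\widetilde H_\ell^{ab}\to T^{ab}\simeq\Gal(L/K)^{ab}=\bigoplus_k\overline{\langle y_k\rangle}$, and, using the square \eqref{eq:compatible-schur-L}, they compute the inflation $H^1(\Gal(L/K),\mu_{\ell^n})\hookrightarrow H^1(\Gp(K),\mu_{\ell^n})$ via $\inf y_i^\vee(\varpi_\ell(x_p))=y_i^\vee(y_i)^{\,c_{ip}}$.

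\textbf{Step 3 (push forward and match).} Applying $\alpha$ to the expansion of $\lambda$ and using bilinearity of commutators modulo $T^{(3)}$ yields $\lambda_L=\alpha(\lambda)\equiv\prod_{k<l}[\widetilde y_k,\widetilde y_l]^{b_{kl}}\pmod{T^{(3)}}$ with $b_{kl}=\sum_{p<q}a_{pq}(c_{kp}c_{lq}-c_{lp}c_{kq})$, i.e. $b_{kl}$ is the $(k,l)$-entry of $-CM_\lambda C^{\mathrm T}$ for $C=[c_{kp}]$; this both produces the asserted expression for $\lambda_L$ and shows, as in Corollary~\ref{cor:symplectic-stem}, that $b_{kl}$ is well defined modulo the order of $[\widetilde y_k,\widetilde y_l]$ in $T^{(2)}/T^{(3)}$ (which is generated by the $[\widetilde y_i,\widetilde y_j]$). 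Substituting the $i$-th and $j$-th rows of $C$ into the bilinear form $f_{K,\ell^n}$ with matrix $M_\lambda\bmod\ell^n$ and using the description of $\inf y_i^\vee$ from Step~2 then expresses $f_{L/K,\ell^n}(y_i^\vee,y_j^\vee)$ in terms of $(CM_\lambda C^{\mathrm T})_{ij}=-b_{ij}$ and the values $y_i^\vee(y_i),y_j^\vee(y_j)$, which gives the stated formula; the value is $0$ for $i=j$ because $M_\lambda$ is alternating.

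\textbf{Where the difficulty lies.} Steps~1 and~2 are essentially formal consequences of the results of \S\ref{ss:inv-L/K} and \S\ref{ss:cup-product}. The genuine work is Step~3: because $\Gal(L/K)^{ab}$ need not be torsion-free, the matrix identity $b_{ij}=-(CM_\lambda C^{\mathrm T})_{ij}$, the formula for $\inf y_i^\vee$, and the final pairing formula are each only well defined modulo suitable powers of $\ell$, so one must check that these ambiguities are mutually compatible and that the whole statement is independent of the choice of the compatible $\ell$-Schur coverings \eqref{eq:compatible-schur-ell}, of the lifts $x_i$ and $\widetilde y_i$, and of $\underline g$ beyond the normalization $I(\underline g)=\zeta^{-|\Gamma|}$.
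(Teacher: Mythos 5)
Your proposal is correct and follows essentially the same route as the paper's proof: identify $\lambda_L=\alpha(\lambda)$ with $\lambda$ as in Lemma~\ref{lem:Weil-pairing-lambda}\eqref{item:Weil-lambda-2}, use Corollary~\ref{cor:symplectic-stem} to read off the pairing from the expansion of $\lambda$ in a minimal generating set of $\widetilde{H}_\ell$, and push everything through $\alpha$ modulo $T^{(3)}$, comparing the inflated classes $y_i^\vee$ with the dual basis $\chi_i[n]$. The only cosmetic difference is that you normalize the generating set so that $\alpha(x_i)=\widetilde{y}_i$ for $i\le r$ (legitimate, since $\alpha$ is surjective by Lemma~\ref{lem:app-stem}), whereas the paper works with an arbitrary minimal generating set and records the change of basis in the matrices $D$, $E=D\Delta$; the resulting matrix identity is the same.
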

	
	\begin{proof}
		There exist elements $t_1, \cdots, t_r$ of $\Gal(L/K)^{ab}$ such that $\Gal(L/K)^{ab}=\langle t_1 \rangle \times \cdots \times \langle t_r \rangle$. So there exists a basis $\{ y_1^{\vee}, \ldots, y_r^{\vee}\}$ of $H^1(\Gal(L/K), \mu_{\ell^n})=H^1(\Gal(L/K)^{ab}, \mu_{\ell^n})$ such that $y_i^{\vee}(t_j)=0$ for $i\neq j$. We pick $y_i \in \Gal(L/K)$ to be a lift of $t_i$. By the Burnside basis theorem, $y_1, \ldots, y_r$ form a minimal set of generators of $\Gal(L/K)$. Since $T$ is a stem extension of $\Gal(L/K)$, we have $T^{ab}\simeq \Gal(L/K)^{ab}$ and hence $\{\widetilde{y}_1, \ldots, \widetilde{y}_r\}$ is a minimal set of generators of $T$. 
		
		Let $\lambda$ be as defined in Lemma~\ref{lem:Weil-pairing-lambda}\eqref{item:Weil-lambda-2}. So $\lambda_L$ is $\alpha(\lambda)$, where $\alpha$ is the middle vertical arrow in \eqref{eq:compatible-schur-L}. We let $M_{\lambda}$, $x_i$, $\chi_i$, $m_{ij}$ and $a_{ij}$  be as defined in Corollary~\ref{cor:symplectic-stem} for $1\leq i,j \leq 2g$, then let $\chi_i[n]$ denote the composition of $\chi_i$ and $\Z_{\ell} \to \mu_{\ell^n}$ (defined by $1\mapsto \zeta_n$).
		Let $D=[d_{ij}]$ be a $2g\times r$ $\Z_\ell$-entry matrix such that 
		\[
			\alpha(x_i) \equiv \prod_{j=1}^{r} (\widetilde{y}_j)^{d_{ij}} \mod T^{(2)},
		\]
		and let $E=[e_{ij}]$ be a $2g\times r$ $\Z_{\ell}$-entry matrix such that 
		\[
			y_j^{\vee} = \sum_{i=1}^{2g} e_{ij}\chi_i[n].
		\]
		We compute $y_j^{\vee}(\alpha(x_i))$ by viewing $y_j^{\vee}$ as a homomorphism from $\widetilde{H}_{\ell}$ to $\mu_{\ell^n}$ that factors through $\Gal(L/K)$. On one hand, we have 
		\[
			y_j^{\vee}(\alpha(x_i))=y_j^{\vee}\left(\prod_{s=1}^r (\widetilde{y}_s)^{d_{is}} \right)=y_j^{\vee}(y_j)^{d_{ij}};
		\]
		and on the other hand,
		\[
			y_j^{\vee}(\alpha(x_i))=y_j^{\vee}(x_i)=\prod_{s=1}^{2g} \chi_s[n](x_i)^{e_{sj}}=(\zeta_n)^{e_{ij}}.
		\]
		So we see that $E=D\Delta$, where $\Delta$ is the $r\times r$ diagonal matrix whose $(i,i)$-entry is $\log_{\zeta_n} y_i^{\vee}(y_i)$. By $\lambda_L=\alpha(\lambda)$ and Corollary~\ref{cor:symplectic-stem}, we have
		\begin{eqnarray*}
			\lambda_L &=& \prod_{1 \leq i < j \leq 2g} [\alpha(x_i), \alpha(x_j)]^{a_{ij}} \cdot \lambda' \quad \quad  \text{ for some }\lambda' \in T^{(3)} \\
			&=& \prod_{1 \leq i < j \leq 2g} \left[ \prod_{s=1}^{r} (\widetilde{y}_s)^{d_{is}},  \prod_{t=1}^{r} (\widetilde{y}_t)^{d_{jt}}\right]^{a_{ij}} \cdot \lambda''  \quad \quad  \text{ for some }\lambda'' \in T^{(3)} \\
			&=& \prod_{1 \leq s < t \leq r} [\widetilde{y}_s, \widetilde{y}_t]^{-\sum_{1 \leq i,j \leq 2g} d_{is} d_{jt} m_{ij}} \cdot \lambda''
		\end{eqnarray*}
		So we see that $-b_{st}$ is the $(s,t)$-entry of $D^TM_{\lambda}D$ for $1 \leq s < t \leq r$.
		Then by Corollary~\ref{cor:symplectic-stem} and Lemma~\ref{lem:Weil-pairing-lambda}\eqref{item:Weil-lambda-2}
		\[
			f_{L/K, \ell^n}\left(y_i^{\vee}, y_j^{\vee}\right) = f_{L/K, \ell^n} \left( \sum_{s=1}^{2g} e_{si} \chi_s[n],  \sum_{t=1}^{2g} e_{tj}\chi_t[n]\right)= (\zeta_{n})^{(i,j)\text{-entry of } E^TM_{\lambda}E},
		\]
		which proves the proposition.
	\end{proof}

\section{The function field moments as $q \to \infty$}\label{sect:ff-moment}

	In this section, we study the relationship between the $\omega$-invariant $\omega_{L/K}$ and the EVW-W lifting invariant. Building on the strategy of the work \cite{LWZB} of Wood, Zureick-Brown and the author, we give the proof of Theorem~\ref{thm:functionfield} by counting rational points on Hurwitz schemes with prescribed $\omega$-invaraint.

\subsection{Revisit: Hurwitz schemes and a lifting invariant of Ellenberg--Venkatesh--Westerland and Wood} \label{ss:EVWW}
	In \cite{Wood-lifting}, Wood defined and proved basic properties of a lifting invariant of covers of $\PP^1$ over an algebraically closed field, which was introduced by Ellenberg, Venkatesh, and Westerland. 
	Let $G$ be a finite group and $c$ a subset of $G$ closed under conjugation by elements of $G$ and closed under invertible powering. Let $c/G$ denote the set of conjugacy classes of elements in $c$. 
		
	Let $U(G,c)$ be the group defined by presentation with generators $[g]$ for $g\in c$, and relations $[x][y][x]^{-1}=[xyx^{-1}]$ for $x,y \in c$. Then there is a natural surjection $U(G,c) \to G$ mapping $[g]\mapsto g$ and a natural surjection $U(G,c) \to \Z^{c/G}$ mapping $[g]$ to a generator for the conjugacy class of $g$. These two surjections induce the same map $U(G,c) \to G^{ab}$ to the abelianization of $G$, so we obtain a homomorphism $U(G,c) \to G \times_{G^{ab}} \Z^{c/G}$. This homomorphism is actually a surjection, and \cite[Thm.2.5]{Wood-lifting} proves that $U(G,c)$ is isomorphic to the fiber product $S_c \times_{G^{ab}} \Z^{c/G}$, where $S_c$ is a reduced Schur covering for $G$ and $c$ defined as follows. 
	A \emph{reduced Schur covering} $S_c \to G$ is defined to be the quotient of a Schur covering $\phi: S \to G$ by the normal subgroup generated by the set of commutators
	\[
		\{[\hat{x}, \hat{y}] \mid \hat{x}, \hat{y}\in S, \, \phi(\hat{x})\in c, \, \text{and } [\phi(\hat{x}), \phi(\hat{y})]=1 \}.
	\]
	Via the isomorphic differential map $H_2(G, \Z) \to \ker \phi$, the kernel of a reduced Schur covering is isomorphic to a quotient of $H_2(G,\Z)$, which we denote by $H_2(G,c)$; and one can prove by definition that $H_2(G,c)$ does not depend on the choice of the Schur covering that we start with. Thus, the kernel of the natural quotient map $U(G,c)\to G$ is isomorphic to $H_2(G,c) \times \Z^{c/G}$, and we denote this kernel by $K(G,c)$. 
	
	 Let $p$ be a prime not dividing $|G|$.
	 There is an action of $\hat{\Z}_{(p)'}^{\times}$ on the set of elements of $K(G,c)$. Let $S_c\to G$ be a reduced Schur covering for $G$ and $c$. Then elements of $K(G,c)$ can be written in the form $(g, \underline{m})$ for $g\in \ker (S_c \to G)$ and $\underline{m}\in \Z^{c/G}$. For each conjugacy class $\gamma \in c/G$, we pick an element $x_{\gamma}$ in $\gamma$ and a lift $\widehat{x_{\gamma}}$ of $x_{\gamma}$ in $S_c$. Then for each $\alpha \in  \hat{\Z}_{(p)'}^{\times}$ and $\gamma \in c/G$, we define
	$w_{\alpha}(\gamma)=\widehat{x_{\gamma}}^{-\alpha} \widehat{x_{\gamma}^{\alpha}} \in \ker(S_c\to G)$, where $\widehat{x_{\gamma}^{\alpha}}$ is the picked lift of the element in the conjugacy class of $\gamma^{\alpha}$.
	Then we define a group homomorphism
	$W_{\alpha} : \Z^{c/G} \to \ker(S_c \to G)$
	by sending the generators for the conjugacy class $\gamma$ to $w_{\alpha}(\gamma)$.
	 It was shown in \cite[\S4]{Wood-lifting} that $W_{\alpha}$ does not depend on the choice of the lifts $\widehat{x_{\gamma}}$. Then, for each $\alpha \in  \hat{\Z}_{(p)'}^{\times}$, we define an action of $\alpha$ on the set $K(G,c)$ by
	 \begin{equation}\label{eq:alpha-K}
	 	\alpha \ast (g, \underline{m}) = (g^{\alpha}W_{\alpha}(\underline{m}), \underline{m}^{\alpha}),
	 \end{equation}
	where $g^{\alpha}$ is the $\alpha$th power of $g$ and $\underline{m}^{\alpha}$ is a rearranging of the coordinates of $\underline{m}$ by sending the coordinate of $\gamma$ to the coordinate of $\gamma^{\alpha}$. This action of $\alpha$ is not a group homomorphism.

	Let $\overline{k}$ be an algebraically closed field of characteristic $p$ such that $p\nmid |G|$ if $p\neq 0$ (here we include the case $p=0$). A branched $G$-cover $X$ of $\PP^1$ over $\overline{k}$ that is ramified at $\{P_1, \cdots, P_m\}$ defines a surjection 
	\[
		\varphi: \pi_1^{\et} (\PP^1_{\overline{k}} - \{P_1, \ldots, P_m\})_{(p)'} \longrightarrow G.
	\] 
	Assume that all the inertia groups of the cover are generated by elements in $c$. We let $\underline{g}=(g_1,\ldots, g_m)$ be a system of inertia generators for $\pi_1^{\et} (\PP^1_{\overline{k}} - \{P_1, \ldots, P_m\})_{(p)'}$ and let $I(\underline{g})$ is the corresponding generator of $\widehat{\Z}(1)_{(p)'}$. Then \emph{EVW-W lifting invariant} associated to this cover $X/\PP^1_{\overline{k}}$ is defined to be the $\hat{\Z}^{\times}_{(p)'}$-equivariant map of sets
	\begin{eqnarray*}
		\frakz: \hat{\Z}(1)^{\times}_{(p)'} &\longrightarrow& K(G,c) \\
		I(\underline{g}) &\longmapsto& [\varphi(g_1)]\cdots [\varphi(g_m)].
	\end{eqnarray*}

	There is a Hurwitz scheme $\HHur^n_{G,c}$ defined over $\Z[|G|^{-1}]$, whose points represent tame Galois covers $X/\PP^1$ with $n$ branch points (i.e., the degree of each geometric fiber of $D \to \Spec(\Z[|G|^{-1}])$ is $n$, where $D\subset \PP^1$ is the branch loci of the cover $X/\PP^1$) together with a choice of identification $\Gal(X/\PP^1)\simeq G$ and a choice of point over infinity, such that all inertia subgroups are generated by elements in $c$ and the place at $\infty$ is unramified (see \cite[\S11]{LWZB} for more details about this Hurwitz scheme). For each geometric point $\overline{s}$ on $\HHur^n_{G,c}$ whose residue field has characteristic $p$, we can attach to it the lifting invariant $\frakz_{\overline{s}}$ as defined above. The lifting invariant is actually a component invariant: for any algebraically closed field $k$, the $k$-points of $(\HHur^n_{G,c})_k$ in the same component have the same lifting invariant (\cite[Cor.12.2]{LWZB}). 
	Moreover, when $M$ is sufficiently large, there is a bijection between the set
	$\calK:=\{(g,\underline{m}) \in K(G,c) \mid \text{coordinates of $\underline{m}$ are at least $M$ and sum up to $n$}\}$ and
	the components of $(\HHur^n_{G,c})_{\overline{\F}_q}$ (and similarly, the components of $(\HHur^n_{G,c})_{\C}$) whose lifting invariant has image in $\calK$.
	In particular, by \cite[Thm.12.1]{LWZB}, if $\overline{s}$ is a $\overline{\F}_q$-point of $(\HHur_{G,c}^n)_{\overline{\F}_q}$, then for a $\zeta \in \hat\Z(1)^{\times}_{(q)'}$, we have 
	\begin{equation}\label{eq:Frob-inv}
	\frakz_{\Frob(\overline{s})}(\zeta)=q^{-1} \ast \frakz_{\overline{s}}(\zeta),
	\end{equation}
	where $\Frob$ is the Frobenius map $\Frob_{(\HHur_{G,c}^n)_{\F_q}}$ and the $\ast$ operator is as defined in \eqref{eq:alpha-K}.  
	
	In order to compute the number of $\F_q$-points of $\HHur_{G,c}^n$ acting on $(\HHur_{G,c}^n)_{\overline{\F}_q}$, by the Grothendieck--Lefschetz trace formula, it is crucial to understand the number of $\Frob_{(\HHur_{G,c}^n)_{\F_q}}$-fixed components of $(\HHur_{G,c}^n)_{\overline{\F}_q}$, which we denote by $\pi_{G,c}(q,n)$. 
	For integers $n$ and $M$, we write $\Z^{c/G}_{\equiv q, n, \geq M}$ for the subset of $\Z^{c/G}$ consisting of elements satisfying: 1) the coordinates are constant on each set of conjugacy classes of $c/G$ that can be obtained from one another by taking $q$th powers, 2) the coordinates sum up to $n$, and 3) each coordinate is at least $M$. Then there is a homomorphism $\Z^{c/G}_{\equiv q, n, \geq M} \to G^{ab}$ that is the restriction of the natural map $\Z^{c/G} \to G^{ab}$. 
	Then by \cite[Prop.12.7]{LWZB}, the number of $\Frob_{(\HHur^n_{G,c})_{\F_q}}$-fixed components of $(\HHur^n_{G,c})_{\overline{\F}_q}$ has the following estimation
	\begin{equation}\label{eq:pi-b}
		\pi_{G,c}(q,n)=b(G,c,q,n)+O_G(n^{d_{G,c}(q)-2}),
	\end{equation}
	where $d_{G,c}(q)$ is the number of orbits of $q$th powering on the conjugacy classes in $c/G$ and 
	\[
		b(G,c,q,n):= \sum_{h \in \ker(S_c \to G)} \# \left\{ \underline{m} \in \ker \left( \Z^{c/G}_{\equiv q, n, \geq 0} \to G^{ab}\right) \, \biggr\rvert \, W_{q^{-1}}(\underline{m})^q=h^{q-1}\right\}.
	\] 
	The definition of $b(G,c,q,n)$ above appears differently from but agrees with the one in \cite{LWZB} (see the proof of \cite[Prop.12.7]{LWZB}).

\subsection{Relationship between the EVW-W lifting invariant and the invariant $\omega_{L/K}$} \label{ss:EVW-omega}

	In this subsection, we will prove in Proposition~\ref{prop:EVW-omega} that the $\omega$-invariant $\omega_{L/K}$ equals the prime-to-$|\Gamma|$-torsion part the EVW-W lifting invariant.
	Throughout this subsection, we consider the EVW-W lifting invariants when $G$ is a semidirect product $H \rtimes \Gamma$ with $\gcd(|H|, |\Gamma|)=1$ and $c$ is the set consisting of elements of $G\backslash \{1\}$ that have the same order as their image in $\Gamma$.

	\begin{lemma}\label{lem:reduced-prime}
		Let $\Gamma$ be a finite group and $H$ a $|\Gamma|'$-$\Gamma$-group. Suppose that there is a stem extension of groups
		\[
			1 \longrightarrow A \longrightarrow \widetilde{H} \rtimes \Gamma \overset{\pi}{\longrightarrow} H \rtimes \Gamma \longrightarrow 1,
		\]
		such that $A$ has order prime to $|\Gamma|$. Let $x$ and $y$ be two elements of $H \rtimes \Gamma$ such that $[x,y]=1$ and $x$ has order equal to the order of its image in $\Gamma$. Then $[\widetilde{x}, \widetilde{y}]=1$ for any lifts $\widetilde{x} \in \pi^{-1}(x)$ and $\widetilde{y} \in \pi^{-1}(y)$.
	\end{lemma}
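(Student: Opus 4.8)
The plan is to exploit two features of the kernel $A$: it is central (the extension is stem) and it has order prime to $|\Gamma|$. First I would observe that $\pi([\widetilde x,\widetilde y]) = [x,y] = 1$, so $[\widetilde x,\widetilde y]\in A$; and since $A$ is central, replacing $\widetilde x$ or $\widetilde y$ by another lift — which differs from the given one by an element of $A$ — does not change the commutator $[\widetilde x,\widetilde y]$. Hence it suffices to prove the claim for one convenient pair of lifts.

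Next I would set $n:=\ord(x)$, which by hypothesis equals the order of the image of $x$ in $\Gamma$; in particular $n\mid |\Gamma|$, so every prime dividing $n$ divides $|\Gamma|$ and therefore does not divide $|A|$. Decomposing the central (hence abelian) profinite group $A$ into its pro-$\ell$ parts, one sees that the $n$th power map is an automorphism of $A$. I would use this to single out a distinguished lift of $x$: given any $\widetilde x_0\in\pi^{-1}(x)$ we have $\widetilde x_0^{\,n}\in A$, and there is a unique $\eta\in A$ with $\eta^{n}=\widetilde x_0^{\,n}$; as $\eta$ is central, $\widehat x:=\widetilde x_0\eta^{-1}$ is a lift of $x$ with $\widehat x^{\,n}=1$. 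This lift is moreover unique among lifts of $x$ killed by $n$: if $\widehat x_1,\widehat x_2\in\pi^{-1}(x)$ both satisfy $\widehat x_i^{\,n}=1$, then $\widehat x_2=\widehat x_1\zeta$ for some $\zeta\in A$, and $1=\widehat x_2^{\,n}=\widehat x_1^{\,n}\zeta^{n}=\zeta^{n}$ forces $\zeta=1$ since $\gcd(n,|A|)=1$.

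Finally, by the reduction in the first paragraph I may take $\widetilde x=\widehat x$, the unique lift of $x$ with $\widetilde x^{\,n}=1$. Since $[x,y]=1$ we have $yxy^{-1}=x$, so $\widetilde y\,\widetilde x\,\widetilde y^{-1}$ is again a lift of $x$, and $(\widetilde y\,\widetilde x\,\widetilde y^{-1})^{n}=\widetilde y\,\widetilde x^{\,n}\,\widetilde y^{-1}=1$. By the uniqueness just established, $\widetilde y\,\widetilde x\,\widetilde y^{-1}=\widetilde x$, i.e. $[\widetilde x,\widetilde y]=1$, which is the assertion. I do not expect a real obstacle here; the only point needing a little care is the profinite bookkeeping, namely that "$A$ of order prime to $|\Gamma|$" genuinely makes the $n$th power map on $A$ bijective, and this is immediate from the pro-$\ell$ decomposition of $A$.
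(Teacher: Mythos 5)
Your proof is correct, and it takes a genuinely different route from the paper's. You reduce to the distinguished lift $\widehat{x}$ of $x$ of the same (finite) order $n=\ord(x)\mid|\Gamma|$ — which exists and is unique because $A$ is central and the $n$th power map is bijective on $A$ (as $\gcd(n,|A|)=1$) — and then note that conjugation by $\widetilde{y}$ preserves both ``lifting $x$'' and ``being killed by $n$,'' so it must fix $\widehat{x}$. The initial reduction to a single convenient lift via centrality of $A$ is also handled correctly. The paper instead uses Schur--Zassenhaus to put $\widetilde{x}$ into a splitting $s(\Gamma)$, writes the commutator explicitly in semidirect-product coordinates as $(\ldots(b^{-1}\gamma(b)),[\gamma,\sigma])$, and then shows $b^{-1}\gamma(b)\in A$ forces $b^{-1}\gamma(b)=1$ by a counting argument on the sets $Y_\gamma(G)=\{g^{-1}\gamma(g)\}$ imported from \cite{LWZB}. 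Your argument is more elementary and self-contained; the paper's yields the extra structural fact $Y_\gamma(\widetilde{H})\cap A=1$, but that is not needed for the statement at hand. (Indeed, your ``unique lift of the same order'' device is exactly the one the paper itself deploys in the proof of Proposition~\ref{prop:etale-lift}.) The one point you rightly flag — that ``order prime to $|\Gamma|$'' for the profinite group $A$ makes $a\mapsto a^n$ bijective — is indeed immediate from the pro-$\ell$ decomposition, so there is no gap.
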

	
	\begin{proof}
		We let $\gamma$ denote the image of $\widetilde{x}$ in $\Gamma$. By the Schur--Zassenhaus theorem, we can pick a homomorphic splitting $s:\Gamma \to \widetilde{H}\rtimes \Gamma$ such that $s(\gamma)=\widetilde{x}$. We can redefine the group $\widetilde{H}\rtimes \Gamma$ as the semidirect product $\widetilde{H} \rtimes s(\Gamma)$, and then using the notation of elements in semidirect product, we write $\widetilde{x}$ as $(1,\gamma)$ and write $\widetilde{y}$ as $(b, \sigma)$ for $b\in \widetilde{H}$ and $\sigma \in \Gamma$. So $[\widetilde{x}, \widetilde{y}]=\widetilde{x}^{-1}\widetilde{y}^{-1}\widetilde{x}\widetilde{y}=(\gamma^{-1} \circ\sigma^{-1}(b^{-1}\gamma(b)), [\gamma, \sigma])$. Since $[x,y]=1$, we have that $[\gamma,\sigma]=1$ and $b^{-1}\gamma(b) \in \ker \pi$, and it's left to show that $b^{-1}\gamma(b)=1$. 
		
		We consider the action of the subgroup $\langle\gamma\rangle$ of $\Gamma$ on the exact sequence 
		\[
			1 \longrightarrow A \longrightarrow \widetilde{H}\longrightarrow H \longrightarrow 1.
		\]
		For a $\langle \gamma \rangle$-group $G$, we denote $Y_\gamma(G):=\left\{g^{-1}\gamma(g) \mid g\in G \right\}$. It follows by \cite[Lem.3.5 and Lem.3.6]{LWZB} that $|A|=|A^{\langle \gamma \rangle}||Y_{\gamma}(A)|$, $|Y_{\gamma}(A)| |Y_{\gamma}(H)| = |Y_{\gamma}(\widetilde{H})|$, and the elements of $Y_{\gamma}(\widetilde{H})$ are equidistributed in $Y_{\gamma}(H)$. Thus, because $\Gamma$ acts trivially on $A$, we see that $A^{\langle \gamma \rangle}=A$, so $Y_{\gamma}(A)=1$ and therefore $Y_{\gamma}(\widetilde{H}) \cap A=1$. Then it follows that if $b^{-1}\gamma(b) \in \ker \pi$ for some $b\in \widetilde{H}$, then $b^{-1}\gamma(b)=1$, which finishes the proof of the lemma.
	\end{proof}

	\begin{lemma}\label{lem:decom-Schur}
		Let $p$ be a prime, $\Gamma$ a finite group, and $H$ a finite admissible $\Gamma$-group such that $p\nmid |H||\Gamma|$. Let $c_{\Gamma}$ be the set $\Gamma\backslash\{1\}$. Let $G:=H\rtimes \Gamma$ and $c_G$ the set of elements of $G\backslash\{1\}$ that have the same order as their image in $\Gamma$. 
		\begin{enumerate}
			\item\label{item:d-S-H} 
				The order of the kernel of the quotient map $H_2(G,\Z) \to H_2(G,c_G)$ has order supported on the set of prime divisors of $|\Gamma|$.
			
			\item\label{item:d-S-Gamma} 
				The quotient map $G \to \Gamma$ induces a surjection $H_2(G, c_G) \to H_2(\Gamma, c_{\Gamma})$, and the kernel of this surjection has order prime to $|\Gamma|$.
		\end{enumerate}
		In particular, combining the two statements above, we have 
		\begin{equation}\label{eq:d-S}
			H_2(G,c_G)\simeq H_2(G,\Z)_{(|\Gamma|)'} \oplus H_2(\Gamma, c_{\Gamma}).
		\end{equation} 
	\end{lemma}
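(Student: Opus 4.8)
The plan is to prove the two statements separately and then combine them. For statement \eqref{item:d-S-H}, I would start from the definition of the reduced Schur covering. Fix a Schur covering $\phi: S \to G$ where $G = H \rtimes \Gamma$; by the Schur--Zassenhaus theorem we may assume $S$ splits the subgroup $\Gamma$, so $S \simeq \widetilde{H} \rtimes \Gamma$ for a stem extension $1 \to A \to \widetilde H \to H \to 1$ with $A = \ker\phi = H_2(G,\Z)$. The reduced Schur covering $S_{c_G}$ is the quotient of $S$ by the normal subgroup $N$ generated by all commutators $[\hat x, \hat y]$ with $\phi(\hat x) \in c_G$ and $[\phi(\hat x), \phi(\hat y)] = 1$. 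By Lemma~\ref{lem:reduced-prime}, any such commutator in $S$ that lands in $A$ is actually trivial (the hypothesis that $A$ has order prime to $|\Gamma|$ is exactly the conclusion of Lemma~\ref{lem:reduced-whatever}\dots no, wait: we want $A$ of order prime to $|\Gamma|$, but $A = H_2(G,\Z)$ need not be prime-to-$|\Gamma|$). So I would instead first pass to $\overline{S} := S / A_{|\Gamma|}$, where $A_{|\Gamma|}$ is the $|\Gamma|$-primary part of $A$; this is the maximal quotient with prime-to-$|\Gamma|$ kernel, and $\overline S \to G$ is the $(|\Gamma|)'$-Schur covering with kernel $H_2(G,\Z)_{(|\Gamma|)'}$. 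Now Lemma~\ref{lem:reduced-prime} applies to $\overline S$: the subgroup $\overline N \subset \overline S$ generated by the relevant commutators meets $\ker(\overline S \to G)$ trivially, hence $\overline N \to G$ is injective, and in particular $\overline N$ maps isomorphically onto a subgroup of $G$. So $H_2(G,\Z)_{(|\Gamma|)'}$ injects into $\ker(\overline S/\overline N \to G)$; combined with the fact that $H_2(G,c_G)$ is by definition the kernel of a reduced Schur covering, and that reduced Schur coverings can be computed from any Schur covering, one concludes that the map $H_2(G,\Z)_{(|\Gamma|)'} \to H_2(G,c_G)$ (induced by $H_2(G,\Z) \to H_2(G,c_G)$ restricted to the prime-to-$|\Gamma|$ part) is injective, and therefore the kernel of $H_2(G,\Z) \to H_2(G,c_G)$ is contained in $A_{|\Gamma|}$, i.e. has order supported on primes dividing $|\Gamma|$.

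For statement \eqref{item:d-S-Gamma}, I would use the functoriality of $H_2(-,-)$ with respect to the pair $(G, c_G) \to (\Gamma, c_\Gamma)$: the quotient $q: G \to \Gamma$ sends $c_G$ into $c_\Gamma$ (an element of $G$ with the same order as its image in $\Gamma$ maps to a nontrivial element, and nontriviality is the only condition defining $c_\Gamma$), so there is an induced map $H_2(G,c_G) \to H_2(\Gamma, c_\Gamma)$ fitting into a commuting square with the surjection $H_2(G,\Z) \to H_2(\Gamma,\Z)$ (which is split surjective with prime-to-$|\Gamma|$ kernel by the standard Lyndon--Hochschild--Serre / transfer argument for $H = \ker q$ of order prime to $|\Gamma|$, exactly as in Lemma~\ref{lem:comp-schur}'s setup). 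Surjectivity of $H_2(G,c_G) \to H_2(\Gamma,c_\Gamma)$ follows because $H_2(G,\Z) \to H_2(\Gamma,\Z)$ is surjective and the reduced-Schur quotient maps are surjective (chase the square). For the kernel: by \eqref{item:d-S-H}, $H_2(G,c_G)$ is a quotient of $H_2(G,\Z)$ by a $|\Gamma|$-primary subgroup and hence has the same prime-to-$|\Gamma|$ part as $H_2(G,\Z)$, namely $H_2(G,\Z)_{(|\Gamma|)'}$; similarly $H_2(\Gamma,c_\Gamma)$ has prime-to-$|\Gamma|$ part equal to $H_2(\Gamma,\Z)_{(|\Gamma|)'}$. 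Since $\ker(H_2(G,\Z) \to H_2(\Gamma,\Z))$ has order prime to $|\Gamma|$ (as $[G:\Gamma] = |H|$ is prime to $|\Gamma|$, the restriction-corestriction argument shows the kernel is killed by $|H|$, hence prime-to-$|\Gamma|$), the induced map on prime-to-$|\Gamma|$ parts $H_2(G,\Z)_{(|\Gamma|)'} \to H_2(\Gamma,\Z)_{(|\Gamma|)'}$ has prime-to-$|\Gamma|$ kernel, and tracking this through the reduced-Schur quotients gives that $\ker(H_2(G,c_G) \to H_2(\Gamma,c_\Gamma))$ has order prime to $|\Gamma|$.

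Finally, for the direct sum decomposition \eqref{eq:d-S}: from \eqref{item:d-S-Gamma} we have a short exact sequence $0 \to \ker \to H_2(G,c_G) \to H_2(\Gamma,c_\Gamma) \to 0$ where the kernel has order prime to $|\Gamma|$. I claim $\ker \simeq H_2(G,\Z)_{(|\Gamma|)'}$: indeed the prime-to-$|\Gamma|$ part of $H_2(G,c_G)$ equals $H_2(G,\Z)_{(|\Gamma|)'}$ by \eqref{item:d-S-H}, the prime-to-$|\Gamma|$ part of $H_2(\Gamma,c_\Gamma)$ equals $H_2(\Gamma,\Z)_{(|\Gamma|)'}$ by the $\Gamma = H' \rtimes \Gamma$ with $H' = 1$ case of \eqref{item:d-S-H} applied to $\Gamma$ itself (or directly), and taking prime-to-$|\Gamma|$ parts of the short exact sequence — which is exact since the quotient $H_2(\Gamma,c_\Gamma)$ then contributes only its prime-to-$|\Gamma|$ part and the kernel is already prime-to-$|\Gamma|$ — identifies $\ker$ with the kernel of $H_2(G,\Z)_{(|\Gamma|)'} \to H_2(\Gamma,\Z)_{(|\Gamma|)'}$, which I would rather handle by noting $H_2(\Gamma,c_\Gamma)$ is itself prime-to-$|\Gamma|$-... hmm, actually $H_2(\Gamma,c_\Gamma)$ is a quotient of $H_2(\Gamma,\Z)$ by a $|\Gamma|$-primary subgroup, so it need not be prime-to-$|\Gamma|$, which means the sequence does not obviously split by orders alone. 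Instead I would construct the splitting directly: the inclusion of the prime-to-$|\Gamma|$ part $H_2(G,\Z)_{(|\Gamma|)'} \hookrightarrow H_2(G,c_G)$ from \eqref{item:d-S-H} together with the transfer-induced section $H_2(\Gamma,\Z) \to H_2(G,\Z)$ composed with $H_2(G,\Z) \to H_2(G,c_G)$ give two maps into $H_2(G,c_G)$; one checks (using that $H_2(G,c_G) = H_2(G,\Z)_{(|\Gamma|)'} \oplus (\text{image of the } |\Gamma|\text{-part structure})$, or more cleanly that $\ker(H_2(G,c_G)\to H_2(\Gamma,c_\Gamma))$ being prime-to-$|\Gamma|$ and $H_2(\Gamma,c_\Gamma)$ absorbing the $|\Gamma|$-part forces the primary decomposition to split the sequence) that together they exhibit $H_2(G,c_G)$ as the internal direct sum. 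The main obstacle I anticipate is precisely this last bookkeeping: being careful that "the $|\Gamma|$-primary part" behaves well under the non-additive reduced-Schur construction, and verifying compatibility of the section from the transfer with the reduced-Schur quotient — everything else is a diagram chase using Lemma~\ref{lem:reduced-prime} and the fact that $|H|$ is prime to $|\Gamma|$.
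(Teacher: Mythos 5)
Your treatment of part \eqref{item:d-S-H} is essentially the paper's argument: pass to the $(|\Gamma|)'$-Schur covering $S'=S/A_{|\Gamma|}$, apply Lemma~\ref{lem:reduced-prime} to see that all the defining commutators of the reduced covering become trivial there (note they \emph{are} trivial, not merely "meeting the kernel trivially" --- Lemma~\ref{lem:reduced-prime} says $[\widetilde x,\widetilde y]=1$ outright), so $\ker(S\to S_{c_G})\subseteq A_{|\Gamma|}$. For part \eqref{item:d-S-Gamma} you replace the paper's explicit construction (compatible Schur coverings $S\twoheadrightarrow S^{\Gamma}$ from \cite[Lem.12.9]{LWZB}, where $S^\Gamma$ is $S$ modulo the prime-to-$|\Gamma|$ part of the kernel, descending to compatible reduced coverings) with a transfer/Lyndon--Hochschild--Serre argument on $H_2(G,\Z)\to H_2(\Gamma,\Z)$; since $\gcd(|H|,|\Gamma|)=1$ and the quotient $G\to\Gamma$ is split, that works and is an acceptable alternative route.

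The genuine gap is in your final step, and it is caused by a missing standard fact rather than a wrong idea: the Schur multiplier $H_2(\Gamma,\Z)$ of the finite group $\Gamma$ has order supported on the prime divisors of $|\Gamma|$ (its exponent divides $|\Gamma|$). Consequently $H_2(\Gamma,\Z)_{(|\Gamma|)'}=0$ and $H_2(\Gamma,c_\Gamma)$, being a quotient of $H_2(\Gamma,\Z)$, is \emph{entirely} $|\Gamma|$-primary --- not merely "not necessarily prime to $|\Gamma|$" as you say. Once this is in hand, the short exact sequence $0\to K\to H_2(G,c_G)\to H_2(\Gamma,c_\Gamma)\to 0$ from \eqref{item:d-S-Gamma} has kernel of order prime to $|\Gamma|$ and quotient of order a product of primes dividing $|\Gamma|$, so it splits for trivial coprimality reasons (primary decomposition of a finite abelian group), $K$ is the full prime-to-$|\Gamma|$ part of $H_2(G,c_G)$, and \eqref{item:d-S-H} identifies $K$ with $H_2(G,\Z)_{(|\Gamma|)'}$. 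This is exactly how the paper concludes. Your proposed detour through a transfer-induced section, and the worry about whether "the $|\Gamma|$-primary part behaves well under the reduced-Schur construction," are unnecessary once you use that one fact; as written, the argument does not close because you never pin down the primary support of $H_2(\Gamma,c_\Gamma)$.
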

	
	\begin{proof}
		Let $\pi: S \to G$ be a Schur covering of $G$, and $\pi_{c_G}:S_{c_G}\to G$ its reduced Schur covering associated $c_G$. By taking quotient of $S$ modulo the $|\Gamma|$-part of $\ker \pi$, we have a stem extension
		\begin{equation}\label{eq:star}
			1 \longrightarrow (\ker\pi)_{(|\Gamma|)'} \longrightarrow S' \longrightarrow G \longrightarrow 1.
		\end{equation}
		Then by Lemma~\ref{lem:reduced-prime}, the image of $\ker(S\to S_{c_G})$ in $S'$ is trivial, so $\pi_{c_G}$ factors through $S'$, which proves the statement~\eqref{item:d-S-H}.
		
		By the proof of \cite[Lem.12.10]{LWZB}, there are compatible Schur coverings of $G$ and $\Gamma$ 
		\[\begin{tikzcd}
			S \arrow[two heads]{r} \arrow[two heads]{d} & G \arrow[two heads]{d} \\
			S^{\Gamma} \arrow[two heads]{r} & \Gamma,
		\end{tikzcd}\]
		where the left vertical surjection is obtained by taking quotient of $S$ by the prime-to-$|\Gamma|$ part of $\ker (S \to G)$. By definition of $c_G$ and $c_{\Gamma}$, one can check that the diagram above defines the following compatible reduced Schur coverings, where the kernel of the left vertical map has order prime to $|\Gamma|$, which proves the statement \eqref{item:d-S-Gamma} in the proposition.
		\begin{equation*}
		\begin{tikzcd}
			S_{c_{G}} \arrow[two heads]{r} \arrow[two heads]{d} & G \arrow[two heads]{d} \\
			S^{\Gamma}_{c_{\Gamma}}\arrow[two heads]{r} & \Gamma.
		\end{tikzcd}
		\end{equation*}
		Finally, by a basic fact that a Schur covering group of $\Gamma$ (resp. $G$) has order supported on prime divisors of $|\Gamma|$ (resp. $|G|$), we have that $H_2(G,\Z)_{(p|\Gamma|)'}$ is the prime-to-$|\Gamma|$ completion of $H_2(G, c_G)$ and $H_2(\Gamma, c_{\Gamma})$ is the $|\Gamma|$-completion of $H_2(G, c_G)$, so the equality~\ref{eq:d-S} follows.   
	\end{proof}

	\begin{proposition}\label{prop:EVW-omega}
		Let $k$, $K$, $p$, $\Gamma$ and $L$ be as described in Definition~\ref{def:inv-L/K}. Let $G$ denote the Galois group of $L\overline{k}/\overline{k}(t)$, and $c$ the set of elements of $G \backslash \{1\}$ that have the same order as their image in $\Gamma$ (where the quotient map $G \to \Gamma$ is defined by the tower of fields $L\overline{k}/K\overline{k}/\overline{k}(t)$). 
		Then the composition map 		
		\[
			\hat{\Z}(1)^{\times}_{(p)'} \overset{\frakz}{\longrightarrow} K(G,c) \longrightarrow H_2(G,c) \longrightarrow H_2(G,\Z)_{(p|\Gamma|)'}
		\]
		extends to the $\omega$-invariant $\omega_{L/K}:\hZ(1)_{(p|\Gamma|)'} \to H_2(G, \Z)_{(p|\Gamma|)'}$ associated to $L/K$.
		Here $\frakz$ is the EVW-W lifting invariant associated to $L\overline{k}/\overline{k}(t)$, the second map is the projection of from $K(G,c)= H_2(G,c)\times \Z^{c/G}$ to the first coordinate, and the last map is the prime-to-$(p|\Gamma|)$ completion map, which is well-defined by Lemma~\ref{lem:decom-Schur} \eqref{item:d-S-H}.
	\end{proposition}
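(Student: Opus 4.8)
The plan is to compare the two invariants by expressing both through a common group-theoretic construction: a reduced Schur covering of $G$ restricted to the subgroup lying over $\Gp(K)$ (or its image in $G$), and then matching the images of a system of inertia generators. First I would fix a system of inertia generators $\underline{g} = (g_1, \ldots, g_m)$ for $K$, recalling that each $g_i$ maps to the common generator $I(\underline{g})$ of $\hat\Z(1)_{(p)'}$ (via the normalization from \cite{Wood-lifting} used in Lemma~\ref{lem:another-description-iota}). By the definition of the EVW--W lifting invariant, $\frakz(I(\underline{g})) = [\varphi(g_1)]\cdots[\varphi(g_m)] \in K(G,c)$, where $\varphi: \Omega_K \to G = \Gal(L\overline{k}/\overline{k}(t))$ is the surjection defined by $L$. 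Under the universal property $U(G,c) \cong S_c \times_{G^{ab}} \Z^{c/G}$ (\cite[Thm.2.5]{Wood-lifting}), the element $[\varphi(g_1)]\cdots[\varphi(g_m)]$ corresponds, in the $S_c$ coordinate, to the product $\widehat{\varphi(g_1)}\cdots\widehat{\varphi(g_m)}$ of the canonical lifts $\widehat{\varphi(g_i)}$ of $\varphi(g_i)$ into the reduced Schur cover $S_c \to G$ having the same order as their images --- which are exactly the lifts $\widetilde\varphi(g_i)$ appearing in Corollary~\ref{cor:compatible-cover}, since both are pinned down by the order-preservation condition and the fact that $c$ consists of elements of the same order as their image in $\Gamma$.

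Next I would push this through the decomposition of Lemma~\ref{lem:decom-Schur}. The projection $K(G,c) \to H_2(G,c)$ followed by $H_2(G,c) \to H_2(G,\Z)_{(p|\Gamma|)'}$ (well-defined by Lemma~\ref{lem:decom-Schur}\eqref{item:d-S-H}) sends $\frakz(I(\underline{g}))$ to the class of $\widetilde\varphi(g_1\cdots g_m)$ in the kernel of the $(p|\Gamma|)'$-Schur covering of $G$; here I use that the $\Z^{c/G}$-coordinate of $\frakz(I(\underline{g}))$ records the branch data and is killed by the projection, while the $H_2(G,c)$-coordinate reduces (under $\cdot_{(p|\Gamma|)'}$) to $H_2(G,\Z)_{(p|\Gamma|)'}$, and a $(p|\Gamma|)'$-Schur covering is obtained from $S_c$ by further quotienting --- the lift $\widetilde\varphi(g_i)$ of order $\ord(\varphi(g_i))$ survives this quotient because the kernel has order prime to $|\Gamma|$ and the $\widetilde\varphi(g_i)$ generate the cover by the Schur--Zassenhaus and stem arguments of Lemma~\ref{lem:app-stem}. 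By Corollary~\ref{cor:compatible-cover}, $\omega_{L/K}$ is precisely the composite of $I(\underline{g}) \mapsto \widetilde\varphi(g_1\cdots g_m) \in \ker\pi'$ with $d_{\pi'}^{-1}: \ker\pi' \xrightarrow{\sim} H_2(G,\Z)_{(p|\Gamma|)'}$. So the two maps agree on the generator $I(\underline{g})$; since $\omega_{L/K}$ is a group homomorphism on $\hat\Z(1)_{(p|\Gamma|)'}$ and the EVW--W composite is $\hat\Z^\times_{(p)'}$-equivariant with the correct target, I would check compatibility of the $\hat\Z^\times$-action on $K(G,c)$ (formula \eqref{eq:alpha-K}, involving the twist $W_\alpha$) with scalar multiplication on $\hat\Z(1)$ after projecting to $H_2(G,\Z)_{(p|\Gamma|)'}$ --- the twist term $W_\alpha(\underline m)$ lands in the $\Z^{c/G}$-direction plus torsion supported on $|\Gamma|$ and hence dies, so $\alpha * (g,\underline m)$ projects to $g^\alpha$, giving genuine $\hat\Z_{(p|\Gamma|)'}$-linearity. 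This pins down the extension from generators to the whole group.

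The main obstacle I anticipate is the bookkeeping around the $\hat\Z^\times$-action and the reduced-versus-full Schur cover comparison: specifically, verifying that the canonical lift $\widehat{x_\gamma}$ used to define $W_\alpha$ in \cite{Wood-lifting} is compatible, after the $(p|\Gamma|)'$-completion, with the order-preserving lift used in Corollary~\ref{cor:compatible-cover}, and that the powering correction $w_\alpha(\gamma) = \widehat{x_\gamma}^{-\alpha}\widehat{x_\gamma^\alpha}$ is trivial modulo the $\Z^{c/G}$-direction in $H_2(G,\Z)_{(p|\Gamma|)'}$. This essentially amounts to the computation already done in the proof of Proposition~\ref{prop:etale-lift}\eqref{item:etale-lift-2} (the centrality argument showing $z_i = w_j^\alpha$), transported to the reduced setting, so I would cite that lemma and Lemma~\ref{lem:decom-Schur} rather than redo it, leaving only a short verification that nothing new appears when passing from $S_c$ to the $(p|\Gamma|)'$-Schur cover.
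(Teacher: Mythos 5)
Your proposal is correct and follows essentially the same route as the paper: identify the image of $\frakz(I(\underline g))$ with $\widetilde\varphi(g_1\cdots g_m)$ via Wood's isomorphism $U(G,c)\cong S_c\times_{G^{ab}}\Z^{c/G}$ and Corollary~\ref{cor:compatible-cover}, then observe that the correction terms $\widehat{x}^{-\alpha}\widehat{x^{\alpha}}$ die in the prime-to-$|\Gamma|$ quotient $S'$, so the $\ast$-action becomes honest $\alpha$-th powering and the set map extends to a homomorphism. The only cosmetic difference is that the paper settles your anticipated obstacle directly --- by choosing each lift $\widehat{x}\in S_c$ so that its image in $S'$ is the unique order-preserving lift, whence $\widetilde{x^{\alpha}}=\widetilde{x}^{\alpha}$ and $w_{\alpha}(\gamma)\in\ker(S_c\to S')$ --- rather than transporting the centrality computation from Proposition~\ref{prop:etale-lift}.
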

	
	\begin{proof}	
		Let $\pi:S\to G$ be a Schur covering, and $\pi_c: S_c \to G$ its reduced Schur covering associated to $c$.	
		Let $S' \to G$ be as defined in \eqref{eq:star}.
		The elements in $c$ have order dividing $|\Gamma|$. 
		For each conjugacy class in $c/G$, we pick one element $x$ in this class, and then pick one preimage $\widehat{x}$ of $x$ in $S_c$ such that the image of $\widehat{x}$ in $S'$ is the unique preimage of $x$ whose order equals $\ord(x)$. For any other element $y=g^{-1}xg$ in this conjugacy class, we define $\widehat{y}:=\tilde{g}^{-1}\widehat{x}\tilde{g}$ for some preimage $\tilde{g}$ of $g$ in $S_c$ (by \cite[Lem.2.3]{Wood-lifting}, $\widehat{y}$ is independent of the choice of $\tilde{g}$). 
		By \cite[Thm.2.5]{Wood-lifting}, there is an isomorphism from $U(G,c)$ to $S_c \times_{G^{ab}} \Z^{c/G}$ defined by mapping the generator $[x]$ for each $x\in c$ to $(\widehat{x}, e_x)$, where $e_x$ is the generator corresponding to the conjugacy class of $x$.
		
		Let $\underline{g}=(g_1,\ldots, g_m)$ be a system of inertia generators associated to $K$, and $\varphi$ and $\widetilde{\varphi}$ denote the surjections, defined in Corollary~\ref{cor:compatible-cover}, that map from $\langle g_1,\ldots, g_m \rangle$ to $G$ and $S'$ respectively. Then one can check that the composition map
		\begin{equation}\label{eq:EVW-omega}
			\hat{\Z}(1)^{\times}_{(p)'} \overset{\frakz}{\longrightarrow} K(G,c) \longrightarrow \ker(S_c \to G) \longrightarrow \ker (S' \to G)
		\end{equation}
		sends $I(\underline{g})$ to $\varphi(g_1\cdots g_m)$. We will show in the next paragraph that this composition map extends to   a group homomorphism $\hat{\Z}(1)_{(p)'} \to \ker (S' \to G)$, although $\frakz$ is only a $\hat{\Z}_{(p)'}^{\times}$-equivariant map of sets.
		Then the proposition follows by composing the map \eqref{eq:EVW-omega} with the inverse of the differential map $H_2(G,\Z)_{(p|\Gamma|)'} \overset{\sim}{\to} \ker (S' \to G)$.
		
		For each $x\in c$, we let $\widetilde{x}$ denote the unique preimage of $x$ in $S'$ such that $\ord(\widetilde{x})=\ord(x)$. Recall that  the lift $\widehat{x}$ of $x$ in $S_c$ that we've picked has image $\widetilde{x}$ in $S'$. So for each $\alpha \in \hat{\Z}^{\times}_{(p|\Gamma|)'}$, we have $\widetilde{x^{\alpha}}=\widetilde{x}^{\alpha}$, and hence $\widehat{x}^{-\alpha} \widehat{x^{\alpha}}$ is contained in $\ker(S_c \to S')$ for any $x \in c$. So for any element $(g, \underline{m}) \in K(G,c)$ (using notation in \eqref{eq:alpha-K}), the image of $\alpha \ast (g, \underline{m})$ under the map $K(G,c) \to \ker (S' \to G)$ equals the $\alpha$th power of the image of $(g,\underline{m})$. So we proved that \eqref{eq:EVW-omega} can extend to a group homomorphism $\hat{\Z}(1)_{(p)'} \to \ker (S' \to G)$.
	\end{proof}

\subsection{Proof of Theorem~\ref{thm:functionfield}}\label{ss:proof-main}

	Throughout this subsection, we assume that $\Gamma$ is a finite group and $H$ is a finite admissible $\Gamma$-group. We define $G:=H \rtimes \Gamma$, let $c_2$ denote the set $\Gamma \backslash \{1\}$, and $c_1$ the set of elements of $G \backslash \{1\}$ that have the same order as their image in $\Gamma$. Let $q$ be a power of a prime number $p$ such that $p \nmid |G|$. 
	
	By Lemma~\ref{lem:decom-Schur}, there exists a commutative diagram of surjections
	\begin{equation}\label{eq:diag-Schur}
	\begin{tikzcd}
		&  S' \arrow["\pi'", two heads]{dr}& \\
		\overline{S}^1 \arrow["\phi", two heads]{ur}\arrow["\pi^1", swap, two heads]{rr} \arrow["\widetilde{\rho}",  two heads]{d} & & G \arrow["\rho", two heads]{d} \\
		\overline{S}^2 \arrow["\pi^2", swap, two heads]{rr} && \Gamma
	\end{tikzcd}
	\end{equation}
	where $\pi^1$ is a reduced Schur covering of $G$ and $c_1$, $\pi^2$ is a reduced Schur covering of $\Gamma$ and $c_2$, and $\pi'$ is a $(q|\Gamma|)'$-Schur covering of $G$. 
	Moreover, since $\gcd \left( |\ker \rho \circ \pi'|, |\ker \pi^2| \right)=1$, $\overline{S}^1$ is the fiber product $S' \times_{\Gamma} \overline{S}^2$.

		For a positive integer $n$ and a group homomorphism $\delta: \hat{\Z}(1)_{(q|\Gamma|)'} \to \ker\pi'$, we let $\pi_{G,c_1}^{\delta}(q,n)$ denote the number of $\Frob_{(\HHur_{G,c_1}^n)_{\F_q}}$-fixed components of $(\HHur_{G,c_1}^n)_{\overline{\F}_q}$ whose corresponding $\omega$-invariant is the composition of $\delta$ with the inverse of the differential map of  $\pi'$. We pick a $\zeta \in \hat{\Z}(1)_{(q|\Gamma|)'}^{\times}$ and let $\eta:=\delta(\zeta)$. Define the quantity $b(G,c_1,q,n; \delta)$ as follows, which is independent of the choice of $\zeta$.
		\[
			b(G,c_1,q,n; \delta):= \sum_{\substack{h \in \ker \pi^1 \\ \text{s.t. }\phi(h)=\eta}} \# \left\{ \underline{m} \in \ker \left( \Z^{c_1/G}_{\equiv q, n, \geq 0} \to G^{ab}\right) \, \biggr\rvert \, W_{q^{-1}}(\underline{m})^q=h^{q-1}\right\}
		\]
		
	\begin{lemma}\label{lem:pi-b-marked}
		Given an element $a \in (\Z/|G|^2\Z)^{\times}$, there is a positive integer $M_a$ and a non-empty set $E_a$ of residues modulo $M_a$, and positive numbers $r'_{a,b}$ for $b\in E_a$, such that for a prime power $q\equiv a \, (\operatorname{mod } |G|^2)$ and a non-negative integer $n$ the following holds.
		\begin{enumerate}
			\item \label{item:pi-b-1} If $\ord(\eta) \mid q-1$ and $n \,(\operatorname{mod } M_a) =b \in E_a$, then 
			\[
				\pi^{\delta}_{G,c_1}(q,n)=b(G,c_1,q,n; \delta)+O_G(n^{d_{G,c_1}(q)-2})=r'_{a,b}n^{d_{G,c_1}(q)-1} +O_G(n^{d_{G,c_1}(q)-2}).
			\]
			\item \label{item:pi-b-2} If $\ord(\eta)\nmid q-1$ or $n \,(\operatorname{mod } M_a)\not\in E_a$, then $\pi^{\delta}_{G,c_1}(q,n)=b(G,c_1, q, n ;\delta)=0$.
		\end{enumerate}
	\end{lemma}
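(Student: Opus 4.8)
## Proof proposal for Lemma~\ref{lem:pi-b-marked}

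The plan is to refine the counting argument behind the unmarked estimate \eqref{eq:pi-b} so that it keeps track of the prescribed $\omega$-invariant, and then to apply the asymptotic analysis of \cite{LWZB} on the relevant counting function. First I would recall from \cite[Cor.12.2]{LWZB} and the surrounding discussion that, for $M$ sufficiently large, the $\Frob$-fixed components of $(\HHur^n_{G,c_1})_{\overline{\F}_q}$ whose lifting invariant lands in $\calK$ are in bijection with the $\Frob$-fixed elements of the relevant subset of $K(G,c_1)$, where the $\Frob$-action on $K(G,c_1)$ is the $\ast$-operator with $\alpha=q^{-1}$ from \eqref{eq:Frob-inv}. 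Under the decomposition $K(G,c_1)=H_2(G,c_1)\times \Z^{c_1/G}$ and the further splitting $H_2(G,c_1)\simeq H_2(G,\Z)_{(|\Gamma|)'}\oplus H_2(\Gamma,c_2)$ from Lemma~\ref{lem:decom-Schur}\eqref{item:d-S-H}--\eqref{item:d-S-Gamma}, the $\omega$-invariant of a component is (by Proposition~\ref{prop:EVW-omega}) exactly the image of its lifting invariant in the $H_2(G,\Z)_{(|\Gamma|)'}$-factor. So prescribing $\pi_*\circ\omega=\delta$ amounts to restricting the count to those $\Frob$-fixed elements $(g,\underline m)\in K(G,c_1)$ whose $H_2(G,\Z)_{(|\Gamma|)'}$-component, transported via $\phi$ and $\pi'$ in the diagram \eqref{eq:diag-Schur}, equals $\eta=\delta(\zeta)$. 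Solving $q^{-1}\ast(g,\underline m)=(g,\underline m)$ in the usual way (cf.\ the proof of \cite[Prop.12.6]{LWZB}) gives the constraints $\underline m^{q^{-1}}=\underline m$ (i.e.\ the coordinates of $\underline m$ are constant on $q$-power orbits, $\underline m\in\Z^{c_1/G}_{\equiv q,n,\geq M}$), that $\underline m$ lies in the kernel of $\Z^{c_1/G}\to G^{ab}$, and that $W_{q^{-1}}(\underline m)^q=g^{q-1}$ together with the condition $\phi(g)=\eta$ (which we impose, since the $H_2(G,\Z)_{(|\Gamma|)'}$-component of $g$ is what measures $\omega$). Letting $M\to 0$ in the usual way — subtracting off the components missing from the bijection, which is an $O_G(n^{d_{G,c_1}(q)-2})$ error — yields $\pi^{\delta}_{G,c_1}(q,n)=b(G,c_1,q,n;\delta)+O_G(n^{d_{G,c_1}(q)-2})$.

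Next I would observe that the equation $W_{q^{-1}}(\underline m)^q=h^{q-1}$ in $h\in\ker\pi^1$ with $\phi(h)=\eta$ is only solvable when $\eta$ is fixed by raising to the $q$-th power, i.e.\ when $\ord(\eta)\mid q-1$ (because $\Frob=q^{-1}\ast$ acts on the $H_2(G,\Z)_{(|\Gamma|)'}$-factor as multiplication by $q$, matching the $\Frob$-equivariance of $\omega_K$ established in Lemma~\ref{lem:extension-whole}\eqref{item:e-w-3} and Proposition~\ref{prop:etale-lift}); this gives the vanishing in case \eqref{item:pi-b-2} under the hypothesis $\ord(\eta)\nmid q-1$, i.e.\ $\pi^{\delta}_{G,c_1}(q,n)=b(G,c_1,q,n;\delta)=0$. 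It remains to analyze $b(G,c_1,q,n;\delta)$ as a function of $n$, for $q$ in a fixed congruence class $a\bmod |G|^2$ and with $\ord(\eta)\mid q-1$. Here I would invoke the counting machinery from \cite[\S12]{LWZB}: the inner count is a count of lattice points in $\ker(\Z^{c_1/G}_{\equiv q,n,\geq 0}\to G^{ab})$, stratified by the value of $W_{q^{-1}}(\underline m)^q$ in the finite group $\ker(S'\to G)$ (equivalently by a residue of $\underline m$ modulo a bounded modulus). For each residue class the count is a quasi-polynomial in $n$ of degree $d_{G,c_1}(q)-1$ (the number of free $q$-power orbits minus the abelianization constraints), whose leading coefficient and whose support on residue classes of $n$ modulo some $M_a$ depend only on $q\bmod |G|^2$ — this uses that $W_{q^{-1}}$, the $q$-power orbit structure, and the map to $G^{ab}$ all depend only on $q\bmod |G|$ (or $|G|^2$). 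Summing over the finitely many $h\in\ker\pi^1$ with $\phi(h)=\eta$ and over residues of $\underline m$ then produces the stated form: a non-empty set $E_a$ of residues $b\bmod M_a$ on which $b(G,c_1,q,n;\delta)=r'_{a,b}n^{d_{G,c_1}(q)-1}+O_G(n^{d_{G,c_1}(q)-2})$ with $r'_{a,b}>0$, and vanishing off $E_a$, which is exactly cases \eqref{item:pi-b-1} and \eqref{item:pi-b-2}.

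The main obstacle I anticipate is \emph{non-emptiness of $E_a$ and positivity of $r'_{a,b}$}: one must show that for $q\equiv a$ with $\ord(\eta)\mid q-1$ there actually exists some $n$ (in a positive-density set of residues mod $M_a$) for which the constrained lattice-point count is nonzero, and that for such $n$ the leading term genuinely has positive coefficient rather than cancelling. This requires producing, for at least one admissible $n$, a solution $(\underline m,h)$ to $W_{q^{-1}}(\underline m)^q=h^{q-1}$, $\phi(h)=\eta$, $\underline m\in\ker(\Z^{c_1/G}_{\equiv q,n,\geq 0}\to G^{ab})$ — which in turn relies on the surjectivity statements built into $U(G,c_1)\twoheadrightarrow G\times_{G^{ab}}\Z^{c_1/G}$ (\cite[Thm.2.5]{Wood-lifting}) together with the admissibility of $H$ (so that $G^{ab}\simeq\Gamma^{ab}$ and the abelianization constraint is mild) and the fact that $\ord(\eta)\mid q-1$ makes the $q$-power twist trivial on the relevant component. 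Once a single such solution is exhibited, translating it by the full lattice $\ker(\Z^{c_1/G}_{\equiv q,n,\geq 0}\to G^{ab})$ along a fixed fiber of $W_{q^{-1}}$ and letting $n$ grow gives the quasi-polynomial with positive leading coefficient; I would borrow this positivity argument essentially verbatim from the unmarked case in \cite{LWZB}, noting that the marking by $\delta$ only selects a nonempty subset of the $h$'s being summed, so positivity is inherited. The congruence conditions defining $M_a$ and $E_a$ are then read off from the combinatorics of $q$-power orbits on $c_1/G$ and the map $\Z^{c_1/G}\to G^{ab}$, all of which are determined by $q\bmod|G|^2$.
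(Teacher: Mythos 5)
Your proposal is correct and follows essentially the same route as the paper: the paper's proof is a two-line citation of Proposition~\ref{prop:EVW-omega} together with \cite[Cor.12.5, Prop.12.6, Cor.12.8]{LWZB}, and your argument simply unpacks what those references do (translating the $\omega$-invariant condition into a condition on the $H_2(G,\Z)_{(|\Gamma|)'}$-component of the lifting invariant, solving the Frobenius-fixedness equation, and running the quasi-polynomial lattice-point count). Your flagged concern about non-emptiness of $E_a$ and positivity of $r'_{a,b}$ is the right one to worry about, and it is resolved exactly as you suggest --- via the bijection $\{h\in\ker\pi^1 : \phi(h)=\eta\}\leftrightarrow\ker\pi^2$ that the paper records separately in Lemma~\ref{lem:b=b}.
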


	\begin{proof}
		By Proposition~\ref{prop:EVW-omega} and \cite[Cor.12.6]{LWZB}, $\pi^{\delta}_{G,c_1}(q,n)$ can be approximated by the number of $\hat{\Z}^{\times}_{(p)'}$-equivariant maps of sets $\frakz:\hat{\Z}(1)^{\times}_{(p)'} \to K(G,c_1)$ such that $\frakz$ reduces to $\delta$ and all coordinates of the image of $\frakz$ compositiond with the projection $K(G,c_1)\to \Z^{c_1/G}$ is at least $M$ for a sufficiently large $M$. Then the lemma follows from the proof of Proposition~12.7 and Corollary~12.9 of \cite{LWZB}.
	\end{proof}

	\begin{lemma}\label{lem:b=b}
		For any group homomorphism $\delta: \hat{\Z}(1)_{(p|\Gamma|)'} \to \ker  \pi'$ such that $\ord(\im \delta) \mid q-1$, we have 
		\[
			b(G, c_1, q, n; \delta)=b(\Gamma, c_2, q, n).
		\]
	\end{lemma}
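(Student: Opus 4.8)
The plan is to show that the quotient map $\rho\colon G \to \Gamma$ induces a bijection between the two index sets appearing in the definitions of $b(G,c_1,q,n;\delta)$ and $b(\Gamma,c_2,q,n)$, compatible with all the structures that occur. First I would unwind both definitions: $b(\Gamma,c_2,q,n)$ sums over $h\in\ker\pi^2$ the number of $\underline m \in \ker(\Z^{c_2/\Gamma}_{\equiv q,n,\geq 0}\to\Gamma^{ab})$ with $W_{q^{-1}}(\underline m)^q = h^{q-1}$, while $b(G,c_1,q,n;\delta)$ sums over those $h\in\ker\pi^1$ with $\phi(h)=\eta$ the number of $\underline m \in \ker(\Z^{c_1/G}_{\equiv q,n,\geq0}\to G^{ab})$ with the same type of equation. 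The key structural inputs are: (i) the projection $G\to\Gamma$ induces a bijection $c_1/G \to c_2/\Gamma$ on conjugacy classes (an element of $c_1$ has the same order as its image in $\Gamma$, and since $\gcd(|H|,|\Gamma|)=1$ every element of $\Gamma$ lifts, by Schur--Zassenhaus, to a unique conjugacy class in $c_1$ of the same order); hence an isomorphism $\Z^{c_1/G}\xrightarrow{\sim}\Z^{c_2/\Gamma}$ respecting the ``$\equiv q$'' and ``$\geq 0$'' conditions and the total-degree-$n$ condition; and (ii) since $H$ is admissible, $G^{ab}\simeq\Gamma^{ab}$ and the natural map $\Z^{c_1/G}\to G^{ab}$ is carried to $\Z^{c_2/\Gamma}\to\Gamma^{ab}$ under this isomorphism, so the kernels correspond.

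Next I would handle the group-element side. From the diagram~\eqref{eq:diag-Schur}, $\overline S^1 = S'\times_\Gamma \overline S^2$ with $\gcd(|\ker(\rho\circ\pi')|,|\ker\pi^2|)=1$, so $\ker\pi^1 = \ker(\overline S^1\to G)$ decomposes canonically as $\ker\pi' \times \ker\pi^2$ via $\widetilde\rho$ and $\phi$; concretely $h\mapsto(\phi(h),\widetilde\rho(h)\bmod\ker\pi^2)$ is an isomorphism. Thus fixing $\phi(h)=\eta$ is exactly fixing the $\ker\pi'$-coordinate, and $h$ then ranges over a coset in bijection with $\ker\pi^2$ via $\widetilde\rho$. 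I would then check that the operator $W_{q^{-1}}$ and the powering maps are compatible with $\widetilde\rho$: the lifts $\widehat{x_\gamma}$ used to define $W_{q^{-1}}$ in $\overline S^1$ (resp.\ $\overline S^2$) can be chosen so that $\widetilde\rho$ sends the chosen lift in $\overline S^1$ of $x\in c_1$ to the chosen lift in $\overline S^2$ of its image in $c_2$ — this uses the uniqueness of the order-preserving lift in the prime-to-$|\Gamma|$ quotient, exactly as in the proof of Proposition~\ref{prop:EVW-omega} and Lemma~\ref{lem:reduced-prime}. Consequently $\widetilde\rho(W_{q^{-1}}(\underline m)) = W_{q^{-1}}(\bar{\underline m})$ where $\bar{\underline m}$ is the image of $\underline m$ in $\Z^{c_2/\Gamma}$, and $\widetilde\rho(h^{q-1})$ is the corresponding power in $\ker\pi^2$. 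Since $\widetilde\rho$ restricted to the relevant coset of $\ker\pi^1$ onto $\ker\pi^2$ is a bijection, the equation $W_{q^{-1}}(\underline m)^q = h^{q-1}$ in $\overline S^1$ holds if and only if $W_{q^{-1}}(\bar{\underline m})^q = \widetilde\rho(h)^{q-1}$ holds in $\overline S^2$. Summing over the now-matched index sets yields $b(G,c_1,q,n;\delta)=b(\Gamma,c_2,q,n)$.

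The main obstacle I anticipate is verifying the compatibility of the $W_{q^{-1}}$-operators under $\widetilde\rho$ with genuine care: the operator $\alpha\ast(-)$ is \emph{not} a homomorphism, so one cannot simply quote functoriality. What makes it work is that $\widehat{x_\gamma}^{-\alpha}\widehat{x_\gamma^\alpha}$ lies in $\ker(\overline S^1\to S')$ on the function-field-prime side is the wrong statement here; rather, the point is that after choosing the lifts compatibly the correction terms $w_\alpha(\gamma)$ in $\overline S^1$ map under $\widetilde\rho$ to the correction terms in $\overline S^2$, because both are built from order-preserving lifts and $\widetilde\rho$ respects orders of elements in $c_1$ (their orders divide $|\Gamma|$, which is invertible where it matters). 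So the crux is a bookkeeping argument, parallel to but lighter than the proof of Proposition~\ref{prop:EVW-omega}, checking that all the choices can be made simultaneously on both sides. Once that is in place, everything else is the dictionary of bijections described above, and the identity of the two counts is immediate.
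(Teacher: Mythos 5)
Your route is the paper's route: the Schur--Zassenhaus bijection $c_1/G\to c_2/\Gamma$ and the induced identification of $\ker(\Z^{c_1/G}_{\equiv q,n,\geq 0}\to G^{ab})$ with $\ker(\Z^{c_2/\Gamma}_{\equiv q,n,\geq 0}\to \Gamma^{ab})$; the fiber-product structure $\overline{S}^1\simeq S'\times_{\Gamma}\overline{S}^2$ from \eqref{eq:diag-Schur}, which makes $\{h\in\ker\pi^1 : \phi(h)=\eta\}$ biject with $\ker\pi^2$ via $\widetilde{\rho}$; and the compatibility of the two $W_{q^{-1}}$-maps under $\widetilde{\rho}$, obtained by choosing the class representatives and their lifts compatibly on the two sides. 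Up to bookkeeping this is exactly the published argument.

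The one step I would push back on is your ``if and only if'': that $W_{q^{-1}}(\underline{m})^q=h^{q-1}$ in $\ker\pi^1$ holds iff its image under $\widetilde{\rho}$ holds in $\ker\pi^2$. Only the forward direction follows from $\widetilde{\rho}$-compatibility. Since $\ker\pi^1\simeq\ker\pi'\times\ker\pi^2$ with $\phi$ and $\widetilde{\rho}$ the two projections, the converse also needs the $\ker\pi'$-component of the equation, namely $\phi\bigl(W_{q^{-1}}(\underline{m})\bigr)^q=\eta^{q-1}$. The ingredient that controls this component is precisely the fact you set aside as ``the wrong statement here'': each correction term $w_{q^{-1}}(\gamma)=\widehat{x_\gamma}^{-q^{-1}}\widehat{x_{\gamma}^{q^{-1}}}$ lies in $\ker(\overline{S}^1\to S')$, because the order-preserving lifts in the prime-to-$|\Gamma|$ covering $S'$ are unique and compatible with invertible powering (this is the computation in the proof of Proposition~\ref{prop:EVW-omega}). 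Hence $\phi\circ W_{q^{-1}}$ is trivial, and the $\ker\pi'$-component of the equation reduces to $\eta^{q-1}=1$, i.e.\ to $\ord(\im\delta)\mid q-1$ --- which is the only regime in which the lemma is actually used (compare Lemma~\ref{lem:pi-b-marked} and the hypotheses of Theorem~\ref{thm:functionfield}); the paper's own write-up also passes over this component silently. So: same approach and correct where it is applied, but the backward implication of your equivalence is not a consequence of the $\widetilde{\rho}$-compatible choice of lifts alone; it needs the discarded observation about $\ker(\overline{S}^1\to S')$ together with the divisibility condition on $\ord(\im\delta)$.
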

	
	\begin{proof}
		By definition of $c_1$ and $c_2$, the quotient map $\rho:G \to \Gamma$ sends each element $c_1$ to an element in $c_2$. Moreover, as $\gcd(|H|, |\Gamma|)=1$, by the Schur--Zassenhaus theorem, for each element of $c_2$, all of its lifts in $c_1$ are conjugate to each other, thus there is a natural bijective correspondence $\Z^{c_1/G} \to \Z^{c_2/\Gamma}$ induced by $\rho$. Since $H$ is an admissible $\Gamma$-group, we have $G^{ab}=\Gamma^{ab}$, so the restriction of this bijection gives a natural isomorphism $\ker(\Z^{c_1/G} \to G^{ab}) \simeq \ker(\Z^{c_2/\Gamma} \to \Gamma^{ab})$.
		
		We write the group homomorphisms $W^1_{\alpha}: \Z^{c_1/G} \to \ker\pi_1$ and $W^2_{\alpha}: \Z^{c_2/\Gamma} \to \ker \pi_2$ for $W_{\alpha}$ (defined above the equation~\eqref{eq:alpha-K}) associated to $(G, c_1)$ and $(\Gamma, c_2)$ respectively. For each $g \in c_1/G$ and its image $\gamma \in c_2/\Gamma$, we can pick $x_g$ in $g$ and $x_{\gamma}$ in $\gamma$ such that $\rho(x_g)=x_{\gamma}$, and then pick their lifts $\widehat{x_{g}} \in \overline{S}^1$ and $\widehat{x_{\gamma}} \in \overline{S}^2$ such that $\widetilde{\rho}(\widehat{x_g})=\widehat{x_{\gamma}}$. Then by definition of $W^1_{\alpha}$ and $W^2_{\alpha}$, we have the following commutative diagram.
		\[\begin{tikzcd}
			\Z^{c_1/G} \arrow["\sim"]{d}\arrow["W^1_{q^{-1}}"]{r} & \ker(\overline{S}^1 \overset{\pi^1}{\to} G) \arrow[two heads, "\widetilde{\rho}"]{d} \\
			\Z^{c_2/\Gamma} \arrow["W^2_{q^{-1}}"]{r} & \ker(\overline{S}^2 \overset{\pi^2}{\to} \Gamma)
		\end{tikzcd}\]
		
		Pick a $\zeta \in \hat{\Z}(1)^{\times}_{(p|\Gamma|)'}$ and let $\eta:= \delta(\zeta)$. Considering the diagram \eqref{eq:diag-Schur} and the isomorphism $\overline{S}^1\simeq S' \times_{\Gamma} \overline{S}^2$, we obtain the following identity of sets 
		\begin{eqnarray*}
			\left\{h \in \ker(\overline{S}^1 \overset{\pi^1}{\to} G) \, \bigr\rvert \, \phi(h)=\eta \right\} &\overset{1-1}{\longleftrightarrow}&  \ker (\overline{S}^2 \overset{\pi^2}{\to} \Gamma)  \\
			h & \longmapsto& \widetilde{\rho}(h). 		
		\end{eqnarray*}
	Therefore, combining everything, we have
		\begin{eqnarray*}
			b(G, c_1, q, n; \delta) &=& \sum_{\substack{h \in \ker(\overline{S}^1 \to G) \\ \text{s.t. } \phi(h) = \eta}} \# \left\{ \underline{m} \in \ker \left( \Z^{c_1/G}_{\equiv q, n, \geq 0 } \to G^{ab} \right) \, \biggr\rvert \, W^1_{q^{-1}}(\underline{m})^q=h^{q-1}\right\} \\
			&=& \sum_{\bar{h} \in \ker(\overline{S}^2 \to \Gamma)} \# \left\{ \underline{m} \in \ker \left( \Z^{c_2/\Gamma}_{\equiv q, n, \geq 0 } \to \Gamma^{ab} \right) \, \biggr\rvert \, W^2_{q^{-1}}(\underline{m})^q=\bar{h}^{q-1}\right\} \\
			&=& b(\Gamma, c_2, q, n).
		\end{eqnarray*}
	\end{proof}

	By the correspondence between smooth projective curves and their function fields, $\HHur^n_{G,c_1}(\F_q)$ parametrizes the pairs $(L, \iota)$, where $L$ is an extension of $\F_q(t)$ that is split completely at $\infty$ and has $\rDisc L=q^n$, and $\iota$ is an isomorphism $\Gal(L/\F_q(t)) \overset{\sim}{\to} G$ such that each inertia subgroup of $L/\F_q(t)$ is generated by an element in $c_1$. We fix a surjection $\rho: G \to \Gamma$ as defined in the diagram~\eqref{eq:diag-Schur}. Then for each pair $(L, \iota)$, consider the subfield $K:=L^{\ker(\rho \circ \iota)}$ of $L$. The surjection $\rho \circ \iota$ defines an isomorphism $\Gal(K/\F_q(t)) \to \Gamma$, and moreover $L/K$ is unramified and  $\rDisc(K)=q^n$.
	Then the restriction of $\iota$ to $\Gal(L/K)$ gives an isomorphism $\Gal(L/K) \to H$.
	
	\begin{lemma}\label{lem:Hur-Sur}
		There is a surjective map of sets
		\begin{eqnarray}
			\HHur_{G,c_1}^n(\F_q) &\longrightarrow& \coprod_{K \in E_{\Gamma}(q^n, \F_q(t))} \Sur_{\Gamma}(\Gs(K), H) \label{eq:Hur-Sur}\\
			(L, \iota) & \longmapsto& \{ \Gs(K) \to \Gal(L/K) \}. \nonumber
		\end{eqnarray}
		Each element in the set on the right has $[H:H^{\Gamma}]$ preimages.
	\end{lemma}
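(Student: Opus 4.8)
The plan is: first check that the assignment $(L,\iota)\mapsto\{\Gs(K)\to\Gal(L/K)\}$ is well defined, then produce an explicit preimage of an arbitrary pair on the right (giving surjectivity), and finally show that the fiber over such a pair is a torsor under $Z^1(\Gamma,H)$, whose cardinality is $[H:H^\Gamma]$. For well-definedness, take $(L,\iota)\in\HHur^n_{G,c_1}(\F_q)$ and set $K:=L^{\ker(\rho\circ\iota)}$. Every inertia subgroup of $L/\F_q(t)$ is tame and generated by an element of $c_1$, hence cyclic and mapped isomorphically onto its image in $\Gamma$ by $\rho\circ\iota$; therefore $L/K$ is unramified (an inertia group of $L/K$ is $\langle g\rangle\cap H=1$ for $g\in c_1$), the extensions $L/\F_q(t)$ and $K/\F_q(t)$ are ramified at exactly the same places, so $\rDisc K=\rDisc L=q^n$, and $K/\F_q(t)$ is split completely at $\infty$ because $L/\F_q(t)$ is; thus $(K,\iota_K)\in E_\Gamma(q^n,\F_q(t))$ with $\iota_K$ induced by $\rho\circ\iota$. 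Since $[L:K]=|H|$ is prime to $p|\Gamma|$ and $L/K$ is unramified and split above $\infty$, we have $L\subseteq\Ks$, and composing $\Gs(K)=\Gal(\Ks/K)\twoheadrightarrow\Gal(L/K)$ with $\iota|_{\Gal(L/K)}\colon\Gal(L/K)\overset{\sim}{\to}H$ gives the surjection attached to $(L,\iota)$; it is $\Gamma$-equivariant for the chosen lift of the $\Gamma$-action, both sides carrying the conjugation action coming from $\Gal(L\overline{k}/\overline{k}(t))$ relative to the fixed place over $\infty$.

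For surjectivity, given $(K,\iota_K)\in E_\Gamma(q^n,\F_q(t))$ and $\psi\in\Sur_\Gamma(\Gs(K),H)$, set $L:=\Ks^{\ker\psi}$. Because $\psi$ is $\Gamma$-equivariant, $\ker\psi$ is stable under the lift of $\Gamma$, hence normal in $\Gal(\Ks/\F_q(t))$, so $L/\F_q(t)$ is Galois; the equivariance of $\psi$ further shows that, via the induced isomorphism $\bar\psi\colon\Gal(L/K)\overset{\sim}{\to}H$ and via $\iota_K$, the conjugation action of a lift of $\Gamma$ on $\Gal(L/K)$ matches the $\Gamma$-action on $H$, so there is an isomorphism $\iota_0\colon\Gal(L/\F_q(t))\overset{\sim}{\to}H\rtimes\Gamma$ restricting to $\bar\psi$ on $\Gal(L/K)$ and inducing $\iota_K$ on $\Gal(K/\F_q(t))$. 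The pair $(L,\iota_0)$ is split at $\infty$, has $\rDisc=q^n$ (as $L/K$ is unramified), and each of its inertia subgroups is cyclic and maps isomorphically onto an inertia subgroup of $K/\F_q(t)$, hence is generated by an element of $c_1$; so $(L,\iota_0)\in\HHur^n_{G,c_1}(\F_q)$, and by construction it maps to $((K,\iota_K),\psi)$.

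For the fiber count, fix $((K,\iota_K),\psi)$. Any preimage $(L,\iota)$ must satisfy $\Gal(\Ks/L)=\ker\psi$, so $L$ is forced to be $\Ks^{\ker\psi}$; and because $\gcd(|H|,|\Gamma|)=1$, $H$ is the unique normal Hall subgroup of $G=H\rtimes\Gamma$, hence characteristic, so $\Gal(L/K)=\ker(\rho\circ\iota)$ automatically and $\iota|_{\Gal(L/K)}=\bar\psi$ is forced. Therefore any two preimages differ by an automorphism of $G$ that is the identity on $H$ and induces the identity on $\Gamma$; such automorphisms are precisely the maps $h\gamma\mapsto h\,\mu(\gamma)\,\gamma$ with $\mu\in Z^1(\Gamma,H)$, and composing with them preserves $c_1$ and the image under the map, so the fiber is a torsor under $Z^1(\Gamma,H)$, nonempty by the previous paragraph. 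Finally, since $\gcd(|H|,|\Gamma|)=1$, every complement of $H$ in $G$ is $H$-conjugate to $\Gamma$ (Schur--Zassenhaus), i.e.\ $H^1(\Gamma,H)$ is trivial, so $Z^1(\Gamma,H)=B^1(\Gamma,H)$; and the coboundary $\gamma\mapsto h^{-1}\cdot{}^{\gamma}h$ depends only on the coset $H^\Gamma h$, so $|Z^1(\Gamma,H)|=[H:H^\Gamma]$, which is the asserted fiber size.

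I expect the genuinely delicate point to be the bookkeeping in the first two paragraphs rather than the group theory: the $\Gamma$-action on $\Gs(K)$ is only canonical up to the lift chosen in Notation~\ref{not}, so one must check that ``$\Gamma$-equivariant surjection in $\Sur_\Gamma(\Gs(K),H)$'' matches ``valid $\F_q$-point of $\HHur^n_{G,c_1}$'' on the nose, or at least that the resulting count is insensitive to these choices (conjugate lifts induce bijections on the relevant sets). The non-equivariant analogue of this lemma already appears in the work of Wood, Zureick-Brown and the author, and I would follow that treatment to pin down these compatibilities; the only new input here is that nothing about the $\omega$-invariant enters this statement, so it is a purely Galois/group-theoretic dictionary.
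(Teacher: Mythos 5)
Your identification of $K$, the verification that $L/K$ is unramified with $\rDisc K=q^n$, and the surjectivity argument are all fine, and the overall shape matches the intended argument (the paper's own proof is just a citation to the proofs of Lemmas 9.3 and 10.2 of \cite{LWZB}). However, the fiber count contains a genuine error when $H$ is nonabelian with $Z(H)\neq H$. A map $h\gamma\mapsto h\,\mu(\gamma)\,\gamma$ is a homomorphism of $G=H\rtimes\Gamma$ only if $\mu(\gamma)$ centralizes $H$: expanding $\phi(\gamma h\gamma^{-1})=\phi(\gamma)\phi(h)\phi(\gamma)^{-1}$ with $\phi|_H=\mathrm{id}$ forces $\mu(\gamma)\in Z(H)$. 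So the group of automorphisms of $G$ restricting to the identity on $H$ and inducing the identity on $\Gamma$ is $Z^1(\Gamma,Z(H))$, of order $[Z(H):Z(H)^{\Gamma}]$, not $[H:H^{\Gamma}]$. Relatedly, your well-definedness step is too quick: if $s(\gamma)$ is the lift defining the $\Gamma$-action on $\Gs(K)$ and $\iota(\bar s(\gamma))=(h_\gamma,\gamma)$, then the composite $\Gs(K)\to\Gal(L/K)\overset{\iota}{\to}H$ intertwines the chosen action with the standard action twisted by conjugation by $h_\gamma$; it lies in $\Sur_\Gamma(\Gs(K),H)$ for the standard action only when every $h_\gamma\in Z(H)$. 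For a concrete failure take $H$ the Heisenberg group of order $\ell^3$ and $\Gamma=\Z/2\Z$ inverting the two generators: then $[H:H^\Gamma]=\ell^2$ while $[Z(H):Z(H)^\Gamma]=1$, so the naive restriction map is only defined on part of the Hurwitz set and has fibers of size $1$ there, not $\ell^2$.

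The correct mechanism, which is the content of \cite[Lem.~9.3]{LWZB}, is the following. An isomorphism $\iota\colon\Gal(L/\F_q(t))\to G$ over $\iota_K$ is equivalent to a pair $(c,f)$ with $c:=\mathrm{pr}_H\circ\iota\circ\bar s\in Z^1(\Gamma,H)$ and $f:=\iota|_{\Gal(L/K)}$ satisfying the twisted equivariance $f(\gamma\cdot x)=c(\gamma)\,\gamma(f(x))\,c(\gamma)^{-1}$. Since $H^1(\Gamma,H)=1$ by Schur--Zassenhaus, one may write $c(\gamma)=h^{-1}\gamma(h)$, and then $hfh^{-1}$ is honestly $\Gamma$-equivariant; the map of the lemma sends $(L,\iota)$ to $hfh^{-1}\circ\mathrm{res}$ (well defined up to the harmless ambiguity $h\in H^\Gamma h$), and the fiber over a fixed $\psi$ is parametrized by the cocycle $c$, giving $|Z^1(\Gamma,H)|=[H:H^\Gamma]$ preimages. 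So your final number is correct, but it arises from the $[H:H^\Gamma]$ splittings of $G\to\Gamma$, not from automorphisms of $G$ fixing $H$ pointwise, and the map itself must carry this conjugation correction in order to land in $\Sur_\Gamma(\Gs(K),H)$ at all.
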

	
	\begin{proof}
		The lemma follows by the proofs of Lemmas~9.3 and 10.2 in \cite{LWZB}.
	\end{proof}

	Let $q$ be a prime power such that $\gcd(q, |G|)=1$ and $\ord(\im \delta) \mid q-1$. Then taking $q$th power on $\im \delta$ is the identity map. So by the Frobenius action on lifting invariants in \eqref{eq:Frob-inv} and Proposition~\ref{prop:EVW-omega}, if there is a $\overline{\F}_q$-point of a connected component of $(\HHur_{G,c_1}^n)_{\F_q}$ has $\omega$-invariant $\delta$, then all the $\overline{\F}_q$-point of this component have $\omega$-invariant $\delta$. 
	So we let $Z^{\delta}_{q,n}$ be the union of components of $(\HHur^n_{G,c_1})_{\F_q}$ whose $\overline{\F}_q$-points has $\omega$-invariant $\delta$. 
	By Lemma~\ref{lem:Hur-Sur}, we have
	\[
		\frac{\#Z^{\delta}_{q,n}(\F_q)}{[H:H^{\Gamma}]}=  \sum_{K \in E_{\Gamma}(q^n, \F_q(t))} \#\left\{ \pi\in \Sur_{\Gamma}\left(\Gs(K), H \right) \, \bigl\rvert\, \pi^{*} \circ \omega^{\#}_K=\delta \right\}.
	\]	 
	By the Grothendieck--Lefschetz trace formula, since $(Z^{\delta}_{q,n})_{\overline{\F}_q}$ is $n$-dimensional (\cite[Lem.10.3]{LWZB}), we have
	\[
		\#Z^{\delta}_{q,n}(\F_q) = \sum_{j=0}^{2n} (-1)^j \Tr(\Frob_{\F_q} |H^j_{c, \et}((Z^{\delta}_{q,n})_{\overline{\F}_q}, \Q_\ell))
	\]
	where $H^j_{c,\et}$ is the $j$th \'etale fundamental group with compact support.
	Here $\Tr(\Frob_{\F_q} |H^{j}_{c, \et}((Z^{\delta}_{q,n})_{\overline{\F}_q}, \Q_\ell))$ is $\pi^{\delta}_{G,c_1}(q,n)q^n$ when $j=2n$, and is at most $q^{j/2}C(n)$ when $0\leq j < 2n$ by the comparison of \'etale and analytic cohomology of components of the Hurwitz scheme \cite[Lem.10.3]{LWZB}. Here   
	\[
		C(n)=\max_{0\leq j < 2n} \dim H^{2n-j} (Z^{\delta}_{\C,n}, \Q),
	\]
	where $Z^{\delta}_{\C,n}$ is the union of components of $(\HHur^n_{G,c_1})_{\C}$ whose prime-to-$|\Gamma|$-torsion part of the EVW-W lifting invariant
	\[
		\hZ(1)^{\times} \overset{\frakz}{\longrightarrow} K(G,c_1) \longrightarrow H_2(G, \Z)_{(|\Gamma|)'} 
	\]
	is given by $\delta$. 
	So we round down $N$ to the nearest integer whose residue mod $M_a$ is in $E_a$, and so we have
	\[
		\sum_{n\leq N} \# Z^{\delta}_{q,n}(\F_q) = \pi^{\delta}_{G, c_1}(q, N) q^N + O_{N,G}(q^{N-1/2}).
	\]
	Similarly, $E_{\Gamma}(q^n, \F_q(t))=\#\HHur_{\Gamma, c_2}^n(\F_q)$, and
	\[
		\sum_{n \leq N} \#\HHur^n_{G,c_1}(\F_q) = \pi_{\Gamma, c_2}(q, N)q^N+O_{N,\Gamma}(q^{N-1/2}).
	\]	
	Therefore, the fraction in the limits of the left-hand side of the equality in Theorem~\ref{thm:functionfield} equals
	\begin{eqnarray*}
		\frac{1}{[H:H^{\Gamma}]}\frac{\sum_{n \leq N} \#Z^{\delta}_{q,n}(\F_q)}{\sum_{n \leq N} \#\HHur^n_{\Gamma, c_2}(\F_q)} &=&\frac{1}{[H:H^{\Gamma}]} \frac{\pi^{\delta}_{G,c_1}(q,N) q^N + O_{N,G}(q^{N-1/2})}{\pi_{\Gamma, c_2}(q, N) q^N+O_{N,\Gamma}(q^{N-1/2})}\\
		&=& \frac{1}{[H:H^{\Gamma}]}\left(1+ \frac{O_G(N^{d_{\Gamma}(q)-2}) + O_{N,G}(q^{-1/2})}{\pi_{\Gamma,c_2}(q,N)+O_{N,\Gamma}(q^{-1/2})} \right),
	\end{eqnarray*}
	where the last equality follows by Lemmas~\ref{lem:pi-b-marked}, \ref{lem:b=b} and \cite[Prop.12.7]{LWZB}. Then the equality in Theorem~\ref{thm:functionfield} follows after taking limits.

\section{A random pro-$\ell$ $\Gamma$-group for $\Gamma=\Z/2\Z$}\label{sect:RandomGroup}

	In this section, we assume that $\Gamma=\Z/2\Z$, and $\ell$ is an odd prime that is not $\Char(k)$. We construct a random group to model the distribution of $\Gs(K)(\ell)$ as $K$ varies among all the quadratic extensions of $k(t)$ that is split completely at $\infty$.
	
	Recall that $G_{\O}(K\overline{k})(\ell)$ is a pro-$\ell$ Demu\v{s}kin group whose abelianization is free of rank $2g$, where $g$ is the genus of the curve over $\overline{k}$ corresponding to $K\overline{k}$. We've shown that $G_{\O}(K\overline{k})(\ell)$ has a $\Gamma$ action and a $\Frob_k$ action, and it admits a $\Gamma$-equivariant and $\Frob_k$-equivariant stem extension \eqref{eq:ext-ell} whose kernel is isomorphic to $\Z_\ell(1)$. Moreover, $\Gs(K)(\ell)$ is the maximal $\Gamma$-equivariant and $\Frob_k$-equivariant quotient of $G_{\O}(K\overline{k})(\ell)$ whose induced $\Frob_k$-action is trivial. So our strategy is to treat the $\Frob_k$ action as a random automorphism of the stem extension. We start with a Demu\v{s}kin group with desired properties, pick a random automorphism that acts on the kernel of the stem extension as taking the $|k|$-th powers. Then define the random group to be the maximal quotient fixed by this random automorphism, as the generating rank of the Demu\v{s}kin group goes to infinity.

\subsection{Construction of the random group}\label{ss:construction}

	By the classification of Demu\v{s}kin groups \cite[Thm.(3.9.11)]{NSW}, if a Demu\v{s}kin group has abelianization isomorphic to $\Z_\ell^{2n}$ , then it is isomorphic to $\langle x_1, \ldots, x_{2n} \mid [x_1, x_2][x_3, x_4]\cdots [x_{2n-1},x_{2n}]\rangle$. In the following lemma, we show that there is a unique (up to isomorphism) $\Z/2\Z$-action on this type of Demu\v{s}kin group such that the Demu\v{s}kin group is an admissible $\Z/2\Z$-group.

	\begin{lemma}\label{lem:p-gp-aut}
		Let $G$ be a pro-$\ell$ group and $\sigma_1, \sigma_2\in \Aut(G)$ have the same order which is prime to $\ell$. Assume that $\pi: G \to H$ is a surjection such that $\ker \pi \subset [G,G]$, and that $\sigma_1, \sigma_2$ preserve $\ker \pi$ and induce the same automorphism on $H$. Then there exists $\tau \in \Aut(G)$ such that $\sigma_1=\tau^{-1}\circ \sigma_2 \circ \tau$ and $\tau$ preserves $\ker \pi$ and induces the trivial action on $H$.
	\end{lemma}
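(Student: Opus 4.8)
The plan is to build $\tau$ by a successive-approximation argument along the lower central series of $G$, exactly in the spirit of Labute's method referenced repeatedly in the paper. Write $\sigma := \sigma_2^{-1}\circ\sigma_1 \in \Aut(G)$; by hypothesis $\sigma$ fixes $\ker\pi$ setwise and induces the identity on $H = G/\ker\pi$. I want $\tau$ with $\sigma = \tau\circ\sigma_2\circ\tau^{-1}\circ\sigma_2^{-1}$... more precisely, rearranging the desired identity $\sigma_1 = \tau^{-1}\sigma_2\tau$ gives $\sigma_2^{-1}\sigma_1 = \sigma_2^{-1}\tau^{-1}\sigma_2\tau = [\tau,\sigma_2]$ in $\Aut(G)$ (with $\Aut(G)$-commutator convention), so the task is to realize $\sigma$ as such a commutator with $\tau$ inducing the identity on $H$ and preserving $\ker\pi$. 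Because $\sigma$ acts trivially on $H$ and on $\ker\pi \subset [G,G]$, and $\ker\pi$ is central-ish only up to the lower central filtration, I will instead argue directly that $\sigma$ is conjugate-to-identity, i.e. find $\tau$ with $\tau^{-1}\sigma_2\tau = \sigma_1$, by approximating modulo $G^{(i)}$.

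First I would set up the filtration: let $G^{(i)}$ be the lower central series, so $\ker\pi \subseteq G^{(2)}$, each $G^{(i)}/G^{(i+1)}$ is a finitely generated $\Z_\ell$-module, and $\sigma_1,\sigma_2$ both act on the associated graded, agreeing on $\gr^1 = G/G^{(2)} = H/H^{(2)}$ (since they agree on $H$ and $\ker\pi\subseteq G^{(2)}$). The key cohomological input is that, since the common order $d$ of $\sigma_1,\sigma_2$ is prime to $\ell$, the group $\langle\sigma_1\rangle$ (or the relevant cyclic group) has trivial higher cohomology on each $\ell$-torsion-free-by-finite graded piece: $H^i(\Z/d\Z, M) = 0$ for $i\geq 1$ when $M$ is a pro-$\ell$ $\Z/d\Z$-module. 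This is what lets one lift an approximate conjugator from level $i$ to level $i+1$: given $\tau_i$ with $\tau_i^{-1}\sigma_2\tau_i \equiv \sigma_1 \pmod{G^{(i)}}$ acting correctly and trivially on $H$, the obstruction to improving $\tau_i$ to $\tau_{i+1}$ lives in a cocycle group $Z^1$ or a coboundary quotient that vanishes by the vanishing of $H^1$ (respectively $H^2$) of the cyclic group of order $d$ with coefficients in $\gr^i G$. One constructs $\tau_{i+1} = \tau_i\circ(1 + \text{correction in }G^{(i)}/G^{(i+1)})$, checks it still preserves $\ker\pi$ (automatic since the correction lands in $G^{(i)}$ and the constraint is only modulo $\ker\pi\subseteq G^{(2)}$, needing a small separate check that the correction can be chosen compatibly with $\ker\pi$), and still induces the identity on $H$ (again automatic as the correction lies in $G^{(i)}\subseteq G^{(2)}$, hence dies in $H/H^{(2)}$... no, one needs it to die in all of $H$, so actually the correction must be taken in $\ker\pi\cdot G^{(i)}$ or one uses that $G^{(i)}\to H^{(i)}$ and lifts the $H$-side trivially, which holds because $\sigma_1,\sigma_2$ already agree on $H$ so no correction is needed downstairs). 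Passing to the inverse limit $\tau = \lim \tau_i$, which converges because $\bigcap G^{(i)} = 1$ in a pro-$\ell$ group (topologically finitely generated, so $G$ is the inverse limit of $G/G^{(i)}$), yields the desired $\tau\in\Aut(G)$.

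The main obstacle I anticipate is making the lifting step genuinely $\ker\pi$-compatible: the inductive hypothesis carries two constraints on $\tau_i$ (preserves $\ker\pi$, trivial on $H$), and when I perturb $\tau_i$ by an element of $\gr^i G$ to kill the level-$i$ discrepancy between $\tau_i^{-1}\sigma_2\tau_i$ and $\sigma_1$, I must ensure the perturbation respects the filtration $\ker\pi\cap G^{(i)}$ versus $G^{(i)}$. The clean way is to run the whole argument inside the short exact sequence $1\to\ker\pi\to G\to H\to 1$ simultaneously: since $\sigma_1,\sigma_2$ agree on $H$, I first arrange (trivially) the $H$-part, then the problem reduces to conjugating the two restricted automorphisms on $\ker\pi$ by an automorphism of $G$ that is identity on $H$ — and the group of such automorphisms is itself a pro-$\ell$ group (being $\varprojlim$ of $\ell$-groups, as $\ker\pi$ is pro-$\ell$ and finitely generated as a normal subgroup), on which the finite prime-to-$\ell$ cyclic group $\langle\sigma_1\rangle$ acts, so $H^1$ of that action vanishes and the two are conjugate by an element of precisely the required form. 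I would cite \cite[(3.9.13)]{NSW} for the explicit lower-central-series computation in the Demu\v{s}kin case to make the graded pieces concrete, and the vanishing of cohomology of finite groups of order prime to $\ell$ acting on pro-$\ell$ modules as the engine. The convergence and the verification that the limiting $\tau$ is bicontinuous are routine.
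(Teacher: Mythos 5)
Your closing ``clean way'' is, in substance, the paper's own proof: work inside the subgroup $\Aut(G,\pi)\subset\Aut(G)$ of automorphisms preserving $\ker\pi$, look at the restriction map $\alpha\colon\Aut(G,\pi)\to\Aut(H)$, note that $\ker\alpha$ is pro-$\ell$ while $\sigma_1,\sigma_2$ have the same image of order prime to $\ell$, and conclude that they are conjugate by an element of $\ker\alpha$. The long successive-approximation scheme along the lower central series is unnecessary, and it is exactly where your acknowledged loose ends sit (the bookkeeping of the two constraints on $\tau_i$, and whether the correction can be chosen compatibly with $\ker\pi$), so I will focus on the short argument.

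There is a genuine gap in that short argument: your stated reason that the automorphisms inducing the identity on $H$ form a pro-$\ell$ group --- ``being $\varprojlim$ of $\ell$-groups, as $\ker\pi$ is pro-$\ell$ and finitely generated as a normal subgroup'' --- is not a proof and, crucially, never uses the hypothesis $\ker\pi\subset[G,G]$. Without that hypothesis the claim is simply false: for $G=\Z_\ell\times\Z_\ell$ ($\ell$ odd), $H$ the first factor and $\ker\pi$ the second, the automorphism $(x,y)\mapsto(x,-y)$ preserves $\ker\pi$, is trivial on $H$, and has order $2$, so $\ker\alpha$ is not pro-$\ell$ and any coprimality argument collapses. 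The correct input is that $\ker\pi\subset[G,G]$ forces $\ker\pi$ to lie in the Frattini subgroup, so elements of $\ker\alpha$ act trivially on the Frattini quotient of $G$, and the classical theorem (the paper cites Gorenstein, Chap.~5, Thm.~3.5) then says $\ker\alpha$ is pro-$\ell$. Two smaller points to tighten: (i) since $\ker\alpha$ is in general a nonabelian pro-$\ell$ group, ``$H^1$ of that action vanishes'' is not literally the right statement; what you need is the (profinite) Schur--Zassenhaus conjugacy of complements in $\ker\alpha\cdot\langle\sigma_1\rangle$, noting that $\ord(\sigma_i)$ equals the order of the common image $\alpha(\sigma_i)$ because a pro-$\ell$ group has no nontrivial prime-to-$\ell$ torsion, so $\langle\sigma_1\rangle$ and $\langle\sigma_2\rangle$ are indeed complements to $\ker\alpha$ there; (ii) the problem is to conjugate $\sigma_1$ to $\sigma_2$ as automorphisms of all of $G$ by an element of $\ker\alpha$, not merely to conjugate their restrictions to $\ker\pi$, and one should check (via injectivity of $\alpha$ on the finite cyclic complement) that conjugating the subgroups yields $\tau^{-1}\sigma_2\tau=\sigma_1$ on the nose.
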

	
	\begin{proof}
		Consider the subgroup $\Aut(G, \pi)$ of $\Aut(G)$ consisting of all the automorphisms of $G$ preserving $\ker \pi$. There is a natural map $\alpha: \Aut(G, \pi) \to \Aut(H)$. Since $\ker \pi \subset [G,G]$, by \cite[Chap.5, Thm.3.5]{Gorenstein}, the kernel of $\alpha$ is a pro-$\ell$ group. The assumptions in the lemma imply that $\alpha(\sigma_1)$ equals $\alpha(\sigma_2)$ and has order prime to $\ell$. Then by the Schur--Zassenhaus theorem, $\sigma_1$ and $\sigma_2$ are conjugate in $\Aut(G, \pi)$ (and hence in $\Aut(G)$) by an element in $\ker \alpha$.
	\end{proof}

	\begin{lemma}\label{lem:C2-Demu}
		Let $n$ be a positive integer and $\calG_n$ a Demu\v{s}kin group whose abelianization is isomorphic to $\Z_{\ell}^{2n}$. Then $\calG_n$ admits a Schur covering (unique up to isomorphism)
		\[
			1 \longrightarrow \Z_{\ell} \longrightarrow \widetilde{\calG}_n \overset{\pi_n}{\longrightarrow} \calG_n \longrightarrow 1.
		\]
		There exists a $\Z/2\Z$ action on $\widetilde{\calG}_n$ such that the action preserves and acts trivially on $\ker \pi_n$ and its induced action on $\calG_n^{ab}$ is taking inversion. Moreover, if $\sigma_1$ and $\sigma_2$ are two such $\Z/2\Z$ actions on $\widetilde{\calG}_n$, then there exists an automorphism $\tau$ of $\widetilde{\calG}_n$ such that $\sigma_1= \tau^{-1} \circ \sigma_2 \circ \tau$, and $\tau$ preserves $[\widetilde{\calG}_n, \widetilde{\calG}_n]$ and induces the trivial action on $\calG_{n}^{ab}$.
	\end{lemma}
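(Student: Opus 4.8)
The plan is to follow the same strategy already used for the pro-prime-to-$(p|\Gamma|)$ case in Lemma~\ref{lem:extension-whole} and Lemma~\ref{lem:tg-isom}, but now working entirely with the abstract Demu\v{s}kin group $\calG_n$, which has the advantage of carrying an explicit presentation. First I would establish the existence and uniqueness of the Schur covering. By the classification of Demu\v{s}kin groups, $\calG_n \simeq \langle x_1, \ldots, x_{2n} \mid [x_1,x_2]\cdots[x_{2n-1},x_{2n}]\rangle$, so $H_2(\calG_n, \Z) \simeq \Z_\ell$ (this is the pro-$\ell$ analogue of the computation in the proof of Lemma~\ref{lem:Galois-H2}, using that a Demu\v{s}kin group acting trivially on $\mu_\ell$ has $H^2 \simeq \Z/\ell\Z$), and $\calG_n^{ab} \simeq \Z_\ell^{2n}$ is free. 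Taking $F$ free pro-$\ell$ on $x_1,\ldots,x_{2n}$ and $R$ the closed normal subgroup generated by the relator, the sequence $1 \to R/[F,R] \to F/[F,R] \to \calG_n \to 1$ is a stem extension with kernel $R/[F,R] \simeq \Z_\ell$ (this is exactly \cite[Chap.11, Thm.2.3]{GroupRep} together with $H_2(\calG_n,\Z)\simeq\Z_\ell$), hence a Schur covering; uniqueness up to isomorphism follows because $\Ext^1_\Z(\calG_n^{ab}, \Z_\ell) = \Ext^1_\Z(\Z_\ell^{2n}, \Z_\ell) = 0$, exactly as in the proof of Lemma~\ref{lem:stem-ell-K}. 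Set $\widetilde{\calG}_n := F/[F,R]$ and $\pi_n$ the quotient map.

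Next I would construct the $\Z/2\Z$-action. On $\calG_n$ itself there is an automorphism $\iota$ of order $2$ sending $x_{2i-1} \mapsto x_{2i-1}^{-1}$ and $x_{2i} \mapsto x_{2i}^{-1}$ after possibly conjugating, or more cleanly one exhibits directly an order-$2$ automorphism inducing inversion on $\calG_n^{ab}$: since inversion on $\Z_\ell^{2n}$ preserves the symplectic form defined by the relator (it multiplies it by $(-1)^2 = 1$), one checks it lifts to an automorphism of $\calG_n$ of order $2$ by the standard argument that automorphisms of the abelianization respecting the relator-class lift, combined with the fact that the ambiguity in the lift is a pro-$\ell$ group while the automorphism we want has order prime to $\ell$, so Schur--Zassenhaus provides a lift of order exactly $2$ — this is the same input as Lemma~\ref{lem:p-gp-aut} applied to $\calG_n \twoheadrightarrow \calG_n^{ab}$ (note $\ker \subset [\calG_n,\calG_n]$). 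Then, to push this action up to $\widetilde{\calG}_n$: the automorphism of $\calG_n$ acts on $H_2(\calG_n,\Z) \simeq \ker\pi_n$, and since inversion on $\calG_n^{ab}$ induces $(-1)\otimes(-1) = +1$ on the degree-$2$ part, it acts trivially on $\ker\pi_n$; functoriality of the Schur covering (any automorphism of $\calG_n$ lifts to $\widetilde{\calG}_n$, because $\widetilde{\calG}_n$ is a stem extension and such lifts exist by \cite[Chap.11]{GroupRep}, with the lift unique up to a homomorphism $\calG_n \to \ker\pi_n$ which vanishes on commutators) gives a lift $\widetilde\sigma$, and again the ambiguity in choosing the lift is killed by Schur--Zassenhaus so that $\widetilde\sigma$ can be chosen of order $2$; its induced action on $\ker\pi_n$ is trivial and on $\calG_n^{ab}$ is inversion, as required.

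Finally, the uniqueness-up-to-conjugation statement is a direct application of Lemma~\ref{lem:p-gp-aut} with $G = \widetilde{\calG}_n$, $H = \calG_n^{ab}$, and $\pi$ the composite $\widetilde{\calG}_n \twoheadrightarrow \calG_n \twoheadrightarrow \calG_n^{ab}$: two actions $\sigma_1,\sigma_2$ of order $2$ (prime to $\ell$) preserving $\ker\pi = [\widetilde{\calG}_n, \widetilde{\calG}_n]$ (it contains $\ker\pi_n$ since the extension is stem, and maps onto $[\calG_n,\calG_n]$) and inducing the same automorphism (inversion) on $\calG_n^{ab}$ are conjugate by some $\tau \in \Aut(\widetilde{\calG}_n)$ preserving $[\widetilde{\calG}_n,\widetilde{\calG}_n]$ and inducing the trivial action on $\calG_n^{ab}$. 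The main obstacle I anticipate is not any single step but verifying carefully that the order-$2$ lift exists at \emph{each} stage without inflating the order: one must be disciplined about invoking Schur--Zassenhaus in the form that the relevant automorphism groups (of $\calG_n$ relative to $\pi$, and of $\widetilde{\calG}_n$ relative to $\pi$) have a pro-$\ell$ kernel to the map to $\Aut(\calG_n^{ab})$ — this is where \cite[Chap.5, Thm.3.5]{Gorenstein} (as used in Lemma~\ref{lem:p-gp-aut}) does the work, but it must be applied to the right surjection, and one should double-check that $\ker\pi_n \subset [\widetilde{\calG}_n,\widetilde{\calG}_n]$ and $\ker(\widetilde{\calG}_n \to \calG_n^{ab}) \subset [\widetilde{\calG}_n, \widetilde{\calG}_n]$ so that the hypothesis of the cited theorem is genuinely met.
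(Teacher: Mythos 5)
Your treatment of the Schur covering (existence via $F/[F,R]$, uniqueness from $\Ext^1_{\Z}(\Z_\ell^{2n},\Z_\ell)=0$) and of the conjugacy statement (Lemma~\ref{lem:p-gp-aut} applied to $\widetilde{\calG}_n \twoheadrightarrow \calG_n^{ab}$, whose kernel is $[\widetilde{\calG}_n,\widetilde{\calG}_n]$ because the covering is stem) is correct and coincides with the paper's argument. The gap is in the heart of the lemma: the \emph{existence} of an order-$2$ automorphism of $\calG_n$ (and then of $\widetilde{\calG}_n$) inducing inversion on $\calG_n^{ab}$. Your first suggestion, $x_i \mapsto x_i^{-1}$ on the standard presentation with relator $[x_1,x_2]\cdots[x_{2n-1},x_{2n}]$, works for $n=1$ (where $[x_1^{-1},x_2^{-1}]$ is a cyclic permutation, hence a conjugate, of $[x_1,x_2]$), but for $n\ge 2$ the image of the relator is a product of conjugates of the individual commutators $[x_{2i-1},x_{2i}]$, which is not visibly in the normal closure $R$ of the single relator; the genuine involution here is the hyperelliptic one, which is not this naive map, so "after possibly conjugating" does not rescue it. Your fallback — "automorphisms of the abelianization respecting the relator-class lift to $\Aut(\calG_n)$" — is exactly the nontrivial point: it is true for Demu\v{s}kin groups, but it requires a successive-approximation argument of Labute type (of the kind carried out in \S\ref{ss:Labute}) or a precise citation, and you neither prove it nor cite it; asserting it as "the standard argument" leaves the key step unestablished. (Your remaining reductions — Schur--Zassenhaus via \cite[Chap.5, Thm.3.5]{Gorenstein} to fix the order, functorial lifting to $\widetilde{\calG}_n$, and triviality of the action on $\ker\pi_n$ because cup products generate $H^2$ of a Demu\v{s}kin group — are fine once an order-$2$ automorphism inducing inversion exists.)

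The paper sidesteps the lifting problem entirely by a change of presentation: it shows that $\langle x_1,\ldots,x_{2n}\mid x_1^{-1}x_2^{-1}\cdots x_{2n}^{-1}x_1x_2\cdots x_{2n}\rangle$ is also a Demu\v{s}kin group with abelianization $\Z_\ell^{2n}$ (by checking the relator is congruent to $\prod_{i<j}[x_i,x_j]$ mod $F^{(3)}$, so the cup product is non-degenerate), and for this relator the literal inversion $\sigma(x_i)=x_i^{-1}$ on the free group sends the relator to an explicit conjugate of itself. Hence $\sigma$ preserves $R$ and $[F,R]$, descends to an automorphism of $\widetilde{\calG}_n=F/[F,R]$ of order dividing $2$ on the nose, and acts trivially on the central kernel $R/[F,R]$ because conjugation does. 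If you want to keep your route, you would need to actually prove the lifting statement (e.g.\ by the successive-approximation method of \S\ref{ss:Labute}); otherwise the paper's explicit presentation is the cheaper fix.
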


	\begin{proof}
		Consider the pro-$\ell$ group defined by the following presentation
		\begin{equation}\label{eq:pres-Demu-Gamma}
			G:=\langle x_1, \cdots, x_{2n} \mid x_1^{-1} x_2^{-1}\cdots x_{2n}^{-1} x_1 x_2 \cdots x_{2n} \rangle.
		\end{equation}
		Let $F$ denote the free pro-$\ell$ generated by $x_1, \cdots, x_{2n}$, and let $R$ denote the closed normal subgroup of $F$ generated by $x_1^{-1}\cdots x_{2n}^{-1}x_1 \cdots x_{2n}$.
		Note that 
		\[
			x_1^{-1}\cdots x_{2n}^{-1} x_1 \cdots x_{2n} = [x_1, x_2][x_1x_2, x_{2n}\cdots x_3] x_3^{-1}\cdots x_{2n}^{-1} x_3 \cdots x_{2n},
		\]
		\[
			\text{and } [x_1x_2, x_{2n}\cdots x_3] \equiv [x_1, x_3] \cdots [x_1, x_{2n}] [x_2, x_3] \cdots [x_2, x_{2n}] \mod F^{(3)}. 
		\]
		By repeating this process, we have $x_1^{-1}\cdots x_{2n}^{-1} x_1 \cdots x_{2n} \equiv \prod_{1\leq i < j \leq 2n} [x_i, x_j] \mod F^{(3)}$. So by \cite[Prop.3.9.13(ii)]{NSW}, the cup product $H^1(G, \Z_\ell) \times H^1(G, \Z_\ell) \to H^2(G, \Z_\ell)$ is non-degenerate. Moreover, $G^{ab}$ is free of rank $2n$. So $G$ is a Demu\v{s}kin group whose abelianization is isomorphic to $\Z_{\ell}^{2n}$ and hence is isomorphic to $\calG_n$, and we will use this presentation \eqref{eq:pres-Demu-Gamma} of $\calG_n$ in the rest of the proof. 
		
		The existence and uniqueness of the Schur covering $\pi_n$ follow immediately from $H_2(G, \Z)\simeq \Z_\ell$ and $\Ext_{\Z}^1(G^{ab}, \Z_{\ell})=0$. We define $\sigma \in \Aut(F)$ by $\sigma(x_i)=x_i^{-1}$. We will show next that $\sigma$ induces a $\Z/2\Z$ action on $\widetilde{\calG}_n$ as described in the lemma. First, because 
		\begin{equation}\label{eq:sigma-rel}
			\sigma(x_1^{-1} \cdots x_{2n}^{-1} x_1 \cdots x_{2n})=x_1\cdots x_{2n} x_1^{-1} \cdots x_{2n}^{-1}=(x_1^{-1}\cdots x_{2n}^{-1} x_1\cdots x_{2n})^{x_{2n}^{-1} \cdots x_{1}^{-1}},
		\end{equation}
		the automorphism $\sigma$ stabilizes $R$. So $\sigma$ induces an automorphism $\overline{\sigma}$ of $G$, and $\overline{\sigma}$ acts on $G^{ab}$ by inversion. The presentation of $G$ defines a stem extension of $G$ with kernel isomorphic to $\Z_{\ell}$
		\[
			1 \longrightarrow R/[F,R] \longrightarrow F/[F,R] \longrightarrow G \longrightarrow 1,
		\]
		where the image of $x_1^{-1}\cdots x_{2n}^{-1}x_1\cdots x_{2n}$ generates the kernel $R/[F,R]$.
		So \eqref{eq:sigma-rel} shows that $\sigma$ preserves $[F,R]$, and hence it induces an automorphism of $F/[F,R]$, and moreover this automorphism acts trivially on $R/[F,R]$. So we proved the existence of the $\Z/2\Z$ action on $\widetilde{\calG}_n$ as described in the lemma.
		
		Suppose that $\sigma_1$ and $\sigma_2$ are two $\Z/2\Z$ actions on $\widetilde{\calG}_n$ as described in the lemma.
		Then since they induce the same automorphism on $\calG_n^{ab}$, the last statement of the lemma follows by Lemma~\ref{lem:p-gp-aut}.
	\end{proof}
	
	\begin{definition}[Definition of the random group]\label{def:random-group}
		Let $n$ be a positive integer, $\calG_n$, $\widetilde{\calG}_n$, $\pi_n$ and the $\Gamma=\Z/2\Z$ action on them be as defined in Lemma~\ref{lem:C2-Demu}, and $q$ be a positive integer such that $\ell \nmid q$. We fix a generator $\xi_n$ of $\ker \pi_n$. 
		Let $\Aut(\widetilde{\calG}_n\rtimes \Gamma, \pi_n; q)$ denote the set of automorphisms of $\widetilde{\calG}_n\rtimes \Gamma$ that preserves and acts as multiplication by $q$ on $\ker \pi_n$. 
		Let $\phi$ be a random element of $\Aut(\widetilde{\calG}_n \rtimes \Gamma, \pi_n ; q)$ with respect to the ``Haar'' measure. We let $1-\phi$ denote the quotient map from $\widetilde{\calG}_n \rtimes \Gamma$ to its maximal $\Gamma$-equivariant quotient on which the induced $\phi$ action is trivial.
		 Then we define the random group $X_{q,n}$ as
		 \begin{equation}
		 	X_{q, n} := (1-\phi)(\widetilde{\calG}_n \rtimes \Gamma) / (1-\phi)(\ker \pi_n).
		 \end{equation}
		 Moreover, we fix a homomorphic splitting of $\widetilde{\calG}_n \rtimes \Gamma \to \Gamma$ and hence an action of $\Gamma$ (by conjugation) on $\widetilde{\calG}_n$. Then $\ker (X_{q,n} \to \Gamma)$ naturally obtains an action of $\Gamma$. We define the following random $\Gamma$-group
		 \[
		 	Y_{q,n} := \ker (X_{q,n} \to \Gamma).
		 \] 
	\end{definition}
	
	\begin{remark}
		\begin{enumerate}
			\item We need to explain what the ``Haar'' measure is in the definition. Since $\widetilde{\calG}_n\rtimes \Gamma$ is a finitely generated profinite group, $\Aut(\widetilde{\calG}_n\rtimes \Gamma)$ is profinite by \cite[Cor.4.4.4]{Ribes-Zalesskii}. The subgroup $\Aut(\widetilde{\calG}_n \rtimes \Gamma, \pi_n; 1)$ consisting of all automorphisms that stabilizes and acts trivially on $\ker \pi_n$ is a closed subgroup of $\Aut(\widetilde{\calG}_n \rtimes \Gamma)$, so it is a profinite group, and hence has a Haar measure. Then since $\Aut(\widetilde{\calG}_n\rtimes \Gamma, \pi_n; q)$ is a coset of $\Aut(\widetilde{\calG}_n\rtimes \Gamma, \pi_n; 1)$ in $\Aut(\widetilde{\calG}_n \rtimes \Gamma)$, it obtains a measure from the measure on $\Aut(\widetilde{\calG}_n \rtimes \Gamma, \pi_n; 1)$, which is the ``Haar'' measure in the definition. 
			\item Since $\ell$ is odd, $\widetilde{\calG}_n$ is a characteristic subgroup of $\widetilde{\calG}_n \rtimes \Gamma$ and then $\phi$ preserves $\widetilde{\calG}_n$. Any action on $\Z/2\Z$ is trivial, so $\ker (1-\phi)$ is contained in $\widetilde{\calG}_n$ for any $\phi$.
		 \end{enumerate}
	\end{remark}

\subsection{Computing the moment of the random group}\label{ss:moment-random-group}

	\begin{lemma}\label{lem:Labute}
		Consider the Schur covering $1 \to \Z_{\ell} \to \widetilde{\calG}_n \overset{\pi_n}{\longrightarrow} \calG_n \to 1$. Let $\xi_n$ be a generator of $\ker \pi_n$. Let $H$ be a pro-$\ell$ group, $\varpi: \widetilde{H} \to H$ a Schur covering of $H$, and $\rho: \calG_n \to H$ a group surjection. 
		\begin{enumerate}
			\item \label{item:Labute-1} Then $\rho$ can be lifted to a surjection $\widetilde{\rho}: \widetilde{\calG}_n \to \widetilde{H}$ such that $\varpi\circ \widetilde{\rho}=\rho \circ \pi_n$. Moreover, the image $\widetilde{\rho}(\xi_n)$ in $\widetilde{H}$ does not depend on the choice of the lift $\widetilde{\rho}$.
			\item \label{item:Labute-2} Given $\rho_1, \rho_2 \in \Sur(\calG_n, H)$, let $\widetilde{\rho}_1, \widetilde{\rho}_2 \in \Sur(\widetilde{\calG}_n, \widetilde{H})$ be their lifts respectively. Assume that $\widetilde{\rho}_1(\xi_n)=\widetilde{\rho}_2(\xi_n)$. Then for any $q\in \Z_{\ell}^{\times}$ with $\ord(\widetilde{\rho}_1(\xi_n))\mid q-1$, there exists $\widetilde{\varphi} \in \Aut(\widetilde{\calG}_n)$ such that 
			\begin{enumerate}
				\item \label{item:L-c-2} $\widetilde{\varphi}$ preserves $\ker \pi_n$ and induces $\varphi \in \Aut(\calG_n)$ such that $\rho_1=\rho_2 \circ \varphi$, and
				\item \label{item:L-c-3} $\widetilde{\varphi}(\xi_n)=\xi_n^q$.
			\end{enumerate}
		\end{enumerate}
	\end{lemma}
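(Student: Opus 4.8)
The plan is to follow the strategy already used for Proposition~\ref{prop:etale-lift}\eqref{item:etale-lift-1}--\eqref{item:etale-lift-2} to get part \eqref{item:Labute-1}, and then to use the method of successive approximation along the lower central series of the free pro-$\ell$ group (as in Labute's classification of Demu\v{s}kin groups) for part \eqref{item:Labute-2}. First, for \eqref{item:Labute-1}: fix a minimal pro-$\ell$ presentation $1\to R\to F\overset{\psi}\to \calG_n\to 1$ with $F$ free pro-$\ell$ on $2n$ generators. Since $F$ is free, $\rho\circ\psi: F\to H$ lifts through the Schur covering $\varpi:\widetilde H\to H$ to a homomorphism $F\to\widetilde H$; because $F^{ab}\simeq\calG_n^{ab}$ and $\widetilde H^{ab}\simeq H^{ab}$ (the covering is stem), this lift is surjective by Lemma~\ref{lem:app-stem}. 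It kills $[F,R]$ for degree reasons (any relator lies in $[F,F]$, and the Schur multiplier of $\widetilde H$ vanishes, forcing the lift to factor through $F/[F,R]\cong\widetilde{\calG}_n$), giving $\widetilde\rho:\widetilde{\calG}_n\to\widetilde H$ with $\varpi\circ\widetilde\rho=\rho\circ\pi_n$. Independence of $\widetilde\rho(\xi_n)$ from the choice of lift is the same argument as in the Remark after Lemma~\ref{lem:stem-ell-K}: two lifts differ by a homomorphism $\widetilde{\calG}_n\to\ker\varpi$, which annihilates $\xi_n\in[\widetilde{\calG}_n,\widetilde{\calG}_n]$.

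For part \eqref{item:Labute-2} I would argue as follows. Work with the presentation \eqref{eq:pres-Demu-Gamma}-type model $\calG_n=\langle x_1,\dots,x_{2n}\mid r\rangle$ with $r\equiv\prod_{i<j}[x_i,x_j]\bmod F^{(3)}$, so $\xi_n$ is the image of $r$ in $F/[F,R]\cong\widetilde{\calG}_n$. Lift $\rho_1,\rho_2$ to $F\to H$, then to $\widetilde\rho_1,\widetilde\rho_2:F\to\widetilde H$. The hypothesis $\widetilde\rho_1(\xi_n)=\widetilde\rho_2(\xi_n)$ together with the surjectivity (both induce isomorphisms on $H_2$-quotients since they're lifts of surjections) says that the two induced pairings/relator-data agree. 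Now I build $\widetilde\varphi\in\Aut(\widetilde{\calG}_n)$ by successive approximation: construct a sequence of automorphisms $\tau_k\in\Aut(F)$ with $\tau_0=\mathrm{id}$, such that $\rho_1\equiv\rho_2\circ\bar\tau_k$ modulo the image of $F^{(k)}$ and $\tau_k(r)\equiv r\bmod F^{(k)}$ (up to conjugation), refining $\tau_k$ to $\tau_{k+1}$ at each step by multiplying by an inner-type or ``elementary'' automorphism supported in degree $k$ — this is exactly the mechanism in \cite[proof of Thm.~3.9.11]{NSW}/Labute. The obstruction to refining at stage $k$ lives in $F^{(k)}/F^{(k+1)}\otimes(\text{coefficients})$, and it vanishes because $\rho_1$ and $\rho_2$ already agree on $H_2$ after the $\xi_n$ normalization and because $\calG_n$ is Demu\v{s}kin with nondegenerate cup product (so $F^{(k)}/F^{(k+1)}$ is ``filled'' by the relator in the right way). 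Taking the inverse limit $\widetilde\varphi=\lim\tau_k$ gives an automorphism of $F$ preserving $R$ and $[F,R]$, hence an automorphism of $\widetilde{\calG}_n$ preserving $\ker\pi_n$ and inducing $\varphi\in\Aut(\calG_n)$ with $\rho_1=\rho_2\circ\varphi$, proving \eqref{item:L-c-2}.

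For \eqref{item:L-c-3}: the automorphism $\widetilde\varphi$ just built acts on $\ker\pi_n=\Z_\ell\xi_n$ by some scalar $u\in\Z_\ell^\times$, and a priori $u=1$ from the construction (the $\tau_k$ fix $r$ modulo higher terms). To arrange $\widetilde\varphi(\xi_n)=\xi_n^q$, I compose with an extra automorphism: since $\ord(\widetilde\rho_1(\xi_n))\mid q-1$, the element $\xi_n^{q}\xi_n^{-1}=\xi_n^{q-1}$ maps to the identity under $\widetilde\rho_1$, so multiplying the partial automorphisms by an ``$\alpha$-powering'' automorphism (one that rescales the relator, analogous to the $\ast$-action in \eqref{eq:alpha-K} and the map $W_\alpha$ of \S\ref{ss:EVWW}) changes the scalar on $\ker\pi_n$ from $1$ to $q$ while keeping $\rho_1=\rho_2\circ\varphi$ intact because the discrepancy is killed by both $\rho_1$ and $\rho_2$. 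Concretely: the $q$-power automorphism $x_i\mapsto$ (a word conjugate to $x_i$, chosen so the relator gets raised to the $q$-th power) composed with $\widetilde\varphi$ does the job; this is where the hypothesis $\ord(\widetilde\rho_1(\xi_n))\mid q-1$ is essential. The main obstacle I anticipate is making the successive-approximation step fully rigorous — in particular verifying that at each stage the cohomological obstruction to refining $\tau_k$ genuinely vanishes, which requires unwinding the Demu\v{s}kin relation degree by degree and using that the two surjections carry the same ``lifting data'' $\widetilde\rho_1(\xi_n)=\widetilde\rho_2(\xi_n)$; once that is in place, the passage to the limit and the final $q$-twist are routine.
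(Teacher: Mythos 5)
Your treatment of part \eqref{item:Labute-1} is essentially the paper's: lift through a minimal presentation $1\to R\to F_{2n}\to\calG_n\to 1$, use Lemma~\ref{lem:app-stem} for surjectivity, and note that two lifts differ by a central character that kills $\xi_n\in[\widetilde{\calG}_n,\widetilde{\calG}_n]$. (Your appeal to vanishing of the Schur multiplier of $\widetilde H$ is unnecessary: since $\kappa(R)=1$, the lift sends $R$ into the central $\ker\varpi$, so it kills $[F_{2n},R]$ directly.) The problem is in part \eqref{item:Labute-2}, where the entire content of the lemma lives and where your obstruction analysis is not correct as stated.

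You assert that the obstruction to refining $\tau_k$ at stage $k$ ``vanishes because $\rho_1$ and $\rho_2$ already agree on $H_2$ \dots and because $\calG_n$ is Demu\v{s}kin with nondegenerate cup product (so $F^{(k)}/F^{(k+1)}$ is filled by the relator in the right way).'' This is not the mechanism, and nondegeneracy of the cup product (a degree-$2$ statement) does not control the higher graded pieces. The actual constraint is that the correction elements $t_i$ used to pass from $\mathbf{y}^{\{j\}}$ to $\mathbf{y}^{\{j+1\}}$ must lie in $F_{2n}^{\{j\}}\cap\ker\kappa_2$ — otherwise you destroy the already-achieved identity $\kappa_1(x_i)=\kappa_2(y_i^{\{j\}})$. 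Consequently the discrepancy $D=\lambda(\mathbf{x})^{-q}\lambda(\mathbf{y}^{\{j\}})$ must be shown to lie in $F_{2n}^{\{j+1\}}\cap[N,F_{2n}]$ with $N=\ker\kappa_2$; this uses $\widetilde\kappa_2(\lambda(\mathbf{y}^{\{j\}}))=\widetilde\rho_1(\xi_n)=\widetilde\rho_2(\xi_n)^q=\widetilde\kappa_2(\lambda(\mathbf{x})^q)$, i.e.\ the hypotheses enter at \emph{every} stage, not only in a final normalization. One then needs a genuine Lie-algebra computation in $\gr(F_{2n})$ (using $[a,c^{\ell}]=[a^{\ell},c]$ and the Jacobi identity) to show that the image of $F_{2n}^{\{j+1\}}\cap[N,F_{2n}]$ in $\gr_{j+1}(F_{2n})$ is spanned by brackets $[a,b]$ with $a\in\gr_j$ the image of an element of $N$ and $b\in\gr_1$, so that $D$ is realized by $d_j(t_1,\ldots,t_{2n})$ with admissible $t_i$. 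None of this is supplied or replaced by your argument. The base case is also missing its two essential ingredients: the extension of a basis of $\kappa_i^{*}H^1(H,\F_\ell)$ to a symplectic (resp.\ $q$-symplectic) basis of $H^1(F_{2n},\F_\ell)$, and the Gasch\"utz-type lemma \cite[Lem.2.1]{Lubotzky} needed to realize the resulting Frattini-quotient automorphism by an actual generating tuple compatible with $\kappa_1$ and $\kappa_2$. Finally, your two-step plan (first build $\theta$ fixing the relator, then compose with a ``$q$-powering'' automorphism) does not save work: the second step is precisely the claim with $\rho_1=\rho_2$ and requires the same successive approximation, so deferring the $q$-twist to the end leaves both halves unproved.
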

	
	We will give the proof of this lemma in \S\ref{ss:Labute}, which heavily uses the method in \cite{Labute} of successive approximation on the filtration of lower $\ell$-central series of pro-$\ell$ groups.

	Let $H$ be a finite $\ell$-group with an action of $\Gamma$, and let $\pi \in\Sur(X_{q,n}, H \rtimes \Gamma)$ (or $\pi \in \Sur_{\Gamma}(Y_{q,n}, H)$ when we fix the action of $\Gamma$ on $\calG_n$). Consider the composition map $\calG_n \rtimes \Gamma \overset{1-\phi}{\longrightarrow} X_{q,n} \overset{\pi}{\longrightarrow} H \rtimes \Gamma$ and let $\varpi: \widetilde{H} \rtimes \Gamma \to H \rtimes \Gamma$ be an $\ell$-Schur covering. By Lemma~\ref{lem:comp-schur}, there is a surjection $\widetilde{\calG}_n\rtimes \Gamma \to \widetilde{H} \rtimes \Gamma$ satisfing the following commutative diagram
	\begin{equation*}
	\begin{tikzcd}
		1 \arrow{r} & \Z_{\ell} \arrow{r} \arrow{d} & \widetilde{\calG}_n \rtimes \Gamma \arrow["\pi_n"]{r} \arrow{d} & \calG_n \rtimes \Gamma \arrow[two heads, "\pi \circ (1-\phi)"]{d} \arrow{r} & 1 \\
		1 \arrow {r} & \ker \varpi \arrow{r} & \widetilde{H} \rtimes \Gamma \arrow["\varpi"]{r} & H \rtimes \Gamma \arrow{r} & 1,
	\end{tikzcd}
	\end{equation*}
	such that the first vertical arrow in the diagram is the composition of
	\[\begin{tikzcd}
		\Z_{\ell} \arrow["d_{\pi_n}^{-1}", "\sim"']{r} & H_2(\calG_n \rtimes \Gamma, \Z)(\ell) \arrow["\coinf"]{r} & H_2(H \rtimes \Gamma, \Z)(\ell) \arrow["d_{\varpi}", "\sim"']{r} & \ker \varpi,
	\end{tikzcd}\]
	where the middle map $\coinf$ is induced by the quotient map $\pi \circ(1-\phi)$. We denote the composition of the first two maps by $\pi_\dagger: \Z_{\ell} \to H_2(H \rtimes \Gamma, \Z)(\ell)$. Because the sequence is $\phi$-equivariant, $\pi_{\dagger}$ factors through $\Z_{\ell} \twoheadrightarrow \Z_{\ell}/(q-1)\Z_{\ell}$.  
	
	\begin{theorem}\label{thm:moment}
		Let $H$ be a finite $\ell$-group with an action of $\Gamma=\Z/2\Z$ that acts on $H^{ab}$ as inversion. 
		Let $n$ be a sufficiently large integer such that there exists a $\Gamma$-equivariant surjection from $\calG_n$ to $H$.
		Let $\delta$ be a group homomorphism $\Z_{\ell} \to H_2(H \rtimes \Gamma, \Z)(\ell)[q-1]$. 
		Then
		\[
			\EE\left( \#\left\{ \pi \in \Sur(X_{q,n}, H \rtimes \Gamma) \mid \pi_{\dagger}=\delta  \right\} \right) =1
		\]
		Also, if we fix a homomorphic splitting of $\widetilde{\calG}_n \rtimes \Gamma \to \Gamma$ (that is, we fix the action of $\Gamma$ on $\widetilde{\calG}_n$), then 
		\[
			\EE \left( \# \left\{ \pi \in \Sur_{\Gamma}(Y_{q, n}, H) \mid \pi_\dagger = \delta \right\} \right) = \frac{1}{[H : H^{\Gamma}]}.
		\]
	\end{theorem}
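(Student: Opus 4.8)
The plan is to reduce the theorem to a counting statement about orbits of $\Sur_{\Gamma}(\calG_n, H)$ under the group $\Aut(\widetilde{\calG}_n \rtimes \Gamma, \pi_n; q)$, using Lemma~\ref{lem:Labute} as the main engine. First I would fix an $\ell$-Schur covering $\varpi: \widetilde H \rtimes \Gamma \to H \rtimes \Gamma$ once and for all, so that for every $\Gamma$-equivariant surjection $\rho: \calG_n \to H$ the induced coinflation $\rho_\dagger: \Z_\ell \to H_2(H \rtimes \Gamma, \Z)(\ell)$ is well-defined, and by Lemma~\ref{lem:comp-schur}~(applied to $\alpha = \rho$) there is a lift $\widetilde\rho: \widetilde{\calG}_n \to \widetilde H \rtimes \Gamma$ with $\widetilde\rho(\xi_n)$ computing $\rho_\dagger$ up to the fixed identification $d_\varpi$. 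The key point, which I would extract from Lemma~\ref{lem:Labute}~(\ref{item:Labute-2}), is that two $\Gamma$-equivariant surjections $\rho_1, \rho_2 \in \Sur_\Gamma(\calG_n, H)$ lie in the same $\Aut(\widetilde{\calG}_n \rtimes \Gamma, \pi_n; q)$-orbit (acting on $\calG_n$ through $\pi_n$) \emph{if and only if} $\rho_{1,\dagger} = \rho_{2,\dagger}$ and $\ord(\im \rho_{1,\dagger}) \mid q-1$; here one direction uses that $\phi$-equivariance forces $\pi_\dagger$ to factor through $\Z_\ell/(q-1)\Z_\ell$, and the other uses Lemma~\ref{lem:Labute}~(\ref{item:Labute-2}) to produce the conjugating automorphism $\widetilde\varphi$ with $\widetilde\varphi(\xi_n) = \xi_n^q$, together with Lemma~\ref{lem:p-gp-aut} / Lemma~\ref{lem:C2-Demu} to make the automorphism respect the $\Gamma$-structure.

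Second, I would translate the expectation into a sum over orbits. For a random $\phi \in \Aut(\widetilde{\calG}_n \rtimes \Gamma, \pi_n; q)$, a surjection $\pi \in \Sur_\Gamma(Y_{q,n}, H)$ is the same datum as a $\Gamma$-equivariant surjection $\calG_n \to H$ that factors through $(1-\phi)$, i.e. one whose kernel contains $\ker(1-\phi) \cdot (1-\phi)(\ker\pi_n)$; precisely, $\Sur_\Gamma(Y_{q,n}, H)$ is in bijection with $\{\rho \in \Sur_\Gamma(\calG_n, H) : \rho \text{ factors through } X_{q,n}\}$ with the matching $\pi$-invariant. So
\[
\EE\bigl(\#\{\pi \in \Sur_\Gamma(Y_{q,n}, H) : \pi_\dagger = \delta\}\bigr)
= \sum_{\substack{\rho \in \Sur_\Gamma(\calG_n, H) \\ \rho_\dagger = \delta}} \Prob\bigl(\rho \text{ factors through } X_{q,n}\bigr).
\]
Because the ``Haar'' measure on the coset $\Aut(\widetilde{\calG}_n \rtimes \Gamma, \pi_n; q)$ is invariant, the probability $\Prob(\rho \text{ factors through } X_{q,n})$ depends only on the orbit of $\rho$, and summing over a full orbit $O$ one gets that $\sum_{\rho \in O} \Prob(\rho \text{ factors through } X_{q,n})$ equals the expected number of $\rho' \in O$ that factor through $X_{q,n}$, which by the transitive action of $\Aut(\widetilde{\calG}_n \rtimes \Gamma, \pi_n; q)$ on $O$ and a standard averaging (a ``random element of a coset kills a random target'' computation, as in the abelian case $\coker(1-\phi)$) equals exactly $1$. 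Hence the total equals the number of orbits $O$ with $\rho_\dagger = \delta$ on $O$; by the first paragraph, when $\ord(\im\delta)\mid q-1$ there is exactly one such orbit inside $\Sur_\Gamma(\calG_n, H)$ (nonempty since $n$ is large enough that a $\Gamma$-equivariant surjection exists, and the realizability of the prescribed $\delta$ follows from Lemma~\ref{lem:comp-schur} as every homomorphism $\Z_\ell \to H_2(H\rtimes\Gamma,\Z)(\ell)[q-1]$ arises from some $\rho$), and when $\ord(\im\delta)\nmid q-1$ there are none.

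Third, I would account for the $\Gamma$-equivariance bookkeeping: a $\Gamma$-equivariant surjection $X_{q,n} \to H \rtimes \Gamma$ restricts to a $\Gamma$-equivariant surjection $Y_{q,n} \to H$ after fixing the splitting, and conversely each $\pi \in \Sur_\Gamma(Y_{q,n}, H)$ together with the fixed splittings extends to one of $X_{q,n} \to H \rtimes \Gamma$; the fibers of $\Sur(X_{q,n}, H\rtimes\Gamma) \to \Sur_\Gamma(Y_{q,n}, H)$ have size $[H : H^\Gamma]$ by the usual count of splittings differing by a coboundary (same computation as in Lemma~\ref{lem:Hur-Sur}), which gives the factor $\tfrac{1}{[H:H^\Gamma]}$ in the second formula once the first formula (value $1$) is established. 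The main obstacle is the orbit-counting input of the first paragraph: showing that \emph{distinct} values of $\rho_\dagger$ force \emph{distinct} orbits (the ``only if'' direction) requires that the full $\Aut(\widetilde{\calG}_n \rtimes \Gamma, \pi_n; q)$-action on $\calG_n$ changes $\widetilde\rho(\xi_n)$ only within its $\Z_\ell^\times$-orbit, and combined with $\phi$-equivariance this is what pins down the $\Z_\ell/(q-1)$-quotient; and the ``if'' direction is precisely the content of Lemma~\ref{lem:Labute}~(\ref{item:Labute-2}), whose proof (deferred to \S\ref{ss:Labute}) is the technically hardest step, relying on successive approximation along the lower $\ell$-central series as in \cite{Labute}. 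Everything else is formal manipulation of Haar measures on cosets of profinite automorphism groups.
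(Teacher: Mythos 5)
Your proposal follows essentially the same route as the paper's proof: rewrite the expectation as a sum, over surjections from $\calG_n\rtimes\Gamma$ with prescribed image of $\xi_n$ under the (unique) lift to the Schur cover, of the measure of the stabilizing automorphisms; show this set of surjections is a single orbit under $\Aut(\widetilde{\calG}_n\rtimes\Gamma,\pi_n;q)$ using Lemma~\ref{lem:Labute}; and obtain the factor $1/[H:H^{\Gamma}]$ by counting splittings of $H\rtimes\Gamma\to\Gamma$. The one place where the paper does noticeably more work than you indicate is in upgrading the automorphism produced by Lemma~\ref{lem:Labute} to a $\Gamma$-equivariant one that still sends $\xi_n\mapsto\xi_n^q$ (two applications of Lemma~\ref{lem:p-gp-aut} plus a separate verification that an automorphism preserving $\ker\pi_n$ and acting trivially on the abelianization must fix $\xi_n$), but you correctly identify the relevant tools, so this is a matter of detail rather than a gap.
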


	\begin{proof}
		For any $\pi \in \Sur(\calG_n \rtimes \Gamma, H \rtimes \Gamma)$, by Lemma~\ref{lem:comp-schur}, there exists $\widetilde{\pi} \in \Sur(\widetilde{\calG}_{n} \rtimes \Gamma, \widetilde{H} \rtimes \Gamma)$ such that $\pi \circ \pi_n = \varpi \circ \widetilde{\pi}$. 
		Let's prove that there is a unique such $\widetilde{\pi}$. 
		Let $S$ be the closed subgroup of $\widetilde{\calG}_{n} \rtimes \Gamma$ generated by all elements of order $2$. Then $S$ is normal because the set of all the order-$2$ elements are closed under conjugation. The quotient $\widetilde{\calG}_{n} \rtimes \Gamma/S$ is a $\Gamma$-equivariant  pro-$\ell$ quotient of $\widetilde{\calG}_{n} \rtimes \Gamma$ with trivial induced $\Gamma$ action. However, $\Gamma$ acts on $\widetilde{\calG}_n^{ab}$ as inversion and any non-trivial pro-$\ell$ group has non-trivial abelianization, so $\widetilde{\calG}_n$ does not have any non-trivial quotient on which the induced $\Gamma$-action is trivial. So we have $S=\widetilde{\calG}_{n} \rtimes \Gamma$, which implies that $\widetilde{\calG}_{n} \rtimes \Gamma$ is generated by order-2 elements. 
		Also, for each order-2 element $x$ of $\widetilde{\calG}_{n} \rtimes \Gamma$, we have that $\widetilde{\calG}_{n} \rtimes \Gamma$ is the product of $\widetilde{\calG}_n$ and $\langle x\rangle$, so $\pi \circ \pi_n(x)$ is non-trivial and hence has order 2.
		Each order-2 element of $H\rtimes \Gamma$ lifts to a unique order-2 element of $\widetilde{H} \rtimes \Gamma$ because $\widetilde{H}\to H$ is an $\ell$-Schur covering. Therefore, $\widetilde{\pi}$ must map each order-2 element $x$ of $\widetilde{\calG}_{n} \rtimes \Gamma$ to the unique lift of $\pi \circ \pi_n(x)$, which finishes the proof of the uniqueness of $\widetilde{\pi}$. So for each $\pi \in \Sur(\calG_n \rtimes \Gamma, H \rtimes \Gamma)$, we let $\widetilde{\pi}$ denote its unique lift $\Sur(\widetilde{\calG}_n \rtimes \Gamma, \widetilde{H} \rtimes \Gamma)$.
		
		Next, we prove the first equality in the theorem. Let $h=d_{\varpi} \circ \delta(\xi_n)$.
		Note that 
		\begin{eqnarray}
			&& \EE \left(\# \left\{ \pi \in \Sur(X_{q,n}, H\rtimes \Gamma) \mid \pi_{\dagger} = \delta \right\} \right) \nonumber\\
			&=& \sum_{\substack{\pi \in \Sur(\calG_n \rtimes \Gamma, H\rtimes \Gamma) \\ \text{s.t. } \widetilde{\pi}(\xi_n)=h}} \Prob ( \phi \in \Aut(\widetilde{\calG}_n \rtimes \Gamma, \pi_n ; q) \mid  \ker (1-\phi) \subseteq \ker \widetilde{\pi} ). \label{eq:EE}
		\end{eqnarray}
		The condition $\ker (1-\phi) \subseteq \ker \widetilde{\pi}$ is equivalent to $\widetilde{\pi} \circ \phi =\widetilde{\pi}$. 
		We let $\Sur(\calG_n\rtimes \Gamma, H\rtimes \Gamma; \xi_n\mapsto h)$ denote the set of $\pi$ as described under the sum in \eqref{eq:EE}, and let $\pi \in \Sur(\calG_n \rtimes \Gamma, H \rtimes \Gamma; \xi_n\mapsto h)$. Each $\phi \in \Aut(\widetilde{\calG}_n \rtimes \Gamma, \pi_n ; q)$ passes $\widetilde{\pi}$ to another surjection $\widetilde{\pi}\circ \phi \in \Sur(\widetilde{\calG}_n \rtimes \Gamma, \widetilde{H}\rtimes \Gamma)$, and the assumption $\ord(h)\mid q-1$ implies that $\widetilde{\pi}\circ \phi(\xi_n)=h$, so $\widetilde{\pi} \circ \phi$ induces a surjection in $\Sur(\calG_n \rtimes \Gamma, H \rtimes \Gamma; \xi_n\mapsto h)$. In other words, $\Aut(\widetilde{\calG}_n \rtimes \Gamma, \pi_n ; q)$ acts on the set $\Sur(\calG_n\rtimes \Gamma, H \rtimes \Gamma; \xi_n \mapsto h)$, and in particular, the stabilizer of $\pi$ by this action is exactly the set of $\phi\in \Aut(\widetilde{\calG}_n \rtimes \Gamma, \pi_n;q)$ such that $\ker(1-\phi)\subseteq \ker \widetilde{\pi}$. Thus, the probability in \eqref{eq:EE} is the inverse of the size of the orbit containing $\pi$. In the rest of the proof, we will prove that the set $\Sur(\calG_n \rtimes \Gamma, H \rtimes \Gamma; \xi_n \mapsto h)$ contains only one orbit. After proving that, it immediately follows that \eqref{eq:EE} equals 1, and hence we finish the proof of the theorem.

		Let $\pi_1, \pi_2 \in \Sur(\calG_n \rtimes \Gamma, H \rtimes \Gamma; \xi_n \mapsto h)$, and let $\widetilde{\pi}_1$ and $\widetilde{\pi}_2$ denote the lifts of $\pi_1$ and $\pi_2$ respectively. 
		Let $\rho_i$ (resp. $\widetilde{\rho}_i$) be the restriction of $\pi_i$ (resp. $\widetilde{\pi}_i$) onto $\calG_n$ (resp. $\widetilde{\calG}_n$) for $i=1, 2$. Then $\widetilde{\rho}_i(\xi_n) = \widetilde{\pi}_i(\xi_n)=h$, so by Lemma~\ref{lem:Labute}\eqref{item:Labute-2}, there exists $\widetilde{\varphi} \in \Aut(\widetilde{\calG}_n)$ satisfying \eqref{item:L-c-2} and \eqref{item:L-c-3}. This automorphism $\widetilde{\varphi}$ is not necessarily $\Gamma$-equivariant, however, we will show below that we can modify $\widetilde{\varphi}$ to make it $\Gamma$-equivariant. Let $\varphi\in\Aut(\calG_n)$ be as described in Lemma~\ref{lem:Labute}\eqref{item:L-c-2}. The automorphism $\varphi$ passes the action of $\gamma$ (the non-trivial element of $\Gamma$) on $\calG_n$ to an action of $^{\varphi}\gamma$ which is defined by
		\[
			^{\varphi} \gamma(g) = \varphi\circ \gamma \circ \varphi^{-1}(g) \quad \text{ for $g \in \calG_n$}.
		\]
		Because of $\rho_2 \circ \varphi= \rho_1$ and that $\rho_1, \rho_2$ extend to $\pi_1, \pi_2$, it follows that $\gamma$ and $^{\varphi}\gamma$ both preserve $\ker \rho_2$ and they induce the same action on $H$ via the quotient map $\rho_2$, where the latter is because for any $g\in \calG_n$
		\[
			\rho_2 \circ \varphi \circ \gamma \circ \varphi^{-1}(g)= \rho_1\circ \gamma \circ \varphi^{-1}(g)=\gamma \circ \rho_1 \circ \varphi^{-1}(x) = \gamma \circ \rho_2(x) = \rho_2 \circ \gamma(x).
		\]
		Also, the actions of $\gamma$ and $^{\varphi}\gamma$ induce the same action on $\calG_n^{ab}$ (which is taking inversion). So, we let $M:=[\calG_n, \calG_n] \cap  \ker \rho_2$, and we see that the actions of $\gamma$ and $^{\varphi}\gamma$ induce the same action on $\calG_n/M$. By Lemma~\ref{lem:p-gp-aut}, there exists $\tau \in \Aut(\calG_n)$ such that $^{\varphi} \gamma = \tau^{-1} \circ \gamma \circ \tau$ and $\tau$ preserves $M$ and induces the trivial action on $\calG_n/M$. Thus, $\tau \circ \varphi$ is a $\Gamma$-equivariant automorphism of $\calG_n$ such that $\rho_2 \circ \tau \circ \varphi = \rho_1$. Because $\pi_n: \widetilde{\calG}_n \to \calG_n$ is a Schur covering, by Lemma~\ref{lem:Labute}\eqref{item:Labute-1}, there exists $\widetilde{\tau} \in \Aut(\widetilde{\calG}_n)$ such that $\pi_n \circ \widetilde{\tau} = \tau \circ \pi_n$. Next, we consider the actions on $\widetilde{\calG}_n$, and similarly to above arguments, $\widetilde{\varphi}$ passes the action of $\gamma$ on $\widetilde{\calG}_n$ to an action of $^{\widetilde{\varphi}}\gamma$. Then because $^{\varphi}\gamma=\tau^{-1}\circ \gamma \circ \tau$ as actions on $\calG_n$, we have that $^{\widetilde{\varphi}} \gamma$ and $\widetilde{\tau}^{-1} \circ \gamma \circ \widetilde{\tau}$ are two order-2 automorphisms of $\widetilde{\calG}_n$ that preserve $\ker \pi_n$ and induce the same automorphism on $\calG_n$. So, by Lemma~\ref{lem:p-gp-aut}, there exists $\sigma \in \Aut(\widetilde{\calG}_n)$ such that $^{\widetilde{\varphi}}\gamma = \sigma^{-1} \circ \widetilde{\tau}^{-1} \circ \gamma \circ \widetilde{\tau} \circ \sigma$ and $\sigma$ preserves $\ker \pi_n$ and induces the trivial action on $\calG_n$. Therefore, we obtain an automorphism $\widetilde{\varphi}' :=\widetilde{\tau}\circ \sigma \circ \widetilde{\varphi} \in \widetilde{\calG}_n$ that respects the $\Gamma$ action on $\widetilde{\calG}_n$, so $\widetilde{\varphi}'$ extends to an automorphism of $\widetilde{\calG}_n \rtimes \Gamma$ sending $\gamma \mapsto \gamma$. It's only left to show that $\widetilde{\varphi}' \in \Aut(\widetilde{\calG}_n \rtimes \Gamma, \pi_n ; q)$. Indeed, $\widetilde{\varphi}'$ perserves $\ker \pi_n$ because all of $\widetilde{\tau}, \sigma$ and $\widetilde{\varphi}$ preserve $\ker \pi_n$; and $\widetilde{\varphi}'$ mapping $\xi_n$ to $\xi_n^q$ follows by applying the following claim to $\kappa:=\widetilde{\tau} \circ \sigma$ and by the fact that $\widetilde{\varphi}(\xi_n)=\xi_n^q$. 
		
		\emph{Claim: Suppose that $\kappa$ is an automorphism of $\widetilde{\calG}_n$ that perserves $\ker \pi_n$ and induces the trivial automorphism on $\widetilde{\calG}_n^{ab}$. Then $\kappa(\xi_n)=\xi_n$.}
		
		We use the usual presentation $\calG_n=\langle x_1, \ldots, x_{2n} \mid [x_1,x_2]\cdots [x_{2n-1},x_{2n}] \rangle$, and consider the short exact sequence defined by this presentation
		\[
			1 \longrightarrow R \longrightarrow F_{2n} \longrightarrow \calG_n \longrightarrow 1,
		\]
		where $F_{2n}$ is the free pro-$\ell$ group on $x_1,\ldots, x_{2n}$ and $R$ is the closed normal subgroup generated by $\lambda:=[x_1,x_2]\cdots [x_{2n-1}, x_{2n}]$. Then it gives the map $\pi_n$ via
		\begin{equation}\label{eq:ses-claim}
			1 \longrightarrow R/[R,F_{2n}] \longrightarrow F_{2n}/[R, F_{2n}]\simeq \widetilde{\calG}_n \overset{\pi_n}{\longrightarrow} \calG_n \longrightarrow 1.
		\end{equation}
		The automorphism $\kappa$ of $\widetilde{\calG}_n$ lifts to an automorphism $\widehat{\kappa}$ of $F_{2n}$. Since $\widetilde{\calG}_n^{ab}=\calG_n^{ab}=F_{2n}^{ab}$, $\widehat{\kappa}$ induces the trivial automorphism on $F_{2n}^{ab}$. So the element $y_i:=x_i^{-1}\widehat{\kappa}(x_i)$ is contained in $F_{2n}^{(2)}$ for each $i$. Then $\widehat{\kappa}(\lambda)=[x_1y_1, x_2y_2]\cdots [x_{2n-1}y_{2n-1}, x_{2n}y_{2n}] \equiv [x_1, x_2]\cdots [x_{2n-1}, x_{2n}] \mod F_{2n}^{(3)}$. As $R \subset F_{2n}^{(2)}$, we have that $[R, F_{2n}]\subset F_{2n}^{(3)}$; and then the quotient map $R/[R, F_{2n}] \to R/(R\cap F_{2n}^{(3)})$ is an isomorphism since both of them are isomorphic to $\Z_\ell$. So $\widehat{\kappa}(\lambda)\equiv \lambda \mod F_{2n}^{(3)}$ implies $\kappa$ acts trivially on $\ker \pi_n$, which proves the claim and hence proves the first equality in the theorem.
		
		To prove the second equality, we consider the following expression of the moment that is similar to \eqref{eq:EE}
		\begin{eqnarray}
			&& \EE \left(\# \left\{ \pi \in \Sur_{\Gamma}( Y_{q,n}, H) \mid \pi_{\dagger}=\delta\right\} \right) \nonumber \\
			&=& \sum_{\substack{\pi \in \Sur(\calG_n\rtimes \Gamma, H \rtimes \Gamma) \\ \text{s.t. } \widetilde{\pi}(\xi_n)=h \\ \text{and } \pi|_{\calG_n} \text{ is $\Gamma$-equivariant}}} \Prob ( \phi \in \Aut(\widetilde{\calG}_n \rtimes \Gamma, \pi_n ; q) \mid  \ker (1-\phi) \subseteq \ker \widetilde{\pi} ). \label{eq:EE'}
		\end{eqnarray}
		By the Schur--Zassenhaus theorem, any two homomorphic splittings of $H \times \Gamma \to \Gamma$ are conjugate by an element of $H$, and therefore, the conjugation gives an automorphism of $H \rtimes \Gamma$ that maps one splitting to the other. Also, because the conjugations of $H \rtimes \Gamma$ acts transitively on the set of all splittings, the number of splittings is exactly $[H: H^{\Gamma}]$. So we have 
		\begin{eqnarray*}
			&& \# \{ \pi \in \Sur(\calG_n \rtimes \Gamma, H \rtimes \Gamma) \mid \widetilde{\pi}(\xi_n) = h \} \\
			&=& [H: H^{\Gamma}]  \# \{ \pi \in \Sur(\calG_n \rtimes \Gamma, H \rtimes \Gamma) \mid \widetilde{\pi}(\xi_n) = h  \text{ and  $\pi|_{\calG_n}$ is $\Gamma$-equivariant} \} .
		\end{eqnarray*}
		Thus, the second equality in the theorem follows by the first equality, \eqref{eq:EE} and \eqref{eq:EE'}.
	\end{proof}

\subsection{Proof of Lemma~\ref{lem:Labute}}\label{ss:Labute}

		Let $F_{2n}$ denote the free pro-$\ell$ group on $2n$ generators $x_1, \ldots, x_{2n}$. Then there is a presentation of $\calG_n$
		\begin{equation}\label{eq:pres-Gn}
			1 \longrightarrow R \longrightarrow F_{2n} \overset{\pi}{\longrightarrow} \calG_n \longrightarrow 1,
		\end{equation}
		and it induces a Schur covering of $\calG_n$
		\begin{equation}\label{eq:ind-ses-stem}
			1 \longrightarrow R/[R,F_{2n}] \longrightarrow F_{2n}/[R, F_{2n}] \longrightarrow \calG_n \longrightarrow 1.
		\end{equation}
		It follows by \cite[Chap.11, Thm.2.3(iii) and (iv)(b)]{GroupRep} that $F_{2n}/[R,F_{2n}]$ can be identified with $\widetilde{\calG}_n$.
		We denote the composition of the quotient map $\pi$ in \eqref{eq:pres-Gn} and $\rho$ by $\kappa: F_{2n} \to H$, and denote $N:=\ker \kappa$. For each $i=1, \ldots, 2n$, we denote $h_i:=\kappa(x_i)$ and we pick an element $\widetilde{h}_i \in \varpi^{-1}(h_i)$. Then the map $\widetilde{\kappa}: F_{2n} \to \widetilde{H}$ defined by $x_i \mapsto \widetilde{h}_i$ is a homomorphism since $F_{2n}$ is free, and moreover it is surjective by Lemma~\ref{lem:app-stem}. By \cite[Chap.11, Thm.2.3(iv)(b)]{GroupRep}, $\ker \widetilde{\kappa}$ contains $[N, F_{2n}]$, so $\widetilde{\kappa}$ factors through $\widetilde{\calG}_n$ since $R \subseteq N$ and $[R,F_{2n}]\subseteq [N, F_{2n}]$. Because $\varpi \circ \widetilde{\kappa}= \rho \circ \pi$, $\widetilde{\kappa}$ induces a surjection $\widetilde{\rho}: \widetilde{\calG}_n \to \widetilde{H}$ such that $\varpi \circ \widetilde{\rho} = \rho \circ \pi_n$,  which proves the existence of $\widetilde{\rho}$ in the statement \eqref{item:Labute-1}. 
		\[\begin{tikzcd}
			F_{2n} \arrow["\pi"]{rr} \arrow{drr} \arrow[bend left=30, "\kappa"]{rrr} \arrow[bend right=70, "\widetilde{\kappa}"']{rrrd} && \calG_n \arrow["\rho"]{r} & H \\
			&& \widetilde{\calG}_n \arrow["\pi_n"']{u} \arrow["\widetilde{\rho}"]{r} & \widetilde{H} \arrow["\phi"]{u}
		\end{tikzcd}\]

		Without loss of generality, we assume $\xi_n$ is the image of $[x_1, x_2]\cdots [x_{2n-1}, x_{2n}]$ in $\widetilde{\calG}_n$. Then for any $\widetilde{\rho}$ satisfying the desired requirments, we have that $\widetilde{\rho}$ maps the image of $x_i$ to $\widetilde{h}_i \delta_i$ for some $\delta_i \in \ker \varpi$. However, $\ker \varpi$ is in the center of $\widetilde{H}$, so $[\widetilde{h}_1,  \widetilde{h}_2]\cdots [\widetilde{h}_{2n-1}, \widetilde{h}_{2n}]=[\widetilde{h}_1 \delta_1,  \widetilde{h}_2 \delta_2]\cdots [\widetilde{h}_{2n-1} \delta_{2n-1}, \widetilde{h}_{2n} \delta_{2n}]$. Thus, $\widetilde{\rho}(\xi_n)$ does not depend on the choice of $\widetilde{\rho}$, which finishes the proof of \eqref{item:Labute-1}.
		
		Let $\rho_i$ and $\widetilde{\rho}_i$, $i=1,2$, be as defined in the statement \eqref{item:Labute-2}. We use the surjections from $F_{2n}$ to $\calG_n$ and $\widetilde{\calG}_n$ defined in \eqref{eq:pres-Gn} and \eqref{eq:ind-ses-stem}. Taking compositions with $\rho_i$ and $\widetilde{\rho}_i$, we obtain surjections $\kappa_i: F_{2n} \to H$ and $\widetilde{\kappa}_i: F_{2n} \to \widetilde{H}$ such that $\varpi \circ \widetilde{\kappa}_i = \kappa_i$ for each $i$. For an ordered $2n$-tuple $\mathbf{z}=(z_1, \ldots, z_{2n}) \in F_{2n}^{2n}$, we denote $\lambda(\mathbf{z}):=[z_1, z_2]\cdots [z_{2n-1}, z_{2n}]$. We write $\mathbf{x}$ for the tuple of generators $(x_1, \ldots, x_{2n})$. We will prove the statement~\eqref{item:Labute-2} in the lemma by proving the following claim.
		
		\emph{ Claim: There exists $\theta \in \Aut(F_{2n})$ such that $\kappa_1 = \kappa_2 \circ \theta$ and $\lambda(\theta(\mathbf{x}))=\lambda(\mathbf{x})^q$. Here $\theta(\mathbf{x})$ denotes the tuple $(\theta(x_1), \ldots, \theta(x_{2n}))$.
		}
		
		We now explain how the claim implies the statement~\eqref{item:Labute-2}. In the presentation \eqref{eq:pres-Gn} of $\calG_n$, the kernel $R$ is the closed normal subgroup generated by $\lambda(\mathbf{x})$. So if $\lambda(\theta(\mathbf{x}))=\lambda(\mathbf{x})^q$, then $R$ is also generated by $\lambda(\theta(\mathbf{x}))$ as $q\in \Z_{\ell}^{\times}$. It follows that $\theta(R)=R$, and hence $\theta$ induces $\varphi \in \Aut(\calG_n)$, and also $\widetilde{\varphi} \in \Aut(\widetilde{\calG}_n)$ by the description of $\widetilde{\calG}_n$ in \eqref{eq:ind-ses-stem}. The automorphisms $\varphi$ and $\widetilde{\varphi}$ satisfy the condition \eqref{item:L-c-2} because $\kappa_1 = \kappa_2 \circ \theta$ and $\kappa_i$ factors through $\widetilde{\rho}_i$ and $\rho_i$ for each $i=1,2$. And the condition~\eqref{item:L-c-3} is satisfied, because $\xi_n$ is assumed to be the image of $\lambda(\mathbf{x})$ and then $\widetilde{\varphi}(\xi_n)$ is the image of $\lambda(\theta(\mathbf{x}))$.
		
		In the remainder of the proof, we will prove the claim by induction on the lower $\ell$-central series $\{F_{2n}^{\{j\}}\}$ of $F_{2n}$, which is defined as
		\[
			F_{2n}^{\{1\}}=F_{2n} \quad \text{and} \quad F_{2n}^{\{n+1\}}:=[F_{2n},  F_{2n}^{\{n\}}](F_{2n}^{\{n\}})^{\ell}.
		\]
		Explicitly, we will show the following.
		\begin{description}
			\item[($\ast$)]\label{item:induction} For each $j \in \Z_{>1}$, there exists a tuple $\mathbf{y}^{\{j\}}=(y_1^{\{j\}}, \ldots, y_{2n}^{\{j\}})$ formed by a set of generators $\{y_i^{\{j\}}\}_{i=1}^{2n}$ of $F_{2n}$ such that $\lambda(\mathbf{y}^{\{j\}})\equiv \lambda(\mathbf{x})^q \mod F_{2n}^{\{j+1\}}$ and $\kappa_1(x_i) = \kappa_2(y_i^{\{j\}})$ for each $i=1, \ldots, 2n$. Moreover, $y_i^{\{j+1\}} \equiv y_i^{\{j\}} \mod F_{2n}^{\{j\}}$.
		\end{description}
		Then the map sending $x_i$ to the inverse limit of $\{y_i^{\{j\}} \mod F_{2n}^{\{j\}}\}$ as $j \to \infty$ defines an automorphism $\theta$ satisfying all conditions in the claim.

		\begin{lemma}
			For $j=2$, the tuple $\mathbf{y}^{\{2\}}$ as in ($\ast$) exists.
		\end{lemma}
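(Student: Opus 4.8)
The plan is to descend to the quotient $F_{2n}/F_{2n}^{\{3\}}$ and recast the assertion as a problem in symplectic linear algebra over $\F_\ell$. Put $V:=F_{2n}/F_{2n}^{\{2\}}\cong\F_\ell^{2n}$, write $\bar w$ for the image in $V$ of $w\in F_{2n}$, and let $\bar\kappa_i\colon V\to H/H^{\{2\}}$ be induced by $\kappa_i$, with kernel $K_i$. Since $\ell$ is odd, $F_{2n}^{\{2\}}/F_{2n}^{\{3\}}$ is the direct sum of the span of the images of the $[x_i,x_j]$ and the span of the images of the $x_i^\ell$; a product of commutators such as $\lambda(\mathbf z)$ lies in the first summand, and there its class is $\sum_i\bar z_{2i-1}\wedge\bar z_{2i}\in\Lambda^2 V$, depending only on $\bar z_1,\dots,\bar z_{2n}$. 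In particular $\lambda(\mathbf x)^q$ has class $\bar q\,\omega$ with $\bar q:=q\bmod\ell\neq0$ and $\omega:=\sum_i\bar x_{2i-1}\wedge\bar x_{2i}$ the standard symplectic element. Hence it suffices to find $A\in\GL(V)$ with (a) $\bar\kappa_2\circ A=\bar\kappa_1$ and (b) $A\in\GSp(V,\omega)$ with multiplier $\bar q$: setting $\bar y_i:=A\bar x_i$, the tuple is a basis of $V$ and so, by the Burnside basis theorem, generates $F_{2n}$; one then lifts each $\bar y_i$ to some $y_i'\in F_{2n}$ and, using that $\kappa_2$ is onto and so carries $F_{2n}^{\{2\}}$ onto $H^{\{2\}}$, multiplies $y_i'$ on the right by an element of $F_{2n}^{\{2\}}$ to obtain $y_i^{\{2\}}$ with $\kappa_2(y_i^{\{2\}})=\kappa_1(x_i)$ (this is legitimate because (a) makes $\kappa_2(y_i')$ and $\kappa_1(x_i)$ agree modulo $H^{\{2\}}$); such a correction changes neither $\bar y_i$ nor the class of $\lambda(\mathbf y^{\{2\}})$ modulo $F_{2n}^{\{3\}}$, which by (b) equals $\bar q\,\omega$, the class of $\lambda(\mathbf x)^q$.

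The existence of such an $A$ is where the hypothesis $\widetilde{\rho}_1(\xi_n)=\widetilde{\rho}_2(\xi_n)$ is used. The generator $\xi_n$ of $\ker\pi_n\cong\Z_\ell$ corresponds to a generator of $H_2(\calG_n,\Z)(\ell)$ whose reduction modulo $\ell$ maps to $\omega$ under the natural map $H_2(\calG_n,\F_\ell)\to\Lambda^2 V$; pushing forward along $\rho_i$ and using naturality, $\widetilde{\rho}_i(\xi_n)\bmod\ell\in H_2(H,\F_\ell)$ maps to $\Lambda^2\bar\kappa_i(\omega)\in\Lambda^2(H/H^{\{2\}})$ (this is the homological incarnation of the cup-product description of Corollary~\ref{cor:symplectic-stem}, applied to $\calG_n$ in place of the Demu\v{s}kin group there). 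Thus the hypothesis forces $\Lambda^2\bar\kappa_1(\omega)=\Lambda^2\bar\kappa_2(\omega)$; combined with the constraint $\ord(\widetilde{\rho}_i(\xi_n))\mid q-1$ it also controls the scalar — for instance, when $\bar\kappa_1,\bar\kappa_2$ are isomorphisms $\Lambda^2\bar\kappa_i(\omega)$ is nondegenerate, so $\widetilde{\rho}_i(\xi_n)\bmod\ell\neq0$, whence $\ell\mid\ord(\widetilde{\rho}_i(\xi_n))\mid q-1$ and $\bar q=1$, making (b) compatible with (a).

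It then remains to construct $A$ by a Witt-type extension: one matches $K_1$ to $K_2$, fits in the isomorphism $V/K_1\to V/K_2$ determined by $\bar\kappa_1$ and $\bar\kappa_2$, and respects $\omega$ up to the multiplier $\bar q$. The relevant invariants are the rank of $\omega|_{K_i}$ — equal for $i=1,2$ because $\Lambda^2\bar\kappa_1(\omega)=\Lambda^2\bar\kappa_2(\omega)$ — together with the behaviour of $\omega$ on a complement of $K_i$, where the scalar $\bar q$ enters. I expect this symplectic linear-algebra step to be the main obstacle: choosing bases adapted simultaneously to $K_i$ and to the radical of $\omega|_{K_i}$, handling the possible degeneracy of $\omega|_{K_i}$, and verifying that the scalar information extracted from the hypothesis is exactly what the extension needs. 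By contrast, the remaining steps $j\geq3$ of the induction should require no further arithmetic input and rely only on the freeness of $F_{2n}$ and the single Demu\v{s}kin relation.
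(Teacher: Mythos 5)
Your setup is sound and is essentially the dual-side version of what the paper does: the paper works in $H^1(F_{2n},\F_\ell)$ with the cup-product form $\tr_{\lambda(\mathbf x)}(\chi\cup\chi')$ and the inflation maps $\kappa_i^*$, whereas you work in $V=F_{2n}/F_{2n}^{\{2\}}$ with the element $\omega\in\Lambda^2V$; these are equivalent by \cite[Prop.(3.9.13)(ii)]{NSW}. Your bookkeeping at the end (lift $A\bar x_i$ to $F_{2n}$, correct by elements of $F_{2n}^{\{2\}}$ using $\kappa_2(F_{2n}^{\{2\}})=H^{\{2\}}$, note that neither the generating property nor the class of $\lambda$ mod $F_{2n}^{\{3\}}$ changes) is a correct substitute for the paper's use of the fiber product $H\times_{H/H^{\{2\}}}F_{2n}/F_{2n}^{\{2\}}$ and Lubotzky's lifting lemma.

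However, there is a genuine gap: the existence of the automorphism $A$ with properties (a) and (b) is the entire content of the lemma, and you explicitly defer it (``I expect this symplectic linear-algebra step to be the main obstacle''). Two things are missing. First, the general form of the arithmetic input: you only verify the compatibility of the multiplier $\bar q$ with condition (a) in the special case where $\bar\kappa_1,\bar\kappa_2$ are isomorphisms. What is needed in general is that the pairing induced on $H^1(H,\F_\ell)$ (equivalently, the form $\omega$ descends to on the $\bar\kappa_i$-side) is annihilated by $q-1$ in $\F_\ell$, which follows from $\ord(\widetilde\rho_i(\xi_n))\mid q-1$; without this, a similitude with multiplier $\bar q\neq1$ cannot restrict to the ``identity'' on the part of the form seen by $H$, and the Witt extension fails. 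Second, the extension itself: the paper carries it out by choosing a partial symplectic basis $e_1,\dots,e_{a+b},f_1,\dots,f_a$ of $H^1(H,\F_\ell)$ (adapted to the radical of the induced form), extending its image under $\kappa_1^*$ to a symplectic basis of $H^1(F_{2n},\F_\ell)$ and its image under $\kappa_2^*$ to a $q$-symplectic basis, and defining $\beta$ by matching the two bases; $A$ is then $(\beta^\vee)^{-1}$. Note also that the relevant invariant is the form induced on $V/K_i\cong H/H^{\{2\}}$ (dually, on $\kappa_i^*H^1(H,\F_\ell)$), not the restriction $\omega|_{K_i}$ as you suggest, though the two determine each other by nondegeneracy of $\omega$. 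As written, your argument identifies the right reduction but does not prove the lemma.
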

			
		\begin{proof}
			Note that $F_{2n}/F_{2n}^{\{2\}}$ is the Frattini quotient of $F_{2n}$, so it is the dual of $H^1(F_{2n}, \F_{\ell})$. The element $\lambda(\mathbf{x})$ defines a symplectic pairing on $H^1(F_{2n}, \F_{\ell})\simeq H^1(\calG_n, \F_{\ell})$
			\[\begin{tikzcd}
				(- ,- ) : H^1(F_{2n}, \F_{\ell}) \times H^1(F_{2n}, \F_{\ell}) \arrow["\cup"]{r} & H^2(\calG_n, \F_{\ell}) \arrow["\tr_{\lambda(\mathbf{x})}"]{r} &\F_{\ell},
			\end{tikzcd}\]
			where $\tr_{\lambda(\mathbf{x})}$ is the trace map induced by $\lambda(\mathbf{x})$. For each $r=1,2$, the surjection $\kappa_r$ induces inflation map $\kappa_r^{*}: H^1(H, \F_{\ell}) \hookrightarrow H^1(F_{2n}, \F_{\ell})$, and then we obtain a bilinear and alternating pairing on $H^1(H, \F_{\ell})$.
			By the assumption that $\widetilde{\kappa}_1(\lambda(\mathbf{x}))= \widetilde{\kappa}_2(\lambda(\mathbf{x}))$ has order dividing $q-1$ and the argument similar to Proposition~\ref{prop:omega-weil}, the two pairings defined by $\kappa_1^{*}$ and $\kappa_2^{*}$ are the same and $(\alpha, \alpha')=q(\alpha, \alpha')$ for any $\alpha, \alpha' \in H^1(H, \F_\ell)$.
			So we can choose a basis $e_1, \ldots, e_{a+b}, f_1, \ldots, f_{a}$ of $H^1(H, \F_{\ell})$ for some $a, b \geq 0$, such that $(e_i, e_j)=(f_i, f_j)=0$ for all $i,j$ and $(e_i, f_j)=1$ if $i=j$ and $0$ otherwise. By the theory of symplectic space, we can extend this basis to a symplectic basis $e_{1,1}, \ldots, e_{n,1}, f_{1,1}, \ldots, f_{n,1}$ of $H^1(F_{2n}, \F_{\ell})$ via the embedding $\kappa_1^{*}$: that is, a basis such that $(e_{i,1}, f_{i,1})=1$ and all the other pairings of elements in the basis are zero, and moreover
			 $e_{i,1}=\kappa_1^*(e_{i})$ and $f_{j,1}=\kappa_1^*(f_j)$ for $1 \leq i \leq a+b$ and $1\leq j \leq a$. Similarly, we extend this basis to a $q$-symplectic basis $e_{1,2}, \ldots, e_{n,2}, f_{1,2}, \ldots, f_{n,2}$ of $H^1(F_{2n}, \F_{\ell})$ via the embedding $\kappa_2^{*}$ such that $(e_{i,1}, f_{i,1})=q$ for all $i$, and $e_{i,2}=\kappa_2^*(e_{i})$ and $f_{j,2}=\kappa_2^*(f_j)$ for $1 \leq i \leq a+b$ and $1\leq j \leq a$.

			Then the automorphism $\beta \in \Aut(H^1(F_{2n}, \F_\ell))$ sending $e_{i,1}\mapsto e_{i,2}$ and $f_{i,1} \mapsto f_{i,2}$ satisfies $\beta \circ \kappa_1^* = \kappa_2^*$ and $(\alpha, \alpha')=q(\beta(\alpha), \beta(\alpha'))$ for any $\alpha, \alpha' \in H^1(F_{2n}, \F_{\ell})$. So by taking the dual, we obtain $\beta^{\vee} \in \Aut( F_{2n}/F_{2n}^{\{2\}})$ making the following diagram commute, where $\kappa_r^{\{2\}}$ is the quotient map of the Frattini quotients induced by $\kappa_r$.
			\[\begin{tikzcd}
				F_{2n}/F_{2n}^{\{2\}} \arrow["\kappa_2^{\{2\}}"]{r} \arrow["\beta^{\vee}", "\sim"', swap]{d} & H/H^{\{2\}} \\
				F_{2n}/F_{2n}^{\{2\}} \arrow["\kappa_1^{\{2\}}", swap]{ru} &
			\end{tikzcd}\]
		Consider the fiber product $P:=H \times_{H/H^{\{2\}}} F_{2n}/F_{2n}^{\{2\}}$ defined by the natural surjection $H \to H/H^{\{2\}}$ and $\kappa_2^{\{2\}}$. For each $i=1, \ldots, 2n$, we let $z_i$ denote the element in $P$ whose first coordinate is $\kappa_1(x_i)$ and the second coordinate is the preimage under $\beta^{\vee}$ of the image of $x_i$ in $F_{2n}/F_{2n}^{\{2\}}$ (these two coordinates have the same image in $H/H^{\{2\}}$ because of the diagram above, so $z_i$ is a well-defined element in $P$). Since $\kappa_2$ factors through a surjection $\kappa_2^{\wedge} : F_{2n} \to P$ obtained by quotienting $F_{2n}$ by $\ker \kappa_2 \cap F_{2n}^{\{2\}}$, by \cite[Lem.2.1]{Lubotzky}, there exist elements $y_1, \ldots, y_{2n}$ in $F_{2n}$ that generate $F_{2n}$ and $\kappa_2^{\wedge}(y_i)=z_i$. It follows that $\kappa_1(x_i)=\kappa_2(y_i)$ and $\beta^{\vee}$ sends the image of $y_i$ to the image of $x_i$. Moreover, letting $\chi_i$ and $\chi'_i$ denote the dual of $x_i$ and $y_i$ respectively in $H^1(F_{2n}, \F_{\ell})$, we have $\beta(\chi_i)=\chi'_i$, and hence $(\chi_i, \chi_j)=q(\chi'_i, \chi'_j)$. Therefore, by \cite[Prop.(3.9.13)(ii)]{NSW}, we have $\lambda(\mathbf{x})^q\equiv [y_1, y_2]\cdots [y_{2n-1}, y_{2n}] \mod F_{2n}^{\{3\}}$, so the tuple $\mathbf{y}^{\{2\}}:=(y_1, \ldots, y_{2n})$ satisfies ($\ast$) for $j=2$.
		\end{proof}

		\begin{lemma}
			Suppose that there exists $\mathbf{y}^{\{j\}}$ in ($\ast$) for some $j>1$. Then $\mathbf{y}^{\{j+1\}}$ exists.
		\end{lemma}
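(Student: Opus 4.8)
The plan is to produce $\mathbf y^{\{j+1\}}$ from $\mathbf y^{\{j\}}$ by the substitution $y_i^{\{j+1\}}:=y_i^{\{j\}}c_i$ with the $c_i$ chosen in $N\cap F_{2n}^{\{j\}}$, where $N:=\ker\kappa_2$. Since $j>1$ we have $c_i\in F_{2n}^{\{2\}}$, so by the Burnside basis theorem $\mathbf y^{\{j+1\}}$ is again a generating tuple of $F_{2n}$; and $c_i\in N\cap F_{2n}^{\{j\}}$ forces both $\kappa_2(y_i^{\{j+1\}})=\kappa_2(y_i^{\{j\}})=\kappa_1(x_i)$ and $y_i^{\{j+1\}}\equiv y_i^{\{j\}}\bmod F_{2n}^{\{j\}}$. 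Thus all clauses of $(\ast)$ except the relator congruence are automatic, and the whole task reduces to choosing the $c_i$ so that $\lambda(\mathbf y^{\{j+1\}})\equiv\lambda(\mathbf x)^q\bmod F_{2n}^{\{j+2\}}$.

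First I would record the effect of the substitution on the relator. Expanding $\lambda(\mathbf y^{\{j+1\}})=\prod_{i=1}^n[y_{2i-1}^{\{j\}}c_{2i-1},\,y_{2i}^{\{j\}}c_{2i}]$ with the standard commutator identities and discarding everything of weight $\ge j+2$ (all terms involving two of the $c$'s, or a $c$ together with a weight-$\ge 2$ part, and all commutators among the weight-$(j+1)$ contributions), one gets
\[
\lambda(\mathbf y^{\{j+1\}})\ \equiv\ \mu(\mathbf c)\cdot\lambda(\mathbf y^{\{j\}})\pmod{F_{2n}^{\{j+2\}}},\qquad
\mu(\mathbf c):=\prod_{i=1}^n[c_{2i-1},\,y_{2i}^{\{j\}}]\,[y_{2i-1}^{\{j\}},\,c_{2i}]\in F_{2n}^{\{j+1\}},
\]
and $\mu(\mathbf c)\bmod F_{2n}^{\{j+2\}}$ depends only on the classes $\bar c_i$ of the $c_i$ in $\frakg_j:=F_{2n}^{\{j\}}/F_{2n}^{\{j+1\}}$ and on the Frattini classes $\bar y_i^{\{j\}}\in\frakg_1$. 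By the inductive hypothesis $\lambda(\mathbf y^{\{j\}})=\lambda(\mathbf x)^qw$ with $w\in F_{2n}^{\{j+1\}}$; applying $\kappa_2$ and using $\lambda(\mathbf x)\in R=\ker\pi$ (so $\kappa_i(\lambda(\mathbf x))=\rho_i(1)=1$) together with $\kappa_2(y_i^{\{j\}})=\kappa_1(x_i)$ shows $\kappa_2(w)=1$, i.e.\ $w\in N$. Since weight-$(j+1)$ elements commute with $F_{2n}^{\{2\}}$ modulo $F_{2n}^{\{j+3\}}$, the desired congruence $\lambda(\mathbf y^{\{j+1\}})\equiv\lambda(\mathbf x)^q$ becomes simply $\mu(\mathbf c)\equiv w^{-1}\pmod{F_{2n}^{\{j+2\}}}$.

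The heart of the matter is then to solve this last equation inside the degree-$(j+1)$ part of the associated graded restricted $\F_\ell$-Lie algebra $\frakg=\bigoplus_d\frakg_d$ of the free pro-$\ell$ group $F_{2n}$, which by Lazard's theorem is the free restricted Lie algebra on the degree-one basis $\bar y_1^{\{j\}},\dots,\bar y_{2n}^{\{j\}}$. In these terms $\overline{\mu(\mathbf c)}=D(\bar\lambda)$, where $\bar\lambda=\sum_i[\bar x_{2i-1},\bar x_{2i}]\in\frakg_2$ is the leading form of $\lambda(\mathbf x)$ and $D$ is the degree-$(j-1)$ derivation of $\frakg$ with $D(\bar y_i^{\{j\}})=\bar c_i$; since $\frakg$ is free, $D$ is unconstrained apart from the requirement $\bar c_i\in\overline N_j:=\operatorname{im}\!\big(N\cap F_{2n}^{\{j\}}\to\frakg_j\big)$ coming from the $\kappa_2$-condition. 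So what I must prove is that the class $-\bar w$, which lies in $\overline N_{j+1}$, is of the form $D(\bar\lambda)$ for such a $D$; after antisymmetrizing, the set of attainable $D(\bar\lambda)$ is $[\,\frakg_1,\overline N_j\,]$, so this amounts to the surjectivity statement $\overline N_{j+1}\subseteq[\,\frakg_1,\overline N_j\,]$.

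I expect this surjectivity to be the main obstacle. It is where the Demushkin hypothesis enters — one uses the non-degeneracy of the alternating form $\bar\lambda$ on $\frakg_1$, via \cite[Prop.(3.9.13)(ii)]{NSW}, exactly as in the $j=2$ step — and where one must cope with the restricted ($\ell$-th power) part of $\frakg_{j+1}$ in degrees divisible by $\ell$. The strategy here is the successive-approximation machinery of \cite{Labute}: since $H$ is finite we have $F_{2n}^{\{d\}}\subseteq N$ for all $d\gg 0$, which pins down $\overline N_d=\frakg_d$ in high degrees, and the finitely many remaining degrees are handled using the one-relator structure of $\calG_n$; if needed, one realizes the abstract solution by honest elements $c_i\in N\cap F_{2n}^{\{j\}}$ by lifting through the relevant congruence quotient with \cite[Lem.2.1]{Lubotzky}, as in the previous lemma. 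With $\mathbf c$ so chosen, the tuple $\mathbf y^{\{j+1\}}=(y_i^{\{j\}}c_i)_i$ satisfies all of $(\ast)$, which closes the induction; passing to the inverse limit as indicated then completes the proof of Lemma~\ref{lem:Labute}, and with it Theorem~\ref{thm:intro-random-group}.
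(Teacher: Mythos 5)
Your setup is the same as the paper's: substitute $y_i^{\{j+1\}}=y_i^{\{j\}}c_i$ with $c_i\in\ker\kappa_2\cap F_{2n}^{\{j\}}$, expand the relator modulo $F_{2n}^{\{j+2\}}$, and reduce to realizing the defect $w=\lambda(\mathbf x)^{-q}\lambda(\mathbf y^{\{j\}})$ in the graded Lie algebra. But there is a genuine gap at the decisive step, and it stems from using too weak a constraint on $w$. You only establish $\kappa_2(w)=1$, i.e.\ $w\in N\cap[F_{2n},F_{2n}]\cap F_{2n}^{\{j+1\}}$, and accordingly reduce to the surjectivity claim $\overline N_{j+1}\subseteq[\frakg_1,\overline N_j]$. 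That claim is false in general and cannot be rescued by the Demu\v{s}kin form: first, $\gr_{j+1}(F_{2n})$ is spanned by $[\gr_1,\gr_j]$ \emph{together with} the $\ell$-th powers of degree-$j$ elements (since $F^{\{j+1\}}=[F,F^{\{j\}}](F^{\{j\}})^{\ell}$), and for large $j$ one has $F^{\{j+1\}}\subseteq N$, so $\overline N_{j+1}=\gr_{j+1}\not\subseteq[\gr_1,\gr_j]$; second, by Hopf's formula $N\cap[F_{2n},F_{2n}]/[N,F_{2n}]\cong H_2(H,\Z)$, so membership of $w$ in $N\cap[F,F]$ does not put its graded class in the image of $[N,F_{2n}]$, which is the space you would actually need.

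The missing idea is to use the hypothesis at the level of the Schur covers: since $\widetilde\rho_1(\xi_n)=\widetilde\rho_2(\xi_n)$ has order dividing $q-1$, one gets $\widetilde\kappa_2(\lambda(\mathbf x)^q)=\widetilde\rho_2(\xi_n)^q=\widetilde\rho_1(\xi_n)=\widetilde\kappa_1(\lambda(\mathbf x))=\widetilde\kappa_2(\lambda(\mathbf y^{\{j\}}))$, hence $w\in\ker\widetilde\kappa_2\cap[F_{2n},F_{2n}]=[N,F_{2n}]$. (Your argument never invokes the Schur-cover hypothesis in the inductive step, which is a sign something is lost.) For the smaller space $[N,F_{2n}]\cap F_{2n}^{\{j+1\}}$ the needed containment in $[\gr_1,\overline N_j]$ does hold, and the proof is elementary rather than symplectic: a generator $[a,b]$ of its graded image with $a$ coming from $N$ is rewritten, via $[a,c^{\ell}]=[a^{\ell},c]$ and the Jacobi identity $[a,[c,d]]=[[d,a],c]+[[a,c],d]$, until the non-$N$ factor has degree one; then, since the $\sigma_i$ generate $\gr_1$, every such bracket is attainable as $d_j(t_1,\dots,t_{2n})$. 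The non-degeneracy of $\bar\lambda$ is used only in the base case $j=2$, not here. As written, your "main obstacle" paragraph is a plan, not a proof, and the plan targets a false statement.
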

		
		\begin{proof}

			We will prove that there exist $t_1, \ldots, t_{2n} \in F_{2n}^{\{j\}} \cap \ker \kappa_2$ such that the tuple $\mathbf{y}^{\{j+1\}}$ obtained by setting $y_i^{\{j+1\}}$ to be $y_i^{\{j\}}t_i$ satisfies ($\ast$) . First, it is automatic that $y_i^{\{j+1\}} \equiv y_i^{\{j\}}$ mod $F_{2n}^{\{j\}}$ and $\kappa_1(x_i)=\kappa_2(y_i^{\{j+1\}})$ as $t_i \in F_{2n}^{\{j\}} \cap \ker \kappa_2$. So it suffices to consider the requirement $\lambda(\mathbf{y}^{\{j+1\}}) \equiv \lambda(\mathbf{x})^q \mod F_{2n}^{\{j+2\}}$. Let's denote $D:=\lambda(\mathbf{x})^{-q} \lambda(\mathbf{y}^{\{j\}})$. Since $\lambda(\mathbf{x})$ and $\lambda(\mathbf{y}^{\{j\}})$ are both contained in $[F_{2n}, F_{2n}]$, we have $D \in F_{2n}^{\{j+1\}} \cap [F_{2n}, F_{2n}]$.
			Also, assuming that $\xi_n$ is the image of $\lambda(\mathbf{x})$, we have
			\[
				\widetilde{\kappa}_2(\lambda(\mathbf{x})^q)= \widetilde{\rho}_2(\xi_n)^q = \widetilde{\rho}_1(\xi_n) = \widetilde{\kappa}_1(\lambda(\mathbf{x}))=\widetilde{\kappa}_2(\lambda(\mathbf{y}^{\{j\}})),
			\]
			where the first and the third equalities follow by definition of $\widetilde{\kappa}_1$ and $\widetilde{\kappa}_2$, the second one follows by the assumption in the proposition that $\widetilde{\rho}_1(\xi_n)=\widetilde{\rho}_2(\xi_n)$ has order dividing $q-1$, and the last one is because of the statement \eqref{item:Labute-1} in the proposition and the induction hypothesis $\kappa_1(x_i)=\kappa_2(y_i^{\{j\}})$. So we have that $D$ is contained in $F_{2n}^{\{j+1\}} \cap [F_{2n}, F_{2n}] \cap \ker \widetilde{\kappa}_2$.
			
			The lower $\ell$-central series $\{F_{2n}^{\{j\}}\}$ induces a Lie algebra $\gr(F_{2n}):= \sum_i \gr_i(F_{2n})$ over $\F_\ell$, where $\gr_i(F_{2n}):= F_{2n}^{\{j\}} / F_{2n}^{\{j+1\}}$ and the Lie bracket for homogenous elements of $\gr(F_{2n})$ is induced by the commutator: if $a=\overline{x}\in \gr_r(F_{2n})$ and $b=\overline{y} \in \gr_s(F_{2n})$, then $[a,b]$ is the image of the commutator $[x,y]$ in $\gr_{r+s}(F_{2n})$. So
			\begin{eqnarray*}
				\lambda(\mathbf{y}^{\{j+1\}})&=& [y_1^{\{j+1\}}, y_2^{\{j+1\}}]\cdots [y_{2n-1}^{\{j+1\}}, y_{2n}^{\{j+1\}}] \\
				&=& [y_1^{\{j\}}t_1, y_2^{\{j\}}t_2]\cdots [y_{2n-1}^{\{j\}} t_{2n-1}, y_{2n}^{\{j\}} t_{2n}] \\
				&\equiv& \lambda(\mathbf{y}^{\{j\}}) [t_1, y_2^{\{j\}}][y_1^{\{j\}}, t_2] \cdots  [t_{2n-1}, y_{2n}^{\{j\}}] [y_{2n-1}^{\{j\}}, t_{2n}] \mod F_{2n}^{\{j+2\}}.
			\end{eqnarray*}
			We write the group action of $\gr_i(F_{2n})$ additively, and define
			\[
				d_j(t_1, \ldots, t_{2n}):= [\tau_1, \sigma_2] + [\tau_2, \sigma_1] + \cdots + [\tau_{2n-1}, \sigma_{2n}] + [\tau_{2n}, \sigma_{2n-1}] \in \gr_{j+1}(F_{2n}),
			\]
			where $\tau_i$ is the image of $t_i$ in $\gr_{j}(F_{2n})$ and $\sigma_i$ is the image of $y_i^{\{j\}}$ in $\gr_{1}(F_{2n})$.
			
			Considering the short exact sequence $1 \longrightarrow N \longrightarrow F_{2n} \overset{\kappa_2}{\longrightarrow} H \longrightarrow 1$ defined by $\kappa_2$, by \cite[Chap.11, Thm.2.3(iii) and (iv)(b)]{GroupRep}, $[N, F_{2n}]$ is $[F_{2n}, F_{2n}] \cap \ker \widetilde{\kappa}_2$. So, the image of $F_{2n}^{\{j+1\}} \cap [F_{2n}, F_{2n}] \cap \ker \widetilde{\kappa}_2=F_{2n}^{\{j+1\}} \cap [N, F_{2n}]$ in $\gr_{j+1}(F_{2n})$ is generated by $[a,b]$ for some $a\in \gr_r(F_{2n})$ and $b \in \gr_{j+1-r}(F_{2n})$ such that $a$ is the image of some element of $N$. If $b=c^{\ell}$ for some $c \in \gr_{j-r}(F_{2n})$, then $[a,b]=[a^\ell,c]$ and $a^{\ell}$ is the image in $\gr_{r+1}(F_{2n})$ of some element of $N$. If $b=[c,d]$, then $[a,b]=[[d,a], c]+[[a,c],d]$ and each of $[d,a]$ and $[a,c]$ is the image of some element of $N$ and of degree strictly larger than $r$. It follows that the image of $F_{2n}^{\{j+1\}} \cap [N, F_{2n}]$ in $\gr_{j+1}(F_{2n})$ is generated by $[a,b]$ where $a \in \gr_j(F_{2n})$, $b\in \gr_1(F_{2n})$, and $a$ is the image of some element of $N$. Because $y_1^{\{j\}}, \ldots, y_{2n}^{\{j\}}$ form a generating set of $F_{2n}$, we have that $\sigma_1, \ldots, \sigma_{2n}$ generate $\gr_1(F_{2n})$, and hence the argument above shows that there exist $t_1, \ldots, t_{2n} \in F_{2n}^{\{j\}} \cap \ker \kappa_2$ such that $d_j(t_1, \ldots, t_{2n})$ equals the image of $D$ in $\gr_{j+1}(F_{2n})$. Then for such a choice of $t_1, \ldots, t_{2n}$, we have the tuple $\mathbf{y}^{\{j+1\}}$ satisfying $\lambda(\mathbf{y}^{\{j+1\}}) \equiv \lambda(\mathbf{x})^q$, and hence we finish the proof.
		\end{proof}

\section{Conjectures and Evidences}\label{sect:conjectures}
 
 	In \S\ref{ss:statements}, we give the statements of all of the conjectures made in this paper, which include the moment conjectures in both the function field case and the number field case, and also the conjectures regarding the existence and uniqueness of the probability measure with desired moments. Then in \S\ref{ss:evidence1} and \S\ref{ss:evidence2}, we discuss the presentation of $G_{\O}(K)$ and positively ramifications, and how they support the moment conjecture in the number field case.

\subsection{Statements of the conjectures}\label{ss:statements}
	
	We first recall the notion of pro-$\calC$ completion of a $\Gamma$-group defined in \cite{LWZB}.
	Let $\calC$ be a set of isomorphism classes of finite $\Gamma$-groups, and define $\overline{\calC}$ to be the smallest set of isomorphism classes of $\Gamma$-groups containing $\calC$ that is closed under taking finite direct product, $\Gamma$-quotient and $\Gamma$-subgroup. Then for a given profinite $\Gamma$-group $G$, the \emph{pro-$\calC$ completion of $G$} is defined to be 
	\[
		G^{\calC} = \varprojlim_{M} G/M,
	\]
	where the inverse limit runs over all closed normal $\Gamma$-subgroup $M$ of $G$ such that $G/M \in \overline{\calC}$. We say that a group $G$ is \emph{pro-$\calC$} if $G=G^{\calC}$.

	Let $q$ be a positive integer such that $\gcd(q, |\Gamma|)=1$. 
	We consider pairs $(H, h)$ for a profinite admissible $q'$-$\Gamma$-group $H$ and an element $h \in H_2(H \rtimes \Gamma, \Z)_{(|\Gamma|)'}[q-1]$. We say that two such pairs $(G, g)$ and $(H, h)$ are isomorphic if there exists an $\Gamma$-equivariant isomorphism $G \simeq H$ such that its induced coinflation map $H_2(G\rtimes \Gamma, \Z)_{(|\Gamma|)'} \to H_2(H \rtimes \Gamma, \Z)_{(|\Gamma|)'}$ sends $g \mapsto h$.
	We define a topological space $\calP_{\Gamma, q}$, whose underlying space is the set of isomorphism classes of pairs $(H, h)$ as described above and basic opens are the sets 
	\[
		U_{\calC, H, h}:= \left\{(G,g) \in \calP_{\Gamma,q} \,\Bigg| \,\begin{aligned} & \text{there is a $\Gamma$-equivariant surjection $e: G \to H $ s.t. } \\ & \text{$e$ induces an isom.  $G^{\calC} \simeq H$ and the coinf map sending $g \mapsto h$}\end{aligned}\right\},
	\] 
	for each finite set $\calC$ of finite $(q|\Gamma|)'$-$\Gamma$-groups and each pair $(H, h)\in \calP_{\Gamma,q}$ such that $H$ is finite pro-$\calC$.
	For a measure $\mu$ on $\calP_{\Gamma,q}$, we use the following notation to denote the average number of surjections to a pair $(H,h)$
	\[
		\EE_{\mu} \# \Sur_{\Gamma}(*, (H, h)):= \sum_{(G,g) \in \calP_{\Gamma,q}}  \# \left\{ \pi \in \Sur_{\Gamma}(G, H) \, \bigg| \, \begin{aligned} &\text{the $\coinf$ map induced } \\ & \text{by $\pi$ sends $g \mapsto h$} \end{aligned} \right\} {\mu}(G,g).
	\]

	\begin{conjecture}[Existence and uniqueness of the limit probability measure with the desired moments]\label{conj:prob-exist}
		For a finite group $\Gamma$ and a positive integer $q$ satisfying $\gcd(q, |\Gamma|)=1$, there exists a unique probability measure $\mu_{\Gamma, q}$ on the topological space $\calP_{\Gamma,q}$, such that for each $(H,h) \in \calP_{\Gamma,q}$, the following equality holds 
		\[
			\EE_{\mu_{\Gamma,q}} \# \Sur_{\Gamma}(*, (H, h)) = \frac{1}{[H: H^{\Gamma}]}.
		\]
	\end{conjecture}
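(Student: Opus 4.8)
The plan is to treat the statement as a \emph{moment problem} on $\calP_{\Gamma,q}$, separating it into a uniqueness part, which should be essentially formal once one observes that the prescribed moments are bounded, and an existence part, which is the substantive one. The relevant combinatorial skeleton is the poset (indeed category) of isomorphism classes of \emph{finite} pairs $(H,h)\in\calP_{\Gamma,q}$, with $\Gamma$-equivariant surjections compatible with the $H_2$-data as morphisms: the basic opens $U_{\calC,H,h}$ are indexed by such finite pairs together with a pro-$\calC$ truncation datum, and they form a $\pi$-system generating the Borel $\sigma$-algebra of $\calP_{\Gamma,q}$, so a measure is pinned down by its values on them.

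First I would establish uniqueness. Given a probability measure $\mu$ realizing the stated moments, one expresses $\mu(U_{\calC,H,h})$ as an absolutely convergent alternating sum of the moments $\EE_\mu\#\Sur_\Gamma(*,(G,g))=\tfrac1{[G:G^\Gamma]}$ over the finitely many $(G,g)$ dominating $(H,h)$ with $G\in\overline{\calC}$, together with the standard M\"obius/inclusion--exclusion inversion relating ``number of surjections'' moments to ``number of pro-$\calC$-isomorphisms'' probabilities over the whole poset; this is exactly the inversion used in \cite{Wood-lifting,LWZB} and in the Sawin--Wood framework for moment problems in a category. The only thing to check is absolute convergence of the inversion series, and here the situation is unusually favorable: since every moment is at most $1$, while $\#\Sur_\Gamma(G,H)\le|H|^{d(G)}$ and only finitely many $(G,g)$ of bounded pro-$\calC$-size admit a $\Gamma$-surjection to $H$, the tails are dominated by a convergent geometric-type series. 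Hence $\mu$ is determined on a generating $\pi$-system, and uniqueness follows.

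For existence I would construct $\mu_{\Gamma,q}$ as a weak limit of honest probability measures. When $\Gamma=\Z/2\Z$ (and after restricting to pro-$\ell$ coefficients) the candidate is already available: the random group $Y_{q,n}$ of Definition~\ref{def:random-group}, together with its coinflation class, defines a probability measure $\nu_{q,n}$ on (the pro-$\ell$ analogue of) $\calP_{\Gamma,q}$, and Theorem~\ref{thm:intro-random-group} computes its moments to be $\tfrac1{[H:H^\Gamma]}$ for finite $\ell$-group targets once $n$ is large. For general $\Gamma$ one would first need the analogue of $Y_{q,n}$: a ``universal'' admissible $\Gamma$-group $\calG_n^\Gamma$ carrying a canonical $\Frob$-equivariant stem extension in the spirit of \eqref{eq:intro-stem}, a random lift of $\Frob$ defining $\nu_{q,n}$, and then a recomputation of its moments, either via the Hurwitz-space input behind Theorem~\ref{thm:functionfield} or by re-running the successive-approximation argument of \S\ref{ss:Labute}. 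Granting tightness of $\{\nu_{q,n}\}_n$, one extracts a weakly convergent subsequence $\nu_{q,n_j}\to\mu_{\Gamma,q}$; since each moment functional $(G,g)\mapsto\#\Sur_\Gamma(*,(H,h))$ is continuous, and provided one has a uniform-integrability (domination) bound for these functionals against the $\nu_{q,n}$, one passes to the limit to get $\EE_{\mu_{\Gamma,q}}\#\Sur_\Gamma(*,(H,h))=\tfrac1{[H:H^\Gamma]}$. The uniqueness step then upgrades this to convergence of the full sequence and independence of choices, and matches $\mu_{\Gamma,q}$ with the conjectural limit of Conjecture~\ref{conj:limit-measure}.

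The hard part — precisely the open point flagged in the introduction — is showing that no mass escapes to infinity, i.e. that $\mu_{\Gamma,q}$ is a genuine \emph{probability} measure and not merely a sub-probability one; equivalently, that for each fixed $B$ the $\nu_{q,n}$-mass of pairs $(H,h)$ with $|H|\le B$ does not tend to $0$ as $n\to\infty$. This demands a real upper bound on large quotients of $Y_{q,n}$ — the kind of estimate that fails for balanced-presentation models but that one expects to hold here because the Demu\v{s}kin relation forces a single relator in the pro-$\ell$ presentation. A secondary obstacle, for $\Gamma\not\simeq\Z/2\Z$, is the lack (noted in \S\ref{ss:statements} and after Theorem~\ref{thm:intro-random-group}) of a canonical pro-$\ell$ Demu\v{s}kin $\Gamma$-group, so that the approximating measures $\nu_{q,n}$ are not yet even defined; one would either have to supply such a canonical object, or bypass the random-group model entirely and build $\mu_{\Gamma,q}$ from an explicit product formula whose moments are then checked against the category-theoretic moment criterion — nonnegativity of all inclusion--exclusion combinations of the values $\tfrac1{[H:H^\Gamma]}$ — which is itself a nontrivial positivity statement.
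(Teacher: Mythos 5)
This statement is a \emph{conjecture} in the paper (Conjecture~\ref{conj:prob-exist}); the author offers no proof of it, and indeed explicitly lists as open both the computation of the measure defined by $Y_{q,n}$ and whether its limit as $n\to\infty$ has total mass $1$. So there is no proof in the paper to compare yours against, and what you have written is a strategy outline rather than a proof — which you yourself acknowledge by flagging the decisive gaps (no escape of mass to infinity; the absence, for $\Gamma\not\simeq\Z/2\Z$, of a canonical Demu\v{s}kin $\Gamma$-group with which to even define the approximating measures; the positivity of the inclusion--exclusion combinations of the values $1/[H:H^{\Gamma}]$).

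That said, your outline is a reasonable map of the territory. The uniqueness half is the part most likely to be genuinely within reach: since every prescribed moment is at most $1$, the M\"obius-inversion/moment-problem machinery in the style of \cite{Wood-lifting} and \cite{LWZB} (and its categorical refinements) should determine $\mu$ on the basic opens $U_{\calC,H,h}$, provided one verifies absolute convergence of the inversion over the relevant finite level — your domination argument is the right shape but needs to be carried out against the actual poset of pairs $(H,h)$, where the $H_2$-marking $h$ multiplies the count and must be controlled. The existence half, by contrast, is exactly where the paper stops: Theorem~\ref{thm:moment} computes $\EE(\#\Sur_\Gamma(Y_{q,n},H))$ for each fixed $n$, but the paper is careful to note that what one wants is the moment of the \emph{limit} measure (i.e.\ exchanging $\lim_{n\to\infty}$ with $\EE$), and that this exchange — equivalently, tightness plus uniform integrability of the surjection-counting functionals against $\nu_{q,\ell,n}$ — is not established. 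Your proposal does not supply that estimate, so the existence part remains open as stated.
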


	When $\Gamma=\Z/2\Z$, we've discussed in Section~\ref{sect:RandomGroup} that there is a random pro-$\ell$ group model $Y_{q,n}$ giving the desired moments. Next, we define a coarser topological space $\calP_{\Gamma,q, \ell}$ on which our pro-$\ell$ version of the probability measure is defined, and then we conjecture that the limit of the random group model $Y_{q,n}$ as $n \to \infty$ gives the desired probability measure.

	\begin{definition}[Definition of measures $\nu_{q,n, \ell}$ on pro-$\ell$ $\Z/2\Z$-groups]\label{def:measure}
		Let $\Gamma=\Z/2\Z$ and $\ell$ an odd prime. 
		We define a coarser topology $\calP_{\Gamma, q, \ell}$ on the underlying space of $\calP_{\Gamma, q}$: where the basic opens are all $U_{\calC, H, h}$ with an additional condition that $H$ is an $\ell$-group. 
		For fixed $q, n$ satisfying $\gcd(2\ell, q)=1$, we use the notation in Theorem~\ref{thm:moment}, fix a generator $\xi_n$ of $H_2(\calG_n, \Z)(\ell)$, and define a probability measure $\nu_{q, \ell, n}$ on the space $\calP_{\Gamma,q, \ell}$ by
	\[
		\nu_{q, \ell, n}(U_{\calC,H, h}):= \Prob \left( \begin{aligned} &\text{there exists $\pi \in \Sur_{\Gamma}(Y_{q,n}, H)$ s.t. }\\ & \text{$\pi$ induces an isom. $Y^{\calC}_{q,n} \simeq H$ and $\pi_\dagger(\xi_n)=h$} \end{aligned} \right),
	\]
	for each finite set $\calC$ of $\ell$-$\Gamma$-groups and pair $(H, h) \in \calP_{\Gamma,q, \ell}$ such that $H$ is finite pro-$\calC$.
	\end{definition}

	\begin{conjecture}\label{conj:limit-measure}
		When $\Gamma=\Z/2\Z$, on the topological space $\calP_{\Gamma, q, \ell}$,
		the probability measures $\nu_{q, \ell, n}$ have weak convergence to the measure $\mu_{\Gamma, q}$ in Conjecture~\ref{conj:prob-exist}, as $n  \to \infty$.
	\end{conjecture}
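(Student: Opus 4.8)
The plan is to deduce Conjecture~\ref{conj:limit-measure} from Theorem~\ref{thm:moment} together with a moment--problem principle in the style of Sawin--Wood, establishing the pro-$\ell$, $\Gamma=\Z/2\Z$ case of Conjecture~\ref{conj:prob-exist} along the way. First I would observe that the ``moment'' of $\nu_{q,\ell,n}$ against a pair $(H,h)\in\calP_{\Gamma,q,\ell}$ is exactly the quantity computed in Theorem~\ref{thm:moment}. Unwinding Definition~\ref{def:measure} and the definition of $\EE_\mu\#\Sur_\Gamma$: the pushforward under $\phi\mapsto(Y_{q,n},g_n)$ of the Haar measure on $\Aut(\widetilde{\calG}_n\rtimes\Gamma,\pi_n;q)$, where $g_n$ is the coinflation image of $\xi_n$ along the natural $\Gamma$-equivariant surjection $\calG_n\twoheadrightarrow Y_{q,n}$, satisfies $\EE_{\nu_{q,\ell,n}}\#\Sur_\Gamma(*,(H,h))=\EE\bigl(\#\{\pi\in\Sur_\Gamma(Y_{q,n},H):\pi_\dagger=\delta\}\bigr)$ with $\delta$ the unique homomorphism $\Z_\ell\to H_2(H\rtimes\Gamma,\Z)(\ell)[q-1]$ sending $\xi_n\mapsto h$ (it is unique because $\xi_n$ generates $\Z_\ell$ topologically and $\delta$ factors through $\Z_\ell/(q-1)\Z_\ell$). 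By Theorem~\ref{thm:moment} this equals $1/[H:H^\Gamma]$ for every $n$ large enough that a $\Gamma$-equivariant surjection $\calG_n\twoheadrightarrow H$ exists; such $n$ exist because the $\Gamma$-action of Lemma~\ref{lem:C2-Demu} makes each $\calG_n$ an admissible $\Gamma$-group (the inversion action on $\calG_n^{ab}\simeq\Z_\ell^{2n}$ has trivial fixed part since $\ell$ is odd) and every finite admissible pro-$\ell$ $\Z/2\Z$-group is a $\Gamma$-quotient of a Demu\v{s}kin group of sufficiently large rank. Hence for each fixed $(H,h)$ we get $\lim_{n\to\infty}\EE_{\nu_{q,\ell,n}}\#\Sur_\Gamma(*,(H,h))=1/[H:H^\Gamma]$.

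Next I would set up the moment problem for the decorated category whose objects are finite admissible pro-$\ell$ $\Z/2\Z$-groups $H$ equipped with an element of $H_2(H\rtimes\Gamma,\Z)(\ell)[q-1]$ and whose morphisms are the $\Gamma$-equivariant surjections compatible with coinflation; this is the categorical incarnation of $\calP_{\Gamma,q,\ell}$. The prescribed moments $M_{(H,h)}=1/[H:H^\Gamma]$ are uniformly bounded by $1$, far inside the growth regime required by the Sawin--Wood moment theorems, so the substantive input is verifying that this decorated category satisfies their structural hypotheses, in particular that the decoration functor $H\mapsto H_2(H\rtimes\Gamma,\Z)(\ell)[q-1]$ behaves well with respect to the coinflation maps along epimorphisms. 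Granting this, the moment theorem yields: (i) there is at most one probability measure on $\calP_{\Gamma,q,\ell}$ with moments $M_{(H,h)}$, it exists, and we denote it $\mu$; and (ii) any sequence of probability measures on $\calP_{\Gamma,q,\ell}$ whose $(H,h)$-moments converge to $M_{(H,h)}$ for all $(H,h)$ converges weakly to $\mu$. Applying (ii) to $\nu_{q,\ell,n}$ and the limit identity of the first paragraph gives $\nu_{q,\ell,n}\to\mu$ weakly. Finally I would identify $\mu$ with $\mu_{\Gamma,q}$: the matching identity $\EE_{\mu}\#\Sur_\Gamma(*,(H,h))=1/[H:H^\Gamma]$ shows $\mu$ is precisely the pro-$\ell$ shadow of the measure asserted in Conjecture~\ref{conj:prob-exist}, which this argument simultaneously establishes in the case $\Gamma=\Z/2\Z$.

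The main obstacle is item (ii), equivalently tightness of $\{\nu_{q,\ell,n}\}$, i.e.\ that no probability mass escapes in the limit; this is exactly the open point flagged in the introduction, where a priori the limit could have total mass strictly less than $1$. Theorem~\ref{thm:moment} by itself only controls, for each fixed target $(H,h)$, the moment once $n$ is large depending on $(H,h)$, with no uniformity, so making the Sawin--Wood machinery deliver tightness requires either (a) checking that this $H_2$-decorated category of admissible pro-$\ell$ $\Z/2\Z$-groups falls under a version of their framework in which bounded moments force tightness, or (b) a uniform upper bound $\EE\,\#\Sur_\Gamma(Y_{q,n},H)\ll 1$ valid for \emph{all} $n$ and all admissible pro-$\ell$ $H$ of a given order, obtained by pushing the successive-approximation orbit analysis behind Lemma~\ref{lem:Labute} to the range of small $n$. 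The orbit description there in fact suggests that each basic open $U_{\calC,H,h}$ has $\nu_{q,\ell,n}$-measure eventually independent of $n$, which would give convergence on basic opens directly; but upgrading ``eventually constant on each basic open'' to weak convergence of probability measures still demands ruling out loss of mass. I expect (a), the structural verification plus tightness, to be the crux, with the remaining steps being essentially formal.
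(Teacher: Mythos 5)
The statement you are addressing is stated in the paper as a conjecture, and the paper gives no proof of it; the introduction explicitly flags as open exactly the points your proposal leaves open (whether the limit of the measures defined by $Y_{q,n}$ has total mass $1$, and whether one may take $\lim_{n\to\infty}$ inside $\EE$). Your first paragraph is correct and consistent with the paper: Theorem~\ref{thm:moment} shows that for each fixed $(H,h)$ the $\nu_{q,\ell,n}$-moment equals $1/[H:H^{\Gamma}]$ once $n$ is large enough that a $\Gamma$-equivariant surjection $\calG_n\twoheadrightarrow H$ exists, so the moments converge pointwise to the conjectural moments of $\mu_{\Gamma,q}$. That much is a restatement of what the paper proves.

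The gap is everything after that. The appeal to a ``moment-problem principle in the style of Sawin--Wood'' is not a proof step here but a bundling together of two unproven assertions. First, the existence and uniqueness of a probability measure on $\calP_{\Gamma,q,\ell}$ with moments $1/[H:H^{\Gamma}]$ is itself Conjecture~\ref{conj:prob-exist}, which the paper also only conjectures; you cannot ``establish it along the way'' without verifying the structural hypotheses of whatever moment theorem you invoke for this decorated category of $H_2$-marked admissible pro-$\ell$ $\Z/2\Z$-groups, and you do not attempt that verification. Second, even granting existence and uniqueness, the implication ``moments converge $\Rightarrow$ measures converge weakly'' requires tightness of $\{\nu_{q,\ell,n}\}$, i.e.\ that no mass escapes in the limit. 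You correctly identify this as the crux, but nothing in the proposal supplies it: Theorem~\ref{thm:moment} gives no uniformity in $n$ over targets $H$, and no uniform bound $\EE\,\#\Sur_{\Gamma}(Y_{q,n},H)\ll 1$ valid for all $n$ is proved or even sketched. As written, the proposal reduces the conjecture to precisely the two assertions whose absence makes it a conjecture rather than a theorem, so it does not constitute a proof.
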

	
	Next, we give the probability and moment conjecture in the function field case.

	\begin{conjecture}[Function field case]\label{conj:ff}
		Let $\Gamma$ be a finite group and $q$ a prime power such that $\gcd(q, |\Gamma|)=1$. Let $\calC$ be a finite set of finite $(q|\Gamma|)'$-$\Gamma$-groups, and $(H,h)$ a pair in $\calP_{\Gamma, q}$ such that $H$ is finite pro-$\calC$. Fix a generator $\zeta$ of $\hZ(1)_{(q|\Gamma|)'}$, and we let $\delta$ denote the homomorphism $\hZ(1)_{(q|\Gamma|)'} \to H_2(H \rtimes \Gamma, \Z)_{(|\Gamma|)'}$ sending $\zeta \mapsto h$. Then 
		\[
			\lim_{N \to \infty} \frac{\sum\limits_{n \leq N} \#\left\{ K \in E_{\Gamma}(q^n, \F_q(t))\, \bigg|\, \begin{aligned} & \text{ there exists $\pi  \in \Sur_{\Gamma}(\Gs(K),H)$ s.t.} \\& \text{ $\pi$ induces an isom. $\Gs(K)^{\calC} \simeq H$ and $\pi_* \circ \omega_K^{\#}=\delta$} \end{aligned} \right\}}{ \sum\limits_{n\leq N} \# E_{\Gamma}(q^n, \F_q(t))} = \mu_{\Gamma,q}(U_{\calC, H, h}).
		\]
		and
		\[
			\lim_{N \to \infty} \frac{\sum\limits_{n \leq N} \sum\limits_{K \in E_{\Gamma}(q^n, \F_q(t))} \#\left\{ \pi \in \Sur_{\Gamma}\left(\Gs(K), H\right) \,\bigg|\, \pi_{*} \circ \omega_K^{\#} = \delta \right\}}{\sum\limits_{n \leq N} \# E_{\Gamma}(q^n, \F_q(t))} = \frac{1}{[H: H^{\Gamma}]}.
		\]
	\end{conjecture}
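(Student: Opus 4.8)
The statement above is a conjecture, so what follows is a plan rather than a proof. I will concentrate on the second (moment) display; the first (probability) display would follow from it once Conjecture~\ref{conj:prob-exist} is granted, since the values $\mu_{\Gamma,q}(U_{\calC,H,h})$ are recovered from the moments $\EE_{\mu_{\Gamma,q}}\#\Sur_\Gamma(*,(G,g))$ over the finitely many pro-$\calC$ pairs by inclusion--exclusion, exactly as in \cite{LWZB}. Thus the heart of the matter is the fixed-$q$ moment equation.

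The route to the moment equation is the Hurwitz-scheme/Grothendieck--Lefschetz argument already used for Theorem~\ref{thm:functionfield}, but now \emph{without} the auxiliary $q\to\infty$ limit. Set $G=H\rtimes\Gamma$, let $c_1$ be the set of elements of $G\setminus\{1\}$ of the same order as their image in $\Gamma$, let $c_2=\Gamma\setminus\{1\}$, and let $Z^{\delta}_{q,n}\subset(\HHur^{n}_{G,c_1})_{\F_q}$ be the union of the components whose $\omega$-invariant equals $\delta$. Because $\ord(\im\delta)\mid q-1$, the Frobenius action on lifting invariants \eqref{eq:Frob-inv} together with Proposition~\ref{prop:EVW-omega} shows that $\Frob_{\F_q}$ fixes this $\omega$-invariant, so $Z^{\delta}_{q,n}$ is defined over $\F_q$. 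By Lemma~\ref{lem:Hur-Sur} the numerator of the moment equals $[H:H^{\Gamma}]^{-1}\sum_{n\le N}\#Z^{\delta}_{q,n}(\F_q)$ and the denominator equals $\sum_{n\le N}\#\HHur^{n}_{\Gamma,c_2}(\F_q)=\sum_{n\le N}\#E_\Gamma(q^n,\F_q(t))$. Applying Grothendieck--Lefschetz to both, the conjecture becomes the cohomological assertion
\[
\lim_{N\to\infty}\frac{\sum_{n\le N}\sum_{j}(-1)^{j}\Tr\big(\Frob_{\F_q}\mid H^{j}_{c,\et}((Z^{\delta}_{q,n})_{\overline{\F}_q},\Q_{\ell})\big)}{\sum_{n\le N}\sum_{j}(-1)^{j}\Tr\big(\Frob_{\F_q}\mid H^{j}_{c,\et}((\HHur^{n}_{\Gamma,c_2})_{\overline{\F}_q},\Q_{\ell})\big)}=\frac{1}{[H:H^{\Gamma}]}.
\]
In the $q\to\infty$ proof only the top-degree term $j=2n$ contributes to the leading asymptotics, with value $\pi^{\delta}_{G,c_1}(q,n)\,q^{n}$, and Lemmas~\ref{lem:pi-b-marked} and \ref{lem:b=b} identify its leading term with $[H:H^{\Gamma}]^{-1}$ times the $(\Gamma,c_2)$-count. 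For fixed $q$ that shortcut is gone: one must control the Frobenius traces on \emph{all} the cohomology $H^{j}_{c,\et}$, i.e. one needs homological stability for $\HHur^{n}_{G,c_1}$ together with an explicit description of the stable cohomology \emph{and its Frobenius weights}, refined so as to respect the decomposition into $\omega$-components, and one must show the unstable range is negligible after averaging over $n\le N$.

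The genuine obstacle is exactly this last point. The Ellenberg--Venkatesh--Westerland stability machinery \cite{EVW}, as used in \cite{LWZB}, is strong enough to pin down $H^{0}$ of these Hurwitz spaces (which is why the $q\to\infty$ limit is accessible) but not to evaluate the higher stable cohomology with its arithmetic Frobenius action, let alone with the lifting-invariant grading; one would need a theorem asserting that the stable cohomology of $\HHur^{n}_{G,c_1}$ is ``as small as possible'' and Frobenius-semisimple of predictable weight, so that the ratio in the display is governed entirely by the stable part. A partial alternative, available when $\Gamma=\Z/2\Z$ and after pro-$\ell$ completion, is to pass through the random group $Y_{q,n}$: Theorem~\ref{thm:moment} already gives the exact identity $\EE(\#\Sur_\Gamma(Y_{q,n},H)\mid \pi_\dagger=\delta)=[H:H^{\Gamma}]^{-1}$ for every $n$, so it would suffice to prove (i) that the measures $\nu_{q,\ell,n}$ of Definition~\ref{def:measure} converge weakly with no escape of mass, (ii) that the limit is $\mu_{\Gamma,q}$, and (iii) that the arithmetic Hurwitz counts converge to the moments of this limit measure. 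I would attack (i) by a Labute-style successive-approximation and compactness argument on the tower $\{\widetilde{\calG}_n\}$, bounding uniformly in $n$ the probability that $Y^{\calC}_{q,n}$ is large; but, as the paper stresses, interchanging $\lim_{n}$ with $\EE$ in Theorem~\ref{thm:moment}---and the total-mass-$1$ claim---is exactly the unresolved point, so even granting the random-group picture one cannot yet deduce the moment equation, which carries no $\lim_{n}$ on its arithmetic side. In short, the plan ultimately needs either a sharp computation of the stable homology of Hurwitz spaces over $\F_q$ with its Frobenius weights, or an effective weak-convergence theorem for the $Y_{q,n}$-measures; I expect the former to be where the real work lies.
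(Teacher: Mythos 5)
This statement is a conjecture; the paper contains no proof of it, only evidence (Theorem~\ref{thm:functionfield}, which adds an auxiliary $q\to\infty$ limit, and the random-group computation of Theorem~\ref{thm:moment}). You correctly treat it as such, and your diagnosis of why the fixed-$q$ case is out of reach matches the paper's own framing: the Grothendieck--Lefschetz reduction only controls the top-degree term $\pi^{\delta}_{G,c_1}(q,n)q^n$ via Lemmas~\ref{lem:pi-b-marked} and \ref{lem:b=b}, and at fixed $q$ the lower-degree traces are no longer dominated, so one would need the stable cohomology of $\HHur^n_{G,c_1}$ with its Frobenius weights and its decomposition by lifting invariant --- which is precisely what is unavailable. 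Your alternative route through $Y_{q,n}$ and its failure point (the interchange of $\lim_n$ with $\EE$, i.e.\ Conjecture~\ref{conj:limit-measure} plus no escape of mass) is also exactly the obstruction the paper identifies in its ``further questions'' discussion.

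One small caution on your opening remark: recovering the first display from the second requires more than Conjecture~\ref{conj:prob-exist}. Even granting existence and uniqueness of $\mu_{\Gamma,q}$, passing from convergence of moments of the empirical distributions to convergence of the probabilities $\mu_{\Gamma,q}(U_{\calC,H,h})$ needs tightness of the empirical measures (no escape of mass to infinite groups) and a moment-determinacy statement strong enough to upgrade moment convergence to weak convergence; neither is automatic here since the moments $1/[H:H^{\Gamma}]$ do not decay. So the two displays are genuinely two separate assertions of the conjecture rather than one being a formal consequence of the other. Otherwise your assessment is accurate.
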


	To give the moment conjecture in the number field case, we need to first study how the action of taking quotient modulo $u$ random elements changes the moments. This idea comes from \cite{Lipnowski-Sawin-Tsimerman}, and Lemma~\ref{lem:Qt-moment} below is the analogue of \cite[Lem.8.12]{Lipnowski-Sawin-Tsimerman} for our notion of the probability space.  

	\begin{definition}\label{def:Qt-measure}
		Let $\mu$ be a measure on $\calP_{\Gamma, q}$. For a positive integer $t$, define the measure $\calQ^t\mu$ on $\calP_{\Gamma,q}$ as obtained by taking the $\Gamma$-equivariant quotient of the random pair $(H,h)\in \calP_{\Gamma,q}$ according to $\mu$ modulo $t$ random elements. That is, for any $(H,h)$, define
		\[
			\calQ^t\mu(H, h) := \sum_{(G,g)\in \calP_{\Gamma,q}}  \Prob\left( G/[x_1, \ldots, x_t]  \simeq H \text{ such that } g \mapsto h\right) \mu(G,g),
		\] 
		where $x_1, \ldots, x_t$ are $t$ random elements of $G$ with respect to Haar measure, and the notation $[x_1, \ldots, x_t]$ represents the closed normal $\Gamma$-subgroup of $G$ generated by $x_1, \ldots, x_t$.
	\end{definition}

	\begin{lemma}\label{lem:Qt-moment}
		Let $\mu$ be a measure on $\calP_{\Gamma, q}$. Then for any $(H,h)\in \calP_{\Gamma,q}$, we have
		\[
			\EE_{\calQ^t\mu} \# \Sur_{\Gamma}(*, (H,h)) = \frac{\EE_{\mu} \# \Sur_{\Gamma}(*, (H,h))}{|H|^t}.
		\]
	\end{lemma}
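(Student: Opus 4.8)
The plan is to compute the left-hand side directly from Definition~\ref{def:Qt-measure} by expanding both the measure $\calQ^t\mu$ and the moment $\EE$, then interchanging the order of summation and reducing to a counting problem about surjections from a group modulo $t$ random elements. First I would write out
\[
	\EE_{\calQ^t\mu} \# \Sur_{\Gamma}(*, (H,h)) = \sum_{(K,k) \in \calP_{\Gamma,q}} \# \left\{ \pi \in \Sur_{\Gamma}(K, H) \,\Big|\, \text{$\pi$ sends $k \mapsto h$ on $H_2$} \right\} \cdot \calQ^t\mu(K,k),
\]
and then substitute the definition of $\calQ^t\mu(K,k)$ as a sum over $(G,g)\in\calP_{\Gamma,q}$ of $\Prob(G/[x_1,\dots,x_t] \simeq K \text{ with } g\mapsto k)\,\mu(G,g)$. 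After swapping the two outer sums so that $(G,g)$ is on the outside, the inner expression becomes, for each fixed $(G,g)$, the expected number (over $t$ Haar-random elements $x_1,\dots,x_t$ of $G$) of surjections $\pi: G/[x_1,\dots,x_t] \to H$ sending the image of $g$ to $h$. Such surjections are in bijection with surjections $\bar\pi: G \to H$ sending $g\mapsto h$ (on $H_2$) that additionally kill all of $x_1,\dots,x_t$, i.e.\ with $x_i \in \ker\bar\pi$ for every $i$. So the inner quantity equals $\sum_{\bar\pi} \Prob(x_1,\dots,x_t \in \ker\bar\pi)$ where $\bar\pi$ ranges over $\Gamma$-equivariant surjections $G\to H$ compatible with $g\mapsto h$.

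The key computational step is then that for a fixed such $\bar\pi$, since the $x_i$ are independent and Haar-distributed, $\Prob(x_1,\dots,x_t \in \ker\bar\pi) = [G : \ker\bar\pi]^{-t} = |H|^{-t}$, using that $\bar\pi$ is surjective so $G/\ker\bar\pi \simeq H$. Here I would need to be slightly careful that ``$\ker\bar\pi$'' has the right index in the relevant profinite sense — the Haar measure of the closed subgroup $\ker\bar\pi$ in $G$ is exactly $1/|H|$ because $G/\ker\bar\pi$ is the finite group $H$ — and also that the closed normal $\Gamma$-subgroup generated by $x_1,\dots,x_t$ lies in $\ker\bar\pi$ if and only if each $x_i$ does, which holds because $\ker\bar\pi$ is already a closed normal $\Gamma$-subgroup (it is the kernel of a $\Gamma$-equivariant homomorphism). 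Pulling the factor $|H|^{-t}$ out of the sum over $\bar\pi$, the remaining sum $\sum_{\bar\pi} 1$ over $(G,g)$, weighted by $\mu(G,g)$, is precisely $\EE_{\mu} \# \Sur_{\Gamma}(*,(H,h))$ by definition. This yields the claimed identity.

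The main obstacle, such as it is, is bookkeeping rather than mathematics: one must justify the interchange of the (possibly infinite) sums over $\calP_{\Gamma,q}$, which is legitimate because all terms are nonnegative (Tonelli), and one must make sure the bijection ``surjections from $G/[x_1,\dots,x_t]$'' $\leftrightarrow$ ``surjections from $G$ vanishing on the $x_i$'' is correctly set up at the level of the compatibility condition on $H_2$ — i.e.\ that the coinflation map induced by $G/[x_1,\dots,x_t]\to H$ sends the image of $g$ to $h$ exactly when the coinflation induced by $G\to H$ does, which follows from functoriality of coinflation applied to the factorization $G \twoheadrightarrow G/[x_1,\dots,x_t] \twoheadrightarrow H$. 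Everything else is a direct unwinding of Definitions~\ref{def:Qt-measure} and the notation for $\EE_\mu$. I expect the written proof to be only a few lines.
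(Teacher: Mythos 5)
Your proposal is correct and follows essentially the same computation as the paper's proof: expand both definitions, swap the order of summation, identify surjections from the quotient with surjections from $G$ killing the random elements, and use that $\ker\bar\pi$ has Haar measure $1/|H|$. The only cosmetic difference is that the paper first reduces to $t=1$ and iterates, whereas you handle general $t$ directly (and you spell out a couple of routine justifications — Tonelli and functoriality of coinflation — that the paper leaves implicit).
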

	
	\begin{proof}
		It suffices to prove the lemma for $t=1$. For two pairs $(G_1, g_1), (G_2, g_2) \in \calP_{\Gamma, q}$, we write $\pi: (G_1, g_1) \to (G_2, g_2)$ if $\pi$ is a $\Gamma$-equivariant group homomorphism $G_1 \to G_2$ whose induced coinflation maps $g_1$ to $g_2$, and we write $\Sur_{\Gamma}((G_1, g_1), (G_2, g_2))$ for the set of surjections $(G_1, g_1) \to (G_2, g_2)$. Then we have
	\begin{eqnarray*}
			&& \EE_{\calQ\mu} \# \Sur_{\Gamma}(*, (H, h)) \\
			&=& \sum_{(G,g) \in \calP_{\Gamma,q}} \# \Sur_{\Gamma}\left((G,g), (H, h)\right) \cdot \calQ\mu(G,g) \\
			&=& \sum_{(G,g) \in \calP_{\Gamma,q}} \sum_{(E,e) \in \calP_{\Gamma,q}} \#\Sur_{\Gamma}\left((G,g), (H,h)\right) \cdot \Prob\left(E/[x] \simeq G \text{ s.t. } e\mapsto g\right) \cdot \mu(E,e) \\
			&=& \sum_{(E,e) \in \calP_{\Gamma, q}} \sum_{\pi \in \Sur_{\Gamma}\left( (E,e), (H,h)\right)} \Prob\left(\text{the random $x\in E$ is contained in $\ker \pi$}\right) \cdot \mu(E,e) \\
			&=& \sum_{(E,e) \in \calP_{\Gamma,q}} \# \Sur_{\Gamma}\left((E,e), (H,h) \right)\cdot \frac{1}{|H|} \cdot \mu(E,e) \\
			&=& \frac{\EE_{\mu} \#\Sur_{\Gamma}(*, (H,h))}{|H|}.
		\end{eqnarray*}		
	\end{proof}

	\begin{conjecture}[Number field case]\label{conj:nf}
		Let $\Gamma$ be a finite group, $Q$ a number field, and $m$ the maximal integer such that the $m$-th roots of unity $\mu_m$ is contained in $Q$. Let $G_{\O}^{\#}(K):=G_{\O}(K)_{(|\Gamma||G_{\O}(Q)|)'}$, and $H$ be a finite admissible $\Gamma$-group such that $G_{\O}(Q)$ is pro-prime-to $|H|$. Then, for any extension $K/Q$, 
		\[
			\lim_{B \to \infty} \frac{\sum_{D \leq B} \sum_{K \in E_{\Gamma}(D,Q)} \# \Sur_{\Gamma}(G^{\#}_{\O}(K), H)}{ \sum_{D \leq B} \# E_{\Gamma}(D,Q)}=\frac{\#H_2(H \rtimes\Gamma, \Z)_{(|\Gamma|)'}[m]}{[H:H^{\Gamma}] \cdot |H|^u},
		\]
		where $u$ is the rank of the group of units in the ring $\calO_Q$ of algebraic integers of $Q$. 
	\end{conjecture}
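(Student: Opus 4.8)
Since Conjecture~\ref{conj:nf} is a conjecture, the goal of a ``proof'' is to show that its right-hand side is forced once one grants (i) the function field moments of Theorem~\ref{thm:functionfield} (equivalently, the moment half of Conjecture~\ref{conj:ff} and the existence of the measure $\mu_{\Gamma,q}$ of Conjecture~\ref{conj:prob-exist}), (ii) the operation $\calQ^u$ of Definition~\ref{def:Qt-measure} together with Lemma~\ref{lem:Qt-moment}, and (iii) the heuristic, supported by \S\ref{ss:evidence1} and \S\ref{ss:evidence2}, that for a number field $Q$ with $u=\rank\calO_Q^{\times}$ the group $G^{\#}_{\O}(K)$ is distributed like the ``function-field-type'' unramified Galois group modulo $u$ independent Haar-random relations, with the number of roots of unity $m$ of $Q$ playing the role of $|\mu(\F_q)|$. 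First I would record the number field analogue of the topological space $\calP_{\Gamma,q}$: pairs $(H,h)$ with $H$ a finite admissible $\Gamma$-group and $h\in H_2(H\rtimes\Gamma,\Z)_{(|\Gamma|)'}[m]$, carrying a conjectural measure $\mu_{\Gamma,m}$ with $\EE_{\mu_{\Gamma,m}}\#\Sur_{\Gamma}(*,(H,h))=1/[H:H^{\Gamma}]$; this is exactly the analogue of $\mu_{\Gamma,q}$ under the substitution $q-1\rightsquigarrow m$, justified because $H_2(H\rtimes\Gamma,\Z)_{(|\Gamma|)'}$ is a fixed finite group and, for $H$ fixed, one may choose $q$ with $\gcd(q-1,\exp H_2(H\rtimes\Gamma,\Z)_{(|\Gamma|)'})=\gcd(m,\exp H_2(H\rtimes\Gamma,\Z)_{(|\Gamma|)'})$.

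\textbf{Key steps.} Step 1: pass from the $\omega$-refined moment to the unrefined moment. Every $\pi\in\Sur_{\Gamma}(\Gs(K),H)$ induces a well-defined invariant $\pi_{*}\circ\omega^{\#}_{K}$, which lands in $H_2(H\rtimes\Gamma,\Z)_{(|\Gamma|)'}[q-1]$; hence $\#\Sur_{\Gamma}(\Gs(K),H)=\sum_{h}\#\{\pi:\pi_{*}\circ\omega^{\#}_{K}=\delta_h\}$, the sum over the $\#H_2(H\rtimes\Gamma,\Z)_{(|\Gamma|)'}[q-1]$ admissible $\delta$. Summing Theorem~\ref{thm:functionfield} over these $\delta$ gives the unrefined function field moment $\#H_2(H\rtimes\Gamma,\Z)_{(|\Gamma|)'}[q-1]/[H:H^{\Gamma}]$, i.e. $\EE_{\mu_{\Gamma,m}}$ of the forgetful count is $\#H_2(H\rtimes\Gamma,\Z)_{(|\Gamma|)'}[m]/[H:H^{\Gamma}]$ after the substitution. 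Step 2: in the function field case $u=\rank\calO_{\F_q(t)}^{\times}=0$, so the number field case is $\mu_{\Gamma,m}$ pushed through $\calQ^u$. Step 3: apply Lemma~\ref{lem:Qt-moment} with $t=u$ to each pair $(H,h)$: $\EE_{\calQ^u\mu_{\Gamma,m}}\#\Sur_{\Gamma}(*,(H,h))=1/([H:H^{\Gamma}]\,|H|^{u})$. Step 4: sum over $h\in H_2(H\rtimes\Gamma,\Z)_{(|\Gamma|)'}[m]$ exactly as in Step 1 to obtain the forgetful moment
\[
	\frac{\#H_2(H\rtimes\Gamma,\Z)_{(|\Gamma|)'}[m]}{[H:H^{\Gamma}]\cdot|H|^{u}},
\]
which is the right-hand side of Conjecture~\ref{conj:nf}. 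Finally one would match this with the conjectured density of $\#\Sur_{\Gamma}(G^{\#}_{\O}(K),H)$ as $K$ ranges over $E_{\Gamma}(D,Q)$ ordered by $\rDisc$, using the evidences of \S\ref{ss:evidence1} (the presentation of $\Gs(K)$ needs $u$ more relators than in the function field case) and \S\ref{ss:evidence2} (for $Q=\Q(\mu_\ell)$ and $\ell$-CM $K$, $G_{\O}(K)$ is the quotient of $G^{\#}_{\pos}(K)$ by $u$ elements).

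\textbf{Main obstacle.} The genuine difficulty, and the reason this remains a conjecture, is that there is at present no number-field analogue of the \'etale fundamental group of $X_{\overline{\F}_q}$, hence no construction of a ``master group'' $\calG$ together with an invariant $\omega_{K}$ such that $G^{\#}_{\O}(K)$ is literally the $\Gamma$-equivariant quotient of $\calG$ modulo $u$ Haar-random relations; \S\ref{ss:evidence2} produces such a $\calG$ (via Schmidt's positively ramified extensions) only for $Q=\Q(\mu_\ell)$ with $K$ $\ell$-CM, and \S\ref{ss:evidence1} only controls the relator count, not the full distribution. Even granting a suitable $\calG$ and $\omega_K$, one would still need the existence and moment-determinacy of the limit measure (Conjecture~\ref{conj:prob-exist}, itself open even in the function field case beyond $\Gamma=\Z/2\Z$ and pro-$\ell$ quotients, cf. Theorem~\ref{thm:intro-random-group}), together with the analytic input of counting $\Gamma$-extensions of a number field by $\rDisc$ with uniform control on unramified $H$-extensions --- a theorem entirely out of reach with current methods. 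So the realistic deliverable is the conditional derivation above, presented alongside the evidences in \S\ref{ss:evidence1}--\S\ref{ss:evidence2}.
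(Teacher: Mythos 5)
Your conditional derivation matches the paper's own justification of this conjecture exactly: the paper states (end of \S\ref{ss:evidence1}) that Conjecture~\ref{conj:nf} follows from Conjecture~\ref{conj:ff} and Lemma~\ref{lem:Qt-moment} once one grants that the $u$ extra relators in the number field presentation are Haar-random, and the right-hand side is obtained by summing the refined moment $1/[H:H^{\Gamma}]$ over the $\#H_2(H\rtimes\Gamma,\Z)_{(|\Gamma|)'}[m]$ admissible invariants and dividing by $|H|^{u}$, just as you do. Your identification of the genuine obstructions (no number-field analogue of $\pi_1^{\et}(X_{\overline{\F}_q})$ outside the $\ell$-CM setting of \S\ref{ss:evidence2}, and the open measure-existence Conjecture~\ref{conj:prob-exist}) also agrees with the paper's discussion.
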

	
	\begin{remark}
	\begin{enumerate}

		\item Although we will see in \S\ref{ss:evidence2} that, for some number field $K$, we can define a lifting invariant that is analogous to the invariant $\omega_K^{\#}$ in the function field case, in general, we don't know how to define such an invariant, so in Conjecture~\ref{conj:nf}, we do not specify the ``lifting invariant'' and study only the distribution of $G_{\O}^{\#}(K)$.
		
		\item We only make the conjecture in the moment version because in the number field case the characteristic is zero and then the distribution of $G^{\#}_{\O}(K)$ is not a measure on $\calP_{\Gamma, q}$ for any $q$. However, one can similarly define a topology on the set of pairs $(H,h)$ where $H$ is an admissible $\Gamma$-group and $h \in H_2(H \rtimes \Gamma, \Z)_{(|\Gamma|)'}[m]$, and conjecture that there exists a unique probability measure on this topological space that gives the desired moments. 
	\end{enumerate}
	\end{remark}

\subsection{Evidence I: Presentations of $G_{\O}(K)$}\label{ss:evidence1} 

	In this section, we fix a finite group $\Gamma$ and let $Q$ denote a global field that is either $\F_q(t)$ or a number field.

	We let $F_n(\Gamma)$ denote \emph{the free $|\Gamma|'$-$\Gamma$-group on $n$ generators} \cite[\S3.1]{LWZB}, that is: $F_n(\Gamma)$ is the free pro-$|\Gamma|'$ group on $\{x_{i,\gamma} \mid i =1 , \ldots n \text{ and } \gamma \in \Gamma\}$ where $\sigma \in \Gamma$ acts on $F_n(\Gamma)$ by $\sigma(x_{i, \gamma})=x_{i,\sigma \gamma}$. Let $G$ be a finitely generated $|\Gamma|'$-$\Gamma$-group. For a sufficiently large $n$, there exists a $\Gamma$-equivariant short exact sequence of $\Gamma$-groups
	\[
		1 \longrightarrow N \longrightarrow F_n(\Gamma) \overset{\pi}{\longrightarrow} G \longrightarrow 1,
	\]
	and we call such a short exact sequence \emph{a $\Gamma$-presentation of $G$}. For a set $\calC$ of isomorphism classes of $|\Gamma|'$-$\Gamma$-groups, if $G$ is pro-$\calC$, then the map $\pi$ factors through the pro-$\calC$ completion map $F_n(\Gamma) \to F_n(\Gamma)^{\calC}$. We call the induced short exact sequence
	\[
		1 \longrightarrow M \longrightarrow F_n(\Gamma)^{\calC} \overset{\pi^{\calC}}{\longrightarrow} G \longrightarrow 1
	\]
	\emph{a pro-$\calC$ $\Gamma$-presentation of $G$}.

	For the rest of this section, we assume $\calC$ is finite. For any $\Gamma$-extension $K$ of $Q$ that is split completely at $\infty$ or at all archimedean primes, by \cite[Thm.6.4]{Liu2020}, the pro-$\calC$ completion of $G_{\O}(K)$ is finite, so $\Gs(K)^{\calC}$ is finite because $\Gs(K)$ is a quotient of $G_{\O}(K)$ in both the number field case and the function field case. Then we can study a pro-$\calC$ $\Gamma$-presentation
	\begin{equation}\label{eq:C-Gur}
		1 \longrightarrow M \longrightarrow F_n(\Gamma)^{\calC} \overset{\pi^{\calC}}{\longrightarrow} \Gs(K)^{\calC} \longrightarrow 1.
	\end{equation}
	The work \cite{Liu2020} studies the minimal number of elements that generates $M$ as a $\Gamma$-closed normal subgroup of $F_n(\Gamma)^{\calC}$ in such a presentation, and gives an upper bound for this minimal number using Galois cohomology groups $H^i(G_{\O}(K), A)$ for $i=1,2$ and $A$ a finite simple $\Gal(K_{\O}/Q)$-module. We will apply that result to number field case and function field case respectively, and show that the number field case needs $u$ more elements than the function field case to generate $M$, where $u$ is the rank of $\calO_Q^{\times}$ as defined in Conjecture~\ref{conj:nf}.
		
	We first recall the definition of multiplicities in \cite[Def.3.1]{Liu2020}. Given a short exact sequence $1 \to N \to F \overset{\omega}{\longrightarrow} H \to 1$ of $\Gamma$-groups, we let $N_0$ be the intersection of all maximal proper $F \rtimes \Gamma$-normal subgroups of $N$, and denote $\overline{N}=N/N_0$ and $\overline{F}=F/N_0$. Then $\overline{N}$ is a direct sum of finite irreducible $\overline{F} \rtimes \Gamma$-groups. For any finite irreducible $\overline{F} \rtimes \Gamma$-group $A$, we define the \emph{multiplicity $m(\omega, \Gamma, H, A)$} to be the multiplicity of $A$ appearing in $\overline{N}$.

	\begin{lemma}\label{lem:multiplicity}
		Let $Q$ be $\F_q(t)$ with $\gcd(q, |\Gamma|)=1$ or a number field. Let $K$ be a $\Gamma$-extension of $Q$ such that each archimedean prime is unramified when $Q$ is a number field, and $\infty$ splits completely when $Q=\F_q(t)$. Let $\calC$ be a finite set of $|\Gamma|'$-$\Gamma$-groups when $Q=\F_q(t)$
		, and a finite set of $(|\Gamma||G_{\O}(Q)|)'$-$\Gamma$-groups when $Q$ is a number field. Let $\ell$ be a prime such that $\ell \neq \Char(Q)$, $\ell \nmid |\Gamma|$, and $\ell \nmid |G_{\O}(Q)|$ when $Q$ is a number field.
		We consider a pro-$\calC$ $\Gamma$-presentation \eqref{eq:C-Gur} of $\Gs(K)^{\calC}$. 
		
		Then for a finite simple $\F_{\ell}[ \Gs(K)^{\calC}\rtimes \Gamma ]$-module $A$, we have
		\[
			 m(\pi^{\calC}, \Gamma, \Gs(K)^{\calC}, A) 
			\leq \begin{cases}
				\dfrac{(n+1)\dim_{\F_\ell} A - \xi(A)-{\bf{1}}_{\F_\ell}(A)}{\dim_{\F_\ell} \Hom_{ \Gs(K)^{\calC}\rtimes \Gamma}(A)} & \text{ if $Q$ is a function field} \\
				\dfrac{(n+u+1)\dim_{\F_\ell} A - \xi(A)-{\bf{1}}_{\F_\ell}(A)}{\dim_{\F_\ell} \Hom_{ \Gs(K)^{\calC}\rtimes \Gamma}(A)} & \text{ if $Q$ is a number field}.
			\end{cases}
		\]
		Here $\xi(A):=\dim_{\F_\ell} A^{\Gamma}/A^{ \Gs(K)^{\calC}\rtimes \Gamma}$; ${\bf{1}}_{\F_\ell}(A)$ is 1 if $A=\F_\ell$ and $\mu_{\ell}\not\subset Q$, and 0 otherwise; and $u$ is the rank of $\calO_{Q}^{\times}$.
	\end{lemma}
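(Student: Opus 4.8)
The statement is about bounding the multiplicity $m(\pi^{\calC}, \Gamma, \Gs(K)^{\calC}, A)$ in a pro-$\calC$ $\Gamma$-presentation of $\Gs(K)^{\calC}$, and the strategy is to apply the main general bound from \cite{Liu2020} (which I will take as a black box) and then to compute the relevant Galois cohomological quantities in our two specific settings. Recall that by \cite[Def.3.1]{Liu2020} and the accompanying multiplicity bound (phrased there in terms of $H^1$ and $H^2$ of $G_{\O}(K)$ with coefficients in a finite simple $\Gal(K_{\O}/Q)$-module), one has an inequality of the shape
\[
	m(\pi^{\calC}, \Gamma, \Gs(K)^{\calC}, A) \leq \frac{\dim_{\F_\ell} H^2\big(G_{\O}(K), A\big)^{\Gamma} + (\text{a correction term for } H^1) }{\dim_{\F_\ell}\Hom_{\Gs(K)^{\calC}\rtimes\Gamma}(A)},
\]
where the numerator is governed by the difference $h^2 - h^1$ of local-global Euler-characteristic type, together with the contribution of the generating rank $n$ of the free $|\Gamma|'$-$\Gamma$-group $F_n(\Gamma)^{\calC}$. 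So the real content is: (i) transcribe the precise form of the bound from \cite{Liu2020}, and (ii) evaluate the Euler-characteristic piece $\dim H^2(G_{\O}(K),A) - \dim H^1(G_{\O}(K),A)$ in the function field and number field cases.

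\textbf{Key steps.} First I would invoke the theorem of \cite{Liu2020} giving the upper bound for the minimal number of $\Gamma$-normal generators of $M$ in terms of $H^1, H^2$ of $G_{\O}(K)$ with simple coefficients, then translate ``minimal number of generators'' into the multiplicity $m$ via the standard dictionary (the multiplicity of $A$ in $\overline{N}$ equals $\dim H^1(\ker \pi^{\calC}, A^{\vee})^{F_n(\Gamma)^{\calC}\rtimes\Gamma}$ divided by $\dim \Hom(A)$, compared against the analogous quantity for a free presentation). Second, I would compute the Euler characteristic $\chi(G_{\O}(K), A) = h^0 - h^1 + h^2$ using the Euler--Poincar\'e characteristic formula for the \'etale fundamental group (function field case) and the Tate--Poitou / global Euler characteristic formula (number field case): in the function field case the global Euler characteristic of a curve of genus $g$ contributes $(2g-2)\dim A$ plus local inertia corrections at ramified primes, which after accounting for how $\Gamma$-ramification forces $2g-2$ and the branch data match up with the rank $n$ of $F_n(\Gamma)$, yields the ``$(n+1)\dim A$'' shape; in the number field case the global Euler characteristic formula carries an extra contribution of $\sum_{v\mid\infty}\dim A^{+} $ — equivalently a term of size $u \dim A$ coming from the unit rank (this is precisely where Dirichlet's unit theorem, $\rank\calO_Q^\times = u$, enters) — which produces the ``$(n+u+1)\dim A$'' shape. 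Third, I would identify the correction terms: $\xi(A) = \dim A^{\Gamma}/A^{\Gs(K)^{\calC}\rtimes\Gamma}$ accounts for the discrepancy between $H^0$ of $G_{\O}(K)$ and $H^0$ of $\Gs(K)^{\calC}$ (i.e. the part of $A^\Gamma$ not already seen by the quotient group), and ${\bf 1}_{\F_\ell}(A)$ is the familiar Leopoldt/cyclotomic correction: when $A = \F_\ell$ is the trivial module and $\mu_\ell \not\subset Q$, the trivial character does not come from a root of unity, shaving one off the $H^2$ contribution (via local class field theory / the product formula for the $\mu_\ell$-pairing). Finally I would assemble these into the stated inequality, checking that the $\Gamma$-invariance and the passage from $G_{\O}(K)$-modules to $\Gs(K)^{\calC}$-modules is handled correctly (this is where the hypothesis $\ell \nmid |\Gamma|$ is used: $\F_\ell[\Gamma]$ is semisimple, so taking $\Gamma$-invariants is exact and the simple $\F_\ell[\Gs(K)^{\calC}\rtimes\Gamma]$-modules behave well).

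\textbf{Main obstacle.} The delicate point is the bookkeeping in the number field case: matching the abstract Euler-characteristic contribution of the archimedean places — which in the global Euler characteristic formula appears as $\sum_{v\mid\infty}(\dim A - \dim A^{G_v})$ or, for the full unramified-at-infinity situation, as a term of total dimension $u\cdot\dim_{\F_\ell} A$ — against the clean statement $(n+u+1)\dim_{\F_\ell} A$. One must be careful that we are working with the $Q$-relative group $G_{\O}(K)$ (unramified everywhere including at archimedean primes, which split completely in $K/Q$ by hypothesis), so the local terms at infinity are uniform, and that the $+1$ reflects the single relation coming from the ``fundamental class'' of the Schur multiplier / Brauer group (the $H^2(G_{\O}(K),\mu_\ell)\simeq\Z/\ell$ phenomenon that already appeared in Lemma~\ref{lem:Galois-H2}), with the ${\bf 1}_{\F_\ell}(A)$ correction precisely removing this $+1$ when the coefficient module is trivial but the base field lacks $\mu_\ell$. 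I expect reconciling these normalizations — especially confirming that the function field count genuinely has $u=0$'s worth of archimedean contribution while the number field count picks up exactly $\rank\calO_Q^\times$ — to be the step requiring the most care, and it is the conceptual heart of why the number field conjecture needs $u$ extra relators.
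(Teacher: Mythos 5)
Your overall strategy is the paper's: quote the general multiplicity bound from \cite{Liu2020} as a black box, whose numerator is $n\dim_{\F_\ell}A-\xi(A)+\dim H^2(\Gs(K),A)^\Gamma-\dim H^1(\Gs(K),A)^\Gamma$, and then evaluate the $h^2-h^1$ term separately in the two cases. Your number-field bookkeeping is also essentially correct: the archimedean terms in the global Euler characteristic formula contribute $-r_2\dim A$ from the Tate cohomology at infinite places plus the local term at $\ell$, combining to $(r_1+r_2)\dim A=(u+1)\dim A$, and since $Q=Q_{\O}$ forces $\Gs(K)=G_{\O}(K)$ no further adjustment is needed there.

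The genuine gap is in the function field case, where your plan misidentifies the source of the $+1$ and proposes a computation that would not land on the stated bound without substantial extra work. The paper does \emph{not} compute an Euler characteristic of the curve via its genus; it uses the duality statement \cite[Prop.9.3(2)]{Liu2020}, which gives
\[
\dim_{\F_\ell}H^2(G_{\O}(K),A)^\Gamma-\dim_{\F_\ell}H^1(G_{\O}(K),A)^\Gamma=\dim_{\F_\ell}\Hom_{\Gal(K_{\O}/Q)}(A,\mu_\ell)-\dim_{\F_\ell}A^{\Gal(K_{\O}/Q)},
\]
a quantity that is $0$ or $-1$ (the $-{\bf 1}_{\F_\ell}(A)$), so the bound for $G_{\O}(K)$ itself has numerator only $n\dim A-\xi(A)-{\bf 1}_{\F_\ell}(A)$ with no $+1$. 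The $+1$ appears only afterwards, because $\Gs(K)$ is the further $\Gamma$-equivariant quotient of $G_{\O}(K)$ by one element (a generator of the decomposition group at the chosen place over $\infty$), and killing one element costs one extra relator, i.e.\ adds $\dim A\cdot(\dim\Hom_{\Gs(K)^{\calC}\rtimes\Gamma}(A))^{-1}$ to the multiplicity as in \cite[Lem.10.6]{Liu2020}. Your attribution of the $+1$ to the fundamental class $H^2(G_{\O}(K),\mu_\ell)\simeq\Z/\ell$, with ${\bf 1}_{\F_\ell}(A)$ ``removing'' it, conflates two independent corrections: the surface-group relation is already absorbed into the duality formula above, while ${\bf 1}_{\F_\ell}(A)$ is a $-1$ coming from $h^1$ when $A=\F_\ell$ and $\mu_\ell\not\subset Q$ (since then $A^{\Gal(K_{\O}/Q)}$ is one-dimensional but $\Hom_{\Gal(K_{\O}/Q)}(A,\mu_\ell)=0$), and it persists independently of the $+1$. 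A genus-based route of the kind you sketch would additionally require you to relate $2g-2$ and the branch data to the generator count $n$ of $F_n(\Gamma)^{\calC}$, which is exactly the comparison the cited duality formula lets you avoid. So the plan needs to be repaired by (i) replacing the genus computation with the duality formula and (ii) inserting the missing quotient-by-one-element step relating $G_{\O}(K)$ to $\Gs(K)$.
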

	
	\begin{remark}
		We take the pro-$\calC$ completions for a finite $\calC$ only because we want to work with a presentation with finitely many generators. If $\Gs(K)$ is finitely generated, then the pro-$\calC$ completions in the inequality can be removed. Also, we can take an ascending series of finite sets $\{\calC_i\}$ such that $\cup\, \calC_i$ contains all isomorphism classes of $|\Gamma|'$-$\Gamma$-groups, then $\varprojlim \Gs(K)^{\calC_i}=\Gs(K)$. We will see in the discussion after the proof of this lemma that only the difference between the upper bounds in the number field case and the function field case is important, which does not depend on the choice of $\calC$. 
	\end{remark}
	
	\begin{proof}
		We denote $G:=G_{\O}(K)$ and $G^{\#}:=\Gs(K)$.
		By \cite[Prop.3.4 and Cor.5.3]{Liu2020}, 
		\begin{equation}\label{eq:mult}
			m(\pi^{\calC}, \Gamma, (G^{\#})^{\calC}, A) \leq \frac{n \dim_{\F_\ell} A - \xi(A) + \dim_{\F_\ell} H^2(G^{\#}, A)^{\Gamma} - \dim_{\F_\ell} H^1(G^{\#}, A)^{\Gamma}}{\dim_{\F_\ell} \Hom_{ (G^{\#})^{\calC}\rtimes \Gamma}(A)}
		\end{equation}
		where the $\Gamma$ actions on the cohomology groups are induced by the conjugation action of $\Gamma$ on $G^{\#}$.
	
		We first prove the function field case. Note that $G^{\#}$ is the quotient of $G$ modulo the decomposition subgroup at $\infty$. So we can assume that $n$ is sufficiently large such that $\pi^{\calC}$ is the composition map $F_n(\Gamma)^{\calC} \overset{\hat{\pi}^{\calC}}{\longrightarrow} G^{\calC} \to (G^{\#})^{\calC}$. Then we have a formula for $m(\hat\pi^{\calC}, \Gamma, G^{\calC}, A)$ similar to \eqref{eq:mult}. Because $K/Q$ is split completely above $\infty$, the genus of $K$ is positive. By \cite[Prop.9.3(2)]{Liu2020}, 
		\begin{eqnarray}
			&& \dim_{\F_\ell} H^2(G, A)^{\Gamma} - \dim_{\F_\ell} H^1(G, A)^{\Gamma} \nonumber\\
			&=& \dim_{\F_{\ell}} \Hom_{\Gal(K_{\O}/Q)} (A, \mu_{\ell}) - \dim_{\F_\ell} A^{\Gal(K_{\O}/Q)}. \label{eq:delta}
		\end{eqnarray}
		Here $A$ is a $(G^{\#})^{\calC} \rtimes \Gamma$-module, and hence is a $\Gal(\Ks/Q)$-module. Because $\Ks/Q$ is split completely over $\infty$, if $A=\mu_{\ell}$, then $\mu_{\ell}$ must be contained in $Q$, so $A$ is a trivial module and \eqref{eq:delta} is $1-1=0$. If $A=\F_{\ell}$, then \eqref{eq:delta} is $-1$ when $\mu_\ell\not \subset Q$ and is 0 otherwise. If $A$ is neither $\F_{\ell}$ or $\mu_{\ell}$, then both terms in \eqref{eq:delta} are 0. So we have an upper bound: $m(\hat\pi^{\calC}, \Gamma, G^{\calC}, A)\leq (n \dim A - \xi(A)-{\bf1}_{\F_\ell}(A)) \cdot (\dim \Hom_{G^{\calC}\rtimes \Gamma}(A))^{-1}$. Recall that $G^{\#}$ is the $\Gamma$-equivariant quotient of $G$ modulo one element (a generator of the decomposition at $\infty$). By following the proof of \cite[Lem.10.6]{Liu2020}, we have $m(\pi^{\calC}, \Gamma, (G^{\#})^{\calC}, A) = m(\hat{\pi}^{\calC}, \Gamma, G^{\calC}, A)+ \dim A \cdot (\dim \Hom_{G^{\calC}\rtimes \Gamma}(A))^{-1}$, and hence we prove the lemma in the function field case. 
		
		When $Q$ is a number field, $G=G^{\#}$ because of the assumption that $\calC$ contains only groups whose order are prime to $|\Gamma||G_{\O}(Q)|$. Let $T$ denote the set of all archimedean primes and all primes of $K$ above $\ell$. Let $S_\infty(Q)$ denote the set of all archimedean primes of $Q$. By \cite[Thm.7.1]{Liu2020},
		\begin{eqnarray*}
			\log_{\ell} \chi_{K/Q, T}(A)&:=&\sum_{i=0}^2 (-1)^i \dim_{\F_\ell} H^i(G_T(K), A)^{\Gamma}\\
			&=& \sum_{v \in S_{\infty}(Q)}  \left( \dim_{\F_\ell} \hH^0(Q_v, \Hom(A, \mu_{\ell})) - \dim_{\F_\ell} H^0 (Q_v, \Hom(A, \mu_{\ell}))\right),
		\end{eqnarray*}
		where $\hH^0$ is the 0-th Tate cohomology of the group $G_{Q_v}$. 
		We let $r_1$ and $r_2$ denote the number of real embeddings and complex embeddings of $Q$ respectively.
		If $v$ is imaginary, then $\dim H^0(Q_v, \Hom(A, \mu_{\ell}))=\dim \Hom(A, \mu_{\ell}) = \dim A$. 
		When $\ell$ is odd, $G_{Q_v}$ is either trivial or $\Gal(\C/\R)=\Z/2\Z$, so the Tate cohomology terms above are always 0 \cite[Prop.(1.6.2)]{NSW}. 
		Since $K_{\O}/Q$ is unramified at every archimedean prime, if $v$ is real, then $A$ as a $G_{Q_v}$-module is trivial but $\mu_{\ell}$ is not trivial, so $H^0(Q_v, \Hom(A, \mu_{\ell}))=\Hom_{G_{Q_v}}(A, \mu_{\ell})=0$. 
		When $\ell=2$, for each real place $v$ of $Q$, $G_{Q_v}$ acts trivially on $A$ and so on $\Hom(A, \mu_2)$. Since $\Hom(A, \mu_2)$ has exponent 2, its $G_{Q_v}$-norm map is zero, which implies that $\hH^0(Q_v, \Hom(A, \mu_2))=H^0(Q_v, \Hom(A, \mu_2))$. When $\ell=2$ and $v$ is imaginary, $G_{Q_v}=1$, so the Tate cohomology term is trivial. Therefore, in every case, we have $\log_{\ell}\chi_{K/Q, T}(A) = -r_2 \dim_{\F_\ell} A$. 
		
		Then by \cite[Prop.9.4]{Liu2020}, 
		\begin{eqnarray}
			&& \dim_{\F_{\ell}} H^2(G, A)^{\Gamma} - \dim_{\F_\ell} H^1(G, A)^{\Gamma} \nonumber \\
			&\leq& -r_2\dim_{\F_\ell} A+ \dim_{\F_{\ell}} \Hom_{\Gal(K_T/Q)}(A, \mu_{\ell}) - \dim_{\F_{\ell}}A^{\Gal(K_{\O}/Q)} + [K:\Q] \dim_{\F_\ell} A. \label{eq:delta-nf}
		\end{eqnarray}
		Similarly to the function field case, if $A=\mu_\ell$, then $\mu_{\ell} \subset Q$ and $\Delta:=\dim \Hom_{\Gal(K_T/Q)}(A, \mu_{\ell})-\dim A^{\Gal(K_{\O}/Q)} =0$. If $A$ is neither $\F_{\ell}$ or $\mu_{\ell}$, then $\Delta=0$. When $A=\F_{\ell}$, $\Delta$ is $0$ if $\mu_{\ell}\subset Q$ and is $-1$ when $\mu_{\ell} \not\subset Q$. Finally, because $[K:\Q]=r_1+2r_2$ and $u=r_1+r_2-1$, the lemma in the number field case follows by \eqref{eq:delta-nf}. 
	\end{proof}
	
	The multiplicities computed in Lemma~\ref{lem:multiplicity} are closely related to the minimal number of relators in the presentation~\eqref{eq:C-Gur} (which is the minimal number of elements that generate $M$ as a $\Gamma$-closed normal subgroup of $F_n(\Gamma)^{\calC}$). Let $R$ and $F$ be the quotients of $M$ and $F_n(\Gamma)^{\calC}$ respectively modulo the intersection of all maximal proper $F_n(\Gamma)^{\calC} \rtimes \Gamma$-normal subgroups of $M$. Then 
	\[
		R\simeq \bigoplus_{A} A^{m(\pi^{\calC}, \Gamma, \Gs(K)^{\calC}, A)},
	\]
	where the direct sum runs over all finite simple $F\rtimes \Gamma$-groups $A$. Then a set of elements generates $M$ as a $\Gamma$-closed normal subgroup of $F_n(\Gamma)^{\calC}$ if and only if the image of the set in $R$ generates $R$ as a $\Gamma$-closed normal subgroup of $F$. By \cite[Thm.5.1 and Rmk.5.2]{Liu-Wood}, for a non-abelian $A$, the direct product of arbitrarily many copies of $A$ can always be generated by 1 element; and for an abelian $A$, the maximal $m$ such that $A^m$ can be generated by 1 element is $\dim_{\F_\ell} A \cdot (\dim_{\F_\ell} \Hom_{\Gs(K)^{\calC} \rtimes \Gamma}(A))^{-1}$. Thus, the upper bounds for the multiplicities given in Lemma~\ref{lem:multiplicity} show that in the pro-$\calC$ $\Gamma$-presentation \eqref{eq:C-Gur}, the number field case needs $u$ more relators than the function field case. If these $u$ extra relators are random, then Conjecture~\ref{conj:nf} follows by Conjecture~\ref{conj:ff} and Lemma~\ref{lem:Qt-moment} (regardless of the ``prime-to-$q$'' condition in the function field case).
	
	Of course, in Lemma~\ref{lem:multiplicity}, we give only upper bounds of the multiplicities, and if there exist better upper bounds, then one may wonder whether Conjecture~\ref{conj:nf} is still reasonable. \cite{Liu2020} shows that the equality in Lemma~\ref{lem:multiplicity} holds if and only if, in the number field case, $H^2(G_{\O}(K), A)$ and $\B_{\O}(K,A)$ are in the same class of the Grothendieck group of $\F_\ell[\Gal(K/Q)]$-modules (see the proof of \cite[Prop.9.4]{Liu2020}). Here $\B_{\O}(K,A)$ is an abelian group defined in \cite[Def.8.1]{Liu2020}.
	We don't know to what extent this condition holds, so it's unclear whether there are upper bounds that are generically better than the ones given in Lemma~\ref{lem:multiplicity}.

\subsection{Evidence II: Positively ramified extensions of number fields}\label{ss:evidence2}
	
	If Conjectures~\ref{conj:ff} and \ref{conj:nf} are true, then one may expect that, for a number field $K$, there exists some group associated to $K$ such that: 1) it has group structure similar to the prime-to-$\Char(k)$ part of the \'etale fundamental group of a smooth projective curve $X$ over a finite field $k$; and 2) $G^{\#}_{\O}(K)$ is a $\Gamma$-equivariant quotient of that group modulo $u$ elements. In the function field case, for $K$ satisfying Notation~\ref{not}, we have seen that $G^{\#}_{\O}(K)$ is naturally a quotient of $\pi_1^{\et}(X_{\overline{k}})=G_{\O}(K\overline{k})$. When we focus on the pro-$\ell$ completions of Galois groups in the number field case, then the passage to the algebraic closure of constant field is analogous to taking the composition of $K$ with the cyclotomic $\Z_\ell$-extension $Q_{\infty}$ of $Q$. However, the \'etale fundamental group $G_{\O}(K)$ is too small to be an analogue of $\pi_1^{\et}(X)$ because $Q_\infty$ is already ramified. Let $S$ be a set of primes of $K$ that contains all the archimedean primes and primes above $\ell$, and let $G_S(K)(\ell)$ denote the Galois group of the maximal pro-$\ell$ extension of $K$ that is unramified outside primes above $S$. It is well known that there exists a strong analogy between $G_S(K)(\ell)$ and $\pi_1^{\et}(X - \{P_1, \ldots, P_m\})(\ell)$ when $\ell\neq \Char(k)$: both of them are Poincar\'e group at $\ell$ of dimension 2 and have the Poitou--Tate duality. Because the Riemann existence theorem shows that there is a natural surjection 
	\[\begin{tikzcd}
		\pi_1^{\et}(X-\{P_1, \ldots, P_m\})(\ell) \arrow[two heads]{r} &\pi_1^{\et}(X)(\ell)
	\end{tikzcd}\]
	whose kernel is the free product of the inertia subgroups at $P_1, \ldots, P_m$. So if the analogue of $\pi_1^{\et}(X)(\ell)$ exists, then it should be some group between $G_S(K)(\ell) \twoheadrightarrow G_{\O}(K)(\ell)$. 
	
	In the sequence of works by Wingberg and Schmidt \cite{Wingberg1984, Wingberg1985, Schmidt1995, Schmidt1996}, they introduced the notion of positively ramified extensions of number fields, and in some nice cases, they prove that there is a good analogy between the Galois group of the maximal positively ramified extension and $\pi_1^{\et}(X)$. We briefly explain their ideas below.
	Let $\ell$ be an odd prime number, and $\Q_\ell^{pre}:= \Q_\ell^{nr}(\zeta_\ell+\zeta_\ell^{-1})(odd)(\zeta_\ell)$, where $\Q_\ell^{nr}$ is the maximal unramified extension of $\Q_\ell$ and the notation $\Q_\ell^{nr}(\zeta_\ell+\zeta_\ell^{-1})(odd)$ means the maximal pro-odd extension of $\Q_\ell^{nr}(\zeta_\ell+\zeta_\ell^{-1})$. So $\Q_\ell^{pre}$ is a $\Z/2\Z$-extension of $\Q_\ell^{nr}(\zeta_\ell+\zeta_\ell^{-1})(odd)$, and we let $\rho$ denote the non-trivial element of its Galois group, which can be viewed as the $\ell$-adic version of the complex conjugation. A local field $k/\Q_\ell$ is \emph{orientable} if $k \subset \Q_\ell^{pre}$ and if either $k=k^{+}:=k^{\langle{\rho}\rangle}$ or $\zeta_\ell \in k^{nr}$. In other words, at each local place, Wingberg and Schmidt construct the structure that is similar to a CM-field. So the positively ramified extension is the analogue of the totally real field: a local extension $l/k$ is \emph{positively ramified (p.r.)} if $l \subset \Q_\ell^{pre}k$ (in $\Q_2$ if $p=2$) and the Galois closure of $l/k$ has at most pure wild ramification (i.e., the ramification index is a power of $\ell$). Then a number field extension $L/K$ is called \emph{positively ramified (p.r.)} if its local completion is p.r. at each prime.

	For an odd prime number $\ell$, a number field $K$ is called \emph{$\ell$-CM} if, at every prime $\frakp$ of $K$ lying above $\ell$, $K_{\frakp}$ is orientable, $[K_{\frakp}:K_{\frakp}^+]=2$ and $K_{\frakp}$ contains an index-2 subfield $k$ such that $k=k^{+}$. We let $K_{\pos}$ denote the maximal p.r. extension of $K$. Then by \cite[Thm.7.3]{Schmidt1996}, if $K$ is $\ell$-CM and contains $\mu_{\ell}$, then the followings hold (for analogous statements for $\pi_1^{\et}(X)$, see \cite[Cor.(10.1.3)(i)(ii)]{NSW}):
	\begin{enumerate}
		\item If $g_{\ell}(K)>0$, then $\Gal(K_{\pos}/K)(\ell)$ is a Poincar\'e group of dimension 3 with dualizing module $\mu_{\ell^\infty}$.
		\item If $g_{\ell}(K)=0$, then $\Gal(K_{\pos}/K)(\ell)$ is a free pro-$\ell$ group.
	\end{enumerate}
	Here $g_{\ell}(K)$ is the \emph{$\ell$-genus} which can be computed by the rank of the positive ramified Picard group \cite[Prop.6.7]{Schmidt1996}. Moreover, Schmidt \cite[Thm.8.3]{Schmidt1996} proved the analogue of the Riemann existence theorem: kernel of $G_S(K)(\ell) \twoheadrightarrow \Gal(K_{\pos}/K)(\ell)$ is the free product of inertia subgroups.
	
	Assume that $\ell$ is an odd prime number, $Q=\Q(\mu_{\ell})$ such that $Q_{\O}(\ell)=Q$ (equivalently, the $\ell$-part of the class group of $Q$ is trivial), and $\Gamma$ is a finite group with $\ell\nmid |\Gamma|$. Then $Q$ is $\ell$-CM. Assume that $K$ is a $\Gamma$-extension of $Q$ that is $\ell$-CM (for example, this holds when $K/Q$ is unramified at all primes lying above $\ell$). We let $K_{\pos}(\ell)$ denote the maximal pro-$\ell$ p.r. extension of $K$. Then $KQ_{\infty}$ is a p.r. extension of $K$, and we have the following short exact sequence that is the analogue of \eqref{eq:etale-es}
	\begin{equation}\label{eq:positive-es}
		1 \longrightarrow \Gal(K_{\pos}(\ell) / KQ_{\infty}) \longrightarrow \Gal(K_{\pos}(\ell)/K) \longrightarrow \Gal(KQ_{\infty}/K) \longrightarrow 1,
	\end{equation}
	where every term naturally has an action of $\Gamma$ (when we fix a splitting of $\Gal(K_{\pos}(\ell)/Q) \to \Gal(K/Q)$).
	The Galois group $\Gal(KQ_{\infty}/K)\simeq \Z_{\ell}$, which is a Poincar\'e group of dimension 1 with dualizing module $\Q_{\ell}/\Z_{\ell}$. So by \cite[Thm.(3.7.4)]{NSW}, from the exact sequence above, we see that $\Gal(K_{\pos}(\ell)/KQ_{\infty})$ is a pro-$\ell$ Poincar\'e group of dimension 2 with dualizing module $\mu_{\ell^{\infty}}$, and hence we have the analogue of Poincar\'e duality. 
	Moreover, since $\mu_{\ell^{\infty}} \subset K Q_{\infty}$, by Kummer theory, the abelianization of $\Gal(K_{\pos}(\ell)/KQ_{\infty})$ is free over $\Z_{\ell}$, so $\Gal(K_{\pos}(\ell)/KQ_{\infty})$ is a pro-$\ell$ Demu\v{s}kin group in the form of $\langle x_1, \ldots, x_{2n} \mid [x_1, x_2]\cdots [x_{2n-1}, x_{2n}] \rangle$.
	So one can repeat the argument in the proof of Lemma~\ref{lem:Galois-H2} to show that there is a $\Gal(Q_{\infty}/Q)$-equivariant isomorphism
	\begin{equation}\label{eq:pos-H2-mu}
		H_2(\Gal(K_{\pos}(\ell)/Q_{\infty}), \Z)(\ell)\simeq \Z_{\ell}(1).
	\end{equation}

	We let $v$ denote the unique prime of $Q$ lying above $\ell$, let $\frakp$ be a prime of $K$ lying above $v$, and pick a prime $\frakP$ of $K_{\pos}(\ell)$ above $\frakp$. By \cite[Thm.(7.4.4)]{NSW} and the assumption $\ell \nmid |\Gamma|$, the group of principal units of $K_{\frakp}$ is isomorphic to 
	\[
		U^1_{K_{\frakp}} \simeq \Z_\ell[\Gal(K_{\frakp}/Q_v)]^{\ell-1}
	\]
	as $\Gal(K_{\frakp}/Q_v)$-modules. Let $a$ denote a generator of $\Gal(KQ_{\infty}/K)$. The extension $KQ_{\infty}/K$ is completely widely ramified at $\frakp$. Then by local class field theory, we can pick an element $a_{\frakp}$ of the inertia subgroup $\Gal(K_{\pos}(\ell)/K)$ at $\frakP$, such that it maps to $a$ under the quotient map in \eqref{eq:positive-es} and the $\Gal(K_{\frakp}/Q_v)$-closed subgroup of the abelianization of $\Gal(K_{\pos}(\ell)_{\frakP}/K_{\frakp})$ generated by the image of $a_{\frakp}$ is isomorphic to $\Z_\ell[\Gal(K_{\frakp}/Q_v)]$. Then
for other prime $\frakp'$ of $K$ above $v$, we choose an element $\gamma\in \Gal(K/Q)$ such that $\gamma(\frakp)=\frakp'$, and define $a_{\frakp'}$ to be $\gamma(a_{\frakp})$. 
	We define $G_{\pos}^{\#}(K)$ to be the quotient of $\Gal(K_{\pos}(\ell)/KQ_{\infty})$ modulo the $\Gamma$-closed normal subgroup generated by these elements $a_{\frakp}$'s for all $\frakp \mid v$, and we denote the field extension corresponding to $G_{\pos}^{\#}(K)$ by $K^{\#}_{\pos}/KQ_{\infty}$. The group $G_{\pos}^{\#}(K)$ is designed in the way such that it is a reasonable analogue of $\Gs(K)$ in Notation~\ref{not}. Indeed, we have to take the above approach to quotient out all the relevant lifts of $\Gal(KQ_{\infty}/K)$ (which is the analogue of the Frobenius), because in the function field case the place $\infty$ of $X_{\overline{k}}$ cannot further ramify or be further inert. 
	For example, if $v$ splits completely in $K/Q$, then the elements $a_{\frakp}$'s are lifts of $a$ that are compatible with the $\Gal(K/Q)$ action on all the primes above $v$, which is exactly the same situation as the function field case. 
	So the quotient map $\Gal(K_{\pos}(\ell)/Q_{\infty}) \twoheadrightarrow \Gal(K_{\pos}^{\#}/Q_{\infty})$ induces a coinflation map, which gives the analogue of the invariant $\omega_K^{\#}$ in Definition~\ref{def:inv-L/K}:
	\[
		\Z_{\ell}(1) \simeq H_2(\Gal(K_{\pos}(\ell)/Q_{\infty}), \Z)(\ell) \longrightarrow H_2(\Gal(K_{\pos}^{\#}/Q_{\infty}), \Z)(\ell)
	\]
	and it's clear by our construction that this map factors through $\mu_{\ell}$.
	We cannot recover the group theoretical description of the invariant as described in Proposition~\ref{prop:etale-lift} and Corollary~\ref{cor:compatible-cover} because we don't have the analogue of the system of inertia generators for $\Gamma$-extensions of $Q_{\infty}$. However, we can generalize the cohomological definition of the invariant in \S\ref{ss:coh-def-inv}: we can define a trace map $H^2(\Gal(K_{\pos}(\ell)/KQ_{\infty}), \Z_{\ell}(1) )\to \Z_{\ell}$ and choose the isomorphism \eqref{eq:pos-H2-mu} such that it corresponds to the class of trace $-|\Gamma|$.

	By the discussion above, it's reasonable to consider $G_{\pos}^{\#}(K)$ for a number field $K$ satisfying our assumptions as a good analogue of $\Gs(K)$ for a function field $K$. Then, for the number field case, we consider the natural quotient map
	\begin{equation}\label{eq:Pos-Nr}
		G^{\#}_{\pos}(K) \longrightarrow G_{\O}(K)(\ell),
	\end{equation}
	whose kernel is the normal subgroup generated by the inertia subgroups of $K_{\pos}^{\#}/K$ at primes above $v$. Let $\frakp$ be a prime of $K$ above $v$, and $\frakP$ a prime of $K_{\pos}^{\#}$ above $\frakp$. By definition of p.r., the local completion $(K_{\pos}^{\#})_{\frakP}/K_{\frakp}$ is a sub-$\ell$-extension of $\Q_{\ell}^{pre}/K_{\frakp}$. As we assumed that $K$ is $\ell$-CM,  $K_{\frakp}^{+}$ has index 2 in $K_{\frakp}$ and $\Gal(K_{\frakp}^+/Q_v^+) \cong \Gal(K_{\frakp}/Q_v)$. Recall that $\rho$ is the $\ell$-complex conjucation, we let $(U_{K_{\frakp}}^1)^+$ denote the $\rho$-invariant subgroup of the principal unit group $U_{K_{\frakp}}^1$. Then since $U_{K_{\frakp}}^1 \simeq \Z_\ell[\Gal(K_{\frakp}/\Q_{\ell})]$ as $\Gal(K_{\frakp}/\Q_{\ell})$-modules, we have the following isomorphism of $\Gal(K_{\frakp}/Q_v)$-modules
	\[
		(U_{K_{\frakp}}^1)^+ \simeq \Z_\ell[\Gal(K_{\frakp}^+/\Q_{\ell})] \simeq \Z_\ell[\Gal(K_{\frakp}^+/Q_v^+)]^{\frac{\ell-1}{2}}.
	\]
	So by local class field theory, the maximal pro-$\ell$ abelian subextension of $K_{\frakp}$ inside $\Q_{\ell}^{pre}$ has Galois group isomorphic to $\Z_{\ell}[\Gal(K_{\frakp}/Q_v)]^{\frac{\ell-1}{2}}\oplus \Z_{\ell}$, where the extra term $\Z_{\ell}$ comes from the unramified part. Recall that when we define $G_{\pos}^{\#}(K)$, we quotient out a copy of $\Z_\ell[\Gal(K_{\frakp}/Q_v)]$ that is generated by $a_{\frakp}$. So we conclude that the Galois group of the maximal abelian subextension of $(K_{\pos}^{\#})_{\frakP}/K_{\frakp}$ is a quotient of $\Z_{\ell}[\Gal(K_{\frakp}/Q_v)]^{\frac{\ell-3}{2}} \oplus \Z_{\ell}$. By the Burnside basis theorem for pro-$\ell$ groups, the inertia subgroup of $(K_{\pos}^{\#})_{\frakP}/K_{\frakp}$ is the $\Gal(K_{\frakp}/Q_v)$-closed normal subgroup generated by $\frac{\ell-3}{2}$ well-chosen elements.
	Finally, we look at all the primes above $v$, and obtain that the kernel of the quotient map \eqref{eq:Pos-Nr} is the $\Gal(K/Q)$-closed normal subgroup generated by $\frac{\ell-3}{2}$ elements. The rank of $\calO_Q^{\times}$ is exactly $\frac{\ell-3}{2}$.
	
	The construction above can be applied to other cases, for example, to the case that $Q=\Q(\mu_{\ell^m})$ and $K$ is $\ell$-CM. However, the situations that can be covered are rare because of the $\ell$-CM condition. It would be interesting to understand how to generalize the definition of $\ell$-CM to cover more cases.

\appendix

\section{A random group model for random 3-manifolds}\label{sect:Appendix}

	In the work \cite{Dunfield-Thurston}, Dunfield and Thurston studied random orientable 3-manifolds constructed by a random Heegaard splitting, which means gluing two handlebodies by a random walk in the mapping class group of a surface. They compared the probabilities that the random 3-manifolds have finite covers of particular kinds to the analogous probabilities coming from random balanced presentations, and concluded that the fundamental groups of random 3-manifolds have many more finite quotients than the random group model defined by random balanced presentations. In this section, we use the results from Section~\ref{sect:RandomGroup} to construct a random group model to simulate the fundamental groups of 3-manifolds.
	
	Let $\Sigma_g$ be a closed surface of genus $g$, and let $\calM_g$ be its mapping class group. Then the fundamental group $\pi_1(\Sigma_g)$ of $\Sigma_g$ has a presentation $\langle a_1, \ldots, a_g, b_1, \ldots, b_g \mid [a_1,b_1]\cdots[a_g, b_g] \rangle$, and $\calM_g$ is naturally isomorphic to an index-2 subgroup of $\Out(\pi_1(\Sigma_g))$. For an element $\varphi \in \calM_g$, the 3-manifold $M$ obtained by gluing two copies of $\Sigma_g$ by $\varphi$ has fundamental group 
	\[
		\pi_1(M)=\faktor{\pi_1(\Sigma_g)}{[a_1, \ldots, a_g, \phi(a_1), \ldots, \phi(a_g)]},
	\]
	where $\phi \in \Aut( \pi_1(\Sigma_g))$ is a lift of $\varphi$, and $[a_1, \ldots, a_g, \phi(a_1), \ldots, \phi(a_g)]$ means the normal subgroup of $\pi_1(\Sigma_g)$ generated by those elements. Also, the map from $\Sigma_g$ to the classifying space $B\pi_1(\Sigma_g)$ defines a homology class in $H_2(\pi_1(\Sigma_g), \Z)$, which is invariant under the action of $\calM_g$. 
	For a finite group $H$, Dunfield and Thurston showed that, as $g\to \infty$, the expected number of covers with covering group $H$ of the random 3-manifold $M$ goes to $|[H,H]| \cdot |H_2(H,\Z)| / |\Aut(H)|$ (\cite[Thm.6.21]{Dunfield-Thurston}). Therefore, when we compute the number of surjections from $\pi_1(M)$ to $H$, we have
	\[
		\lim_{g\to \infty}\EE(\# \Sur(\pi_1(M), H)) = |[H,H]| \cdot |H_2(H,\Z)|.
	\]
	Recall that, $H_2(\pi_1(\Sigma_g), \Z)$ can be identified with the kernel of a Schur covering of $\pi_1(\Sigma_g)$. We will explicitly give a pro-$\ell$ random group model that have the above moments for all $\ell$-groups $H$.
		
	Let $\ell$ be a prime number. We use the notation in Section~\ref{sect:RandomGroup} to let $\calG_n$ denote the pro-$\ell$ Demu\v{s}kin group whose abelianization is $\Z_{\ell}^{2n}$ and let 
	\[
		1 \longrightarrow \Z_{\ell} \longrightarrow \widetilde{\calG}_n  \overset{\pi_n}{\longrightarrow} \calG_n \longrightarrow 1
	\]
	denote the Schur covering of $\calG_n$. 
	We use the presentation $\calG_n=\langle a_1, \ldots, a_{n}, b_1, \ldots, b_n \mid [a_1, b_1]\cdots [a_{n}, b_{n}]\rangle$, and let $\xi_n \in H_2(\calG_n, \Z) \simeq \ker \pi_n$ denote the image of $[a_1,b_1]\cdots [a_n, b_n]$. 
	
	\begin{proposition}\label{prop:3-manifold}
		Consider the random group 
		\[
			Z_{n}:= \faktor{\widetilde{\calG}_n}{[a_1, \ldots, a_n, \phi(a_1), \ldots, \phi(a_n)]}
		\]
		where $\phi$ is a random automorphism of $\widetilde{\calG}_n$ that preserves and acts trivially on $\ker \pi_n$. Then for any finite $\ell$-group $H$, we have
		\[
			\lim_{n \to \infty} \EE \left( \# \Sur(Z_n, H)\right) = |[H,H]|\cdot |H_2(H, \Z)|.
		\]
	\end{proposition}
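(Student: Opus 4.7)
The plan is to adapt the orbit-counting strategy from the proof of Theorem~\ref{thm:moment}. A surjection $Z_n \twoheadrightarrow H$ corresponds to a surjection $\widetilde{\pi}\colon \widetilde{\calG}_n \to H$ with $\widetilde{\pi}(a_i) = 1$ and $\widetilde{\pi}(\phi(a_i)) = 1$ for each $i$. Setting the $a_i$'s to $1$ in the Schur-cover presentation kills the relator $[a_1,b_1]\cdots [a_n,b_n]$ together with its commutators, so $\widetilde{\calG}_n / \langle\langle a_1,\ldots,a_n\rangle\rangle$ is isomorphic to the free pro-$\ell$ group $F_n$ on $b_1,\ldots,b_n$. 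The set $\mathcal{A} := \{\widetilde{\pi} \in \Sur(\widetilde{\calG}_n, H) : \widetilde{\pi}(a_i)=1 \text{ for all } i\}$ is therefore in natural bijection with $\Sur(F_n, H)$; and since every $\widetilde{\pi}\in\mathcal{A}$ automatically sends $\xi_n$ to $1$, we have $\mathcal{A} \subseteq \Sur(\calG_n, H; 0)$, where $\Sur(\calG_n, H; 0)$ denotes the surjections $\calG_n \to H$ whose $H_2$-invariant (in the sense of Lemma~\ref{lem:Labute}(1)) vanishes.

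Writing $\widetilde{\pi}(\phi(a_i)) = (\widetilde{\pi}\circ\phi)(a_i)$ and exchanging sum with expectation,
\[
\EE\,\#\Sur(Z_n, H) \;=\; \sum_{\widetilde{\pi}\in\mathcal{A}} \Prob_\phi\bigl(\widetilde{\pi}\circ\phi \in \mathcal{A}\bigr).
\]
Applying Lemma~\ref{lem:Labute}(2) with $q=1$ (the hypothesis $\ord(\widetilde{\pi}(\xi_n))\mid q-1 = 0$ is vacuous, and the conclusion $\widetilde{\varphi}(\xi_n) = \xi_n$ places $\widetilde{\varphi}$ in $G := \Aut(\widetilde{\calG}_n, \pi_n; 1)$) shows that $G$ acts transitively on $\Sur(\calG_n, H; 0)$ via the right action $\widetilde{\pi}\mapsto \widetilde{\pi}\circ\phi$. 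Since $G$ is a profinite group acting continuously on a finite set, Haar measure pushes forward under $\phi \mapsto \widetilde{\pi}\circ\phi$ to the uniform measure on the orbit, so the probability above equals $|\mathcal{A}|/|\Sur(\calG_n, H; 0)|$. Hence
\[
\EE\,\#\Sur(Z_n, H) \;=\; \frac{|\Sur(F_n, H)|^2}{|\Sur(\calG_n, H; 0)|}.
\]

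To pass to the limit, the numerator satisfies $|\Sur(F_n, H)| \sim |H|^n$ as $n\to\infty$. For the denominator I would invoke the Frobenius--Mednykh character formula applied to the Schur cover $\widetilde{H}$: the number of tuples $(\widetilde{a}_i,\widetilde{b}_i) \in \widetilde{H}^{2n}$ with $\prod[\widetilde{a}_i,\widetilde{b}_i] = h \in H_2(H, \Z) \subset Z(\widetilde{H})$ equals $|\widetilde{H}|^{2n-1}\sum_\chi \omega_\chi(h)\,\chi(1)^{2-2n}$, where $\omega_\chi$ is the central character of $\chi$. Because $\widetilde{H}\to H$ is a stem extension, $H_2(H, \Z) \subset [\widetilde{H}, \widetilde{H}]$, so every $1$-dimensional character of $\widetilde{H}$ is trivial on $H_2(H, \Z)$; the dominant ($\chi(1) = 1$) contribution is therefore $|H^{ab}|$ independently of $h$. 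Dividing by the $|H_2(H,\Z)|^{2n}$ possible lifts from $H^{2n}$ to $\widetilde{H}^{2n}$ (the commutator product is lift-independent by centrality) and discarding the negligible non-surjective tuples yields
\[
|\Sur(\calG_n, H; h)| \;\sim\; \frac{|H|^{2n-1}\,|H^{ab}|}{|H_2(H, \Z)|}
\]
for every $h\in H_2(H,\Z)$, whence
\[
\lim_{n\to\infty}\EE\,\#\Sur(Z_n, H) \;=\; \frac{|H|\cdot|H_2(H, \Z)|}{|H^{ab}|} \;=\; |[H, H]|\cdot|H_2(H, \Z)|.
\]

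The core technical difficulty is the equidistribution step: showing that $|\Sur(\calG_n, H; h)|$ has the same leading-order asymptotic for each $h \in H_2(H, \Z)$. This rests on the stem property of $\widetilde{H}\to H$, which annihilates the $h$-dependence of the dominant Frobenius--Mednykh term and produces the extra factor $|H_2(H, \Z)|$ that naive heuristics (which count $\Sur(\calG_n, H)$ without filtering by the lift invariant) miss.
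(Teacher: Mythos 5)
Your proposal is correct and takes essentially the same route as the paper's proof: the same orbit-counting reduction via Lemma~\ref{lem:Labute} with $q=1$ (transitivity of $\Aut(\widetilde{\calG}_n,\pi_n;1)$ on the trivial-invariant fiber) to the ratio $|\Sur(F_n,H)|^2\big/\#\{\rho\in\Sur(\calG_n,H)\mid \rho_*(\xi_n)=1\}$, with the same asymptotics $|H|^{n}$ for the numerator and $|H|^{2n}/(|[H,H]|\cdot|H_2(H,\Z)|)$ for the denominator. The only differences are cosmetic: you justify the denominator count via the Frobenius--Mednykh formula on $\widetilde H$ where the paper simply asserts near-equidistribution of the lifted commutator product in $[\widetilde H,\widetilde H]$, and your inclusion $\mathcal{A}\subseteq\Sur(\calG_n,H;0)$ should be justified by noting that lifts of the $a_i$ to $\widetilde H$ land in the central kernel, so the lifted commutator product vanishes (the condition $\widetilde\pi(\xi_n)=1$ in $H$ alone does not compute the $H_2$-valued invariant).
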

	
	\begin{proof}
	
		For an automorphism $\phi$ of $\widetilde{\calG}_n$, a surjection from $Z_n \to H$ defines a surjection $\widetilde{\calG}_n \to Z_n \to H$ whose kernel contains $a_i$ and $\phi(a_i)$ for all $i$. So we have 
		\[
			\EE \left( \# \Sur(Z_n, H)\right) 
			= \sum_{\substack{\pi \in \Sur(\widetilde{\calG}_n, H) \\ \text{s.t. } \pi(a_i)=1 \, \forall i}} \Prob \left( \phi \in \Aut(\widetilde{\calG}_n, \pi_n ; 1) \,\Big| \, \phi(a_i) \in \ker \pi, \,  \forall i \right),
		\]
		where $\Aut(\widetilde{\calG}_n, \pi_n; 1)$ denote the closed subgroup of $\Aut(\widetilde{\calG}_n)$ consisting of all automorphisms that preserve and act trivially on $\ker \pi_n$.
		Each $\pi$ as described in the sum factors through a surjection $\varpi \in \Sur(\calG_n, H)$ because $\pi([a_1, b_1]\cdots[a_n,b_n])=1$.
		By Lemma~\ref{lem:Labute}, $\Aut(\widetilde{\calG}_n, \pi_n; 1)$ acts transitively on the set $\left\{ \varpi \in \Sur(\calG_n, H) \mid \varpi_*(\xi_n)=1 \right\}$. 
		Since $\phi(a_i)\in \ker \pi$ if and only if $\pi \circ \phi (a_i)=1$, we have
		\[
			\Prob  \left( \phi \in \Aut(\widetilde{\calG}_n, \pi_n ; 1) \,\Big| \, \phi(a_i) \in \ker \pi, \,  \forall i \right) 
			= \frac{\#\{ \rho \in \Sur(\widetilde{\calG}_n ,H) \mid \rho(a_i)=1, \, \forall i\}}{ \# \{ \rho \in \Sur(\calG_n, H) \mid \rho_{*} (\xi_n)=1 \}}.
		\]
		The numerator equals the number of surjections from a free group on generators $b_i$'s to $H$, so it is approximately $|H|^n$ for large $n$. Let $F_{2n}$ denote the free pro-$\ell$ group on the generators $a_1, \ldots, b_n$, and $\widetilde{H} \to H$ is a Schur covering of $H$. Homomorphisms from $F_{2n}$ to $H$ lift to homomorphisms from $F_{2n}$ to $\widetilde{H}$. As $n \to \infty$, considering all homomorphisms from $F_{2n}$ to $H$, the image of $[a_1,b_1] \cdots [a_n, b_n]$ is nearly uniformly distributed in $[\widetilde{H}, \widetilde{H}]$. So the denominator above is approximately $|H|^{2n}/ |[\widetilde{H}, \widetilde{H}]| = |H|^{2n} / \left( |[H,H]| \cdot |H_2(H, \Z)|\right)$. Therefore, we obtain
		\[
			\EE(\#\Sur(Z_n, H)) \sim |H|^n \cdot  \frac{|H|^n}{|H|^{2n} / (|[H,H]| \cdot |H_2(H,Z)|) },
		\]
		and the proposition follows after taking $n \to \infty$. 
	\end{proof}

\def\cprime{$'$}
\begin{bibdiv}
\begin{biblist}

\bib{Achter2006}{article}{
      author={Achter, Jeffrey~D.},
       title={The distribution of class groups of function fields},
        date={2006},
        ISSN={0022-4049},
     journal={J. Pure Appl. Algebra},
      volume={204},
      number={2},
       pages={316\ndash 333},
         url={https://doi-org.proxy.lib.umich.edu/10.1016/j.jpaa.2005.04.003},
      review={\MR{2184814}},
}

\bib{Achter2008}{incollection}{
      author={Achter, Jeffrey~D.},
       title={Results of {C}ohen-{L}enstra type for quadratic function fields},
        date={2008},
   booktitle={Computational arithmetic geometry},
      series={Contemp. Math.},
      volume={463},
   publisher={Amer. Math. Soc., Providence, RI},
       pages={1\ndash 7},
         url={https://doi-org.proxy.lib.umich.edu/10.1090/conm/463/09041},
      review={\MR{2459984}},
}

\bib{Adam2015}{article}{
      author={Adam, Michael},
      author={Malle, Gunter},
       title={A class group heuristic based on the distribution of
  1-eigenspaces in matrix groups},
        date={2015},
        ISSN={0022-314X},
     journal={J. Number Theory},
      volume={149},
       pages={225\ndash 235},
         url={https://doi-org.proxy.lib.umich.edu/10.1016/j.jnt.2014.10.018},
      review={\MR{3296009}},
}

\bib{Boston-Bush}{article}{
      author={Boston, Nigel},
      author={Bush, Michael~R.},
       title={Heuristics for 2-class towers of cyclic cubic fields},
        date={2021},
     journal={Experimental Mathematics},
      volume={0},
      number={0},
       pages={1\ndash 12},
         url={https://doi.org/10.1080/10586458.2021.2011492},
}

\bib{BBH-imaginary}{article}{
      author={Boston, Nigel},
      author={Bush, Michael~R.},
      author={Hajir, Farshid},
       title={Heuristics for {$p$}-class towers of imaginary quadratic fields},
        date={2017},
        ISSN={0025-5831},
     journal={Math. Ann.},
      volume={368},
      number={1-2},
       pages={633\ndash 669},
  url={https://doi-org.ezproxy.library.wisc.edu/10.1007/s00208-016-1449-3},
      review={\MR{3651585}},
}

\bib{BBH-real}{article}{
      author={Boston, Nigel},
      author={Bush, Michael~R.},
      author={Hajir, Farshid},
       title={Heuristics for {$p$}-class towers of real quadratic fields},
        date={2021},
        ISSN={1474-7480},
     journal={J. Inst. Math. Jussieu},
      volume={20},
      number={4},
       pages={1429\ndash 1452},
         url={https://doi.org/10.1017/S1474748019000641},
      review={\MR{4293801}},
}

\bib{Boston-Wood}{article}{
      author={Boston, Nigel},
      author={Wood, Melanie~Matchett},
       title={Non-abelian {C}ohen-{L}enstra heuristics over function fields},
        date={2017},
        ISSN={0010-437X},
     journal={Compos. Math.},
      volume={153},
      number={7},
       pages={1372\ndash 1390},
  url={https://doi-org.ezproxy.library.wisc.edu/10.1112/S0010437X17007102},
      review={\MR{3705261}},
}

\bib{Cohen-Lenstra}{incollection}{
      author={Cohen, H.},
      author={Lenstra, H.~W., Jr.},
       title={Heuristics on class groups of number fields},
        date={1984},
   booktitle={Number theory, {N}oordwijkerhout 1983 ({N}oordwijkerhout, 1983)},
      series={Lecture Notes in Math.},
      volume={1068},
   publisher={Springer, Berlin},
       pages={33\ndash 62},
         url={https://doi-org.ezproxy.library.wisc.edu/10.1007/BFb0099440},
      review={\MR{756082}},
}

\bib{Cohen-Martinet}{article}{
      author={Cohen, H.},
      author={Martinet, J.},
       title={Class groups of number fields: numerical heuristics},
        date={1987},
        ISSN={0025-5718, 1088-6842},
     journal={Mathematics of Computation},
      volume={48},
      number={177},
       pages={123\ndash 137},
         url={http://www.ams.org/mcom/1987-48-177/S0025-5718-1987-0866103-4/},
}

\bib{SGA4.5}{book}{
      author={Deligne, P.},
       title={Cohomologie \'{e}tale},
      series={Lecture Notes in Mathematics},
   publisher={Springer-Verlag, Berlin},
        date={1977},
      volume={569},
        ISBN={3-540-08066-X; 0-387-08066-X},
         url={https://doi.org/10.1007/BFb0091526},
        note={S\'{e}minaire de g\'{e}om\'{e}trie alg\'{e}brique du Bois-Marie
  SGA $4\frac{1}{2}$},
      review={\MR{463174}},
}

\bib{Dunfield-Thurston}{article}{
      author={Dunfield, Nathan~M.},
      author={Thurston, William~P.},
       title={Finite covers of random 3-manifolds},
        date={2006},
        ISSN={0020-9910},
     journal={Invent. Math.},
      volume={166},
      number={3},
       pages={457\ndash 521},
         url={https://doi.org/10.1007/s00222-006-0001-6},
      review={\MR{2257389}},
}

\bib{EVW}{article}{
      author={Ellenberg, Jordan~S.},
      author={Venkatesh, Akshay},
      author={Westerland, Craig},
       title={Homological stability for {H}urwitz spaces and the
  {C}ohen-{L}enstra conjecture over function fields},
        date={2016},
        ISSN={0003-486X},
     journal={Ann. of Math. (2)},
      volume={183},
      number={3},
       pages={729\ndash 786},
         url={https://doi.org/10.4007/annals.2016.183.3.1},
      review={\MR{3488737}},
}

\bib{Friedman-Washington}{incollection}{
      author={Friedman, Eduardo},
      author={Washington, Lawrence~C.},
       title={On the distribution of divisor class groups of curves over a
  finite field},
        date={1989},
   booktitle={Th\'{e}orie des nombres ({Q}uebec, {PQ}, 1987)},
   publisher={de Gruyter, Berlin},
       pages={227\ndash 239},
      review={\MR{1024565}},
}

\bib{Garton2015}{article}{
      author={Garton, Derek},
       title={Random matrices, the {C}ohen-{L}enstra heuristics, and roots of
  unity},
        date={2015},
        ISSN={1937-0652},
     journal={Algebra Number Theory},
      volume={9},
      number={1},
       pages={149\ndash 171},
         url={https://doi-org.proxy.lib.umich.edu/10.2140/ant.2015.9.149},
      review={\MR{3317763}},
}

\bib{Gorenstein}{book}{
      author={Gorenstein, Daniel},
       title={Finite groups},
     edition={Second},
   publisher={Chelsea Publishing Co., New York},
        date={1980},
        ISBN={0-8284-0301-5},
      review={\MR{569209}},
}

\bib{GroupRep}{book}{
      author={Karpilovsky, Gregory},
       title={Group representations. {V}ol. 2},
      series={North-Holland Mathematics Studies},
   publisher={North-Holland Publishing Co., Amsterdam},
        date={1993},
      volume={177},
        ISBN={0-444-88726-1},
      review={\MR{1215935}},
}

\bib{Katz1999}{book}{
      author={Katz, Nicholas~M.},
      author={Sarnak, Peter},
       title={Random matrices, {F}robenius eigenvalues, and monodromy},
      series={American Mathematical Society Colloquium Publications},
   publisher={American Mathematical Society, Providence, RI},
        date={1999},
      volume={45},
        ISBN={0-8218-1017-0},
      review={\MR{1659828}},
}

\bib{Labute}{article}{
      author={Labute, John~P.},
       title={Classification of {D}emushkin groups},
        date={1967},
        ISSN={0008-414X},
     journal={Canadian J. Math.},
      volume={19},
       pages={106\ndash 132},
         url={https://doi.org/10.4153/CJM-1967-007-8},
      review={\MR{210788}},
}

\bib{Liu2020}{article}{
      author={Liu, Yuan},
       title={Presentations of {G}alois groups of maximal extensions with
  restricted ramification},
        date={2020},
        note={preprint, arXiv:2005.07329},
}

\bib{Lipnowski-Sawin-Tsimerman}{article}{
      author={Lipnowski, Michael},
      author={Sawin, Will},
      author={Tsimerman, Jacob},
       title={{Cohen}--{Lenstra} heuristics and bilinear pairings in the
  presence of roots of unity},
        date={2020},
        note={preprint, arXiv:2007.12533},
}

\bib{Lipnowski-Tsimerman}{article}{
      author={Lipnowski, Michael},
      author={Tsimerman, Jacob},
       title={Cohen-{L}enstra heuristics for \'{e}tale group schemes and
  symplectic pairings},
        date={2019},
        ISSN={0010-437X},
     journal={Compos. Math.},
      volume={155},
      number={4},
       pages={758\ndash 775},
         url={https://doi.org/10.1112/s0010437x19007036},
      review={\MR{3925501}},
}

\bib{Lubotzky}{article}{
      author={Lubotzky, Alexander},
       title={Pro-finite presentations},
        date={2001},
        ISSN={0021-8693},
     journal={J. Algebra},
      volume={242},
      number={2},
       pages={672\ndash 690},
         url={https://doi.org/10.1006/jabr.2001.8805},
      review={\MR{1848964}},
}

\bib{Liu-Wood}{article}{
      author={Liu, Yuan},
      author={Wood, Melanie~Matchett},
       title={The free group on {$n$} generators modulo {$n+u$} random
  relations as {$n$} goes to infinity},
        date={2020},
        ISSN={0075-4102},
     journal={J. Reine Angew. Math.},
      volume={762},
       pages={123\ndash 166},
         url={https://doi.org/10.1515/crelle-2018-0025},
      review={\MR{4195658}},
}

\bib{LWZB}{article}{
      author={Liu, Yuan},
      author={Wood, Melanie~Matchett},
      author={Zureick-Brown, David},
       title={A predicted distribution for {Galois} groups of maximal
  unramified extensions},
        date={2019},
        note={preprint, arXiv:1907.05002},
}

\bib{Malle2008}{article}{
      author={Malle, Gunter},
       title={Cohen-{L}enstra heuristic and roots of unity},
        date={2008},
        ISSN={0022-314X},
     journal={J. Number Theory},
      volume={128},
      number={10},
       pages={2823\ndash 2835},
         url={https://doi-org.proxy.lib.umich.edu/10.1016/j.jnt.2008.01.002},
      review={\MR{2441080}},
}

\bib{Malle2010}{article}{
      author={Malle, Gunter},
       title={On the distribution of class groups of number fields},
        date={2010},
        ISSN={1058-6458},
     journal={Experiment. Math.},
      volume={19},
      number={4},
       pages={465\ndash 474},
  url={https://doi-org.proxy.lib.umich.edu/10.1080/10586458.2010.10390636},
      review={\MR{2778658}},
}

\bib{Milne-EC}{book}{
      author={Milne, James~S.},
       title={\'{E}tale cohomology},
      series={Princeton Mathematical Series, No. 33},
   publisher={Princeton University Press, Princeton, N.J.},
        date={1980},
        ISBN={0-691-08238-3},
      review={\MR{559531}},
}

\bib{NSW}{book}{
      author={Neukirch, J\"{u}rgen},
      author={Schmidt, Alexander},
      author={Wingberg, Kay},
       title={Cohomology of number fields},
     edition={Second},
      series={Grundlehren der mathematischen Wissenschaften [Fundamental
  Principles of Mathematical Sciences]},
   publisher={Springer-Verlag, Berlin},
        date={2008},
      volume={323},
        ISBN={978-3-540-37888-4},
         url={https://doi.org/10.1007/978-3-540-37889-1},
      review={\MR{2392026}},
}

\bib{Ribes-Zalesskii}{book}{
      author={Ribes, Luis},
      author={Zalesskii, Pavel},
       title={Profinite groups},
     edition={Second},
      series={Ergebnisse der Mathematik und ihrer Grenzgebiete. 3. Folge. A
  Series of Modern Surveys in Mathematics [Results in Mathematics and Related
  Areas. 3rd Series. A Series of Modern Surveys in Mathematics]},
   publisher={Springer-Verlag, Berlin},
        date={2010},
      volume={40},
        ISBN={978-3-642-01641-7},
         url={https://doi.org/10.1007/978-3-642-01642-4},
      review={\MR{2599132}},
}

\bib{Schmidt1995}{article}{
      author={Schmidt, Alexander},
       title={Positively ramified extensions of algebraic number fields},
        date={1995},
        ISSN={0075-4102},
     journal={J. Reine Angew. Math.},
      volume={458},
       pages={93\ndash 126},
         url={https://doi.org/10.1515/crll.1995.458.93},
      review={\MR{1310955}},
}

\bib{Schmidt1996}{article}{
      author={Schmidt, Alexander},
       title={An arithmetic site for the rings of integers of algebraic number
  fields},
        date={1996},
        ISSN={0020-9910},
     journal={Invent. Math.},
      volume={123},
      number={3},
       pages={575\ndash 610},
         url={https://doi.org/10.1007/s002220050041},
      review={\MR{1383962}},
}

\bib{Sawin-Wood2022-1}{article}{
      author={Sawin, Will},
      author={Wood, Melanie~Matchett},
       title={Finite quotients of 3-manifold groups},
        date={2022},
        note={preprint, arXiv:2203.01140},
}

\bib{Sawin-Wood2022-2}{article}{
      author={Sawin, Will},
      author={Wood, Melanie~Matchett},
       title={The moment problem for random objects in a category},
        date={2022},
        note={preprint, arXiv:2210.06279},
}

\bib{Wingberg1984}{article}{
      author={Wingberg, Kay},
       title={Ein {A}nalogon zur {F}undamentalgruppe einer {R}iemannschen
  {F}l\"{a}che im {Z}ahlk\"{o}rperfall},
        date={1984},
        ISSN={0020-9910},
     journal={Invent. Math.},
      volume={77},
      number={3},
       pages={557\ndash 584},
         url={https://doi.org/10.1007/BF01388840},
      review={\MR{759255}},
}

\bib{Wingberg1985}{article}{
      author={Wingberg, Kay},
       title={Positiv-zerlegte {$p$}-{E}rweiterungen algebraischer
  {Z}ahlk\"{o}rper},
        date={1985},
        ISSN={0075-4102},
     journal={J. Reine Angew. Math.},
      volume={357},
       pages={193\ndash 204},
         url={https://doi.org/10.1515/crll.1985.357.193},
      review={\MR{783541}},
}

\bib{Wood-nonab}{article}{
      author={Wood, Melanie~Matchett},
       title={Nonabelian {C}ohen-{L}enstra moments},
        date={2019},
        ISSN={0012-7094},
     journal={Duke Math. J.},
      volume={168},
      number={3},
       pages={377\ndash 427},
         url={https://doi.org/10.1215/00127094-2018-0037},
        note={With an appendix by the author and Philip Matchett Wood},
      review={\MR{3909900}},
}

\bib{Wood-lifting}{article}{
      author={Wood, Melanie~Matchett},
       title={An algebraic lifting invariant of {E}llenberg, {V}enkatesh, and
  {W}esterland},
        date={2021},
        ISSN={2522-0144},
     journal={Res. Math. Sci.},
      volume={8},
      number={2},
       pages={Paper No. 21, 13},
         url={https://doi.org/10.1007/s40687-021-00259-2},
      review={\MR{4240808}},
}

\end{biblist}
\end{bibdiv}

\end{document}